\documentclass[12pt,twoside,]{amsart}
\usepackage{etex}
\usepackage{amsmath,amssymb,amsthm,amsfonts}
\usepackage{mathtools}
\usepackage[utf8]{inputenc}
\usepackage{graphicx}

\usepackage{fullpage}
\usepackage{endnotes}
\let\footnote=\endnote
\usepackage{tikz}
\usepackage[all]{xy}
\usepackage{cancel}
\usepackage{comment}
\usepackage{extarrows}
\usepackage{array}
\usepackage{graphics}
\usepackage{mathrsfs}
\usepackage{booktabs}
\usepackage[normalem]{ulem}
\usepackage{amscd}
\usetikzlibrary{matrix,arrows,decorations.pathmorphing}
\tikzset{my loop/.style =  {to path={
  \pgfextra{}
  [looseness=12,min distance=10mm]
  \tikz@to@curve@path},font=\sffamily\small
  }}  
\usepackage{tikz-cd}
\usepackage{hyperref}
\usepackage{cleveref}

\theoremstyle{plain}
\newtheorem{theorem}{Theorem}[subsection]
\newtheorem{lemma}[theorem]{Lemma}
\newtheorem{sub-lemma}[theorem]{Sub-lemma}
\newtheorem{proposition}[theorem]{Proposition}
\newtheorem{set-up}[theorem]{Set-up}
\newtheorem{corollary}[theorem]{Corollary}

\newtheorem{definition}[theorem]{Definition}

\theoremstyle{definition}
\newtheorem{remark}[theorem]{Remark}

\newtheorem{example}[theorem]{Example}

\newcommand{\Pic}{\operatorname{Pic}}
\newcommand{\Z}{\mathbb Z}
\newcommand{\OO}{\mathcal O}
\newcommand{\F}{\mathbb F}

\newcommand{\G}{G}                      

\title{
Geography and Deformations of
$\mathbb{Z}_2^s$-Covers of General Type Over Weighted Projective Threefolds.
}
\date{}

\author[P. Gallardo]{Patricio Gallardo}
\address{Department of Mathematics, University of California, Riverside, Riverside, CA 92521, United States.}
\email{pgallard@ucr.edu}

\author[J. Mukherjee]{Jayan Mukherjee}
\address{Department of Mathematics, Oklahoma State University, 401 Mathematical Sciences, Stillwater, OK 74078, United States.}
\email{jayan.mukherjee@okstate.edu }

\begin{document}
\begin{abstract}
We study threefolds of general type constructed as $\mathbb{Z}_2^s$-covers of weighted projective spaces with a particular focus on their invariants, deformation theory, and the behavior of the $m$-canonical map.
For the invariants, we write the ratios of the volume to the topological and holomorphic Euler characteristics as functions of the ratios of the degree of the branch divisors with respect to the total degree. From this expression, we obtain their asymptotic behavior, bounds, and a counterexample to a conjecture made by Bruce Hunt about the non-existence of smooth threefolds in a forbidden zone. 
From the perspective of deformation theory, we extend the criterion for such covers to be general in their moduli to the case when the weighted projective threefold has isolated singularities and the cover is non- flat—i.e., the pushforward of the structure sheaf splits as a direct sum of reflexive sheaves as opposed to line bundles. As an application, we present new numerical criteria for constructing components of the moduli spaces of stable threefolds and give concrete examples illustrating their application. Finally, we introduce techniques from Fourier transforms on finite groups to completely classify when a $\mathbb{Z}_2^s$-cover is a flat pluricanonical map. For $s \geq 2$, there are $32$ deformation types. We also show that there exist non-flat canonical and bicanonical $\mathbb{Z}_2^s$-covers for arbitrarily large values of $s$. 
\end{abstract}
\maketitle

\section{Introduction}
The classification of varieties of general type and the description of their moduli spaces have long been central problems in higher-dimensional algebraic geometry.  Key questions include understanding the numerical invariants of the varieties, the geometry of the corresponding moduli spaces, and identifying special geometric behavior such as a finite (pluri)-canonical map.  While there is a substantial body of work for surfaces of general type, the threefold case remains comparatively less understood; this work addresses that case.

Our article studies threefolds with ample canonical bundle that arise as  \(\mathbb{Z}_2^{s}\)-covers of weighted projective spaces.  We build on the foundational theory of Pardini \cite{pardini1991abelian} and its extension to singular bases by Alexeev–Pardini \cite{AP12}, to address the following questions:
\begin{enumerate}
    \item[(i)] What is the asymptotic behavior of the Chern ratios of threefolds of general type constructed as \(\mathbb{Z}_2^{s}\)-covers of weighted projective spaces?
    \item[(ii)] When do such threefolds define an open subset of a component in the moduli space of stable varieties?
    \item[(iii)] When is the \(\mathbb{Z}_2^{s}\)-cover induced by the pluricanonical map?
\end{enumerate}
Next, we describe our contributions and their context within the current literature. 

\subsection{Chern ratios for \(\mathbb{Z}_2^{s}\)-covers of weighted projective spaces.}
The possible values of Chern numbers and their ratios have attracted a lot of interest for surfaces of general type. It is known that the Chern ratio \( c_1^2(S)/c_2(S) \) is bounded by the interval \([1/5,3] \) because of the BMY and Noether inequalities. Every rational point of this interval can be realized as the Chern ratio of some surface \cite{sommese1984density};
and there is a subtle
relationship between accumulation points of the ratios with  the geometry of the surfaces \cite{roulleau2015chern}.

In the case of threefolds, we need three invariants: the volume \(K_X^3\), the holomorphic Euler characteristic \(\chi(\mathcal{O}_X)\), and the topological Euler characteristic \(e(X)\)—that is, their Chern numbers when \(X\) is smooth. A novel problem for threefolds is the absence of a unique minimal model, and the fact that the Chern numbers can vary among them. However, the ratios
\begin{align*}
\frac{c_3(X)}{c_1(X)c_2(X)}
=
\frac{e(X)}{24\,\chi(\mathcal{O}_X)}
&&
\frac{c_1^3(X)}{c_1(X)c_2(X)}
=
\frac{-K_X^3}{24\,\chi(\mathcal{O}_X)},
\end{align*}
initially defined for smooth threefolds, 
are also well-defined whenever \(X\) is \(\mathbb{Q}\)-factorial (see \cite{huntthreefolds89}). These ratios form a bounded region \cite{changlopez01} when $X$ is a smooth threefold, and they satisfy a Noether-type inequality \cite{chenchenjiang20, chenhujiang25}. Moreover, in \cite[Sec 7.2.3-7.2.4]{huntthreefolds89}, Hunt calculated the Chern numbers of smooth complete intersections in $\mathbb{P}^N$. Then, he showed that their limits lie on the curve 
 $\{ y(3x+1) = 4\}$ and conjectured that there are no smooth threefolds with Chern ratios that lie above the curve.

In our first result, we describe the asymptotic behavior (Definition~\ref{def:limit}) of the Chern ratios associated to our threefolds under the hypothesis that the branch locus of the \(\mathbb{Z}_2^{s}\)-cover is generic (see Definition~\ref{def:divisor_standard set-up}). 

\begin{figure}[t!]
    \centering
    \caption{Asymptotic Chern ratios for
    $\mathbb{Z}_2^s$-covers of $\mathbb{WP}^3$}
    \label{fig:invariants}    
\includegraphics[width=0.9\linewidth]{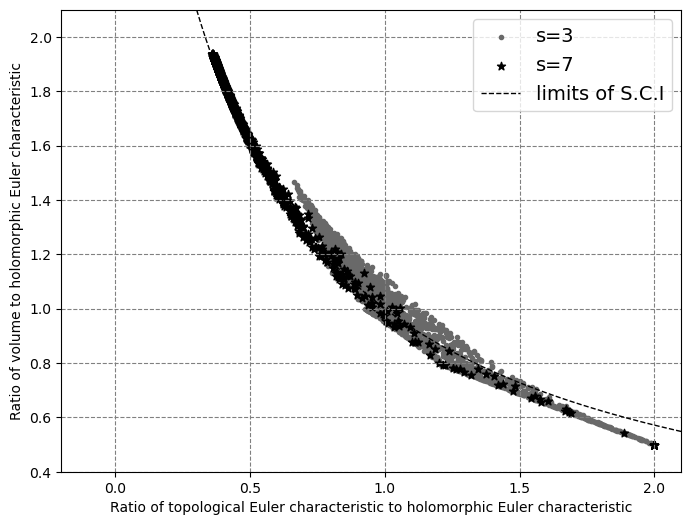}
\end{figure}

\begin{theorem}\label{thm:main_inv}
Let $X \rightarrow \mathbb{WP}^3$ be a 
$G:=(\mathbb{Z}_2)^s$-cover of a weighted projective threefold with generic branch divisors $(D_g)_{g \in G}$, 
and consider its Chern ratios 
\begin{align*}
x(D_g)&:= \frac{e(X)}{24\,\chi(\mathcal{O}_X)}.
&&
y(D_g):=\dfrac{-K_X^3}{24\,\chi(\mathcal{O}_X)}
\end{align*}
Set
\begin{align*}
r_g:= \frac{\deg(D_g)}{\sum_{h\in G \setminus \{0 \}} \deg(D_{h})},\,
&&
\operatorname{SCI}(x, y):= y(3x+1)-4
\end{align*}
where $\{ \operatorname{SCI}(x,y) =0 \}$ is the curve supporting the limits of the Chern ratios of smooth complete intersections (SCI). Then, it holds that
\begin{align*}
-\frac{1}{2}
\leq
\lim_{\deg(D_g) \to \infty}  
\operatorname{SCI} \left(x(D_g), \, y(D_g) \right)
\leq
\frac{8}{3},
\end{align*}
and
\begin{align*}
\frac{3 (2^{2 s-2}) + 2^{s + 1} - 1}{3 (2^s - 1)^2} 
\leq 
\lim_{\deg(D_g) \to \infty} x(D_g) 
\leq 2,
&&
\frac{1}{2} \leq
\lim_{\deg(D_g) \to \infty} y(D_g) \leq 2 - \frac{1}{2^{s-2}} + \frac{1}{2^{2s-1}}, 
\end{align*}
\end{theorem}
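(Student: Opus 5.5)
The plan is to reduce every quantity to its leading cubic term in the branch degrees. Then all three ratios become explicit homogeneous rational functions of degree $0$ in the $\deg D_g$, that is, functions of the vector $(r_g)$ on the standard simplex $\Delta=\{r_g\ge 0,\ \sum_{g\neq 0} r_g=1\}$. After that, the bounds become elementary extremal problems on $\Delta$.

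\textbf{Leading terms of the invariants.} Write $d_g=\deg D_g$, $d=\sum_g d_g$, and let $w$ be the product of the weights of $\mathbb{WP}^3$. I use Pardini's structure theory, in the Alexeev--Pardini version for singular bases, for the generic set-up of Definition~\ref{def:divisor_standard set-up}. This gives $\pi_*\mathcal O_X=\bigoplus_\chi L_\chi^{-1}$ with $2L_\chi\equiv\sum_{\chi(g)\neq 0}D_g$, so $\deg L_\chi=a_\chi/2$ where $a_\chi=\sum_{\chi(g)\ne0}d_g$. It also gives $K_X=\pi^*(K_{\mathbb{WP}^3}+\tfrac12\sum_g D_g)$.

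The singular points of $\mathbb{WP}^3$ and the reflexive (non-locally-free) summands only change the invariants by terms of lower order in the $d_g$. Hence, up to $O(d^2)$:
\begin{itemize}
\item $K_X^3\sim 2^s d^3/(8w)$;
\item $\chi(\mathcal O_X)=\sum_\chi\chi(L_\chi^{-1})\sim-\sum_{\chi\ne0}a_\chi^3/(48w)$, by Riemann--Roch on $\mathbb{WP}^3$ (Kawasaki's correction terms are lower order);
\item $e(X)$ is obtained by stratifying $\mathbb{WP}^3$ by the intersections of the $D_g$ and multiplying the Euler number of each stratum by the fibre cardinality $2^s/|\mathrm{Stab}|$. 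The stabilizer is the subgroup generated by the $g$ whose divisors contain the point. The leading terms of the strata are $e(D_g)\sim d_g^3/w$, $e(D_g\cap D_h)\sim -d_gd_h(d_g+d_h)/w$, and $\#(D_g\cap D_h\cap D_k)=d_gd_hd_k/w$.
\end{itemize}

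\textbf{The ratio $y$.} Normalising $d=1$, one gets $\lim y=2^s/(4\sum_\chi a_\chi^3)$ with $a_\chi=\sum_{\chi(g)\neq0}r_g$. Each $g\neq0$ is nonzero on exactly $2^{s-1}$ characters, so $\sum_\chi a_\chi=2^{s-1}$.
\begin{itemize}
\item \emph{Upper bound.} By the power mean inequality, $\sum a_\chi^3$ is minimised at $a_\chi\equiv 2^{s-1}/(2^s-1)$, which is attained at the barycenter $r_g\equiv 1/(2^s-1)$. This gives the upper bound $2-2^{2-s}+2^{1-2s}$.
\item \emph{Lower bound.} $r\mapsto\sum_\chi a_\chi(r)^3$ is convex on $\Delta$, being a composition of a convex function with a linear map. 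So its maximum is attained at a vertex. There $\sum a_\chi^3=2^{s-1}$, giving $y=\tfrac12$.
\end{itemize}

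\textbf{The ratio $x$ and the $\operatorname{SCI}$ bounds.} Assembling the stratification gives $\lim e(X)$ as $2^s$ times a cubic form in the $r_g$. This form is a sum of $-\tfrac12\sum r_g^3$-type terms, pair terms weighted by $\tfrac14$, and triple terms. The triple terms are weighted by $\tfrac18$ or $\tfrac14$ according to whether $g,h,k$ are independent or satisfy $g+h+k=0$. Hence $\lim x$ is a ratio of two explicit cubics on $\Delta$. I would first check the barycenter and the vertices, which give the stated values $\frac{3\cdot2^{2s-2}+2^{s+1}-1}{3(2^s-1)^2}$ and $2$ respectively. Then I would prove these are the global extrema by showing that $\text{numerator}-c\cdot\text{denominator}$ has a fixed sign on $\Delta$ for the two extremal constants $c$. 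For that step I would symmetrise by averaging over $\mathrm{GL}_s(\mathbb F_2)$, which permutes the $r_g$, and reduce to inequalities between power sums. Finally, $\operatorname{SCI}=y(3x+1)-4$ is again a ratio of cubics. I would bound it by the same sign-check method, confirming that the vertices give $-\tfrac12$ and that the maximum $\tfrac83$ occurs at a symmetric point.

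\textbf{Main obstacle.} The main obstacle is the $x$ and $\operatorname{SCI}$ optimisation. Unlike for $y$, neither ratio comes from a convex function, and the dependent triples ($g+h+k=0$) break naive symmetry. I would handle this with the character (Fourier) expansion $\sum_\chi a_\chi^k$, which should diagonalise the quadratic and cubic pair interactions.
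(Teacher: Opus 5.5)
Your treatment of $y$ is correct, and the power-mean argument for the upper bound is cleaner than the paper's averaging over $\mathrm{Aut}(G)$. The $x$ and $\operatorname{SCI}$ part, however, has a genuine gap, and it starts from a false premise.

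\textbf{The claimed extremal points are wrong.} The barycenter does not give the stated lower bound for $x$. At $r_g\equiv 1/(2^s-1)$ one gets $x=\frac{(2^s+2)(2^s+4)}{3\cdot 4^{s}}$, which is $1$ for $s=2$, whereas the stated constant is $\frac{19}{27}$. Nor does $\frac83$ occur at a symmetric point: $\operatorname{SCI}$ is $\frac12$ at the barycenter for $s=2$, and it tends to $0$ at the barycenters as $s\to\infty$. In fact neither constant is attained anywhere.
\begin{itemize}
\item In the paper, $\operatorname{SCI}\le\frac83$ comes from chaining $\Phi\ge(1+b)^2$ and $a\le b$ with $\max_{[0,1]}4(b^2+12b+1)/(1+b)^4<\frac{20}{3}$. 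So the inequality is strict everywhere.
\item The $x$ bound is just $\frac{7}{6y_{\max}}-\frac13$. It combines $\operatorname{SCI}\ge-\frac12$, which is sharp only at vertices, with $y\le y_{\max}$, which is sharp only at the barycenter.
\end{itemize}

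\textbf{Consequences for your plan.} You cannot check the barycenter and vertices and then sign-check numerator minus $c$ times denominator. What you actually need is a proof of a non-sharp polynomial inequality on $\Delta$, and your proposal gives no mechanism for it. Averaging over $GL_s(\mathbb{F}_2)$ does not supply one. The three ratios are already invariant, and replacing $r$ by its orbit average is only legitimate for convex functions. Moreover, for $s\ge3$ the invariant cubics on $\Delta$ are not functions of power sums alone: the dependent-triple sum is an extra invariant.

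\textbf{The missing step.} It is the computation you only gesture at in your last sentence. Set $a=\sum r_g^3$ and $b=\sum r_g^2$. Let $T$ (resp.\ $S_{\mathrm{idp}}$) be the sum of $r_pr_qr_h$ over ordered triples of distinct elements with $p+q+h=0$ (resp.\ $\ne0$). Put $S_\chi=\sum_g\chi(g)r_g$, so that $A_\chi=\sum_{\chi(g)=-1}r_g=(1-S_\chi)/2$.
\begin{itemize}
\item Orthogonality gives $\sum_{\chi\ne1}S_\chi=-1$, $\sum_{\chi\ne1}S_\chi^2=2^sb-1$ and $\sum_{\chi\ne1}S_\chi^3=2^sT-1$.
\item Hence $\sum_\chi A_\chi^3=2^{s-3}\Phi$ with $\Phi=3b+1-T$.
\item Together with $1=a+3(b-a)+T+S_{\mathrm{idp}}$, everything reduces to $y=2/\Phi$, $x=(14a+6b+\Phi)/(3\Phi)$ and $y(3x+1)=4(7a+3b+\Phi)/\Phi^2$.
\end{itemize}

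\textbf{The bounds are then elementary.}
\begin{itemize}
\item $x\le2$ is equivalent to $24(b-a)+5S_{\mathrm{idp}}\ge0$.
\item $\operatorname{SCI}\ge-\frac12$: the last expression decreases in $\Phi$, and $\Phi\le3b+1$ and $a\ge b^2$ leave $4(7b^2+6b+1)/(3b+1)^2\ge\frac72$.
\item $\operatorname{SCI}\le\frac83$: use $\sum_{\chi\ne1}A_\chi=2^{s-1}$, $\sum_{\chi\ne1}A_\chi^2=2^{s-2}(1+b)$ and Cauchy--Schwarz $(\sum A_\chi^2)^2\le(\sum A_\chi)(\sum A_\chi^3)$, i.e.\ $\Phi\ge(1+b)^2$, together with $a\le b$.
\end{itemize}

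Finally, recheck your assembly of $e(X)$. After inclusion--exclusion the dependent triple points drop out entirely, and independent ones enter with net coefficient $-2^{s-3}$. Your weights $\frac14,\frac18$ are only the fibre cardinalities on the zero-dimensional stratum.
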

The proof of Theorem~\ref{thm:main_inv} is the main theme of Section~\ref{sec:proof_invariants_theorem}.
Next, we highlight a consequence of our work. 
In \cite{huntthreefolds89}, Hunt
defined the so-called empty zone
\begin{align*}
  \text{Zone}(E):=
  \{(x,y) \, | \, 
  \operatorname{SCI}(x,y) > 0 \},
  &&
  (x,y) =  \left(
\frac{e(X)}{24\,\chi(\mathcal{O}_X)} , 
    \frac{-K_X^3}{24\,\chi(\mathcal{O}_X)} \right)
\end{align*}
and said, ``\emph{I know of no examples of algebraic 3-folds with ratios of Chern numbers lying here... I conjecture there are none, which is why I term this the empty zone, or alternatively, if there are any, they would seem to be exotic.}" We show that such conjecture is not true by using the almost uniform covers introduced by Pardini and Alexeev. 
\begin{corollary}\label{cor:counter_example_hunt}
There are smooth $(\mathbb{Z}_2)^s$-covers of $\mathbb{P}^3$ whose Chern ratios are within the empty zone $\operatorname{Zone}(E)$.
\end{corollary}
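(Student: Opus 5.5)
The plan is to produce an explicit family of smooth $(\mathbb{Z}_2)^s$-covers of $\mathbb{P}^3$ and check that its asymptotic Chern ratios satisfy $\operatorname{SCI}>0$. Because the limit in Theorem~\ref{thm:main_inv} depends only on the ratios $r_g$, a strictly positive limiting value gives $\operatorname{SCI}(x(D_g),y(D_g))>0$ for all sufficiently large degrees. The two constraints to balance are positivity of the limit and smoothness of $X$.

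For smoothness I will use Pardini's criterion \cite{pardini1991abelian}. Take the $D_g$ smooth, in general position and with simple normal crossings on the smooth base $\mathbb{P}^3$. Then $X$ is smooth if and only if, at every point $p$, the set $\{g : p\in D_g\}$ is linearly independent in $\mathbb{F}_2^s$. For general surfaces in $\mathbb{P}^3$, at most three of them pass through any point. So it suffices that the support $S=\{g\neq 0: D_g\neq 0\}$ is sum-free, i.e. contains no triple $g,h,g+h$. Pairs of distinct nonzero elements are automatically independent. Spanning $\mathbb{F}_2^s$ is needed only for connectedness.

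The natural choice is the almost uniform cover in the sense of Alexeev--Pardini \cite{AP12}, supported on $S=\{g : \phi(g)=1\}$ for a nonzero linear form $\phi$. This set is sum-free, has $2^{s-1}$ elements and spans $\mathbb{F}_2^s$. I will take all $D_g$ with $g\in S$ of a common degree $d$, and $D_g=0$ otherwise. I will also take $d$ even, or more generally satisfying the divisibility needed, so that Pardini's linear-equivalence relations $2L_\chi=\sum_{\chi(g)=1}D_g$ are solvable in $\Pic(\mathbb{P}^3)=\Z$; for uniform degrees this is a parity condition. Then $r_g=1/2^{s-1}$ on $S$ and $0$ elsewhere. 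I will substitute this into the expressions for $e(X)$, $\chi(\mathcal O_X)$ and $K_X^3$ obtained from Pardini's formulas (the ones underlying the proof of Theorem~\ref{thm:main_inv}) and extract the leading terms in $d$.

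Concretely, $K_X=\pi^*\bigl(K_{\mathbb{P}^3}+\tfrac12\sum_g D_g\bigr)$ gives $K_X^3\sim 2^s(2^{s-2}d)^3$. The quantities $\chi(\mathcal O_X)=\sum_\chi\chi(\mathcal O(-L_\chi))$ and $e(X)$ are computed by inclusion--exclusion over the strata of the branch arrangement, with the fibre over a point lying on $k$ divisors having $2^{s-k}$ points. Both are cubic in $d$, with coefficients given by combinatorial sums over $S$. I will then evaluate $\lim\operatorname{SCI}$ as a function of $s$ and exhibit some $s$ with positive value. If the half-space configuration happens to fail, I will optimise instead over other sum-free supports, or over unequal degrees on $S$. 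The aim is the region of $r$-space where the upper bound $8/3$ of Theorem~\ref{thm:main_inv} is approached, and I will check that it is compatible with sum-freeness.

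The main obstacle is this last point. The configuration maximising $\operatorname{SCI}$ in Theorem~\ref{thm:main_inv} may use divisors on triples $g,h,g+h$, which produce singular curves and not merely points. The work is to show that the positivity survives the sum-free restriction, by explicit computation of the limit for the half-space family, possibly for a specific small $s$ such as $s=3$ or $s=4$. Once a positive limit is found, choosing $d$ large and the divisors general gives the required smooth covers of $\mathbb{P}^3$.
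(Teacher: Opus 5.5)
Your framework matches the paper's. A support inside the affine hyperplane $S=\{g:\phi(g)=1\}$ is sum-free, so for general smooth $D_g$ the divisors through any point are independent and $X$ is smooth. And a strictly positive limit of $\operatorname{SCI}$ puts the covers in $\operatorname{Zone}(E)$ for large degree.

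The gap is in the one step with real content: the family you commit to lies outside the zone, and your fallback is never carried out. For a sum-free support the triple term $T$ vanishes. The limits are then $y=2/(3b+1)$ and $x=(14a+9b+1)/(3(3b+1))$, with $a=\sum r_g^3$ and $b=\sum r_g^2$, so
\[
y(3x+1)-4=\frac{4(7a-9b^2)}{(3b+1)^2}.
\]
Equal degrees on a sum-free support of size $n$ give $a=b^2=n^{-2}$, hence $\operatorname{SCI}=-8b^2/(3b+1)^2<0$. For your half-space family, set $N=2^{s-1}$. A direct computation from Pardini's formulas gives $y\to 2N/(N+3)$, $x\to (N+2)(N+7)/(3N(N+3))$, and
\[
y(3x+1)\to 4-\frac{8}{(N+3)^2},
\]
which is strictly below Hunt's curve for every $s$. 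So neither the half-space family nor any other sum-free support with uniform degrees can work.

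Aiming at configurations where the bound $8/3$ of Theorem~\ref{thm:main_inv} is approached is also the wrong guide. That bound ranges over all $r$, including supports with $T\neq 0$, where $X$ acquires $\tfrac12(1,1,1)$ points.

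The missing idea, which is the paper's construction, is to make the degrees on the half-space strongly non-uniform so that $a/b^2>9/7$. Give one $g_0\in S$ a share $t$ of the total degree, and split the rest equally among the other $2^{s-1}-1$ elements of $S$. Then $7a-9b^2$ is an explicit rational function of $t$ and $2^{s-1}$. It suffices to take $s=3$, where
\[
9(7a-9b^2)=-144t^4+200t^3-87t^2+15t-2.
\]
This is positive near $t=3/5$, where it equals $136/625$. That value is realized by degrees $(18k,4k,4k,4k)$ on $S$. These satisfy all the parity relations ($\ell_\chi\in\{15k,11k,4k\}$) and make $K_X=f^*\mathcal{O}(15k-4)$ ample.

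The positivity window must actually be checked: at $t=1/2$ the same polynomial equals $-1/4$. So not every dominant share works for $s=3$.
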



\subsection{Deformations of $\mathbb{Z}_2^s$-covers of weighted projective spaces }
From the perspectives of both geography and moduli theory of threefolds, a very relevant and important question is the deformation theory of the abelian covers. The study helps us understand how general abelian covers are in their corresponding moduli component. When both $X$ and $Y$ are smooth, the study has been carried out in \cite{pardini1991abelian}, while the case when $Y$ is normal and $f: X \to Y$ is flat and locally simple (which implies $X$ is smooth) is carried out in \cite{Man95}. When both $X$ and $Y$ are normal and $f$ is neither flat nor locally simple, the situation is quite different. However, using techniques of Schlessinger in \cite{schlessinger1971}, Wehler in \cite{wehler86deformations} and Ran in \cite{defofmaps89}, we can handle the deformation theory when the codimension of the singular locus of both $X$ and $Y$ is at least $3$. In \cite{wehler86deformations}, a sufficient condition was introduced, under which any deformation of a variety $X$ which arises as a finite cover $X \to Y$, retains the structure of a finite cover over some deformation of $Y$. In \Cref{every def is a finite cover revised reflexive}, we generalize Wehler's result as follows:

\begin{theorem}[see \Cref{every def is a finite cover revised reflexive}]
  Let $f: X \to Y$ be a finite cover, with $X$ and $Y$ reduced and non-singular in codimension one, such that $f_*\mathcal{O}_X = \mathcal{O}_Y \oplus \mathcal{E}$, where $\mathcal{E}$ is a reflexive sheaf on $Y$. Suppose that $H^1(\mathcal{H}om(\Omega_Y, \mathcal{E})) = H^0(\mathcal{E}xt^1(\Omega_Y, \mathcal{E})) = 0$. Then the natural forgetful map of functors $\mathbf{Def}(f) \xrightarrow{\pi_0} \mathbf{Def}(X)$ is smooth. Consequently, any deformation of $X$ can be realized as a finite cover over a deformation of $Y$. 
\end{theorem}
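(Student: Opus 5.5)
We follow the standard route of Wehler and Ran: compare the tangent–obstruction theories of $\mathbf{Def}(f)$ and $\mathbf{Def}(X)$. By the smoothness criterion for morphisms of deformation functors (Schlessinger), it suffices to show two things. First, the induced map on tangent spaces $T^1_f \to T^1_X$ is surjective. Second, the map on obstruction spaces is injective, or more weakly that every obstruction to lifting $f$ that maps to zero in the obstruction space of $X$ vanishes. For maps, Ran's framework gives long exact sequences relating $T^i_f$ to $T^i_X$, $T^i_Y$ and the relative groups $\operatorname{Ext}^i_{f}(\Omega_Y, f_*\mathcal{O}_X)$. Concretely, $\mathbf{Def}(f)\to \mathbf{Def}(X)$ fits into a sequence
\[
\cdots \to T^i_{f/X} \to T^i_f \to T^i_X \to T^{i+1}_{f/X}\to\cdots,
\]
where $T^i_{f/X}$ governs deformations of $f$ with $X$ fixed, that is, deformations of $Y$ together with the map. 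Hence it suffices to prove $T^1_{f/X}\to T^1_f$ is compatible with the obstructions and that $T^2_{f/X}$, or at least the relevant part of it, receives no contribution from $\mathcal{E}$.

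Step one is to identify these relative groups. Deformations of $f$ with source fixed are controlled by $\operatorname{Ext}^i(f^*\Omega_Y\to\Omega_X, \mathcal{O}_X)$. Because $X$ and $Y$ are only smooth in codimension one, we cannot use locally free resolutions. Instead we work on the smooth loci $U\subset Y$ and $f^{-1}(U)$ and extend by reflexivity, using that $f_*\mathcal{O}_X=\mathcal{O}_Y\oplus\mathcal{E}$ is reflexive. Adjunction ($f$ finite, so $f_*$ is exact) gives
\[
\operatorname{Ext}^i_X(f^*\Omega_Y,\mathcal{O}_X)\cong \operatorname{Ext}^i_Y(\Omega_Y,f_*\mathcal{O}_X)=\operatorname{Ext}^i_Y(\Omega_Y,\mathcal{O}_Y)\oplus \operatorname{Ext}^i_Y(\Omega_Y,\mathcal{E}).
\]
The summand $\operatorname{Ext}^i(\Omega_Y,\mathcal{O}_Y)$ is exactly $T^i_Y$, the part accounted for by deforming $Y$. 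The extra piece is $\operatorname{Ext}^1_Y(\Omega_Y,\mathcal{E})$, and it is this piece that obstructs surjectivity $T^1_f\to T^1_X$.

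Step two is to show this extra piece vanishes using the local-to-global spectral sequence. It gives
\[
0\to H^1(\mathcal{H}om(\Omega_Y,\mathcal{E}))\to \operatorname{Ext}^1(\Omega_Y,\mathcal{E})\to H^0(\mathcal{E}xt^1(\Omega_Y,\mathcal{E})),
\]
so the hypotheses force $\operatorname{Ext}^1(\Omega_Y,\mathcal{E})=0$. With the $\mathcal{E}$-part gone, the diagram chase gives surjectivity $T^1_f\to T^1_X$. The same vanishing makes the map $\operatorname{Ext}^2$-level obstruction map $T^2_f\to T^2_X$ injective on the relevant part. Here "relevant part" means the image of $T^1_X$ under the connecting map, which lands in $\operatorname{Ext}^1(\Omega_Y,\mathcal{E})$ and so is zero. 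Applying the standard criterion (Fantechi–Manetti) then gives smoothness of $\pi_0$. Smoothness implies that every deformation of $X$ over an Artinian base lifts to $\mathbf{Def}(f)$, which proves the last sentence of the statement.

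The main obstacle is making step one rigorous in the singular setting: setting up the exact sequence for $\mathbf{Def}(f)$ without a cotangent complex computed from smooth data. To handle it, I would first use Schlessinger's result that, for reflexive sheaves and a singular locus of codimension $\ge 2$ after restricting to the big open set, deformations of $X$, $Y$ and $f$ are determined by their restrictions to the smooth loci. Working with $\mathcal{H}om$ and $\mathcal{E}xt$ sheaves on $Y$ directly, rather than with $X$, avoids needing depth conditions on $X$ beyond reflexivity of $f_*\mathcal{O}_X$.
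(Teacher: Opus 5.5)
Your skeleton is the same as the paper's: the Wehler/Ran smoothness criterion, then adjunction for the finite map to split off the $\mathcal{E}$-summand, then the local-to-global sequence. However, the one step with real content is missing.

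Deformations and obstructions of $Y$ and of $f$ are governed by the cotangent complex $\mathcal{L}_Y$, not by $\Omega_Y$. The correct adjunction is $\operatorname{Ext}^i_X(Lf^*\mathcal{L}_Y,\mathcal{O}_X)\cong\operatorname{Ext}^i_Y(\mathcal{L}_Y,\mathcal{O}_Y)\oplus\operatorname{Ext}^i_Y(\mathcal{L}_Y,\mathcal{E})$. In your notation, $T^2_{f/X}\cong\operatorname{Ext}^1(\mathcal{L}_Y,\mathcal{E})$. Its vanishing gives surjectivity on $T^1$ and injectivity on $T^2$ at once, so no ``relevant part'' is needed.

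You instead write $\Omega_Y$ and the underived $f^*$ from the start. Exactness of $f_*$ gives $Rf_*=f_*$, not $Lf^*=f^*$, and $f$ is non-flat in exactly the cases this theorem is designed for. More importantly, your local-to-global step only kills $\operatorname{Ext}^1(\Omega_Y,\mathcal{E})$, which is not yet the group that must vanish.

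The missing bridge is short, and it is exactly where the hypotheses that $Y$ is reduced and $\mathcal{E}$ is reflexive are used.
\begin{itemize}
\item Locally embed $Y\subset P$ with $P$ smooth. For $q\le 1$, the sheaves $\mathcal{T}^q(Y,\mathcal{E})=\mathcal{E}xt^q(\mathcal{L}_Y,\mathcal{E})$ are computed from $[I/I^2\xrightarrow{d}\Omega_P|_Y]$.
\item Since $Y$ is reduced, $d$ is injective at the generic points, so $K=\ker d$ is torsion.
\item Since $\mathcal{E}$ is reflexive, hence torsion-free, $\mathcal{H}om(K,\mathcal{E})=0$. Thus $\mathcal{H}om(I/I^2,\mathcal{E})=\mathcal{H}om(\operatorname{Im} d,\mathcal{E})$.
\item Comparing cokernels gives $\mathcal{T}^0(Y,\mathcal{E})=\mathcal{H}om(\Omega_Y,\mathcal{E})$ and $\mathcal{T}^1(Y,\mathcal{E})=\mathcal{E}xt^1(\Omega_Y,\mathcal{E})$.
\item The spectral sequence $H^p(\mathcal{T}^q(Y,\mathcal{E}))\Rightarrow\operatorname{Ext}^{p+q}(\mathcal{L}_Y,\mathcal{E})$ and the two vanishing hypotheses then give $\operatorname{Ext}^1(\mathcal{L}_Y,\mathcal{E})=0$.
\end{itemize}

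You should drop the workaround you propose for the ``main obstacle,'' for two reasons. First, the statement assumes only that $X,Y$ are reduced and regular in codimension one, with no depth condition and singularities not necessarily isolated. So there is no general principle that deformations of $X$, $Y$, $f$ are determined on the smooth loci. Second, restricting to the smooth locus $U$ would discard $H^0(\mathcal{E}xt^1(\Omega_Y,\mathcal{E}))$. That group is supported on $\operatorname{Sing}(Y)$ and is precisely one of the two groups your hypotheses control.
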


Due to the results in \cite{Gon06}, \cite{GGP16}, when $Y$ is smooth, such results can be interpreted as the non-existence of ribbons with conormal bundle $\mathcal{E}$ on $Y$ which are parameterized by $H^1(T_Y \otimes \mathcal{E})$. When $Y$ is singular, $H^1(\mathcal{H}om(\Omega_Y, \mathcal{E}))$ is the natural generalization of $H^1(T_Y \otimes \mathcal{E})$ because $\Omega_Y$ is not locally free. However, it is worth noticing that we have an extra term $H^0(\mathcal{E}xt^1(\Omega_Y, \mathcal{E})) = 0$, which appears due to the contribution of local deformations of the singularities of $Y$. The above result fits nicely in this context, since now due to the local-to-global Ext sequence, ribbons on $Y$ can arise due to the non-vanishing of both $H^1(\mathcal{H}om(\Omega_Y, \mathcal{E}))$ and $H^0(\mathcal{E}xt^1(\Omega_Y, \mathcal{E}))$.  \par

Next we find out under what additional hypothesis the general deformation of $X$ is not just a finite cover of $Y$ but actually a $\mathbb{Z}_2^s$ cover of $Y$. 

\begin{theorem}[see \Cref{every def is an abelian cover}]
  If $f: X \to Y$ is an abelian cover that satisfies conditions of \Cref{every def is an abelian cover}, then, any deformation of $X$ can be realized as an abelian cover over $Y$. Further, in this situation, $X$ is unobstructed and represents a smooth point in its moduli.
\end{theorem}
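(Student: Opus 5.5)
The plan is to go through the smoothness statement of the previous theorem (\Cref{every def is a finite cover revised reflexive}) and then refine it to the $G$-equivariant setting. Write $f_*\mathcal{O}_X=\mathcal{O}_Y\oplus\bigoplus_{\chi\neq 0}L_\chi^{-1}$, where the $L_\chi^{-1}$ are the reflexive eigensheaves. The conditions of \Cref{every def is an abelian cover} will include the vanishings $H^1(\mathcal{H}om(\Omega_Y,L_\chi^{-1}))=H^0(\mathcal{E}xt^1(\Omega_Y,L_\chi^{-1}))=0$ for all $\chi$. Given these, the previous theorem makes $\mathbf{Def}(f)\to\mathbf{Def}(X)$ smooth. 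Hence every deformation $\mathcal{X}_A$ of $X$ over an Artinian base $A$ is a finite cover $\mathcal{X}_A\to\mathcal{Y}_A$ of a deformation of $Y$. Because $Y=\mathbb{WP}^3$ is rigid (its singularities are isolated quotient singularities, and $H^1(T_Y)=0$, $H^0(\mathcal{E}xt^1(\Omega_Y,\mathcal{O}_Y))=0$ in the relevant cases), we can take $\mathcal{Y}_A=Y\times A$.

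The second step is to show that the finite cover $\mathcal{X}_A\to Y\times A$ stays Galois with group $G$, and to do this by deforming the building data directly. Locally on the smooth locus, and reflexively everywhere, the algebra $f_*\mathcal{O}_X$ is determined by Pardini's data: the multiplication maps $L_\chi^{-1}\otimes L_{\chi'}^{-1}\to L_{\chi+\chi'}^{-1}$, given by products of the sections $\sigma_g$ defining the $D_g$. We use the Alexeev--Pardini description on the singular base, working on $Y^{\circ}=Y\setminus\mathrm{Sing}(Y)$ and extending by reflexivity, which is allowed since the codimension is at least $3$. The tangent space of $\mathbf{Def}(f)$ with $Y$ fixed then splits into $G$-isotypic pieces. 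The equivariant part consists of deformations of the $\sigma_g$, namely $\bigoplus_g H^0(\mathcal{O}_Y(D_g))$ modulo automorphisms. The non-equivariant part is controlled by groups of the form $H^0(L_\chi^{-1}(D_g))$ or $H^1$ of twists of $L_\chi^{-1}$. These correspond to deformations of the algebra structure that break the grading, and also to nontrivial deformations of the sheaves $L_\chi$ themselves, which vanish since $\mathrm{Pic}$ and $\mathrm{Cl}$ of $\mathbb{WP}^3$ are discrete and $H^1$ of the rank-one reflexive sheaves vanishes. The remaining hypotheses of \Cref{every def is an abelian cover} should be precisely the vanishing $H^0(\mathcal{O}_Y(D_g)\otimes L_\chi^{-1})=0$ for $\chi(g)=0$, as in Pardini's smooth criterion. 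Under them the natural map from $G$-building-data deformations to $\mathbf{Def}(f)$ is surjective on tangent spaces. Since the building-data functor is smooth, being a space of sections of fixed reflexive sheaves and hence unobstructed, this map is smooth.

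Composing the two smooth maps makes $\mathbf{Def}_G(\text{data})\to\mathbf{Def}(X)$ smooth. So every deformation of $X$ is a $G$-cover of $Y$. Moreover, $\mathbf{Def}(X)$ is the smooth image of a smooth functor, so $X$ is unobstructed. Because $K_X$ is ample, the moduli space near $[X]$ is $\mathrm{Def}(X)/\mathrm{Aut}(X)$ with $\mathrm{Aut}(X)$ finite, so $[X]$ is a smooth point (in the orbifold sense). I expect the main obstacle to be the second step at the singular points of $Y$. There the algebra is non-flat and $L_\chi^{-1}$ is not locally free, so one must check that the local $\mathcal{E}xt^1$ contributions do not produce non-equivariant deformations. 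To handle this, I would first use the local-to-global Ext sequence together with the hypothesis $H^0(\mathcal{E}xt^1(\Omega_Y,\mathcal{E}))=0$ to reduce everything to $Y^\circ$, and then apply Pardini's argument there.
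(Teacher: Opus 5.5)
There is a genuine gap at exactly the point you flag as the main obstacle, and the tool you propose for it works in the wrong direction. Deformations of $f$ over the fixed $Y$ have tangent space $H^0(\mathcal{T}^1_{X/Y})$, because $\mathcal{T}^0_{X/Y}=0$. Pushing forward the sequence coming from $Lf^*\mathcal{L}_Y\to\mathcal{L}_X\to\mathcal{L}_{X/Y}$ gives
\[
T_Y\oplus\mathcal{H}om(\Omega_Y,\mathcal{E})\longrightarrow f_*\mathcal{T}^1_{X/Y}\longrightarrow f_*\mathcal{T}^1_X\longrightarrow \mathcal{T}^1_Y\oplus\mathcal{E}xt^1(\Omega_Y,\mathcal{E}).
\]
The part of $f_*\mathcal{T}^1_{X/Y}$ not covered by Pardini's isotypic description is its image in $f_*\mathcal{T}^1_X$, i.e.\ local deformations of the singularities of $X$. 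The vanishing of $\mathcal{T}^1_Y$ and $\mathcal{E}xt^1(\Omega_Y,\mathcal{E})$ only makes the map to $f_*\mathcal{T}^1_X$ surjective; it does not remove these contributions. So a local-to-global argument on $Y$ using $H^0(\mathcal{E}xt^1(\Omega_Y,\mathcal{E}))=0$ cannot reduce you to $Y^\circ$.

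The missing idea is Schlessinger's rigidity theorem applied to $X$, not to $Y$. The singularities of $X$ are isolated quotient singularities of dimension $\ge 3$, so $\mathcal{T}^1_X=0$, every deformation is locally trivial, and $\operatorname{Ext}^1(\Omega_X,\mathcal{O}_X)=H^1(T_X)$. Even then, \emph{extending by reflexivity} is not enough, because the sheaf you need, $f_*N_f'$, is torsion and supported on the branch divisors. The paper obtains Pardini's decomposition of $f_*N_f'$ into the pieces $i_*\mathcal{O}_{D_{H,\Psi,U}}(D_{H,\Psi,U})\otimes L_\chi^{-1}$ in two steps:
\begin{enumerate}
\item It combines reflexivity of $f_*T_X$ with $R^1j_*T_U(-\log D_U)=0$, which is \Cref{lem:log_tangent_exact_sequence_isolated_quotient} and is where condition (5) enters.
\item It uses $R^1i_*\mathcal{O}_U=H^2_y(\mathcal{O}_Y)=0$ (depth $\ge 3$) to get $0\to\mathcal{O}_Y\to\mathcal{O}_Y(D)\to i_*\mathcal{O}_{D_U}(D_U)\to 0$.
\end{enumerate}
Only then do conditions (2)--(4) give surjectivity from $\bigoplus H^0(\mathcal{O}_Y(D_{H,\Psi}))$.

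With these ingredients your first step becomes a detour, and the paper does without it. It factors the forgetful map as $\mathbf{Def}^G(X/Y)\to\mathbf{Def}'(X/Y)\to\mathbf{Def}(X)$, where $\mathbf{Def}'(X/Y)$ is the functor of locally trivial deformations over the fixed base, with tangent space $H^0(N_f')$. Tangent surjectivity of the second arrow follows directly from
\[
H^0(N_f')\to H^1(T_X)\to H^1(f^*T_Y)=H^1(T_Y)\oplus H^1(T_Y\otimes\mathcal{E})=0,
\]
which is condition (1) as stated. Unobstructedness of $\mathbf{Def}^G(X/Y)$ then propagates along both arrows, so neither \Cref{every def is a finite cover revised reflexive} nor the rigidity of $Y$ is needed.

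Your route through them can be made to work once the gap above is filled, with two further corrections. First, $H^0(\mathcal{E}xt^1(\Omega_Y,L_\chi^{-1}))=0$ is not among the hypotheses of \Cref{every def is an abelian cover}; you would have to derive it from \Cref{analog Schlessinger main} and \Cref{vanishing of second local cohomology}. Second, the theorem concerns a general $Y$ with isolated quotient singularities, not only $\mathbb{WP}^3$.
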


When $f: X \to Y$ is a non-flat abelian cover, the pushforward $f_*\mathcal{O}_X$ decomposes as $\mathcal{O}_Y \oplus \mathcal{E}$, where $\mathcal{E} = \oplus_{\chi \in G^* \setminus \{1\}}L_{\chi}^{-1}$ where each $L_{\chi}^{-1}$ is a reflexive sheaf of rank one. To apply the above theorem, one needs to show the vanishing of $H^0(\mathcal{E}xt^1(\Omega_Y, L_{\chi}^{-1}))$. Using techniques introduced in \cite{schlessinger1971} we show that

\begin{proposition}[see \Cref{analog Schlessinger main}]\label{analog Schlessinger main intro}
    Let $p : Z \to X$ be a morphism of geometric local schemes with closed points $z$ and $x$ respectively. Suppose that $Z$ is smooth and that $X = Z/G$, where $G$ is a finite group of automorphisms of $Z$, having $z$ as the only fixed point. Let $L$ be a reflexive sheaf of rank one on $X$ which is locally free outside finitely many closed points. If $\operatorname{dim}(Z) \geq 3$ and $H_z^2(Z, T_Z \otimes p^*L) = 0$, then $H^0(\mathcal{E}xt^1(\Omega_X, L)) = 0$.
\end{proposition}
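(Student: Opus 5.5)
We follow Schlessinger's strategy for rigidity of quotient singularities, with $\mathcal{O}_X$ replaced by the rank-one reflexive sheaf $L$. Write $U = X \setminus \{x\}$ and $V = Z \setminus \{z\}$, so that $p|_V : V \to U$ is an étale $G$-quotient. The first goal is to interpret $\operatorname{Ext}^1$ on $X$ through $H^1$ on $U$.

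\textbf{Step 1: reduction to a local cohomology group on $X$.} Since $\mathcal{E}xt^1(\Omega_X, L)$ is supported at $x$, it equals $\operatorname{Ext}^1_X(\Omega_X, L)$ on the local scheme. We use the local-to-global comparison for the restriction to $U$. Because $L$ is reflexive and $\operatorname{depth}_x \geq 2$ (indeed $L = (p_*(p^*L)^{\vee\vee})^G$ is a direct summand of a module of depth $\dim Z \geq 3$), we have $\mathcal{H}om(\Omega_X, L) = j_*\mathcal{H}om(\Omega_U, L|_U)$ and $H^1_x(\mathcal{H}om(\Omega_X,L)) = 0$. Schlessinger's argument then gives an injection
\[
\operatorname{Ext}^1_X(\Omega_X, L) \hookrightarrow \operatorname{Ext}^1_U(\Omega_U, L|_U) = H^1(U, T_U \otimes L|_U),
\]
where the equality holds because $U$ is smooth. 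More precisely, $X$ is normal, so an extension $0 \to L \to M \to \Omega_X \to 0$ that splits on $U$ splits globally: the splitting $\Omega_U \to M|_U$ extends by depth of $L$ and the reflexive-hull argument. This is where $\dim Z \geq 3$ and the depth of $L$ are used, and it is the delicate point. We will verify it via the five-term sequence $0 \to H^1_x(\mathcal{H}om) \to \operatorname{Ext}^1_X \to \operatorname{Ext}^1_U$ of the local-to-global spectral sequence with supports, which requires exactly the vanishing of $\mathcal{H}om$-local cohomology in degrees $\le 1$, plus $\mathcal{E}xt^0$.

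\textbf{Step 2: pass to $Z$.} Since $V \to U$ is étale Galois with group $G$, we have $T_V = p^*T_U$, and taking invariants is exact in characteristic zero. Hence
\[
H^1(U, T_U \otimes L) = H^1(V, T_V \otimes p^*L)^G.
\]
Here $p^*L|_V$ is locally free, and we take $p^*L$ to mean the reflexive pullback. On the smooth local scheme $Z$, the sequence
\[
H^1(Z, T_Z\otimes p^*L) \to H^1(V, T_Z\otimes p^*L) \to H^2_z(Z, T_Z\otimes p^*L)
\]
applies, and the first term vanishes because $Z$ is affine. Thus $H^1(V, \cdot) \cong H^2_z(Z, T_Z\otimes p^*L)$, which is zero by hypothesis. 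Combining this with Step 1 gives $\mathcal{E}xt^1(\Omega_X, L) = 0$.

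\textbf{Main obstacle.} The main obstacle is the injectivity in Step 1, namely controlling $\operatorname{Ext}^1_X$ via $U$ when $\Omega_X$ is not locally free. We handle it as Schlessinger does: take a presentation $0 \to K \to F \to \Omega_X \to 0$ with $F$ free, which gives $\operatorname{Ext}^1(\Omega_X, L) = \operatorname{coker}(\operatorname{Hom}(F,L) \to \operatorname{Hom}(K,L))$. Then compare with $U$ using $\operatorname{Hom}(K,L) = \operatorname{Hom}(K|_U, L|_U)$, which holds because $L$ is reflexive, and $H^1(U, L) = H^2_x(L) = 0$, which holds by depth $\ge 3$ of $L$ inherited from $p_*p^*L$ on the smooth $Z$ of dimension $\geq 3$. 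We also need that $p^*L$ is locally free on $Z$. This holds since $Z$ is a smooth local scheme, so the reflexive rank-one sheaf $p^*L$ is invertible, hence free, and its depth at $z$ is $\dim Z \ge 3$.
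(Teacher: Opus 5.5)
Your proof is essentially the paper's. The free presentation $0\to K\to F\to\Omega_X\to 0$ in your last paragraph is exactly the paper's lemma: there $F=i^*\Omega_Y$ for an embedding of $X$ in a smooth $Y$, which is free on the local scheme, and $K=N_0$. It gives $H^0(\mathcal{E}xt^1(\Omega_X,L))\hookrightarrow H^1(U,T_U\otimes L)$. Your Step~2 (a direct summand via $G$-invariants, then $H^1(V,-)\hookrightarrow H^2_z$ on the affine $Z$) is the same as the paper's. Your extra observation that $L$ has depth $\ge 3$, as a $G$-summand of $p_*(p^*L)^{\vee\vee}$, upgrades the injection to an isomorphism; this is correct but not needed.

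One justification in Step~1 is wrong and should be dropped. The kernel of $\operatorname{Ext}^1_X(\Omega_X,L)\to\operatorname{Ext}^1_U(\Omega_U,L|_U)$ is the image of $\operatorname{Ext}^1_x(\Omega_X,L)$. The spectral sequence computing $\operatorname{Ext}^1_x(\Omega_X,L)$ contains, besides $H^1_x(\mathcal{H}om(\Omega_X,L))$, the term $H^0_x(\mathcal{E}xt^1(\Omega_X,L))=H^0(\mathcal{E}xt^1(\Omega_X,L))$. That is the very group you want to kill. So no sequence of the form $0\to H^1_x(\mathcal{H}om)\to\operatorname{Ext}^1_X\to\operatorname{Ext}^1_U$ follows from the depth of $\mathcal{H}om(\Omega_X,L)$ alone. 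What does kill $\operatorname{Ext}^1_x(\Omega_X,L)$ is your presentation: the group sits between $\Gamma_x(\mathcal{H}om(K,L))=0$ and $H^1_x(\mathcal{H}om(F,L))=0$.

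Likewise, your claim that an extension split on $U$ splits on $X$ is right, but the argument should extend the retraction $M|_U\to L|_U$, which is possible because $L=j_*(L|_U)$. Extending the section $\Omega_U\to M|_U$ would instead require depth $\ge 2$ of $M$, which you do not control.

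Finally, you need not reinterpret $p^*L$ as the reflexive pullback. $H^1(V,-)$ only sees $V$, so the literal hypothesis suffices. Under your reinterpretation $T_Z\otimes(p^*L)^{\vee\vee}$ is free and the hypothesis becomes automatic, consistent with the paper's later proposition on vanishing of local cohomology.
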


Next, we apply the above results to $\mathbb{Z}_2^s$-covers of $\mathbb{WP}^3$. By the results of \cite{pardini1991abelian} and \cite{AP12} (see \Cref{lem:AP_extension}), such covers are determined by a choice of a set of Weil-divisors $D_g \in |\mathcal{O}_{\mathbb{P}}(d_g)|$ with $g \in \mathbb{Z}_2^s \setminus \{0\}$ satisfying some fundamental relations (see \Cref{eq:fundamental_relations}) that depend only on $\operatorname{deg}(D_g) = d_g$. We show that under some combinatorial hypothesis in the form of linear inequalities on the space of admissible multidegrees $\{d_g\}_{g \in \mathbb{Z}_2^s \setminus \{0\}} \in \mathbb{N}_{\geq 0}$ (i.e, the set of multidegrees that satisfy the fundamental relations), every $X$ that arises as a $\mathbb{Z}_2^s$-cover over a weighted projective space with isolated singularities, branched along generic quasismooth divisors, retains the structure of a $\mathbb{Z}_2^s$-cover under deformations, i.e., the abelian covers are general in their moduli.

\begin{corollary}[see \Cref{every def is an finite cover over a WPS reflexive}, \Cref{every def is an abelian cover over a WPS} ]
      Let $f: X \to Y$ be a $\mathbb{Z}_2^s$ cover where $Y = \mathbb{P}(a_0,...,a_r)$, with $r \geq 3$, $\operatorname{gcd}(a_i, a_j) = 1$ for all $i,j$ and $X$ has at worst isolated quotient singularities. Let $f_*\mathcal{O}_X = \mathcal{O}_Y \oplus \mathcal{E}$, where $\mathcal{E} = \oplus_{\chi \in G^* \setminus \{1\}}L_{\chi}^{-1}$ with each $L_{\chi}$ 
\textcolor{black}{is a reflexive sheaf of rank one} which is locally free outside a finite set of points. Then the natural forgetful map of functors $\bf Def(f) \to \bf Def(X)$ is smooth. Consequently any deformation of $X$ can be realized as a finite cover over a deformation of $Y$. Further, suppose that 
    \begin{enumerate}
        \item the branch divisors $D_g \in | \mathcal{O}_{\mathbb{P}}(d_g)|$ associated to the data of the abelian cover be quasismooth and generic (see \Cref{def:divisor_standard set-up}).
        
        \item For each pair $(g, \chi)$ such that $\chi \cdot g = 0$, 
        that is $\chi(g) =  (-1)^{\chi \cdot g} = 1$
        and $\chi \neq \operatorname{Id} \in G^*$, 
        \begin{align*}
        d_g < \displaystyle\frac{1}{2} \Sigma_{\chi \cdot g' = 1} d_{g'}
        =
        \displaystyle\frac{1}{2} 
        \Sigma_{\chi(g') = -1} d_{g'},
        \end{align*}

        \item $\Sigma_g d_g > 2 \Sigma_{j=0}^r a_j$
    \end{enumerate}

Then any deformation of $X$ can be realized as an abelian cover over $Y$. Further, in this situation, $X$ is unobstructed and represents a smooth point in its moduli.
\end{corollary}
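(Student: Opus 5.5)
The proof has two parts. First we show that $\mathbf{Def}(f)\to\mathbf{Def}(X)$ is smooth; then, under hypotheses (1)–(3), we upgrade this to deformations that remain abelian covers.

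\emph{First part.} By the generalization of Wehler's criterion (\Cref{every def is a finite cover revised reflexive}), it suffices to show, for each nontrivial character $\chi$,
\[
H^1(\mathcal{H}om(\Omega_Y, L_\chi^{-1}))=0 \quad\text{and}\quad H^0(\mathcal{E}xt^1(\Omega_Y,L_\chi^{-1}))=0 .
\]
Both $X$ and $Y$ are normal, hence nonsingular in codimension one.

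\emph{The local term.} The sheaf $\mathcal{E}xt^1(\Omega_Y, L_\chi^{-1})$ is supported at the finitely many singular points of $Y=\mathbb{P}(a_0,\dots,a_r)$. Because the weights are pairwise coprime, these are exactly the coordinate vertices. Near a vertex, $Y$ is locally $\mathbb{A}^r/\mu_{a_i}$, and the origin is the only fixed point. We apply \Cref{analog Schlessinger main} with $Z=\mathbb{A}^r$ and $r\geq 3$. Since $p^*L_\chi^{-1}$ is trivial on the smooth cover $Z$, we get
\[
T_Z\otimes p^*L \cong \mathcal{O}_Z^{r}.
\]
Then $H^2_z(Z,\mathcal{O}_Z)=0$ because $\operatorname{depth}\geq 3$. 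So the local term vanishes.

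\emph{The global term.} On the smooth locus, $\mathcal{H}om(\Omega_Y,L_\chi^{-1})$ is reflexive and equals $T_Y\otimes L_\chi^{-1}$. We compute its $H^1$ using the weighted Euler sequence
\[
0\to\mathcal{O}_Y\to\oplus_j\mathcal{O}_Y(a_j)\to T_Y\to 0,
\]
which is valid in codimension $\geq 2$ and persists after twisting by the reflexive rank one sheaf $L_\chi^{-1}=\mathcal{O}_Y(-l_\chi)$, once everything is read through reflexive hulls. The claim then reduces to
\[
H^1(\mathcal{O}_Y(a_j-l_\chi))=0 \quad\text{and}\quad H^2(\mathcal{O}_Y(-l_\chi))=0 .
\]
Both hold because intermediate cohomology of the sheaves $\mathcal{O}_Y(k)$ on a weighted projective space vanishes for $r\geq 3$. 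One point needs care: we must confirm that the twisted Euler sequence stays exact on global sections, i.e.\ that no $\mathcal{T}or$ or $\mathcal{E}xt$ correction appears at the vertices. I would do this by working on the affine cone, where the sequence is the graded Koszul-type sequence.

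\emph{Second part.} Here we invoke \Cref{every def is an abelian cover}. Its extra conditions should reduce to two requirements: $H^1(T_Y)=0$, and surjectivity or vanishing conditions on
\[
H^0(\mathcal{O}_Y(D_g)\otimes L_\chi^{-1}) \quad\text{and}\quad H^1(\mathcal{O}_Y(D_g)\otimes L_\chi^{-1}),
\]
taken over pairs with $\chi(g)=1$, which control deformations of the branch data. By the fundamental relations,
\[
2\,l_\chi=\sum_{\chi(g')=-1} d_{g'} .
\]
Hypothesis (2) then gives $d_g-l_\chi<0$, so $H^0(\mathcal{O}_Y(d_g-l_\chi))=0$ and no nontrivial deformations of the branch divisors escape the abelian structure. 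The $H^1$ terms vanish automatically on $\mathbb{WP}^r$. Quasismoothness and genericity, hypothesis (1), ensure that the local analysis of the cover near each $D_g$ matches the hypotheses of that theorem.

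Hypothesis (3) gives
\[
K_X=f^*\Bigl(K_Y+\tfrac12\sum_g D_g\Bigr)
\]
with $K_Y+\tfrac12\sum_g D_g$ of positive degree, so $K_X$ is ample. This is what is needed to place $X$ in the moduli space and to ensure $H^0(T_X)=0$. Unobstructedness follows because $\mathbf{Def}(X)$ receives a smooth map from the deformations of the branch data, which is smooth since $H^1$ of the relevant line bundles vanishes.

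I expect the main obstacle to be the verification of the local hypothesis at the vertices. One must also check that the reflexive $L_\chi$ really pull back to trivial sheaves on the local covers, given that $X$ has isolated quotient singularities.
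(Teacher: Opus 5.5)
Your outline follows the paper's proof. The first half uses \Cref{every def is a finite cover revised reflexive} together with \Cref{analog Schlessinger main} at the vertices. The second half applies \Cref{every def is an abelian cover}: hypothesis (2) becomes $H^0(\mathcal{O}_Y(d_g-l_\chi))=0$ via $2l_\chi=\sum_{\chi(g')=-1}d_{g'}$, and hypothesis (3) gives ampleness of $K_X$ and hence $H^0(T_X)=0$.

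\textbf{The gap is in the local term.} You claim $p^*L_\chi^{-1}$ is trivial on $Z$, so that $T_Z\otimes p^*L_\chi^{-1}\cong\mathcal{O}_Z^{\oplus r}$. This is false exactly when $L_\chi^{-1}$ is not invertible at the vertex, i.e.\ in the non-flat situation this corollary is written for. Write $S$ for the coordinate ring of $Z$. Near a vertex $P_i$, $L_\chi^{-1}\cong\mathcal{O}_Y(-l_\chi)$ corresponds to the $S^{\mu_{a_i}}$-module $S_k\subset S$ of semi-invariants of weight $-l_\chi$ modulo $a_i$. If $a_i\nmid l_\chi$, every element of $S_k$ vanishes at the origin. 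The multiplication map $p^*L_\chi^{-1}=S\otimes_{S^{\mu_{a_i}}}S_k\to S$ is an isomorphism away from $z$. Its image is therefore a proper $\mathfrak{m}_z$-primary ideal. If $p^*L_\chi^{-1}$ were invertible, this map would be multiplication by a function that is a unit off $z$, hence a unit, and the map would be surjective. The pullback can also acquire torsion. Only the reflexive hull $(p^*L_\chi^{-1})^{\vee\vee}$ is trivial, so the check you defer to the end would fail rather than succeed.

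\textbf{How to repair it.} The conclusion $H^2_z(Z,T_Z\otimes p^*L_\chi^{-1})=0$ is still true, and the paper obtains it through \Cref{vanishing of second local cohomology}. Consider the torsion of $p^*L_\chi^{-1}$ and the cokernel of $p^*L_\chi^{-1}/\mathrm{tors}\hookrightarrow(p^*L_\chi^{-1})^{\vee\vee}\cong\mathcal{O}_Z$. Both are supported at $z$, so they have finite length and no local cohomology in positive degrees. The long exact sequences then give $H^2_z(T_Z\otimes p^*L_\chi^{-1})\cong H^2_z(\mathcal{O}_Z)^{\oplus r}=0$, since $\dim Z\geq 3$. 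Alternatively, the proof of \Cref{analog Schlessinger main} only uses $H^1(V,T_Z\otimes p^*L)$ on $V=Z\setminus\{z\}$. There $p^*L$ agrees with its trivial reflexive hull, so you may run the local cohomology sequence on $\mathcal{O}_Z^{\oplus r}$ instead.

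\textbf{Global term.} Here you take a different, slightly cleaner route than the paper. You use the reflexive twisted Euler sequence $0\to\mathcal{O}_Y(k)\to\bigoplus_j\mathcal{O}_Y(k+a_j)\to\mathcal{H}om(\Omega_Y,\mathcal{O}_Y(k))\to 0$. The paper instead tensors the ordinary Euler sequence with $L_\chi^{-1}$ and controls the finite-length kernel of $L_\chi^{-1}\to\bigoplus_j\mathcal{O}_Y(a_j)\otimes L_\chi^{-1}$.

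The exactness at the vertices that worries you is automatic. The sheafified graded sequence is exact, and the $\mathcal{O}_Y(k)$ are Cohen--Macaulay of depth $r\geq 3$ at the vertices. So the cokernel has depth $\geq 2$ there and is reflexive. It therefore coincides with $\mathcal{H}om(\Omega_Y,\mathcal{O}_Y(k))$, since the two agree on the smooth locus. This gives directly the reflexive vanishing that \Cref{every def is a finite cover revised reflexive} needs. It also avoids comparing non-reflexive products such as $\mathcal{O}_Y(a_j)\otimes L_\chi^{-1}$ with $\mathcal{O}_Y(a_j-l_\chi)$.

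\textbf{Second half.} You should also record the remaining hypotheses of \Cref{every def is an abelian cover}:
\begin{itemize}
\item $H^1(\mathcal{O}_Y)=H^1(L_\chi^{-1})=0$, by intermediate cohomology vanishing on $\mathbb{P}(a_0,\dots,a_r)$;
\item $H^1(T_Y\otimes L_\chi^{-1})=0$, from your first-half computation read reflexively;
\item an open set $U$ with $\operatorname{codim}(Y\setminus U)\geq 3$ and both $U$ and $f^{-1}(U)$ smooth, which holds because $X$ and $Y$ have isolated singularities and $r\geq 3$.
\end{itemize}
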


As a first consequence of the above theorem, we show in \Cref{new components}, how to construct new moduli components of stable threefolds such that the general member is a $\mathbb{Z}_2^s$-cover of a weighted projective threefold.

\begin{example}[see \Cref{new components}]
Let $M > 2$ be an even positive integer and  let $f: X \to \mathbb{P}(1,1,1,M)$ be a $\mathbb{Z}_2^4$-cover with branch divisors $D_{g_0} \in |\mathcal{O}_{\mathbb{P}}(2)|$ and $D_{g} \in |\mathcal{O}_{\mathbb{P}}(M)|$ for $g \neq g_0, g \neq 0$. Then one can check that 
\[
f_*(\mathcal{O}_X) = \bigoplus_{\{\chi | \chi(g_0) = -1\}} \mathcal{O}_{\mathbb{P}}(-(1+\frac{7}{2}M)) \bigoplus_{\{\chi | \chi(g_0) = 1\}} \mathcal{O}_{\mathbb{P}}(-4M).
\]
Since the pair $(\mathbb{P}(1,1,1,M), D_{g_0})$ is a stable pair, we have that $X$ is stable by \Cref{cover is stable}. Further, since $(1+\frac{7}{2}M)$ is not divisible by $M$, $f$ is a non-flat cover. This example satisfies all the conditions of \Cref{every def is an abelian cover over a WPS} and hence defines a new component of the moduli of stable threefolds.
\end{example}

As a second consequence, we give for every $s \geq 4$, at least two configurations of hyperplane arrangements in a weighted projective threefold such that there exist abelian covers with ample canonical bundle branched along such arrangements which are general in their moduli components.
\begin{corollary}\label{hyperplane arrangements intro}
Let $Y = \mathbb{P}(a_0,...,a_r)$ with $\textrm{gcd}(a_i,a_j) = 1$ for $0 \leq i < j \leq r$ and $r \geq 3$. Now consider $G = \mathbb{Z}_2^s$ covers branched along either one of the following configurations of hyperplane arrangements.
    \begin{enumerate}
        \item $s \geq 3$, $d_g = 1$ for $g \in G-\{0\}$ and $\Sigma_{j = 0}^r a_j < \displaystyle\frac{2^s-1}{2}$.
        \item $s \geq 4$, fix a linear subspace $L \subset G$, with $\operatorname{dim}(L) \geq 2$, where we look at $G$ as a $\mathbb{Z}_2$ vector space. Then $d_g = 1$ if and only if $g \in G\setminus L$. In this case assume
        $\Sigma_{j = 0}^r a_j < \displaystyle\frac{2^s-2^{\operatorname{dim}(L)}}{2}$ 
    \end{enumerate}
Then for a generic choice of branch divisors $\{D_g \in |\mathcal{O}_Y(d_g)|\}_{g \in G-\{0\}}$, such that $D_g$ is quasismooth for each $g \neq 0$, there exists a $G = \mathbb{Z}_2^s$ cover $f: X \to Y$, with $K_X$ ample, $X$ unobstructed and all deformations of $X$ are once again $\mathbb{Z}_2^s$ covers of $Y$ branched along the same configuration of hyperplane arrangements.
\end{corollary}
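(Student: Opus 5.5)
The plan is to obtain the statement as a direct application of the preceding corollary (\Cref{every def is an finite cover over a WPS reflexive}, \Cref{every def is an abelian cover over a WPS}). That means checking, for each of the two configurations, three things: that the degrees $\{d_g\}$ are admissible, i.e. satisfy the fundamental relations \eqref{eq:fundamental_relations}; condition (2); and condition (3). After that we still need $K_X$ ample and the singularities of $X$ to be isolated quotient singularities. Genericity and quasismoothness of the hyperplanes $D_g\in|\mathcal{O}_Y(1)|$ are part of the hypothesis. Since $\gcd(a_i,a_j)=1$, the singularities of $Y$ are isolated cyclic quotient points. A generic quasismooth configuration of degree-one divisors meets them transversally in the orbifold sense, so $X$ has at worst isolated quotient singularities and each $L_\chi$ is locally free away from finitely many points.

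\textbf{Admissibility.} The fundamental relations require that for every character $\chi$ and every pair $\chi,\chi'$ the quantity $\sum_{\chi(g)=-1} d_g$ be even, or more precisely that the $2L_\chi\equiv\sum_{\chi(g)=-1}D_g$ type relations be solvable in $\operatorname{Cl}(Y)\cong\mathbb{Z}$.
\begin{itemize}
\item In case (1), for $\chi\neq 1$ exactly $2^{s-1}$ elements satisfy $\chi(g)=-1$. So $\sum_{\chi(g)=-1}d_g=2^{s-1}$, which is even for $s\ge2$, and the $L_\chi$ exist with $\deg L_\chi=2^{s-2}$.
\item In case (2), the count is $\#\{g\notin L:\chi(g)=-1\}$. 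This is $2^{s-1}$ if $\chi|_L$ is trivial, and $2^{s-1}-2^{\dim L-1}$ otherwise. Both are even because $\dim L\ge2$ and $s\ge4$.
\end{itemize}
Then I would invoke \Cref{lem:AP_extension} to get the cover.

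\textbf{Condition (2).} We need $d_g<\tfrac12\sum_{\chi(g')=-1}d_{g'}$ whenever $\chi(g)=1$ and $\chi\neq 1$. The left side is at most $1$. In case (1) the right side is $2^{s-2}\geq 2>1$ for $s\ge3$. In case (2) the right side is $\tfrac12(2^{s-1}-2^{\dim L-1})$ or $2^{s-2}$. The first is at least $2^{s-2}-2^{\dim L-2}$, which is $>1$ provided $\dim L\le s-1$; when $L=G$ the configuration is empty, which is excluded by the numerical hypothesis. This case distinction is the one place requiring care. I expect it to be the main obstacle, together with the boundary cases $\dim L=s-1$, where the bound is exactly $2^{s-2}-2^{s-3}=2^{s-3}\ge 2$ for $s\ge4$.

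\textbf{Condition (3) and ampleness.} Condition (3) reads $\sum_g d_g>2\sum_j a_j$. In case (1) this is $2^s-1>2\sum a_j$, and in case (2) it is $2^s-2^{\dim L}>2\sum a_j$; these are exactly the stated hypotheses. For ampleness, $K_X=f^*\bigl(K_Y+\tfrac12\sum_g D_g\bigr)$ as $\mathbb{Q}$-divisors, which has degree $-\sum a_j+\tfrac12\sum d_g>0$ by the same inequality. Pullback of an ample $\mathbb{Q}$-Cartier class under a finite map is ample. Finally, the corollary gives unobstructedness and that every deformation of $X$ is a $\mathbb{Z}_2^s$-cover of $Y$ branched along divisors of the same degrees. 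Since these divisors have degree one, they are again hyperplanes with the same labeling by $G$, i.e. the same configuration type.
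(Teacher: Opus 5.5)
Your verification of the numerical conditions is correct and is essentially the paper's proof. This covers the fundamental relations, condition~(2), condition~(3) and the ampleness of $K_X$. The paper encodes the relations as $\sum_{a\in A}a=0$ (\Cref{evenness condition}) and condition~(2) as $|A\cap H|\ge 4$ for affine hyperplanes $H\not\ni 0$ (\Cref{intersection with affine hyperplanes}). It then computes the same count $2^{s-1}-|L\cap H|\in\{2^{s-1},\,2^{s-1}-2^{\dim L-1}\}$ that you use, and you are more explicit than the paper about $L=G$ and condition~(3). The genuine gap is a step you treat as routine. You claim that a generic quasismooth configuration of degree-one divisors meets $\operatorname{Sing}(Y)$ ``transversally in the orbifold sense''. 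From this you conclude that hypothesis~(1) of \Cref{every def is an abelian cover over a WPS} holds (genericity in the sense of \Cref{def:divisor_standard set-up}) and that $X$ has isolated quotient singularities.

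This fails as soon as some $a_i\ge 2$. $H^0(\mathcal{O}_Y(1))$ is spanned by the variables of weight one, so every member of $|\mathcal{O}_Y(1)|$ contains the vertex $P_i$. If two weights exceed $1$, every member contains the whole edge $\mathbb{P}(a_i,a_j)$. If at most one weight equals $1$, the system has at most one member and the configuration does not exist at all. So all $2^s-1$ (resp.\ $2^s-2^{\dim L}$) branch divisors pass through $P_i$. Items~(3) and~(4) of \Cref{def:divisor_standard set-up} then fail for every choice; this is a base-point phenomenon, so ``generic'' cannot help, and \Cref{singularities of the cover} does not apply.

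A concrete case is $Y=\mathbb{P}(1,1,1,2)$ with $s=4$ in case~(1), which is allowed since $5<15/2$. The germ of $X$ over $(0:0:0:1)$ is the quotient, by $\mu_2$ acting freely in codimension one, of the $\mathbb{Z}_2^4$-cover of $\mathbb{A}^3$ branched along $15$ general planes through the origin. That cover is the affine cone over the smooth $\mathbb{Z}_2^4$-cover $S\to\mathbb{P}^2$ branched along $15$ general lines. Here $K_S$ is ample, since $\tfrac{15}{2}>3$. Such a cone is not log canonical, so $X$ has no quotient singularity there and is not even stable. In particular, the rigidity $H^0(\mathcal{T}^1_X)=0$ used in \Cref{every def is an abelian cover} is unavailable.

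As written, your argument therefore establishes the statement only for $Y=\mathbb{P}^3$. For $\mathbb{P}^r$ with $r\ge 4$ the loci $D_p\cap D_q\cap D_{p+q}$ are positive dimensional, so the quotient singularities of $X$ are not isolated and the cited corollary does not apply verbatim either. The paper's proof checks only the evenness condition and the inequality and never addresses these hypotheses. Closing the gap requires either restricting the statement (e.g.\ to $Y=\mathbb{P}^3$) or a separate deformation analysis of these non-quotient singularities.
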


\subsection{
Flat $\mathbb{Z}_2^s$-covers and pluricanonical maps}
The most fundamental map naturally associated with a variety of general type is its $m$-canonical map. So naturally, the $\mathbb{Z}_2^s$-covers are all the more interesting if they arise as one of the $m$-canonical maps of the threefold. If $f: X \to Y$ is a $\mathbb{Z}_2^s$-cover then we say that $f$ is an $m$-canonical cover if $f$ fits into a diagram as follows for some $m \geq 1$.
\[
\begin{tikzcd}
    X \arrow[d, "f"] \arrow[dr, "\varphi_{|mK_X|}"] & \\
    Y \arrow[r, hook] & \mathbb{P}^N
\end{tikzcd}
\]
When $m = 1$, $Y = \mathbb{P}^3$ and $p_g(X) = 4$, the classification of such covers was obtained in \cite{dugao16}. 
In our next result, whose proof is given in  Section \ref{sec:pluricanonical}, we give 
the complete classification of all flat $m$-canonical maps that are $\mathbb{Z}_2^s$-covers of a weighted projective threefold. 

To obtain this explicit classification, we introduce some techniques from the Fourier transform for functions over finite groups (in our case the group is $\mathbb{Z}_2^s$). In \Cref{prop:pluricanonical_maps_of_threefolds}, we start with a criterion for a $\mathbb{Z}_2^s$-cover to be pluricanonical which is essentially a set of linear inequalities involving the degree of the branch divisors. Then, through a combinatorial lemma (see \Cref{lemma:ineq_comb}), we first establish upper bounds to the sum of weights with respect to the lcm of the weights.
This allows us to show upper bounds on $s$ where $2^s$ is the degree of the cover, on the positive integer $m$ for which it is the $m$-canonical map, and on the sum of weights. (see \Cref{lemma:forbidden_flat_admissible}, \Cref{m atleast 3}, \Cref{m atmost 2}). \par

From this point, the problem boils down to the following: given a set $\{m_i \in \mathbb{Z} \, | \, m_i \geq 0 \}$, does there exist a $\mathbb{Z}_2^s$-cover such that $m_i=\#\{g \neq 0:d_g=i\}$ ? And if the answer is yes, then how many such non-isomorphic covers exist ? \par

By \cite{pardini1991abelian}, this turns out to be a problem of solving a system of linear equations in the degrees $d_g$ of the branch divisors and the degrees $\ell_{\chi}$ of the line bundles which are the direct summands of $f_*\mathcal{O}_X$. But this system gets computationally cumbersome for large values of $s$. Instead we employ some techniques from Fourier transform of finite groups. We mention the highlights in the following points:

\begin{enumerate}
\item By considering the degree of the divisors $D_g$, we have a set theoretic map $d: G \to \mathbb{Z}_{\geq 0}$, $d(g) = \deg(D_g)$, and its unnormalized Fourier transform
$\widehat{d}(\chi) = \sum_g \chi(g) d(g)$.
 \item The first observation is that the function $\ell: \G^* \to \mathbb{Z}_{\geq 0}$ giving the degrees of the line bundles is related to the unnormalized Fourier transform $\widehat{d}: G^* \to \mathbb{Z}$ of the function $d: G \to \mathbb{Z}_{\geq 0}$ by 
 \[
\widehat{d}(0) - \widehat{d}(\chi) = 4 \ell(\chi),
\]
for $\chi \neq 0$ (see \Cref{lemma: Fourier}).

 \item The function $d$ can be recovered from $\widehat{d}$ and hence from $\ell$ (see \Cref{lemma: Inverse Fourier Transform}).

 \item Then we have the first, second and the cubic moment lemmas (see \Cref{lem:first-moment}, \Cref{lemma: Parseval}, \Cref{Plancherel for the unnormalized Fourier transform} and \Cref{lem:fourier-convolution-cubic}), which relate the sum of the first, second and third powers of $\widehat{d}$ and hence those of $\ell$ to the sum of the corresponding powers of $d$. 

 \item Now given a $d$-distribution $\{m_i\}$, we use the first moment lemma to find out the possible $\ell$-distributions $\{n_i\}$, where $n_i=\#\{\chi\neq 0:\ell(\chi)=i\}$. 

 \item Now the second and third moment lemmas provide natural constraints that rule out most of the possible $\ell$-distributions. 

 \item The remaining few $\ell$-distributions need to be analyzed separately. For this we use the fact that $d$ can be recovered from $\ell$ and further that $d$ is a positive integer-valued function. This adds more contraints and at the same time allows us to construct the functions $d$ whenever they are feasible (see for example \Cref{m = 2; k = 1; D = 9 possibility}, \Cref{s=4; m = 1; k = 2; D = 12 possibility}).
 
\end{enumerate}

It is worth noting that when either one of the weights is at least $2$ or when $p_g \geq 4$, the highest value of $m$ is $m = 4$ which is achieved by a $\mathbb{Z}_2^2$-cover over $\mathbb{P}^3$, while highest value of $s$ is $s = 5$ achieved by a canonical cover of $\mathbb{P}(1,1,2,2)$.

\begin{theorem}\label{thm:flat_pluricanonical_covers}
Let $X \rightarrow \mathbb{P}(a_0, a_1, a_2, a_3)$ be a flat $m$-canonical $\mathbb{Z}_2^s$-cover such that $L = \operatorname{lcm}(a_0, a_1, a_2, a_3)$. Then up to $\operatorname{GL}_s(\mathbb{F}_2)$ action,  $X$ is isomorphic to a $\mathbb{Z}_2^s$-cover of $\mathbb{P}(a_0,a_1,a_2,a_3)$ branched along the divisors $\{D_g \in |\mathcal{O}_{\mathbb{P}}(d_\mathbf{g})|\}$ with ${g \in \mathbb{Z}_2^s \setminus \{0\}}$, $m$-th plurigenus $p_m(X) = h^0(kL)$, where $\{d_g\}$, $g \in \mathbb{Z}_2^s \setminus \{0\}$ and $k$ as given in Definition \ref{def:admissible_weights}. In particular:
\begin{itemize}
 \item[(i)] For $s=1$ and $m=1$, there is a finite number of weighted projective threefolds $\mathbb{P}(a_0, a_1, a_2,a_3)$ that appear as a base, and for each of them an unbounded collection of positive integers that appear as degrees of the branch locus of the cover. 
\item[(ii)] For each of the following $s$ and $m$:
\begin{align*}
s=1, m\geq 2 && s = 2, m=1 &&    s = 2, m=2 && s = 2, m=3 \\
s = 3, m=1 && s=3, m=2 &&s= 4, m=1 &&s=5, m=1
\end{align*}
there is a finite number of weighted projective threefolds $\mathbb{P}(a_0, a_1, a_2,a_3)$ that appear as a base, and for each of them, a finite collection of tuples $\{d_g\}$, $g \in \mathbb{Z}_2^s \setminus \{0\}$ of positive integers which appear as the multidegree of the branch divisors 
$\{D_g\} \in |\mathcal{O}_{\mathbb{P}}(d_g)|$ with $g \in \mathbb{Z}_2^s \setminus \{0\} $. 
\item[(iii)] There are no such covers for any other values of $s$ and $m$.
\end{itemize}
\end{theorem}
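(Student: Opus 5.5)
The classification will follow the outline sketched before the statement. First, the condition that $f$ be flat and $m$-canonical becomes a numerical condition. For a flat $\mathbb{Z}_2^s$-cover, Pardini's formulas (via \Cref{lem:AP_extension}) give $f_*\mathcal{O}_X=\mathcal{O}_Y\oplus\bigoplus_{\chi\neq 1}\mathcal{O}_{\mathbb{P}}(-\ell_\chi)$. Flatness means each $\ell_\chi$ is a multiple of $L=\operatorname{lcm}(a_i)$, and $2K_X=f^*(2K_Y+\sum_g D_g)$, so $mK_X=f^*(\mathcal{O}(kL))$ for an integer $k$. Then
\[
H^0(mK_X)=\bigoplus_\chi H^0(\mathcal{O}(kL-\ell_\chi)).
\]
The cover is $m$-canonical precisely when every summand with $\chi\neq 1$ vanishes, that is $\ell_\chi>kL$ in the weighted sense, while $|kL|$ on $Y$ is very ample and embeds $Y$. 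This is the content of \Cref{prop:pluricanonical_maps_of_threefolds}, which I take as the starting system of linear inequalities in $(d_g,\ell_\chi,k,m,a_i)$. It also gives $p_m(X)=h^0(kL)$.

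Second, I derive finiteness. Adding the inequalities $\ell_\chi>kL$ over all $\chi\neq 1$ and using the first moment relation $\sum_{\chi\neq1}\ell_\chi=\frac{2^s-1}{4}\cdot\frac{\sum d_g}{1}\cdot\frac{2^{s}}{2^{s}-1}\cdot\frac{1}{2}$ (from \Cref{lemma: Fourier} and \Cref{lem:first-moment}) together with $2mK_X$ expressed through $\sum d_g-2\sum a_i$, I get an inequality of the form $(\text{const}_{s,m})\sum a_i< L\cdot(\text{small})$. The combinatorial \Cref{lemma:ineq_comb} bounds $\sum a_i/L$ from below, since very ampleness of $kL$ forces $k\ge1$. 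Combining the two yields the forbidden-region statements \Cref{lemma:forbidden_flat_admissible}, \Cref{m atleast 3} and \Cref{m atmost 2}. These bound $s\le5$ and $m$ (with $m\le3$ once $s\ge2$), and leave only finitely many weight vectors. The exception is $s=1,m=1$: there the single inequality $\ell>kL$ with $\ell=d/2$ and $k$ determined by $d$ leaves $d$ free, which gives the unbounded family in (i). For $s=1,m\ge2$ the relation $mK=f^*(kL)$ pins $d$ to finitely many values.

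Third, for each surviving $(s,m,a)$ with $s\ge2$, I enumerate the admissible multidegrees. Fix the total $D=\sum d_g$ (finitely many choices by the bounds) and a $d$-distribution $\{m_i\}$. The first moment lemma restricts the $\ell$-distributions $\{n_i\}$. The Parseval/Plancherel identity (\Cref{lemma: Parseval}, \Cref{Plancherel for the unnormalized Fourier transform}) and the cubic moment \Cref{lem:fourier-convolution-cubic} then eliminate most of the remaining candidates. For each survivor, I reconstruct $d$ from $\ell$ by inverse Fourier transform (\Cref{lemma: Inverse Fourier Transform}) and check that it is positive and integral. Up to the $\operatorname{GL}_s(\mathbb{F}_2)$ action this yields the list in Definition \ref{def:admissible_weights}, which proves (ii). For every other $(s,m)$, either the inequalities or the moment constraints have no solution, which proves (iii).

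The main obstacle is the last step for $s=4,5$ and for $s=2,3$ with $m\ge2$. There the moment constraints can leave several $\ell$-distributions that are consistent in the first three moments, so each needs a hand analysis of the sign and integrality of the inverse transform. I would organize this by the values of $\widehat d(\chi)=\widehat d(0)-4\ell_\chi$ and exploit orthogonality over hyperplanes of $G$ to force $d$ to be constant on cosets.
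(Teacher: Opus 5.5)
Your architecture is the paper's: the criterion of \Cref{prop:pluricanonical_maps_of_threefolds}, then the inequalities of \Cref{lemma:ineq_comb}, then a Fourier-moment enumeration. However, the finiteness step is wrong as written.

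First, your first-moment formula simplifies to $\sum_{\chi\neq 1}\ell_\chi=2^{s-3}D$. The correct identity is $\sum_{\chi\neq 1}\ell_\chi=2^{s-2}D$, since each $g\neq 0$ has $\chi\cdot g=1$ for exactly $2^{s-1}$ characters. For $s=2$ you can check this by adding $2\ell_{10}=d_{10}+d_{11}$, $2\ell_{01}=d_{01}+d_{11}$ and $2\ell_{11}=d_{10}+d_{01}$. With your value the flat bound becomes $W/L\ge (k+1)(4-2^{2-s})-k/m$. That already excludes genuine entries such as $\mathbb{P}(1,1,1,2)$ with $d=(2,6,6)$.

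Second, and more important, you have the two inequalities reversed. Summing $\ell_\chi\ge (k+1)L$ and substituting $D=2W+2kL/m$ gives a \emph{lower} bound $W/L\ge (k+1)(2-2^{1-s})-k/m$; this is \Cref{lemma:ineq_comb}(2). What closes the squeeze is the \emph{upper} bound $W\le 2L+2$ for $L\ge 2$ in \Cref{lemma:ineq_comb}(1). That bound comes from the arithmetic condition $\gcd(a_i,a_j,a_k)=1$ (two weights equal to $L$ force the other two to be $1$, and three are impossible), not from ampleness of $\mathcal{O}(kL)$. This arithmetic input is the actual source of finiteness of the bases, and your sketch does not identify it.

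Third, the conclusion that these lemmas give $s\le 5$, and $m\le 3$ once $s\ge 2$, holds only for $L\ge 2$ (\Cref{lemma:forbidden_flat_admissible}). For $Y=\mathbb{P}^3$ ($L=1$, $W=4$) it fails on both counts.
\begin{itemize}
\item When $(m,k)\in\{(1,1),(1,2),(2,1)\}$, the constraint $(k+1)(2-2^{1-s})-k/m\le 4$ holds for every $s$. So \Cref{m atleast 3} and \Cref{m atmost 2} leave $s$ unbounded, and your third step is not a finite enumeration.
\item The paper needs an extra idea here. The support of $d$ must span $G$, since otherwise a nonzero $\chi$ vanishes on it and $\ell_\chi=0$. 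Refining this (once $s>t$, any $t$ elements of the support lie in $\ker\chi$ for some $\chi\neq 0$) gives $s\le D-2\min_\chi\ell_\chi+1$. That is, $s\le 7$ for $D=12$ and $s\le 6$ for $D=9$ (\Cref{divisor counts in list of cases}). Only after this do Parseval and the cubic moment eliminate the large values of $s$.
\item The case $(m,k)=(1,1)$, i.e.\ $p_g=4$, is not classified in the paper at all; it is taken from \cite{dugao16}. You would have to do it yourself (the same bound gives $s\le 7$).
\item \Cref{m atleast 3}(1), which you cite, leaves $(s,m,k,D)=(2,4,2,9)$, realized by $d=(3,3,3)$. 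Here $\ell_\chi=3>2$, so $h^0(4K_X)=h^0(\mathcal{O}_{\mathbb{P}^3}(2))$ and $\varphi_{|4K_X|}=v_2\circ f$. This is the $m=4$ row of Table~\ref{tab:table2}. So ``$m\le 3$ for $s\ge 2$'' is false, and (iii) cannot be proved unless $(s,m)=(2,4)$ is added to (ii).
\item Similarly, a correct enumeration for $s=4$, $m=2$ produces two $\operatorname{GL}_4(\mathbb{F}_2)$-orbits (\Cref{m = 2; k = 1; D = 9 possibility}). Only one of them appears in Definition~\ref{def:admissible_weights}(5), so the list you are aiming to reproduce must also be adjusted.
\end{itemize}
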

 
We end the article by showing in \Cref{unbounded_non-flat_canonical} and \Cref{unbounded_non-flat_bicanonical} that there exists non-flat canonical and bicanonical $\mathbb{Z}_2^s$ covers for arbitrarily large values of $s$, thereby showing that there is no hope of a complete classification of such covers unless one puts conditions on the singularities of the cover.

\begin{definition}\label{def:admissible_weights}
Let $(a_0,a_1,a_2,a_3)$ be positive integers with $\gcd(a_i, a_j, a_k) = 1$ for distinct $i, j, k$. 
The weights $a_i$ and the degrees $d_g$ for each $s$ and $m$ as in Theorem~\ref{thm:flat_pluricanonical_covers} are as follows.

\begin{enumerate}
\item \textbf{$s = 1$, $m = 1$.} Write $d_{\mathbf{g}} := (d)$ and see Table \ref{tab:table1}.
\begin{table}[h!]
\begin{center}
\renewcommand{\arraystretch}{1.3}
\begin{tabular}{|
@{\hspace{0.7cm}} c @{\hspace{0.7cm}}
| @{\hspace{0.7cm}} c @{\hspace{0.7cm}}|
@{\hspace{0.7cm}} c @{\hspace{0.7cm}}|
@{\hspace{0.7cm}} c @{\hspace{0.7cm}}|}
\toprule
$(a_0,a_1,a_2,a_3)$ & $d$ & $k$ & $p_1(X)$ \\
\midrule
$(1,1,1,1)$   & $2t,\; t \ge 5$ & $t-4$ & $h^0(\mathcal{O}_{\mathbb{P}}(t-4))$   \\
$(1,1,1,3)$   & $6t,\; t \ge 3$ & $t-2$ & $h^0(\mathcal{O}_{\mathbb{P}}(3t-12))$  \\
$(1,1,2,2)$   & $4t,\; t \ge 4$ & $t-3$ & $h^0(\mathcal{O}_{\mathbb{P}}(2t-6))$   \\
$(1,1,2,4)$   & $8t,\; t \ge 2$ & $t-1$ & $h^0(\mathcal{O}_{\mathbb{P}}(4t-4))$   \\
$(1,1,4,6)$   & $24t,\; t \ge 2$ & $t-1$ & $h^0(\mathcal{O}_{\mathbb{P}}(12t-12))$ \\
$(1,2,2,5)$   & $20t,\; t \ge 2$ & $t-1$ & $h^0(\mathcal{O}_{\mathbb{P}}(10t-10))$ \\
$(1,2,3,6)$   & $12t,\; t \ge 3$ & $t-2$ & $h^0(\mathcal{O}_{\mathbb{P}}(6t-12))$  \\
$(1,2,6,9)$   & $36t,\; t \ge 2$ & $t-1$ & $h^0(\mathcal{O}_{\mathbb{P}}(18t-18))$ \\
$(1,3,4,4)$   & $24t,\; t \ge 2$ & $t-1$ & $h^0(\mathcal{O}_{\mathbb{P}}(12t-12))$ \\
$(1,3,8,12)$  & $48t,\; t \ge 2$ & $t-1$ & $h^0(\mathcal{O}_{\mathbb{P}}(24t-24))$ \\
$(1,4,5,10)$  & $40t,\; t \ge 2$ & $t-1$ & $h^0(\mathcal{O}_{\mathbb{P}}(20t-20))$ \\
$(1,6,14,21)$ & $84t,\; t \ge 2$ & $t-1$ & $h^0(\mathcal{O}_{\mathbb{P}}(42t-42))$ \\
$(2,3,10,15)$ & $60t,\; t \ge 2$ & $t-1$ & $h^0(\mathcal{O}_{\mathbb{P}}(30t-30))$ \\
\bottomrule
\end{tabular}
\end{center}
    \caption{Table for $s=1$ and $m=1$,  Definition \ref{def:admissible_weights}, item(1)}
    \label{tab:table1}
\end{table}

\item \textbf{$s=1$, $m \geq 2$.} Let $L = \operatorname{lcm}(a_0,a_1,a_2,a_3)$ and $W = \sum_i a_i$. The weights belong to the finite set
\[
\bigl\{ (a_0, a_1,a_2, a_3) \in \mathbb{N}_+^4 
\;\big|\;
m-1 \mid 2W,\; L \mid mW  
\bigr\},
\]
and for each such tuple, the degree $d_g$ belongs to the finite set
\[
\bigg\{  d_g = 2Lt \in \mathbb{N} 
\;\bigg|\;
\frac{W}{L} < t < \left(1+\frac{1}{m-1} \right)\frac{W}{L}
\bigg\}.
\]
\item \textbf{$s = 2$.} Write $d_{\mathbf{g}} := (d_{10}, d_{01}, d_{11})$ and see Table \ref{tab:table2}.

\begin{table}[h!]
\begin{center}
\renewcommand{\arraystretch}{1.2}
\begin{tabular}[h!]
{
| @{\hspace{0.7cm}} c @{\hspace{0.7cm}} |
@{\hspace{0.7cm}} c @{\hspace{0.7cm}} |
@{\hspace{0.7cm}} c  @{\hspace{0.7cm}} |
@{\hspace{0.7cm}} c @{\hspace{0.7cm}} |
@{\hspace{0.7cm}} c @{\hspace{0.7cm}}|}
\toprule
$m$ & $(a_0,a_1,a_2,a_3)$ & $(d_{10}, d_{01}, d_{11})$ & $k$ & $p_m$ \\
\midrule
$4$ & $(1,1,1,1)$ & $(3,3,3)$      & $2$ & $10$ \\
\midrule
$3$ & $(1,1,3,3)$ & $(6,6,6)$      & $1$ & $6$  \\
\midrule
    & $(1,1,1,2)$ & $(4,4,4)$      & $1$ & $7$  \\
$2$ & $(1,1,2,2)$ & $(2,6,6)$      & $1$ & $5$  \\
    & $(1,1,4,4)$ & $(8,8,8)$      & $1$ & $7$  \\
    & $(1,1,1,1)$ & $(4,4,2)$      & $2$ & $10$ \\
\midrule
    & $(1,1,2,2)$ & $(8,8,0)$      & $1$ & $5$  \\
    & $(1,1,2,2)$ & $(4,4,8)$      & $1$ & $5$  \\
    & $(1,1,1,2)$ & $(2,6,6)$      & $1$ & $7$  \\
    & $(1,1,1,3)$ & $(6,6,6)$      & $1$ & $11$ \\
    & $(1,1,2,4)$ & $(8,8,8)$      & $1$ & $10$ \\
    & $(1,1,4,4)$ & $(4,12,12)$    & $1$ & $7$  \\
    & $(1,2,3,6)$ & $(12,12,12)$   & $1$ & $8$  \\
$1$ & $(1,1,4,4)$ & $(12,12,12)$   & $2$ & $22$ \\
    & $(1,1,1,2)$ & $(6,6,6)$      & $2$ & $22$ \\
    & $(1,1,2,2)$ & $(4,8,8)$      & $2$ & $14$ \\
    & $(1,1,2,2)$ & $(8,8,8)$      & $3$ & $30$ \\
    & $(1,1,1,1)$ & $(6,6,2)$      & $3$ & $20$ \\
    & $(1,1,1,1)$ & $(4,4,6)$      & $3$ & $20$ \\
    & $(1,1,1,1)$ & $(6,6,4)$      & $4$ & $35$ \\
    & $(1,1,1,1)$ & $(6,6,6)$      & $5$ & $56$ \\
\bottomrule
\end{tabular}
\end{center}
   \caption{Table for case $s=2$, $m=1$, Definition \ref{def:admissible_weights}, item (2)}
    \label{tab:table2}
\end{table}

\item \textbf{$s = 3$.} Write $d_{\mathbf{g}} := (d_{100}, d_{010}, d_{110}, d_{001}, d_{101}, d_{011}, d_{111})$, and see Table \ref{tab:Table3}.

\begin{table}[h!]
\begin{center}
\renewcommand{\arraystretch}{1.2}
\begin{tabular}{
|@{\hspace{0.7cm}} c @{\hspace{0.7cm}} | @{\hspace{0.7cm}} c @{\hspace{0.7cm}}| @{\hspace{0.7cm}} c @{\hspace{0.7cm}} | @{\hspace{0.7cm}} c  @{\hspace{0.7cm}}| 
@{\hspace{0.7cm}} c @{\hspace{0.7cm}} |}
\toprule
$m$ & $(a_0,a_1,a_2,a_3)$ & $(d_g)$, $g \in G \setminus \{0\}$ & $k$ & $p_m$ \\
\midrule
$2$ & $(1,1,2,2)$ & $(2,2,2,2,2,2,2)$   & $1$ & $5$  \\
\midrule
    & $(1,1,1,2)$ & $(2,2,2,2,2,2,2)$   & $1$ & $7$  \\
    & $(1,1,1,2)$ & $(4,0,4,0,4,0,4)$   & $1$ & $7$  \\
$1$ & $(1,1,2,2)$ & $(2,2,0,2,4,4,2)$   & $1$ & $5$  \\
    & $(1,1,4,4)$ & $(4,4,4,4,4,4,4)$   & $1$ & $7$  \\
    & $(1,1,1,1)$ & $(2,2,2,2,2,2,2)$   & $3$ & $20$ \\
\bottomrule
\end{tabular}
\end{center}
    \caption{Table for case $s=3$, $m=1$ and $m=2$, Definition \ref{def:admissible_weights} item (iii)}
    \label{tab:Table3}
\end{table}

\item \textbf{$s = 4$, $m = 2$.} 
$(a_0,a_1,a_2,a_3) = (1,1,1,1)$, \; $k = 1$, \; $p_2 = 4$.
\[
d_g = 
\begin{cases}
2, & g = (1,1,1,1), \\[3pt]
1, & g \in \{(0,1,0,0),\, (1,0,0,0),\, (1,0,0,1),\, (1,0,1,0),\\
   & \hphantom{g \in \{}(1,1,0,0),\, (1,1,0,1),\, (1,1,1,0)\}, \\[3pt]
0, & \text{otherwise.}
\end{cases}
\]
\item \textbf{$s = 4$, $m = 1$.} Fix two non-zero characters $\chi_0, \chi_1 \in G^* \setminus \{1\}$.

\begin{enumerate}
\item[(a)] $(a_0,a_1,a_2,a_3) = (1,1,2,2)$, \; $k = 1$, \; $p_1 = 5$.
\[
d_g = 
\begin{cases}
0 & \text{if } \chi_0 \cdot g = 0, \\
2 & \text{if } \chi_0 \cdot g = 1.
\end{cases}
\]

\item[(b)] $(a_0,a_1,a_2,a_3) = (1,1,2,2)$, \; $k = 1$, \; $p_1 = 5$.
\[
d_g = 
\begin{cases}
0 & \text{if } \chi_0 \cdot g = \chi_1 \cdot g = 0, \\
1 & \text{if } \chi_0 \cdot g = 0,\; \chi_1 \cdot g = 1, \\
1 & \text{if } \chi_0 \cdot g = 1,\; \chi_1 \cdot g = 0, \\
2 & \text{if } \chi_0 \cdot g = \chi_1 \cdot g = 1.
\end{cases}
\]

\item[(c)] $(a_0,a_1,a_2,a_3) = (1,1,1,1)$, \; $k = 2$, \; $p_1 = 10$.
\[
d_g = 
\begin{cases}
0, & g \in \{(0,0,0,1),\, (0,0,1,0),\, (0,0,1,1)\}, \\
1, & \text{otherwise.}
\end{cases}
\]
\end{enumerate}

\item \textbf{$s = 5$, $m = 1$.} Fix one non-zero character $\chi_0 \in G^* \setminus \{1\}$.
$(a_0,a_1,a_2,a_3) = (1,1,2,2)$, \; $k = 1$, \; $p_1 = 5$.
\[
d_g = 
\begin{cases}
0 & \text{if } \chi_0 \cdot g = 0, \\
1 & \text{if } \chi_0 \cdot g = 1.
\end{cases}
\]
\end{enumerate}
\end{definition}

\subsection{Notation}
Unless explicitly said otherwise, the group $G$ denotes $\mathbb{Z}_2^s$ and $\chi(G)$ denotes its group of characters. The identity element of $G$ is denoted as $0 \in G$, and the trivial character as $1 \in G*$. In our case, the theory of abelian covers associates a divisor $D_g$ to each element $g \in G \setminus 0$. For simplicity, we associate the trivial divisor with the identity, so $D_0 = 0$ always. 

\subsection{Acknowledgments} 
We thank Long Horizon Research
for providing us with the Sundial platform and the AI tools to proofread the arguments and presentation of this paper.
Patricio Gallardo is partially supported by the National Science Foundation under Grant No. DMS-2316749. 
\tableofcontents

\section{Preliminaries}
\subsection{Abelian covers}\label{sec:abelianCovers}

We recall the description of abelian covers following Pardini~\cite{pardini1991abelian} and the extension to singular bases by Alexeev--Pardini~\cite{AP12}, focusing on the cases relevant to our work. 
Throughout, $G$ denotes the finite abelian group $\mathbb{Z}_2^s$ and  
$f\colon X\to Y$ a $G$-cover (i.e. a finite morphism endowed with a faithful 
$G$-action on $X$ 
such that $Y=X/G$), and $G^*:=\operatorname{Hom}(G,\mathbb{C}^*)$ its character group.

In our work, we assume that $Y$ is normal, irreducible, and $X$ is normal. The $G$-action yields the eigensheaf decomposition
\begin{equation}\label{eq:decomposition}
f_*\OO_X \;=\; \bigoplus_{\chi\in G^*} L_\chi^{-1},
\qquad
L_{1}\simeq \OO_Y,
\end{equation}
where $1\in G^*$ is the trivial character and $G$ acts on $L_\chi^{-1}$ via $\chi$.

If $Y$ is smooth, or if it's the smooth locus, then, by purity of the branch locus, the set of ramification points has divisorial image on $Y$. Let $D\subset Y$ be the reduced branch divisor and $R\subset X$ the reduced ramification divisor. Each irreducible component
$T\subset R$ has an inertia subgroup $H_T\le G$, namely the subgroup of elements fixing $T$ pointwise which in our case it is always isomorphic to $\mathbb{Z}_2$. Moreover, attached to $T$ there is a distinguished generator $\psi_T\in H_T^*$ describing the local
action in a transversal parameter. Grouping components with the same pair $(H,\psi)$ produces a decomposition
\[
R \;=\; \sum_{(H,\psi)} R_{H,\psi},
\qquad
D \;=\; \sum_{(H,\psi)} D_{H,\psi},
\quad
\text{with } R_{H,\psi}=f^{-1}(D_{H,\psi}) \text{ as reduced divisors.}
\]
We refer to the collection $\{L_\chi\}_{\chi\in G^*}$ together with the reduced effective divisors
$\{D_{H,\psi}\}$ as the \emph{building data} of the cover. In our case of $G  = \mathbb{Z}_2^s$, the generator of $H_T^*$ is unique, so it is convenient to index the divisors by
$g\in G$ and write
\begin{equation}\label{eq:Dg_decomp}
D \;=\; \sum_{g\in G} D_g,
\qquad D_0=0,
\end{equation}
where $D_g$ is the reduced divisor whose pullback is the corresponding summand of the ramification divisor.

The building data are constrained by the \emph{fundamental relations}. 
To state them for our case, fix $g\in G$ and $\chi\in G^*$ such that $g \neq 0$ and $\chi \neq 1$, there is a unique integer
$0\le r_g^\chi\le 1$ such that \(\chi(g)=\exp\!\left(\pi i\,r_g^\chi\right), \) that is
\begin{align*}
r_g^{\chi} = 
\begin{cases}
 0 & \text{$\chi$ acts trivially on $g$, that is } \chi(g) = 1
 \\
 1 & \text{$\chi$ acts non-trivially on $g$, that is } \chi(g) = -1
\end{cases}
\end{align*}
For $\chi,\chi'\in G^*$ set
\begin{align*}
 \varepsilon^{g}_{\chi,\chi'}=
&
\begin{cases}
1,& r_g^\chi+r_g^{\chi'}\ge 2,\\
0,& \text{otherwise.}
\end{cases}   
\\
=
&\begin{cases}
1, &  \text{Both $\chi$ and \( \chi'\) acts non-trivially on } g,\\
0,& \text{otherwise.}
\end{cases}  
\end{align*}
Then for all $\chi,\chi'\in G^*$ one 
has an isomorphism
\begin{equation}\label{eq:fundamental_relations_mult}
L_\chi\otimes L_{\chi'} \;\simeq\;
L_{\chi\chi'}\otimes \OO_Y\!\left(\sum_{g\in G}\varepsilon^{g}_{\chi,\chi'}\,D_g\right)
\simeq
L_{\chi\chi'}\otimes \OO_Y\!\left(\sum_{\substack{g\in G\\ \chi(g)=\chi'(g)=-1}} D_g\right).
\end{equation}
If moreover $\Pic(Y)$ is torsion-free, we may write~\eqref{eq:fundamental_relations_mult} additively in $\Pic(Y)$ as
\begin{align}
\label{eq:fundamental_relations}
L_\chi+L_{\chi'} \;\equiv\; L_{\chi\chi'} + \sum_{\substack{g\in G\\ \chi(g)=\chi'(g)=-1}} D_g.
\end{align}
Taking $\chi'=\chi$ yields the frequently used relation when $G = \mathbb{Z}_2^s$
\begin{equation}\label{eq:half_sum_relation}
2L_\chi \;=\; \sum_{\substack{g\in G\\ \chi(g)=-1}} D_g,
\qquad \chi\neq 1.
\end{equation}
In practice, it suffices to impose~\eqref{eq:half_sum_relation} for a basis $\{\chi_1,\dots,\chi_s\}$ of $G^*$ which is viewed as an $\F_2$--vector space since the remaining relations follow formally from~\eqref{eq:fundamental_relations}.
The collection of equations~\eqref{eq:half_sum_relation} for such a basis is known as \emph{reduced building data}.
Moreover, if $Y$ is smooth, irreducible with $H^0(\mathcal{O}_Y^*) = \mathbb{K}^*$ and one is given line bundles $\{L_\chi\}$ and reduced effective divisors
$\{D_g\}$ satisfying~\eqref{eq:fundamental_relations}, then there exists a unique, up to isomorphism of $G$--covers, normal $G$--cover $f\colon X\to Y$ with such building data, see \cite[Proposition $2.1$]{pardini1991abelian}.


We now summarize the adjustments needed when $Y$ is only normal. Following~\cite{AP12}, assume that $Y$ is $S_2$
and regular in codimension one, and that $f\colon X\to Y$ is a $G$--cover with $X$ an $S_2$ scheme. A basic extension principle is that $G$--covers over an open subset extend uniquely across codimension $\ge2$.
\begin{lemma}[{\cite[Lem.~1.2]{AP12}}]\label{lem:AP_extension}
Let $Y$ be an $S_2$ scheme and let $j\colon Y_0\hookrightarrow Y$ be an open subset with $\operatorname{codim}(Y\setminus Y_0)\ge2$.
If $f_0\colon X_0\to Y_0$ is a $G$--cover with $X_0$ $S_2$, then there exists a unique $G$--cover $f\colon X\to Y$
with $X$ $S_2$ whose restriction to $Y_0$ is $f_0$.
\end{lemma}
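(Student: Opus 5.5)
The approach is to build $X$ as the relative spectrum of a reflexive $\OO_Y$-algebra obtained by pushing forward the algebra of $f_0$. Let $j\colon Y_0\hookrightarrow Y$ be the inclusion and write $\mathcal{A}_0 := (f_0)_*\OO_{X_0}$. This is a coherent $\OO_{Y_0}$-algebra with a $G$-action, and it decomposes into eigensheaves $\mathcal{A}_0=\bigoplus_{\chi} L_{\chi,0}^{-1}$.

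\textbf{Step 1: define the candidate algebra.} I would set $\mathcal{A}:=j_*\mathcal{A}_0$. Multiplication and the $G$-action extend, because $j_*$ is a functor and $j_*\mathcal{A}_0\otimes j_*\mathcal{A}_0\to j_*(\mathcal{A}_0\otimes\mathcal{A}_0)\to j_*\mathcal{A}_0$. So $\mathcal{A}$ is a sheaf of $\OO_Y$-algebras with $G$-action, and $\mathcal{A}^G=j_*\OO_{Y_0}=\OO_Y$ since $Y$ is $S_2$ and $\operatorname{codim}(Y\setminus Y_0)\ge 2$.

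\textbf{Step 2: prove coherence.} Each $L_{\chi,0}^{-1}$ is coherent on $Y_0$, so it extends to some coherent sheaf $\mathcal{F}$ on $Y$. Then $j_*L_{\chi,0}^{-1}=j_*j^*\mathcal{F}$. Coherence of this sheaf follows from the standard finiteness theorem for $j_*$: on an $S_2$ excellent scheme with complement of codimension $\ge2$, $j_*$ of a coherent $S_2$ sheaf is coherent. To apply it, I use that $\mathcal{A}_0$ is $S_2$ as an $\OO_{Y_0}$-module, because $X_0$ is $S_2$ and $f_0$ is finite. This is the step I expect to be the main technical point; I would cite the Hartshorne/Grothendieck result on $S_2$-extensions (equivalently, use the reflexive hull of $\mathcal{F}$ when $Y$ is normal).

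\textbf{Step 3: define $X$ and check its properties.} Put $X:=\operatorname{Spec}_Y\mathcal{A}$ and let $f\colon X\to Y$ be the structure map. Then $f$ is finite, $G$ acts on $X$, and $X/G=\operatorname{Spec}\mathcal{A}^G=Y$. The action is faithful because it is faithful on $X_0=f^{-1}(Y_0)$. For the $S_2$ property of $X$, it suffices to check that $\mathcal{A}$ is $S_2$ as an $\OO_Y$-module. This holds because $j_*$ of an $S_2$ sheaf has depth $\ge2$ at points of $Y\setminus Y_0$: the local cohomology $H^0_Z$ and $H^1_Z$ vanish, which is exactly the statement $\mathcal{A}=j_*j^*\mathcal{A}$. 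Depth over $\OO_Y$ agrees with depth of $\OO_X$ at points over $Y$, since $f$ is finite.

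\textbf{Step 4: uniqueness.} Suppose $f'\colon X'\to Y$ is another $S_2$ $G$-cover restricting to $f_0$. Then $f'_*\OO_{X'}$ is an $S_2$ coherent sheaf agreeing with $\mathcal{A}_0$ on $Y_0$. By depth $\ge2$ along $Y\setminus Y_0$, we get $f'_*\OO_{X'}=j_*j^*f'_*\OO_{X'}=\mathcal{A}$ as $G$-algebras. Hence $X'\cong X$ over $Y$, compatibly with the $G$-action and the given identification over $Y_0$.
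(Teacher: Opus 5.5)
Your proof is correct and is the standard argument behind the cited \cite[Lem.~1.2]{AP12}; the paper itself gives no proof and defers to that reference. The construction is to take $X=\operatorname{Spec}_Y j_*(f_0)_*\mathcal{O}_{X_0}$, get coherence from the finiteness theorem for $j_*$ of $S_2$ sheaves across codimension $\ge 2$, and read off both the $S_2$ property and uniqueness from the vanishing of $\mathcal{H}^0_Z$ and $\mathcal{H}^1_Z$.

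Two small remarks: the finiteness step genuinely needs the extra hypothesis you flag (e.g.\ $Y$ excellent, or of finite type over a field, as in the setting of \cite{AP12}), and the eigensheaf decomposition is unnecessary --- applying the finiteness theorem to $\mathcal{A}_0$ directly avoids having to assume $\operatorname{char} k\nmid |G|$.
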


Consequently, the formalism above persists with the following replacements (loc.\ cit.):
\begin{enumerate}
\item In~\eqref{eq:decomposition}, the eigensheaves $L_\chi^{-1}$ are reflexive sheaves of rank one on $Y$
(invertible on the smooth locus of $Y$).
\item The divisors $D_g$ (or more generally $D_{H,\psi}$) are Weil divisors; their restrictions to the smooth locus are Cartier.
\item The relations~\eqref{eq:fundamental_relations}  still hold, interpreted in the
group of rank-one reflexive sheaves.
\item The morphism $f$ need not be flat. It is flat precisely when all eigensheaves $L_\chi$ are line bundles.
\item Given a $G$-cover $f : X \to Y$ and an irreducible subset $S \subset Y$ , we define the inertia subgroup $H_S$ of $S$ to be the subgroup of $G$ consisting of the elements that fix $f^{-1}(S)$ pointwise, or, equivalently since $G$ is abelian, that fix an irreducible component of $f^{-1}(S)$ pointwise. The branch locus $D_f$ of $f$ is the set of points of $Y$ whose inertia subgroup is not trivial. As mentioned in \cite{AP12},  we regard $D_f$ simply as a set, without giving it a scheme structure. If $f$ is flat, then $D_f$ is a divisor. If $f$ is not flat, then the branch locus may have non-divisorial components.
\end{enumerate}

Let $G$ be  $\mathbb{Z}_2^s$ and 
$\{ D_g \, | \, g \in G, D_0 =0 \}$ be the building data of the cover $f:X \rightarrow Y$. 
It is well-known that when $Y$ is smooth  
the canonical bundle of $X$, see   \cite[Prop 4.2]{pardini1991abelian}, is given by
\begin{equation}
\label{eq:canonical_formula_Z2s}
K_X \;=\; f^*\!\left(K_Y + \frac12\sum_{g\in G} D_g\right).
\end{equation}  
When $Y$ is normal, we use the next expression given by \cite{AP12} and the discussion before \cite[Lemma 2.10]{alexeev2009explicit})
\begin{lemma}
\label{lemma:general_canonical_formula_Z2s}
Let $f : X \to Y$ be a $\mathbb{Z}_2^s$-cover of a deminormal variety . Let $E$ be a prime divisor of Y, set
\begin{align*}
a_E = 
\begin{cases}
0 &
  \text{ 
  $f$ is generically \'etale over $E$ or  $E$ is contained in the double locus
  }
  \\
 1 & 
  \text{
$Y$ is generically smooth along $E$ but $X$ is singular along $f^{-1}(E)$
  }
  \\
   \displaystyle\frac{1}{2} &
  \text{
otherwise,
  }
\end{cases}
\end{align*}
and define the Hurwitz divisor $D_{\operatorname{Hur}}$ to be the divisor $\sum_E a_E E$.
Then, it holds that
\begin{enumerate}
\item[(i)] $K_X$ is $\mathbb{Q}$-Cartier if and only if $K_Y +D_{\operatorname{Hur}}$ is  $\mathbb{Q}$-Cartier, 
\item[(ii)] If $K_X$ is $\mathbb{Q}$-Cartier, 
$K_X =  f^*\left( K_Y + D_{\operatorname{Hur}}\right)
$,  
\item[(iii)] If $Y$ is normal, the Hurwitz divisors are given in terms of the building data by
\[
D_{\mathrm{Hur}}=\frac12\sum_{g\ne 0}D_g 
\]
\end{enumerate}
\end{lemma}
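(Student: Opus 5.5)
The plan is to reduce all three statements to a computation on the smooth locus of $X$ and $Y$, and then extend across codimension two using reflexivity. We work with $Y^0 \subset Y$, the open subset where $Y$ is smooth, $f$ is flat, and the branch divisors $D_g$ are smooth and meet transversally. In the deminormal case we also remove the codimension-two locus where the double locus fails to be normal crossing.

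Since $X$ and $Y$ are $S_2$, the divisor classes $K_X$ and $K_Y$ are determined by their restrictions to these big open sets. Here "big" means the complement has codimension $\ge 2$ in $Y$, and so also in $X$, because $f$ is finite. Over $Y^0$ we can use Pardini's local description of the cover. Let $E$ be a prime divisor and $x$ a general point of $f^{-1}(E)$; the idea is to compute the ramification index of $f$ there, since the Hurwitz formula $K_{X^0} = f^*K_{Y^0} + R$ holds on the smooth locus of $X$.

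The local analysis splits into the three cases defining $a_E$.

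First, suppose $f$ is étale over the generic point of $E$. Then $E$ contributes nothing to $R$, so $a_E = 0$. If instead $E$ lies in the double locus, $X$ is generically normal crossing there. The conductor and double locus then cancel in the dualizing sheaf: $\omega_X = f^*\omega_Y(\dots)$ with no correction along $E$, by the standard computation for double normal crossings. So again $a_E = 0$.

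Second, suppose $E = D_g$ is a component with inertia group $\mathbb{Z}_2$ and $X$ is smooth over it. Then $f$ is locally $z \mapsto z^2$ times an étale map, so $R = \frac12 f^*E$ as $\mathbb{Q}$-divisors. This gives $a_E = \frac12$.

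Third, suppose $Y$ is smooth along $E$ but $X$ is singular along $f^{-1}(E)$. In the $\mathbb{Z}_2^s$, deminormal setting this means $X$ is generically a double crossing lying over a smooth $E$, with two sheets glued along the preimage. The dualizing-sheaf computation for that gluing then gives coefficient $1$.

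Combining the cases, $K_{X}|_{X^0} = f^*(K_Y+D_{\mathrm{Hur}})|_{Y^0}$ as Weil divisor classes. Because $X^0$ is big in $X$, this equality holds for Weil divisor classes on all of $X$.

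For (i), suppose $K_Y+D_{\mathrm{Hur}}$ is $\mathbb{Q}$-Cartier. Then its pullback is $\mathbb{Q}$-Cartier, and that pullback equals $K_X$. Conversely, suppose $mK_X$ is Cartier. Take the norm: $\mathrm{Nm}_f(mK_X) = m\,\mathrm{Nm}_f f^*(K_Y+D_{\mathrm{Hur}}) = m|G|(K_Y+D_{\mathrm{Hur}})$, where the last equality is checked on $Y^0$ and extended by reflexivity. The norm of a Cartier divisor under a finite morphism is Cartier, so $K_Y+D_{\mathrm{Hur}}$ is $\mathbb{Q}$-Cartier. Part (ii) is then the identity of Weil classes established above, now read as $\mathbb{Q}$-Cartier divisors.

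For (iii), assume $Y$ is normal. Then $X$ is normal as well, so there is no double locus and the third case cannot occur, since $X$ is $R_1$. Every branch component is some $D_g$ with $a_{D_g} = \frac12$, and this yields $D_{\mathrm{Hur}} = \frac12\sum_{g\ne0}D_g$.

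The main obstacle is the third case of the local analysis: computing $\omega_X$ along a non-normal locus lying over smooth $E$. I would try to do this by writing the local model explicitly as $\operatorname{Spec}\mathcal{O}_Y[w]/(w^2)$-type gluing of two $\mathbb{Z}_2^{s-1}$-covers. I would then apply Alexeev–Pardini's description of $\omega$ for such gluings rather than deriving it from scratch.
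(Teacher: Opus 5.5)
Your strategy is the standard argument behind this lemma: identify $K_X$ with $f^*(K_Y+D_{\operatorname{Hur}})$ at the generic point of every prime divisor, extend by the $S_2$ property, and get (i) by pullback one way and a norm the other. The paper gives no proof of its own and simply cites Alexeev--Pardini.

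However, the step you use for (iii) is false: normality of $Y$ does not force normality of $X$. Take $X=\{w^2=t^2\}\subset Y\times\mathbb{A}^1$, with $Y$ smooth, $t$ a local equation of $E$, and $G=\mathbb{Z}_2$ acting by $w\mapsto -w$. This cover is flat, $S_2$, and has a double crossing along $f^{-1}(E)$. So it is a deminormal, non-normal cover of a normal base, and it is exactly the case $a_E=1$; normality of $Y$ cannot exclude your third case. What makes (iii) true is the standing hypothesis of Section~2.1 that $X$ itself is normal, so that the $D_g$ are the reduced building data of Pardini's theory. You should invoke that instead.

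Two further points need repair.

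First, when $Y$ has a double locus, the open set where $Y$ is smooth or normal crossing \emph{and} $f$ is flat need not be big, because $f$ can fail to be flat generically along the double locus. For example, take $\{xy=0\}$ (times a smooth factor) with $(x,y)\mapsto(-x,-y)$. The quotient is the node $\{uv=0\}$ with $u=x^2$, $v=y^2$, and the anti-invariant summand of $f_*\mathcal{O}_X$ is $\mathcal{O}_Y/(v)\oplus\mathcal{O}_Y/(u)$, which is not locally free. What your argument actually needs is the identification at every codimension-one point, which your case analysis already supplies, followed by the $S_2$ property. Phrase it that way rather than through a big open set $Y^0$.

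Second, the covers the paper cares about are in general not flat, so a norm defined via $\det f_*$ is not available. Instead, choose a generator $\sigma$ of $\omega_X^{[m]}$ over the semilocal ring of $X$ along $f^{-1}(y)$, with $m$ even so that $mD_{\operatorname{Hur}}$ is integral. Then use the $G$-invariant section $\prod_{g\in G}g^*\sigma$. It generates $\mathcal{O}_Y(m|G|(K_Y+D_{\operatorname{Hur}}))$ near $y$, because it does so in codimension one and both sheaves are $S_2$.
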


\section{Proof of Theorem \ref{thm:main_inv}}
\label{sec:proof_invariants_theorem}

We start with setting up the notation and key definitions in Section ~\ref{sec:set-up}. The calculation of the invariants 
$K_X^3$, $\chi(\mathcal{O}_X)$, and $e(X)$ for our 
$\mathbb{Z}_2^s$-covers is completed in Section \ref{sec:calculation_invariants}. We calculate each bound in Section \ref{sec:ratios}.

\subsection{Setup and notation}\label{sec:set-up}
The first paper to systematically study the geography of threefolds was that of Bruce Hunt (\cite{huntthreefolds89}).
As in that work, we suppose that our threefolds are $\mathbb{Q}$-Gorenstein threefolds of general type, and we focus on their Chern numbers. These Chern numbers can be written in terms of the volume $K_X^3$, the holomorphic Euler characteristic 
$\chi:=\chi(\mathcal{O}_X)$ and the topological Euler characteristic $e(X)$ as follows
\begin{align*}
c_1^3(X) = -K_X^3,
&&
c_1(X)c_2(X) = 24 \chi( \mathcal{O}_X),
&&
c_3(X) = e(X)
\end{align*}
Minimal models exist for threefolds, but they are not unique. So, for any $\mathbb{Q}$-Gorenstein threefold $X$ of general type, we have the birational equivalence classes $\mathcal{B}(X)$ of minimal models of $X$. By work of Kawamata (see \cite[Section $7.1.2$]{huntthreefolds89}), there is a well-defined map 
\begin{align*}
\mathcal{B}(X) \mapsto  \mathbb{P}^2(\mathbb{Q})
&&
X \mapsto \big[ 
-K_X^3  , 24\chi(\mathcal{O}_X), 
e(X)
\big] \in \mathbb{P}^2(\mathbb{Q})
\end{align*}
such that for a smooth minimal model, it coincides with the map 
\(
X \mapsto \big[c_1^3 , c_1c_2, c_3 \big].
\)

We will study the ratios of Chern numbers for the case  where
$X \rightarrow Y$ is a $\mathbb{Z}_2^s$-cover, and the branch locus $D_g$ satisfies the following hypothesis that prevents certain pathologies such as non-terminal singularities on $X$, which is a needed for defining the above map from $\mathcal{B}(X)$ to 
$\mathbb{P}^2(\mathbb{Q})$.
\begin{definition}
\label{def:divisor_standard set-up}
Let $X \rightarrow \mathbb{P}(a_0, a_1, a_2, a_3)$ be a 
$\mathbb{Z}^s_2$-cover of a well-formed weighted projective threefold. Let $D_g$ be the divisorial component (possibly a Weil divisor) of the branch locus with inertia group generated by $g \in G$. We say the branch locus is generic if the following holds:
\begin{enumerate}
\item All the branch divisors $D_g$ are well-formed and quasi-smooth.
\item All the double intersections $D_{p, g} := D_{p} \cap D_{g}$ are well-formed and quasi-smooth.
    \item All triple intersections 
    $D_{p,q,g} := D_{p} \cap D_{q} \cap D_{g}$ are smooth and away from the singular locus.
    \item There are no other intersections, that is $\bigcap_{i = 1}^t D_{g_i} = \emptyset$ for all $t \geq 4$.
\end{enumerate}
\end{definition}

We start by describing the singularities of the cover $X$ when the branch locus is generic as in \Cref{def:divisor_standard set-up}.

\begin{lemma}\label{lemma:singDpqr}\label{singularities of the cover}
 Let $X \rightarrow \mathbb{WP}^3$ be a  $\mathbb{Z}_2^s$-cover branching along 
generic divisors $D_g$. Then, the singularities of $X$ are as follows:
\begin{enumerate}
    \item If 
    $y \in \mathbb{WP}^3 \setminus 
    \text{Sing}(\mathbb{WP}^3)$. Then, $X$ is either smooth over $y$ or it has a terminal $\frac{1}{2}(1,1,1)$ singularity over $y$ only when  $y \in D_p \cap D_q \cap D_r$ and $p+q+r=0$.
    \item $y \in \text{Sing}(\mathbb{WP}^3)$, then $X$ has singularities either isomorphic to the ones of $\mathbb{WP}^3$ or a double cover of them, which is also a quotient singularity.
\end{enumerate}
In particular, $X$ is terminal if $\mathbb{WP}^3$ is terminal and the branch locus is away from the singularities of it.
\end{lemma}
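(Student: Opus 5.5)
The argument is local over a point $y\in\mathbb{WP}^3$, and I split it according to whether $y$ is a smooth point of $\mathbb{WP}^3$.

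\emph{Smooth points.} By Definition~\ref{def:divisor_standard set-up}, at most three branch divisors pass through $y$. Near $y$ the divisors are smooth, and they meet transversally: double intersections are quasi-smooth, hence smooth away from $\operatorname{Sing}(\mathbb{WP}^3)$, and triple intersections are smooth points. So we can choose local analytic coordinates $(u,v,w)$ at $y$ in which the branch divisors through $y$ are coordinate hyperplanes, say $D_p=\{u=0\}$, $D_q=\{v=0\}$, $D_r=\{w=0\}$ (with fewer of them if fewer divisors pass through $y$).

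Using Pardini's local description of abelian covers (equivalently the building data \eqref{eq:fundamental_relations}), the connected component of $f^{-1}$ of a neighbourhood of $y$ is a $H$-cover. Here $H\le G$ is the subgroup generated by the inertia elements $p,q,r$. This cover is the normalization of $\mathbb{C}^3$ under the map $(x_1,x_2,x_3)\mapsto(x_1^2,x_2^2,x_3^2)$, taken modulo the kernel $K$ of the natural surjection $\mathbb{Z}_2^3\to H$, $e_1\mapsto p$, $e_2\mapsto q$, $e_3\mapsto r$. So locally $X\cong\mathbb{C}^3/K$.

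We now run through the cases.
\begin{itemize}
\item If $p,q,r$ are linearly independent over $\mathbb{F}_2$, or if fewer than three divisors pass through $y$ (the inertia elements being distinct and nonzero), then $K=0$ and $X$ is smooth over $y$.
\item A dependency among three distinct nonzero elements is exactly $p+q+r=0$. In that case $K=\langle(1,1,1)\rangle$, acting by $-1$ on all three coordinates. This gives the $\tfrac12(1,1,1)$ singularity, which is terminal by Reid's criterion (the age is $3/2>1$).
\end{itemize}

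\emph{Singular points.} The singular points of $\mathbb{WP}^3$ are cyclic quotient points $\mathbb{C}^3/\mu_a$, and I work on the smooth cover $\mathbb{C}^3$. For generic quasi-smooth branch divisors, at most one of them can pass through such a point when the singular locus is isolated: each condition is a nontrivial linear condition on the coefficients of monomials, and genericity rules out the higher intersections. I expect to need condition (2), well-formedness of the intersections, to justify this; it is the delicate part.
\begin{itemize}
\item If no $D_g$ passes through $y$, the cover is étale near $y$, away from codimension $\ge 2$. Pulling back along the orbifold chart and applying Lemma~\ref{lem:AP_extension}, $X$ is locally a disjoint union of copies of $\mathbb{C}^3/\mu_a$.
\item If exactly one $D_g$ passes through $y$, its preimage in the chart is a smooth $\mu_a$-invariant hypersurface $\{h=0\}$. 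The cover is locally the normalization of $\{t^2=h\}$ divided by $\mu_a$, or equivalently $\mathbb{C}^3/\widetilde{\mu}$ for an extension $\widetilde{\mu}$ of $\mu_a$ by $\mathbb{Z}_2$. This is again a quotient singularity, a ``double cover'' of the original one.
\end{itemize}

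\emph{Terminality.} For the final claim, if the branch locus avoids $\operatorname{Sing}(\mathbb{WP}^3)$, then the singularities of $X$ are copies of those of $\mathbb{WP}^3$, which are terminal by hypothesis, together with $\tfrac12(1,1,1)$ points, which are terminal by the first part.

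The main obstacle is the second part: making the orbifold-chart reduction rigorous for a possibly non-flat cover, which requires identifying the AP12 extension with the quotient of the cover upstairs. I would check this by comparing both constructions over the punctured neighbourhood, where they agree, and then invoking the uniqueness in Lemma~\ref{lem:AP_extension}.
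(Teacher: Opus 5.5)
\textbf{Smooth points.} Your argument here is correct, and it takes a different and cleaner route than the paper. The paper applies Pardini's criterion (the cover is singular over $y$ iff $\oplus_{h\in I}\langle h\rangle\to\mathbb{Z}_2^s$ is not injective). It then factors the local cover through an \'etale $\mathbb{Z}_2^{s-2}$-cover and a bidouble cover, writes Catanese's equations $w_i^2=x_jx_k$, $x_kw_k=w_iw_j$, and recognizes the cone over the Veronese surface. Your Kummer model $\mathbb{C}^3/K$, with $K=\ker(\mathbb{Z}_2^3\to H)$, settles the smooth case and the $\frac12(1,1,1)$ case at once, and reduces terminality to a Reid--Tai age computation. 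The paper's route buys explicit local equations.

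\textbf{At most one divisor through a singular point.} Drop the genericity argument. If $a\nmid d_g$, every member of $|\mathcal{O}(d_g)|$ contains the vertex, so genericity cannot help. Your fallback is the right reason, and it is what the paper uses: $D_{p,q}$ is a well-formed curve, so $D_{p,q}\cap\operatorname{Sing}(\mathbb{WP}^3)$ has codimension $\ge 2$ in it, i.e.\ it is empty.

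\textbf{The false step.} When no $D_g$ passes through a singular point $y$, $X$ need not be locally a disjoint union of copies of $\mathbb{C}^3/\mu_a$. The lemma allows non-flat covers, and your own chart argument shows what actually happens. The pulled-back cover is \'etale over a punctured ball $B\setminus\{0\}$, which is simply connected, so it is $G\times(B\setminus\{0\})$. The group $\mu_a$ acts by $\zeta\cdot(h,b)=(h+\rho(\zeta),\zeta b)$ for a local monodromy homomorphism $\rho\colon\mu_a\to G$. By the uniqueness in Lemma~\ref{lem:AP_extension}, $X$ near $y$ is $(G\times B)/\mu_a$, whose components are $B/\ker\rho$, and $\rho$ can be nontrivial.

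For a concrete case, take $s=1$, $Y=\mathbb{P}(1,1,1,2)$ and $D=\{F=0\}$ general of degree $6$. Then $D$ misses $P=(0:0:0:1)$, but $L=\mathcal{O}_Y(3)$ is not invertible at $P$. The cover $X=\{w^2=F\}\subset\mathbb{P}(1,1,1,2,3)$ has a single point over $P$, and that point has trivial stabilizer. So $X\to Y$ is locally $\mathbb{C}^3\to\frac12(1,1,1)$, not a union of copies of $\frac12(1,1,1)$. The paper's proof makes the same ``\'etale over $y$'' claim.

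The fix costs nothing. The image $\rho(\mu_a)$ is a cyclic subgroup of $\mathbb{Z}_2^s$, so $[\mu_a:\ker\rho]\le 2$. That is exactly the ``double cover'' alternative in item (2). The final terminality statement also survives, because $\ker\rho\subseteq\mu_a$ inherits the Reid--Tai age condition and the absence of quasi-reflections.

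\textbf{A smaller slip in the one-divisor case.} The quotient of $\{t^2=h\}$ by the full extension $\widetilde\mu$ of $\mu_a$ by $\mathbb{Z}_2$ (all lifts of the $\mu_a$-action) is the germ of $\mathbb{WP}^3$, not of $X$. Your first formulation, $\{t^2=h\}$ modulo a lift of $\mu_a$, is the correct one.
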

\begin{proof}
Let $y \in \mathbb{WP}^3 \setminus \text{Sing}(\mathbb{WP}^3)$. 
The hypothesis on our divisors $D_g$ implies by Pardini's result \cite[Prop 3.1]{pardini1991abelian} that the $\mathbb{Z}_2^s$-cover is singular over $y$ if and only if   
\( y \in \cap_{h \in I} D_{h}\) and the map 
$\oplus_{h \in I} \langle h \rangle \rightarrow  \mathbb{Z}_2^s$ is not an injection. By hypothesis, our divisors are generic, so at most three of them intersect, that is, $|I| \leq 3$. 
On the other hand, if  $s \geq 2$, then the map $\oplus_{h \in I} \langle h \rangle \rightarrow  \mathbb{Z}_2^s$ fails to be injective only for $|I| =3$.  After a linear change of coordinates in $\mathbb{Z}_2^s$, we can assume that $I = \{h_1, h_2, h_3 \}$ where either 
$h_1 = (1,0,0 \cdots, 0)$, $h_2 = (0,1,0 \cdots, 0)$, 
and $h_3 = (0,0,1 \cdots, 0)$ or 
$h_1 = (1,0,0 \cdots, 0)$, $h_2 = (0,1,0 \cdots, 0)$, 
and $h_3 = (1,1,0 \cdots, 0)$. In the first case, the map 
$\oplus_{h \in I} \langle g_i \rangle \rightarrow  \mathbb{Z}_2^s$ is an injection for $s \geq 2$, so $X$ is also smooth over $y$.  To describe the second case, we observe that the stabilizer group of the point $y$ is $\mathbb{Z}_2^2$, so at an open $U_y \subset \mathbb{WP}^3$, the $\mathbb{Z}_2^s$ abelian cover factors as 
\[
\pi^{-1}({U_y}) \subset X \rightarrow V_y \rightarrow U_y.
\]
The map $\pi^{-1}(U_y) \rightarrow V_y $ is etale with group  $G / \langle h_1, h_2, h_3 \rangle \cong \mathbb{Z}_2^{s-2}$. In contrast, 
the map $V_y \rightarrow U_y$ is a $\mathbb{Z}_2^2$-cover. As a consequence, the singularities at $X$ over $y$, are isomorphic to the ones that appear in $V_y$.
To find the singularities on the bi-double cover 
$V_y \rightarrow U_y $
with $y \in D_{h_1} \cap D_{h_2} \cap D_{h_3}$, we write the local equation of $V_y$ over $y$, as in \cite[Eq (2.4)]{catanese1984moduli}. Let $w_i$ with $i\in \{1, 2, 3\}$ be the fiber coordinates of the line bundles $\mathcal{L}_i$ associated to the bi-double cover, and let $x_j$ with $j \in \{1, 2, 3\}$ be the local equation of the divisors $D_{h_j}$. It holds that
\begin{align*}
V_y = 
\{
(w_1, w_2, w_3, x_1, x_2, x_3) \in \mathbb{A}^6\, | \, 
w_i^2 = x_jx_k,\,  x_kw_k = w_iw_j
\}.
\end{align*}
These equations define a Veronese surface $v_2(\mathbb{P}^2)$ in $\mathbb{P}^5$. Therefore, the germ of the singularity $\left( V_y, 0 \right)$ is isomorphic to 
$\left( 
\text{Cone}\left( 
v_2(\mathbb{P}^2)
\right), 0
\right)$ which is isomorphic to $\frac{1}{2}(1,1,1)$.

Next, we consider the case $y \in  \text{Sing}(\mathbb{WP}^3)$. By our hypotheses $y$ is contained in at most one $D_h$ because $D_{h_1,h_2}$ and $D_{h_1,h_2,h_3}$ are away from the singular locus of $\mathbb{WP}^3$.   If $y$ is not contained in any $D_h$, then the cover is etale over $y$, and $X$ has the same singularities than $\mathbb{WP}^3$. If $y$ is contained in a $D_h$, the $(\mathbb{Z}_2)^s$-cover factors as 
\[
\pi^{-1}({U_y}) \subset X \rightarrow V_y \rightarrow U_y.
\]
The map $\pi^{-1}(U_y) \rightarrow V_y $ is etale with group  $G / \langle h \rangle \cong \mathbb{Z}_2^{s-1}$, and $V_y \rightarrow U_y$ is a double-cover, and a double cover of a quotient singularity
is a quotient singularities as well. 
\end{proof}

\subsection{Calculating invariants of threefolds}\label{sec:calculation_invariants}

Following \cite[Sec 6-10]{huntthreefolds89},  we focus on the self-intersection of the canonical divisor, the holomorphic Euler characteristic, and the topological Euler characteristic to classify our covers.

\begin{lemma}\label{lemma:K3}
Let $X \rightarrow \mathbb{P}(a_0,a_1,a_2,a_3)$ be a $\mathbb{Z}_2^s$-cover, $H_{gen}$ be the generator of the Picard group $Pic_{\mathbb{Q}}(\mathbb{WP}^3)$ and $d_g$ be the degree of the divisor $D_g = d_g H_{gen}$ where $d_0 =0$,
and there is at least one $d_g > 0$.
Then
\begin{align*}
    K_X^3  = 
  \frac{2^s}{\prod_{i=0}^3 a_i}   
  \left(
   \frac{1}{2}\sum_{g \in G} d_{g}
  - \sum_{i=0}^3 a_{i} 
  \right)^3
= 
 \frac{2^{s-3}}{\prod_{i=0}^3 a_i}  
 \left( \sum_{g \in G} d_g \right)^3   
 + O\left( d_{g_a}d_{g_b} \right) 
\end{align*}
\end{lemma}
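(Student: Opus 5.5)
The plan is to compute $K_X$ as a pullback, using \Cref{lemma:general_canonical_formula_Z2s}, and then apply the projection formula for the finite map $f$.

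First, on the normal base $Y=\mathbb{P}(a_0,a_1,a_2,a_3)$ we have $K_Y = -\left(\sum_{i=0}^3 a_i\right) H_{gen}$ as $\mathbb{Q}$-Cartier Weil divisor classes. This is the standard formula for a well-formed weighted projective space, with $H_{gen}=\mathcal{O}_{\mathbb{P}}(1)$. By part (iii) of \Cref{lemma:general_canonical_formula_Z2s}, the Hurwitz divisor is $D_{\operatorname{Hur}}=\frac12\sum_{g\neq 0} D_g$. Since $\operatorname{Cl}(Y)\otimes\mathbb{Q}=\mathbb{Q}H_{gen}$, every Weil divisor on $Y$ is $\mathbb{Q}$-Cartier. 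So $K_Y+D_{\operatorname{Hur}}$ is $\mathbb{Q}$-Cartier, and hence so is $K_X$ by (i). Part (ii) then gives
\[
K_X = f^*\Bigl(\bigl(\tfrac12\textstyle\sum_{g\in G} d_g - \sum_{i=0}^3 a_i\bigr)H_{gen}\Bigr),
\]
where we use $d_0=0$ to include $g=0$ in the sum.

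Second, $f$ is finite of degree $|G|=2^s$ and surjective. Intersection numbers of $\mathbb{Q}$-Cartier divisors therefore satisfy $(f^*A)^3=\deg(f)\,A^3$: clear denominators so that $A$ becomes Cartier, and apply the projection formula. Combined with $H_{gen}^3 = 1/\prod_i a_i$ on $\mathbb{P}(a_0,\dots,a_3)$, this yields the first equality
\[
K_X^3=\frac{2^s}{\prod_i a_i}\Bigl(\tfrac12\textstyle\sum_g d_g-\sum_i a_i\Bigr)^3 .
\]

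Third, expand the cube by the binomial theorem. The leading term is $\frac{2^s}{8\prod a_i}(\sum_g d_g)^3=\frac{2^{s-3}}{\prod a_i}(\sum_g d_g)^3$. The remaining terms have degree at most $2$ in the $d_g$ with coefficients depending only on the weights, so they are $O(d_{g_a}d_{g_b})$ in the asymptotic sense of \Cref{def:limit}. The hypothesis that some $d_g>0$ ensures the branch divisor is nonempty, so that \Cref{lemma:general_canonical_formula_Z2s} applies in the stated form; it is not otherwise needed.

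The only delicate step is justifying the pullback formula and the projection formula when $f$ is non-flat and $Y$ is singular. I would handle this by noting that $\mathbb{Q}$-Cartier intersection numbers are computed after passing to a Cartier multiple $NA$, and that $\deg f=2^s$ holds because the $G$-action on $X$ is faithful with $Y=X/G$.
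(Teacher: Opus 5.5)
Your proposal is correct and follows the paper's argument. The paper likewise invokes \Cref{lemma:general_canonical_formula_Z2s} to write $K_X^3 = 2^s\bigl(K_Y+\tfrac12\sum_g D_g\bigr)^3$, uses the toric description $K_Y = -\bigl(\sum_i a_i\bigr)H_{gen}$, and concludes with $H_{gen}^3 = 1/\prod_i a_i$. Your extra justifications ($\mathbb{Q}$-factoriality of the weighted projective space, the projection formula for the finite degree-$2^s$ map, and the binomial expansion giving the $O(d_{g_a}d_{g_b})$ term) fill in steps the paper leaves implicit.
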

\begin{proof}
By Lemma \ref{lemma:general_canonical_formula_Z2s}, the canonical self-intersection of the $\mathbb{Z}_2^s$ abelian cover
$X \rightarrow \mathbb{WP}^3$ is equal to 
\begin{align*}
  K_X^3 =  2^{s}\left( 
  K_{\mathbb{WP}^3} + \frac{1}{2}\sum_{g \in G} D_{g}
  \right)^3
\end{align*}
Since $\mathbb{WP}^3$ is a toric variety, its canonical divisor can be described in terms of the toric boundary, which yields:
\begin{align*}
  K_{\mathbb{WP}^3} + \frac{1}{2}\sum_{g \in G} D_{g}
  =
  \left(
  - \sum_{i=0}^3 a_{i} + \frac{1}{2}\sum_{g \in G} d_{g}
  \right)H_{gen}
\end{align*}
Our result now follows from 
\(
(H_{gen})^3 = \frac{1}{\prod_{i=0}^3 a_i}.
\)
by \cite[lemma 12.5.2]{cox2024toric}
\end{proof}

\begin{lemma}
\label{lemma:holomorphic_euler_limit_chi_K3}
Let $f:X \to \mathbb{P}(a_0, \ldots, a_3)$ be a $\mathbb{Z}^s_2$-cover with notation as in Lemma \ref{lemma:K3}. Then, 
\begin{align*}
 \chi(\mathcal{O}_X) = 
\frac{-1}{48\prod_{i=0}^3 a_i}
\sum_{\chi \in G^*}
\left( \sum_{\chi(g)=-1} d_g \right)^3 
+ 
O\left( \left( 
\sum d_{g})^2
\right) \right)
\end{align*}
\end{lemma}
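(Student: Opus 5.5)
The plan is to use the eigensheaf decomposition \eqref{eq:decomposition}. Since $f$ is finite, $\chi(\mathcal{O}_X)=\chi(f_*\mathcal{O}_X)=\sum_{\chi\in G^*}\chi(Y,L_\chi^{-1})$. By \eqref{eq:half_sum_relation}, each $L_\chi$ with $\chi\neq 1$ is the rank one reflexive sheaf $\mathcal{O}_{\mathbb{P}}(\ell_\chi)$ with
\[
\ell_\chi=\tfrac12\sum_{\chi(g)=-1}d_g .
\]
This holds because the class group of $\mathbb{P}(a_0,\dots,a_3)$ is $\mathbb{Z}$, generated by $\mathcal{O}_{\mathbb{P}}(1)$, and is torsion free, so the relation determines $L_\chi$. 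The trivial character contributes $\chi(\mathcal{O}_Y)=1$, which is absorbed in the error term. It therefore remains to compute $\chi(\mathbb{P},\mathcal{O}_{\mathbb{P}}(-\ell))$ to leading order in $\ell$.

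For this step I would use the standard asymptotic Riemann--Roch on the normal projective threefold $\mathbb{P}=\mathbb{P}(a_0,\dots,a_3)$. For a $\mathbb{Q}$-Cartier Weil divisor $D$, the function $\chi(\mathcal{O}(nD))$ is a quasi-polynomial in $n$ with constant leading coefficient $D^3/6$. Concretely, the Hilbert series of the weighted polynomial ring is $\prod_i(1-t^{a_i})^{-1}$, so $h^0(\mathcal{O}_{\mathbb{P}}(\ell))=\ell^3/(6\prod a_i)+O(\ell^2)$. Serre duality $\chi(\mathcal{O}(-\ell))=-\chi(\mathcal{O}(K+\ell))=-\chi(\mathcal{O}(\ell-\sum a_i))$ holds on the Cohen--Macaulay variety $\mathbb{P}$ with reflexive dualizing sheaf $\mathcal{O}(-\sum a_i)$. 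Combined with the vanishing of the higher cohomology of $\mathcal{O}_{\mathbb{P}}(m)$ for $m\ge0$, this gives
\[
\chi(\mathcal{O}_{\mathbb{P}}(-\ell))=-\frac{\ell^3}{6\prod a_i}+O(\ell^2),
\]
and the error depends only on the weights. The fact $(H_{gen})^3=1/\prod a_i$ used in Lemma \ref{lemma:K3} matches this leading coefficient.

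Substituting $\ell_\chi=\frac12\sum_{\chi(g)=-1}d_g$ gives $-\frac{1}{48\prod a_i}\bigl(\sum_{\chi(g)=-1}d_g\bigr)^3$ for each $\chi\neq 1$. Summing over $\chi$ yields the stated formula; the term $\chi=1$ has an empty inner sum and contributes nothing to the cubic part. Each $\ell_\chi\le\sum_g d_g$, so the quadratic errors are $O\bigl((\sum d_g)^2\bigr)$ uniformly.

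The main obstacle is justifying the Riemann--Roch estimate when $\mathcal{O}_{\mathbb{P}}(-\ell)$ is not invertible, which is the non-flat case. The quasi-polynomial has periodic lower-order coefficients coming from the cyclic quotient singularities. I would handle this through the explicit graded-ring computation: for $\ell\ge0$, $h^0(\mathcal{O}_{\mathbb{P}}(\ell))$ is the number of monomials of weighted degree $\ell$, and counting lattice points in a dilated simplex gives leading term $\ell^3/(6\prod a_i)$ with a bounded-periodic $O(\ell^2)$ correction. This avoids needing a singular Riemann--Roch theorem.
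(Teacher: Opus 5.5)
Your argument is correct and has the same skeleton as the paper's. You obtain $\chi(\mathcal{O}_X)=\sum_{\chi}\chi(L_\chi^{-1})$ from the eigensheaf decomposition. You identify $L_\chi\cong\mathcal{O}_{\mathbb{P}}(\ell_\chi)$ with $\ell_\chi=\tfrac12\sum_{\chi(g)=-1}d_g$ using \eqref{eq:half_sum_relation} and $\operatorname{Cl}(\mathbb{P})\cong\mathbb{Z}$. Then you extract the cubic leading term of $\chi(\mathcal{O}_{\mathbb{P}}(-\ell))$.

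The difference is only in that last step. The paper invokes Hirzebruch--Riemann--Roch to write $\chi(L_\chi^{-1})=-\tfrac16 L_\chi^3+O(\ell_\chi^2)$, and then uses $H_{gen}^3=1/\prod a_i$. But $\mathbb{P}$ is singular, and in the non-flat case $L_\chi^{-1}$ is not even invertible. So that step tacitly relies on a singular (orbifold) Riemann--Roch whose leading term happens to agree. Your route through the Hilbert series $\prod_i(1-t^{a_i})^{-1}$ plus duality is elementary. It treats flat and non-flat covers uniformly, and it makes explicit that the $O(\ell^2)$ error depends only on the weights.

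The one step you should tighten is the duality. For a non-invertible rank-one reflexive sheaf $\mathcal{F}$, the identity $\chi(\mathcal{F})=-\chi(\mathcal{H}om(\mathcal{F},\omega_{\mathbb{P}}))$ requires $\mathcal{E}xt^j(\mathcal{F},\omega_{\mathbb{P}})=0$ for $j>0$. This holds because the sheaves $\mathcal{O}_{\mathbb{P}}(k)$ are maximal Cohen--Macaulay at cyclic quotient singularities, being direct summands of the smooth cover.

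Alternatively, you can avoid duality altogether by computing with the local cohomology of $k[x_0,\dots,x_3]$, as in \cite[Section 1.4]{dolgachevwps82}, which the paper uses elsewhere. This gives $H^1=H^2=0$ for every $\mathcal{O}_{\mathbb{P}}(m)$. It also shows that $H^3(\mathcal{O}_{\mathbb{P}}(-\ell))$ has a basis of inverse monomials $x_0^{-b_0}\cdots x_3^{-b_3}$ with all $b_i\ge 1$ and $\sum a_ib_i=\ell$. Hence $h^3(\mathcal{O}_{\mathbb{P}}(-\ell))=h^0(\mathcal{O}_{\mathbb{P}}(\ell-\sum a_i))$ directly.
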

\begin{proof}
We start with a direct calculation:
\begin{align*}
 \chi(\mathcal{O}_X) &=
 \chi \left( 
 f_* \left( 
 \mathcal{O}_X
 \right) 
 \right)
&& \text{ since $f$ is finite}
 \\
 &=
\chi \left( 
 \bigoplus_{\chi \in G^*}
 \mathcal{O}_Y \left( L_{\chi}^{-1}\right)
 \right)
  && \text{ by 
  \cite[Eq (1)]{AP12} and $\chi(L_1) = 0$
  } 
 \\
&=
1 + \sum_{\chi \in G^*} \chi(L^{-1}_{\chi})
 && 
 \text{ 
additivity of  $\chi$ and $\chi(\mathcal{O}_{\mathbb{WP}^3}) = 1$
 }
\\
&=
- \frac{1}{6}\sum_{\chi \in G^*}\deg(L_{\chi}^3) + 
O \left( \deg(L_{\chi}^2) \right)
 && \text{ by Hirzebruch–Riemann–Roch}
\\
&=
- \frac{1}{6}\sum_{\chi \in G^*}
\left(
\sum_{g \, | \, \chi(g)=-1} 
\frac{1}{2}
D_g
\right)^3
+ 
O\left( \left( 
\sum d_{g})^2
\right) \right)
 && \text{ by Equation ~\eqref{eq:half_sum_relation}}
\\
&=
\frac{-1}{48 \prod_{i=0}^3a_i}\sum_{\chi \in G^*}
\left(
\sum_{g \, | \, \chi(g)= -1} d_g
\right)^3
+ 
O\left( \left( 
\sum d_{g}
\right)^2 \right)
 && 
\end{align*}
where the last equality follows 
by writing $D_g = d_g H$ and using
 $H_{gen}^3
 = 
 \frac{1}{\prod_{i=0}^3a_i}.
 $
\end{proof}


To describe the Euler characteristic of $X$, we divide $Y$ into a disjoint union of strata $S_i$ such that the number of pre-images is fixed on every strata. Then, we use the additivity of the Euler characteristic to conclude our result.  The details are given next.
\begin{lemma}\label{lemma:strata}
Let $f: X \to Y$ be a $\mathbb{Z}_2^s$ cover with branch data equal to $\{D_g \, |\, g \in G, \, D_0 = 0\}$. 
Assume that the conditions of Definition \ref{def:divisor_standard set-up} hold, and let 
\(D_{p,q,r}:=D_p \cap D_q \cap D_r \) and
\( D_{p,q}:= D_p \cap D_q  \). 
Then, it holds that
\(  Y = \bigsqcup_{i=1}^{4} S_i \)
where 
\begin{align*}
S_4 &:=\bigsqcup_{\{(p,q,r) | \, p < q < r\}}  
    D_{p,q,r}
    &&
S_3 :=
    \bigsqcup_{\{(p,q) | p < q\}} 
    \left( 
   D_{p,q} \setminus
   \bigcup_{\{r | r \neq p, q\}}D_{p,q,r}
    \right)    
\\
S_2 &:=  
    \bigsqcup_p 
    \left( 
    D_p \setminus
    \bigcup_{\{q | q \neq p\}}D_{p,q} 
    \right)
&&     
S_1 := Y \setminus \bigcup_p D_p.  
\end{align*}
and  $\dim(S_4) = 0$, $\dim(S_3) = 1$, $\dim(S_2) = 2$, and  $\dim(S_1) = 3$.  
\end{lemma}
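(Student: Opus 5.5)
The plan is to stratify $Y$ by the number of branch divisors through a point. For each $y \in Y$ let $I(y) := \{ g \in G\setminus\{0\} \mid y \in D_g\}$. Since $D_0 = 0$, only $g \neq 0$ contributes. By condition (4) of Definition~\ref{def:divisor_standard set-up}, every $(t\geq 4)$-fold intersection of distinct branch divisors is empty, so $|I(y)| \leq 3$ for all $y$. We then define $S_{k+1}$ as the set of points with $|I(y)| = k$ for $k=0,1,2,3$. These four sets are disjoint and cover $Y$ by construction.

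Next I check that this abstract description matches the formulas in the statement.
\begin{enumerate}
\item The points with $I(y)=\{p,q,r\}$ are exactly $D_{p,q,r}$. Different unordered triples give disjoint sets, because a point lying in two different triple intersections would lie in at least four $D_g$'s. This is why the union defining $S_4$ is disjoint.
\item The points with $I(y)=\{p,q\}$ are $D_{p,q}\setminus \bigcup_{r\neq p,q} D_{p,q,r}$. Two different pairs cannot share such a point, since otherwise $|I(y)|\geq 3$.
\item The same reasoning applies to $S_2$ and $S_1$.
\end{enumerate}

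The remaining task is dimensions, which comes from genericity. Each $D_p$ is an ample Weil divisor; it is nonempty whenever $d_p>0$, and it has pure dimension $2$. By condition (2), $D_{p,q}$ is a well-formed quasi-smooth complete intersection, hence of dimension $1$. By condition (3), $D_{p,q,r}$ is a smooth complete intersection of three ample divisors, hence finite.

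Removing lower-dimensional closed subsets keeps each stratum dense in its closure. Therefore $\dim S_2 = 2$ and $\dim S_3 = 1$, and $S_1$ is a nonempty Zariski open subset, so $\dim S_1 = 3$. Finally, $S_4$ is finite and nonempty because three ample divisors on a projective threefold always meet, so $\dim S_4 = 0$.

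The main subtlety is the convention when some $d_g=0$ (so $D_g=\emptyset$) or when there are too few nonzero divisors, in which case some strata are empty. I would phrase the dimension statements as "of pure dimension $i-1$ when nonempty," noting that nonemptiness follows from ampleness of the $D_g$ with $d_g>0$ on $\mathbb{P}(a_0,\dots,a_3)$. A second point to check is that intersections of ample $\mathbb{Q}$-Cartier Weil divisors have the expected dimension; this follows from quasi-smoothness, by working on the affine cone $\mathbb{A}^4\setminus 0$, where the $D_g$ become Cartier hypersurfaces meeting transversally.
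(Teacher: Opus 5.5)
Your argument is correct and is the stratification the paper has in mind. The paper gives no separate proof of this lemma, only the remark that $Y$ is cut into strata according to the fibre behaviour of $f$; that is your stratification by the set $I(y)$ of branch divisors through $y$, with condition (4) capping $|I(y)|$ at three.

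Your caveat that the dimension assertions hold only for nonempty strata is a genuine correction to the statement as written. For example, when $s=1$, or whenever fewer than three $d_g$ are positive, $S_4=\emptyset$; and $S_3=\emptyset$ when fewer than two $d_g$ are positive.
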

For the following result, recall that given three different non-zero elements
 $p,q$ and $r$ in $\Z_2^s$, we 
can interpret them as vectors over $\Z_2$; and that they are linearly dependent if and only if
$p+q+r=0$.  
\begin{lemma}\label{lemma:e(strata)}
Let the notation be as in Lemma \ref{lemma:strata}.
Then, the following holds:
\begin{align*}
e(f^{-1}(S_4)) &=
2^{s-2} 
\sum_{\substack{p<q<r \\ p+q+r =0}}
e(D_{p,q,r})
+
2^{s-3} 
\sum_{\substack{p < q <r \\ p+q+r \neq 0} }
e(D_{p,q,r})
\\
e\big(f^{-1}(S_3)\big)
&=
2^{s-2}\sum_{p < q} \left(
e(D_{p,q}) - \sum_{r \, | \, r \neq p,q}
e(D_{p,q,r})
\right)  
\\
e\bigl(f^{-1}(S_2)\bigr)
&=
2^{s-1}  \sum_{p}\left(
e(D_p) - 
\sum_{ q \, | \, q \neq p} e(D_{p,q}) + 
\sum_{\substack{q<r\\ q,r\neq p}}
e(D_{p,q,r})
  \right)
  \\
e(f^{-1}(S_1))
&= 2^{s} \left( e(Y) - 
\sum_p e\left( D_p \right) + 
\sum_{p < q} e\left( D_{p,q} \right) -
\sum_{p < q < r} e\left( D_{p, q, r} \right)
\right)
\end{align*}  
\end{lemma}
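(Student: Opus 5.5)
The plan is to compute, stratum by stratum, the number of points of $X$ over a point $y\in S_i$. We then use that $f$ restricted to $f^{-1}(S_i)\to S_i$ is a finite unramified map of constant degree. Multiplicativity of the Euler characteristic for finite unramified covers (or, more elementarily, for topological covering maps of constructible pieces) then gives $e(f^{-1}(S_i)) = (\#\text{fiber})\cdot e(S_i)$. The $e(S_i)$ are expanded by inclusion--exclusion using the disjoint decomposition of \Cref{lemma:strata}.

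\textbf{Fibre cardinalities.} Over $y\in S_1$ the cover is étale, so the fibre has $2^s$ points. Over $y\in S_2$, lying on exactly one $D_p$, the inertia subgroup is $\langle p\rangle\cong\mathbb{Z}_2$, so the fibre has $2^{s-1}$ points. Over $y\in S_3$, lying on exactly $D_p\cap D_q$ with $p\neq q$, the inertia is $\langle p,q\rangle\cong\mathbb{Z}_2^2$, giving $2^{s-2}$ points.

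Over $y\in D_{p,q,r}$ the inertia is the subgroup generated by $p,q,r$. This subgroup has order $4$ when $p+q+r=0$ and order $8$ otherwise, giving $2^{s-2}$ and $2^{s-3}$ points respectively. This is consistent with the local analysis in the proof of \Cref{lemma:singDpqr}, where the cover factors through an étale $\mathbb{Z}_2^{s-2}$ (resp. $\mathbb{Z}_2^{s-3}$) cover and the local bi-double (resp. triple-double) cover is totally ramified over $y$.

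The justification that the inertia subgroup of a point is generated by the $g$ with $y\in D_g$ comes from the AP12 formalism recalled in Section~\ref{sec:abelianCovers}. By genericity (\Cref{def:divisor_standard set-up}), the triple points avoid $\mathrm{Sing}(\mathbb{WP}^3)$. At singular points of $Y$ lying on at most one $D_h$, the analysis in \Cref{lemma:singDpqr} shows the fibre is still $2^s/|\langle h\rangle|$ or $2^s$. The main delicate point is this check near $\mathrm{Sing}(Y)$ for non-flat covers; I would handle it by the local factorization $\pi^{-1}(U_y)\to V_y\to U_y$ already used there, noting that the fibre count is topological and independent of flatness.

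\textbf{Inclusion--exclusion.} Since $S_4$ consists of finitely many points, $e(f^{-1}(S_4))$ is just the point count, split according to whether $p+q+r=0$. For the other strata, I use additivity of $e$ for disjoint unions of locally closed constructible sets.

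For $S_3$, I write $D_{p,q}\setminus\bigcup_r D_{p,q,r}$. The triple intersections inside a fixed $D_{p,q}$ are pairwise disjoint, since there are no quadruple intersections, so $e$ of this set is $e(D_{p,q})-\sum_r e(D_{p,q,r})$.

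For $S_2$, I use $e(D_p\setminus\bigcup_q D_{p,q}) = e(D_p)-\sum_q e(D_{p,q})+\sum_{q<r} e(D_{p,q,r})$. Here $D_{p,q}\cap D_{p,r}=D_{p,q,r}$ and higher overlaps are empty by condition (4).

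For $S_1$, I apply the standard alternating sum over $D_p$, $D_{p,q}$, $D_{p,q,r}$, again truncated by condition (4). Multiplying each stratum's Euler characteristic by its fibre cardinality gives exactly the four stated formulas.
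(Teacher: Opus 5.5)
Your argument is the paper's argument: a constant fibre cardinality $2^s/|\langle g : y\in D_g\rangle|$ on each stratum, multiplicativity of $e$, and inclusion--exclusion truncated by the absence of quadruple points. The inclusion--exclusion bookkeeping for $S_1,\dots,S_4$ is correct.

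The one point where you go beyond the paper is wrong, however. The fibre count over a singular point of $Y$ is \emph{not} independent of flatness, and $f$ need not be \'etale over a point of $S_1$.

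\textbf{Why it fails.} The singular points of $\mathbb{WP}^3$ are quotient singularities with nontrivial local fundamental group. If an eigensheaf $L_\chi^{-1}$ fails to be invertible at $y$, then $f$ is not \'etale over $y$, whether or not any $D_g$ passes through $y$.

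\textbf{A counterexample.} Take $Y=\mathbb{P}(1,1,1,2)$, $s=2$ and $(d_{g_1},d_{g_2},d_{g_3})=(2,2,4)$.
\begin{itemize}
\item Generic branch divisors of these even degrees miss $P=[0:0:0:1]$, so $P\in S_1$.
\item The character $\chi$ with $\chi(g_1)=1$ has $l_\chi=(d_{g_2}+d_{g_3})/2=3$, so $L_\chi\cong\mathcal{O}_Y(3)$ is not invertible at $P$.
\item Near $P$ the cover is \'etale off $P$ and is induced from $\mathbb{A}^3\to\mathbb{A}^3/\pm1$ via $\mathbb{Z}_2\cong\langle g_3\rangle\subset G$.
\item Hence the fibre over $P$ has $2$ points rather than $4$, and $e(f^{-1}(S_1))=4e(S_1)-2$.
\end{itemize}

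Appealing to the factorization in \Cref{lemma:singDpqr} does not help, because its \'etaleness assertions rest on the same tacit assumption. The paper's own proof also simply asserts the fibre counts.

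\textbf{The fix.} What is needed is flatness, which is the setting where the lemma is actually used (\Cref{lemma:strata_explicit}, \Cref{prop:e_k3}). Suppose $f$ is flat and $y\in\operatorname{Sing}(Y)$ lies on no (resp.\ exactly one) $D_h$.
\begin{itemize}
\item Then $f$ (resp.\ the intermediate cover $X/\langle h\rangle\to Y$, whose push-forward is a direct summand of $f_*\mathcal{O}_X$) is finite, flat, and \'etale in codimension one near $y$.
\item The non-\'etale locus of a finite flat morphism is the zero locus of the discriminant, so it is empty or of pure codimension one. Hence the map is \'etale at $y$.
\item In the second case, $X\to X/\langle h\rangle$ is totally ramified over the preimage of $D_h$.
\end{itemize}
This gives the counts $2^s$ and $2^{s-1}$ at singular points.

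Without flatness, the four identities hold only up to a bounded correction supported on $\operatorname{Sing}(Y)$. That correction is harmless for the asymptotic results but not for the exact formulas.
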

\begin{proof}
In the case $S_4$, the locus $D_{p,q,r}$ is the union of disjoint points.  To calculate $e(f^{-1}(S_4))$, we need to consider the number of closed points in the preimage of each point in $D_{p,q,r}$. Recall that for each $D_p$, the subgroup generated by $p$, that is $\langle p \rangle$, fixes the preimage of $D_p$ pointwise (the inertia subgroup of $D$). Therefore, the number of preimages 
in the intersection $D_p \cap D_q \cap D_r$ is equal to $2^{s}/|\langle p, q, r \rangle|$. 
By hypothesis $p$, $q$ and $r$ are all distinct, we have only two options: 
$p+q+s = 0$ and $p+q+s \neq 0$.
In the first case, $p+q+r = 0$, then the group generated by $\langle p,q,r \rangle$ is isomorphic to $\Z_2^2$. In contrast, when $p+q+r \neq 0$. Then, the group generated by $ \langle p,q,r \rangle$ is isomorphic to $\Z_2^3$. Now, let's talk about the number of pre-images. In the case $p+q+r =0$, we have $2^s/2^2 = 2^{s-2}$. In the other case, we have $2^{s-3}$. This implies 
\begin{align*}
e\big(f^{-1}(S_4)\big) 
&=
2^{s-2} 
\sum_{\substack{p<q<r \\ r+q+r  = 0}}
e(D_{p,q,r})
+
2^{s-3} 
\sum_{\substack{p < q <r \\ p+q+r \neq 0} }
e(D_{p,q,r})
\end{align*}
For the next case, let's recall that
\begin{align*}
S_3=
\bigsqcup_{p<q} U_{p,q}, 
&&
U_{p,q}:=
D_{p,q}\setminus \bigcup_{r\, | \, r\neq p,q} D_{p,q,r}
\end{align*}
On $U_{p,q}$ the fiber cardinality is $2^{\,s-2}$, 
and by the inclusion-exclusion principle, we have:
\begin{align*}
e\big(f^{-1}(S_3)\big) =
2^{s-2}\sum_{p<q}e(U_{p,q}),
&&
e(D_{p,q}) = 
e 
\left( U_{p,q}\right)
+
e \left(
\bigcup_{r\, | \, r\neq p,q} D_{p,q,r} 
\right)
\end{align*}
Therefore, we have that
\begin{align*}
e\big(f^{-1}(S_3)\big) 
 =
2^{s-2}\sum_{p<q} \left(
e(D_{p,q}) - \sum_{r \, | \, r \neq p,q}
e(D_{p,q,r})
\right)
\end{align*}
For the case $S_2$, we recall that over a point of 
\[
U_p := D_p \setminus \bigcup_{q \, | \, q \neq p} D_{p,q}
\]
the fiber cardinality is constant and equal to $2^{\,s-1}$. By the inclusion–exclusion principle and the fact that \(U_p \cap U_q = \emptyset\) for all \(p \neq q\), it holds that
\begin{align*}
e\bigl(f^{-1}(S_2)\bigr) =\sum_{p} 2^{\,s-1}\, e(U_p).
&&
e(U_p) 
=
e(D_p)- e\left( \bigcup_{q\, | \, p \neq q }D_{p,q} \right).
\end{align*}
In general, 
the inclusion-exclusion principle implies
\begin{align*}
   e\left( \bigcup_{q\, | \, p \neq q }D_{p,q} \right) 
=
\sum_{ q \, | \, q \neq p} 
e(D_{p,q})- 
\sum_{q < r} 
e(D_{p,q} \cap D_{p,r})
+
\sum_{q < r < s} 
e(D_{p,q} \cap D_{p,r} \cap D_{p,s}
) - \cdots
\end{align*}
However, $D_{p,q} \cap D_{p,r} \cap D_{p,s} =  D_{p,q,r,s}$ which by hypothesis is empty. Therefore
\begin{align*}
   e\left( \bigcup_{q\, | \, p \neq q }
   D_{p,q} \right) 
=
\sum_{ q \, | \, q \neq p} 
e(D_{p,q})- 
\sum_{q < r} 
e(D_{p,q} \cap D_{p,r})
=
\sum_{ q \, | \, q \neq p} 
e(D_{p,q})- 
\sum_{q < r \, |\,  p \neq q,r} 
e(D_{p,q,r})
\end{align*}
which implies
\begin{align*}
e(U_p) 
=
e(D_p) - 
\sum_{ q \, | \, q \neq p} e(D_{p,q}) + 
\sum_{q < r \,|\,  p \neq q,r} 
e(D_{p,q,r})
\end{align*}
Finally, for the case $S_1$, we have
\begin{align*}
e(f^{-1}(S_1))
&= 
2^s\left( 
e\left(Y\right) - 
e\left( \bigcup_p D_p \right)   
\right)
\end{align*}
where
\begin{align*}
 e\left( \bigcup_p D_p \right)  
 =
\sum_p e\left( D_p \right) - 
\sum_{p < q} e\left( D_p \cap D_q \right) +
\sum_{p < q < r} e\left( D_p \cap D_q  \cap D_r \right)
\end{align*}  
so our result follows.
\end{proof}

\begin{lemma}\label{lemma:e_X}
Let the notation be as in Lemma \ref{lemma:strata}, then 
 \begin{align*}
 e(X) &=
2^se(Y)
-2^{s-1}
\sum_{p}e(D_p) 
+
2^{s-2}\sum_{p<q}e(D_{p,q})
-2^{s-3}
\sum_{\substack{p < q <r \\ 
p+q+r \neq 0
} }
e(D_{p,q,r})
 \end{align*}
\end{lemma}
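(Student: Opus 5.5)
We sum the four contributions computed in Lemma~\ref{lemma:e(strata)}. By Lemma~\ref{lemma:strata}, $Y=\bigsqcup_{i=1}^4 S_i$ is a decomposition into locally closed strata, and $f$ restricts to a finite map over each stratum with constant fiber cardinality. Additivity of the topological Euler characteristic for such decompositions of complex algebraic varieties gives
\begin{align*}
e(X)=\sum_{i=1}^4 e\bigl(f^{-1}(S_i)\bigr).
\end{align*}
So the proof reduces to collecting, for each of the terms $e(Y)$, $e(D_p)$, $e(D_{p,q})$ and $e(D_{p,q,r})$, its total coefficient across the four formulas.

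The coefficients of the first three terms are immediate.
\begin{itemize}
\item $e(Y)$ appears only in $e(f^{-1}(S_1))$, with coefficient $2^s$.
\item $e(D_p)$ appears in $S_1$ with coefficient $-2^s$ and in $S_2$ with coefficient $2^{s-1}$, for a total of $-2^{s-1}$.
\item $e(D_{p,q})$ appears in $S_1$ with coefficient $+2^s$ and in $S_3$ with coefficient $2^{s-2}$. In the $S_2$ formula, each unordered pair $\{p,q\}$ occurs twice, once as $(p,q)$ and once as $(q,p)$ in $\sum_p\sum_{q\neq p}$, giving $-2\cdot 2^{s-1}=-2^s$. The total is $2^{s-2}$.
\end{itemize}

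The main bookkeeping step is the coefficient of $e(D_{p,q,r})$ for a fixed unordered triple $p<q<r$. Here care is needed in counting how many times each triple appears in the nested sums.
\begin{itemize}
\item $S_1$ contributes $-2^s$.
\item In $S_2$, the inner sum runs over $q<r$ with $q,r\neq p$. A given triple is counted once for each choice of the distinguished element, so three times, giving $+3\cdot 2^{s-1}$.
\item In $S_3$, the triple is counted once for each choice of the pair $\{p,q\}$ with the third element as $r$, again three times, giving $-3\cdot 2^{s-2}$.
\item $S_4$ contributes $2^{s-2}$ if $p+q+r=0$ and $2^{s-3}$ otherwise.
\end{itemize}
The first three sum to
\begin{align*}
-2^s+3\cdot 2^{s-1}-3\cdot 2^{s-2}=-2^{s-2}.
\end{align*}
Adding the $S_4$ contribution, the dependent triples ($p+q+r=0$) cancel exactly. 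The independent triples are left with coefficient $-2^{s-2}+2^{s-3}=-2^{s-3}$.

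Assembling the four coefficients yields the stated formula. The only point I expect to need care is the multiplicity count of triples in the $S_2$ and $S_3$ sums: each unordered triple appears three times in each, since $D_{p,q,r}$ is symmetric in its indices. A secondary point is to justify additivity of $e$ over the stratification, which holds because the strata are constructible and $f$ is finite with constant fiber size on each of them.
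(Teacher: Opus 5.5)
Your proposal is correct and follows essentially the same route as the paper: you sum the four stratum contributions from Lemma~\ref{lemma:e(strata)} and collect coefficients, using the same multiplicity counts (each pair twice in the $S_2$ sum, each triple three times in both the $S_2$ and $S_3$ sums) to get $-2^{s-2}$ on all triples. Adding the $S_4$ term, this cancels on dependent triples and leaves $-2^{s-3}$ on independent ones.
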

\begin{proof}
Since the loci $S_i$ are disjoint, it holds that
\( e(X)=
 \sum_{i=1}^{4} e(f^{-1}(S_i))\).
 Our first observation is that 
\[
\sum_{p<q}
\left( 
e(D_{p,q}) - \sum_{r \, | \, r \neq p,q}
e(D_{p,q,r})
\right)
=
\sum_{p <q}
\left(
e(D_{p,q}) 
- 3 \sum_{\substack{r \,| p<q<r}}
e(D_{p,q,r})
\right)
\]
because the term $D_{i,j,k}$ with $i < j <k$ appears three times in the sum: $p=i, q=j, r=k$, and $p=i, q = k, r =j$,  and $p=j, q=k, r = i$. Therefore 
\[
e\big(f^{-1}(S_3) \big)
=
2^{s-2}\sum_{p < q} 
e(D_{p,q}) 
- 3(2^{s-2})\sum_{\substack{p <q < r }}
e(D_{p,q,r})
\]
Similarly, we rewrite \(e\big(f^{-1}(S_2) \big)\) as follows: 
\begin{align*}
e\big(f^{-1}(S_2) \big)
&=
2^{s-1}\sum_{p} \left( e(D_p) - 
\sum_{ q \, | \, q \neq p} e(D_{p,q}) + 
\sum_{\substack{q<r\\ q,r\neq p}}
e(D_{p,q,r})
\right)
\\
&= 
2^{s-1}\left(\sum_{p} e(D_p) - 
2 \sum_{ p < q } e(D_{p,q}) + 
\sum_{p}
\sum_{\substack{q<r\\ q,r\neq p}}
e(D_{p,q,r})\right)
\\
&=
2^{s-1}\left(\sum_{p} e(D_p) - 
2 \sum_{ p < q } e(D_{p,q}) + 
3
\sum_{\substack{p<q<r}}
e(D_{p,q,r})\right)
\end{align*}
which implies
\begin{align*}
e\bigl(f^{-1}(S_2)\bigr)
&=
2^{s-1}   \sum_{p}
e(D_p) - 
2^s\sum_{ p< q } e(D_{p,q}) + 
3(2^{s-1})\sum_{\substack{p<q<r}} e(D_{p,q,r})
\end{align*}
By using the previous formulas within this proof and Lemma \ref{lemma:e(strata)} and 
\( e(X)=
 \sum_{i=1}^{4} e(f^{-1}(S_i))\), we obtain
\begin{align*}
 e(X) &=  2^{s-2} 
\sum_{\substack{p<q<r \\ p + q +r  = 0}}
e(D_{p,q,r})
+
2^{s-3} 
\sum_{\substack{p < q <r \\ p+q+r \neq 0} }
e(D_{p,q,r})
\\
& \quad +
2^{s-2}\sum_{p < q} 
e(D_{p,q}) 
- 3(2^{s-2})\sum_{\substack{p <q < r }}
e(D_{p,q,r})
\\
& \quad + 
2^{s-1}   \sum_{p}e(D_p) 
- 2^s\sum_{ p< q } e(D_{p,q}) + 
3(2^{s-1})\sum_{\substack{p<q<r}} e(D_{p,q,r})
\\
& \quad +
2^{s} e(Y) - 
2^{s} 
\sum_p e\left( D_p \right) + 
2^{s} \sum_{p < q} e\left( D_{p,q} \right) -
2^{s} \sum_{p < q < r} e\left( D_{p, q, r} \right)
\end{align*}
Our task is now to simplify this expression.
\begin{align*}
e(X)&=
2^se(Y)+
\left( 
2^{s-1}-2^s
\right)
\sum_{p}e(D_p) 
+
\left( 2^s-2^s +2^{s-2}\right)\sum_{p<q}e(D_{p,q})
\\
& \quad + 
\left(
3(2^{s-1})-2^s-3(2^{s-2})
\right)
\sum_{p<q<r}
e(D_{p,q,r})
\\
& \quad +
2^{s-2} 
\sum_{\substack{p<q<r \\ p+q+r = 0}}
e(D_{p,q,r})
+
2^{s-3} 
\sum_{\substack{p < q <r \\ 
p+q+r \neq 0
} }
e(D_{p,q,r})
\\
&=
2^se(Y)
-2^{s-1}
\sum_{p}e(D_p) 
+
2^{s-2}\sum_{p<q}e(D_{p,q})
-2^{s-2}
\sum_{p<q<r}
e(D_{p,q,r})
\\
& \quad 
 +
2^{s-2} 
\sum_{\substack{p<q<r \\ p+q+r = 0}}
e(D_{p,q,r})
+
2^{s-3} 
\sum_{\substack{p < q <r \\ 
p+q+r \neq 0} }
e(D_{p,q,r}).
\end{align*}
We observe that 
\[
2^{s-2} 
\sum_{\substack{p < q <r \\ 
p+q+r \neq 0} }
e(D_{p,q,r})
=
2^{s-3} 
\sum_{\substack{p < q <r \\ 
p+q+r \neq 0} }
e(D_{p,q,r})
+
2^{s-3} 
\sum_{\substack{p < q <r \\ 
p+q+r \neq 0} }
e(D_{p,q,r})
\]
Therefore,
\begin{align*}
  2^{s-2} 
\sum_{\substack{p<q<r \\ p+q+r = 0}}
e(D_{p,q,r})
+
2^{s-3} 
\sum_{\substack{p < q <r \\ 
p+q+r \neq 0} }
e(D_{p,q,r})
=
2^{s-2} 
\sum_{\substack{p<q<r}}
e(D_{p,q,r})
-
2^{s-3} 
\sum_{\substack{p < q <r \\ 
p+q+r \neq 0} }
e(D_{p,q,r})
\end{align*}
and we obtain
\begin{align*}
e(X) &=
2^se(Y)
-2^{s-1}
\sum_{p}e(D_p) 
+
2^{s-2}\sum_{p<q}e(D_{p,q})
-
2^{s-3}
\sum_{\substack{p < q <r \\ p+q+r \neq 0} }
e(D_{p,q,r})
\end{align*}
So our result follows.
\end{proof}
\begin{lemma}\label{lemma:strata_explicit}
Let $f: X \rightarrow \mathbb{P}(a_0,a_1,a_2,a_3)$ be a generic and flat $\mathbb{Z}_2^s$-cover, see Definition \ref{def:divisor_standard set-up},  and denote
$A:=a_0a_1a_2a_3$ and $A_\Sigma:=a_0+a_1+a_2+a_3$. Then
\begin{align*}
  e(D_{p,q,r}) &= \displaystyle\frac{d_pd_qd_r}{A} 
  &&
  \\
  e(D_{p,q})  &=  \frac{d_p d_q}{A}\,(A_\Sigma-d_p-d_q) 
  &&
  \\
  e(D_{p})  &= 
\frac{d_p}{A} \left(
d_p^2
- d_p\sum_{i=0}^{i=3} a_i
+ \sum_{i<j} a_ia_j 
\right)
-
\sum_{y \in Sing(D_p)} \left( 1 - \frac{1}{|G_y|} \right)
&&
\end{align*}
where $|G_y|$ is the order of the quotient singularity supported at $y$.
\end{lemma}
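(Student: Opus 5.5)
We compute each Euler characteristic on the base $Y=\mathbb{P}(a_0,a_1,a_2,a_3)$, treating the strata as quasi-smooth (orbifold) complete intersections. We use the orbifold Gauss--Bonnet formula and then correct for the isolated quotient points. Throughout, $H=H_{gen}$ and $H^3=1/A$. Toric adjunction gives $c(T_Y)=\prod_{i=0}^3(1+a_iH)$ in the orbifold sense.

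\textbf{Triple intersections.} By Definition~\ref{def:divisor_standard set-up}(3), $D_{p,q,r}$ is a finite set of reduced points away from $\operatorname{Sing}(Y)$. Its cardinality equals the intersection number $D_pD_qD_r=d_pd_qd_r H^3=d_pd_qd_r/A$. This gives the first formula.

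\textbf{Double intersections.} By Definition~\ref{def:divisor_standard set-up}(2)--(3), $C=D_{p,q}$ is a well-formed quasi-smooth curve. Genericity, together with the fact that $\operatorname{Sing}(Y)$ consists of isolated points (for a well-formed threefold with the relevant gcd conditions), lets us assume $C$ avoids $\operatorname{Sing}(Y)$. Then $C$ is a smooth curve in the smooth locus. Adjunction gives $K_C=(K_Y+D_p+D_q)|_C=(d_p+d_q-A_\Sigma)H|_C$. Hence
\[
e(C)=-\deg K_C=\frac{d_pd_q}{A}(A_\Sigma-d_p-d_q).
\]
If the statement intends $C$ to possibly pass through singular points, the same formula holds as an orbifold Euler number. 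I would check that genericity excludes this, since a curve through an isolated singular point would make $e$ non-integral.

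\textbf{The divisors $D_p$ (main step).} $S=D_p$ is a quasi-smooth surface which may pass through the isolated singular points of $Y$. At each such point $y$, $S$ has a cyclic quotient singularity of order $|G_y|$. The orbifold tangent bundle satisfies
\[
c(T_S^{orb})=\frac{\prod_i(1+a_iH)}{1+d_pH}\Big|_S .
\]
So the degree-two part is $\left(\sum_{i<j}a_ia_j - d_p\sum_i a_i + d_p^2\right)H^2$. Pairing with $[S]=d_pH$ gives the orbifold Euler number
\[
e_{orb}(S)=\frac{d_p}{A}\left(d_p^2-d_pA_\Sigma+\sum_{i<j}a_ia_j\right).
\]
The topological Euler number is recovered by the standard orbifold correction $e(S)=e_{orb}(S)+\sum_{y}\left(1-\frac{1}{|G_y|}\right)$. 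Here each orbifold point contributes $1/|G_y|$ to $e_{orb}$ but $1$ to $e$. This yields the third formula with the correction term.

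One caution: if $e_{orb}=e-\sum(1-1/|G_y|)$, the sign in the stated lemma is $-$ only if the convention runs the other way. I expect the \emph{main obstacle} to be reconciling this sign and convention, namely which Euler number the inclusion--exclusion in Lemma~\ref{lemma:e_X} actually needs. Additivity requires the honest topological Euler number. I would verify this on a test case, such as a hyperplane $\{x_3=0\}\subset\mathbb{P}(1,1,1,3)$, which is $\mathbb{P}^2$ with $e=3$. That case misses the singular point, so a better test is a surface through $[0:0:0:1]$, compared against a known weighted surface. I would then adjust the sign accordingly.

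Flatness is used to ensure the $D_g$ are Cartier away from isolated points, so that the intersection numbers above are computed in $\operatorname{Pic}_{\mathbb{Q}}$ with $H^3=1/A$.
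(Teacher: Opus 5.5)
Your route is the paper's route: count the triple points by the intersection number $d_pd_qd_r/A$, treat $D_{p,q}$ by adjunction in the smooth locus, and treat $D_p$ via the degree-two part of $\prod_i(1+a_iH)/(1+d_pH)$ plus a correction at the quotient points. Two small repairs to the curve step follow.

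First, you do not need genericity or isolated singularities of $Y$ to keep $D_{p,q}$ off $\operatorname{Sing}(Y)$. Well-formedness of a subvariety $C$ in Fletcher's sense means $\operatorname{codim}_C(C\cap\operatorname{Sing}Y)\ge 2$. For a curve this says the intersection is empty, and this also covers bases like $\mathbb{P}(1,1,2,2)$, whose singular locus is a line. This replaces your integrality heuristic, which is muddled anyway: the topological $e$ of a curve is always an integer. Second, flatness plays no role, since every Weil divisor on $Y$ is $\mathbb{Q}$-Cartier with $D_g\equiv d_gH$.

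The sign issue you raised is genuine, and you should trust your derivation rather than plan to adjust it. The correct relation is $e_{orb}(S)=e(S)-\sum_y(1-1/|G_y|)$, so $e(D_p)=e_{orb}(D_p)+\sum_y(1-1/|G_y|)$. The additivity argument for $e(X)$ needs exactly this topological Euler number.

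Your proposed test settles it. The surface $\{x_0=0\}\subset\mathbb{P}(1,1,1,3)$ is $\mathbb{P}(1,1,3)$, which is quasi-smooth and well-formed and passes through the $\frac13(1,1,1)$ point, and $e=3$. The orbifold computation gives $\frac13(1-6+12)=\frac73$, so $e=\frac73+\frac23$, whereas the displayed formula would give $\frac53$.

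So the third formula as stated is false whenever $D_p$ meets $\operatorname{Sing}(Y)$, and no choice of convention rescues it. The paper's proof reaches the minus sign only by quoting Blache's relation in the reversed form $e_{orb}=e+\sum_y(1-1/|G_y|)$. Your sentence ``This yields the third formula with the correction term'' is therefore wrong as written. What your argument actually proves is the third formula with $+$ in place of $-$.

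The discrepancy is $O(1)$ in the $d_g$. The only downstream use is $e(D_p)=d_p^3/A+O((\sum d_g)^2)$ in the asymptotic formula for $e(X)$, so nothing later in the paper is affected.
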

\begin{proof} Let $Y:=\mathbb{P}(a_0,a_1,a_2,a_3)$, 
by \cite[Sec I.2.12]{fletcher00}, there is a finite map $g_{cv}: \mathbb{P}^3 \to Y$ of degree $a_0a_1a_2a_3$. Since $g_{cv}^*\mathcal{O}_Y(d) = \mathcal{O}_{\mathbb{P}^3}(d)$, we have $D_pD_qD_r = \displaystyle\frac{1}{a_0a_1a_2a_3}d_pd_qd_r$. By hypothesis, $D_p, D_q, D_r$ intersect transversely and are away from the singular locus, then we have $D_pD_qD_r$ is equal to the number of points the hypersurfaces $D_p$, $D_q$ and $D_r$ intersect. Hence 
    \begin{equation}\label{triple intersection}
    e(D_{p,q,r}) =  
       e(D_p \cap D_q \cap D_r) = \displaystyle\frac{d_pd_qd_r}{a_0a_1a_2a_3} 
    \end{equation}
    We consider the following case.  By hypothesis, $D_{p,q}$ is well-formed and quasi-smooth. By \cite[I.3.9]{fletcher00}, $D_{p,q}$ does not contain a codimension $3$ singular stratum of $\mathbb{P}(a_0,a_1,a_2,a_3)$. Therefore, any possible singularity of $D_{p,q}$ is away from the singular locus of the weighted projective space, and quasi-smoothness implies $D_{p,q}$ is smooth.
    Since $Y$ is a  well–formed weighted projective threefold, it holds that  $K_Y\equiv -A_\Sigma H$ and
$H^3=1/A$ where $H$ is the generator of $\Pic_{\mathbb{Q}}(Y)$. 
By construction $D_{p,q}$ is a smooth complete–intersection of the divisors $D_p = d_pH$ and $D_q =d_qH$. Therefore, its genus is given by 
\[
2g(D_{p,q})-2 = \deg\!\left((K_Y+d_pH+d_qH)\cdot d_pH\cdot d_qH\right)
=
\left(
-A_{\Sigma}d_pd_q +d_p^2d_q+d_q^2d_p
\right)H^3
\]
and the result follows from $e(D_{p,q})=2-2g(D_{p,q})$.

To calculate $e(D_p)$, we follow the strategy used in 
\cite[Section A.2]{brown2020hodge}. 
$D_p$ is a surface with isolated cyclic singularities of the form $(D_p, y) \cong \left( \mathbb{C}^2/ G_y , 0 \right)$. 
Associated to $D_p$, we have the so called 
orbifold Euler characteristic $e_{orb}(D_p)$, see  \cite{hirzebruch1990euler},  which is the coefficient of $h^2$ in the series expansion of 
\[
g(h) := \frac{d_p}{A}\left( \frac{\prod_{i=0}^3  (1+ a_ih)}{1+ d_ph}\right).
\]
A direct calculation shows that
\begin{align*}
\prod_{i=0}^3  (1+ a_ih) & = 1 + \sum_{i=0}^{3} a_i h + \sum_{i < j} a_i a_j h^2 + \mathcal{O}(h^3)
\\
\frac{1}{1+ d_ph} &= 1 - d_ph + d_p^2 h^2 + \mathcal{O}(h^3)
\end{align*}
Therefore, by calculating the coefficient of $h^2$ in $g(h)$, we have
\begin{align*}
e_{orb}(D_p) = 
\frac{d_p}{A} \left(
d_p^2
- d_p\sum_{i=0}^{3} a_i
+ \sum_{i<j} a_ia_j 
\right).
\end{align*}
Now we use \cite[Sec 2.11-2.14]{blache1996chern} that relates the orbifold Euler characteristic with the standard one:
\begin{align*}
 e_{orb}(D_p) = 
 e(D_p) 
+ \sum_{y\in \text{Sing}(D_p)} \frac{|G_y| -1}{|G_y|} 
\end{align*}
to conclude our results.
\end{proof}
\begin{proposition}
\label{prop:e_k3}
Let $f: X \rightarrow \mathbb{P}(a_0,a_1,a_2,a_3)$ be a generic and flat $\mathbb{Z}_2^s$-cover with branch data
$D_{g} =d_gH$ for $g \in G$, see Definition \ref{def:divisor_standard set-up}. 
Then, 
\begin{align*}
e(X) &=
-\frac{2^{s-3}}{a_0a_1a_2a_3}
\left( 
\sum_{p}4d_p^3 
+
2\sum_{p<q}d_p d_q(d_p+d_q)
+
\sum_{\substack{p < q <h  \\ p+q+h \neq 0 } }
d_pd_qd_h
\right)
+ \mathcal{O}\left( 
\left(
\sum d_i \right)^2
\right)
 \end{align*}  
\end{proposition}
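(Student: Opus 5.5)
The plan is to substitute the explicit Euler characteristics of the strata from Lemma~\ref{lemma:strata_explicit} into the formula of Lemma~\ref{lemma:e_X}, and then keep only the terms that are cubic in the degrees $d_g$. Everything of total degree at most two in the $d_g$ goes into $\mathcal{O}\bigl((\sum d_i)^2\bigr)$.

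Recall that Lemma~\ref{lemma:e_X} gives
\[
e(X)=2^se(Y)-2^{s-1}\sum_p e(D_p)+2^{s-2}\sum_{p<q}e(D_{p,q})-2^{s-3}\sum_{\substack{p<q<r\\ p+q+r\neq 0}}e(D_{p,q,r}).
\]
I would treat the four terms separately, writing $A=a_0a_1a_2a_3$.
\begin{enumerate}
\item The term $2^se(Y)$ is a constant, since $e(\mathbb{P}(a_0,\dots,a_3))=4$, so it is absorbed into the error.
\item By Lemma~\ref{lemma:strata_explicit}, the leading part of $e(D_p)$ is $d_p^3/A$. The remaining polynomial terms, $-d_p^2A_\Sigma/A$ and $d_p\sum_{i<j}a_ia_j/A$, have degree at most two.
\item The leading part of $e(D_{p,q})$ is $-d_pd_q(d_p+d_q)/A$. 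The term $A_\Sigma d_pd_q/A$ is quadratic.
\item $e(D_{p,q,r})=d_pd_qd_r/A$ holds exactly.
\end{enumerate}

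Collecting the cubic parts gives
\[
-\frac{2^{s-1}}{A}\sum_p d_p^3-\frac{2^{s-2}}{A}\sum_{p<q}d_pd_q(d_p+d_q)-\frac{2^{s-3}}{A}\sum_{\substack{p<q<r\\ p+q+r\neq0}}d_pd_qd_r.
\]
Factoring out $-2^{s-3}/A$ turns the coefficients $2^{s-1}$ and $2^{s-2}$ into $4$ and $2$. This is exactly the claimed expression.

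The step I expect to need genuine care is the orbifold correction $\sum_{y\in \mathrm{Sing}(D_p)}\bigl(1-1/|G_y|\bigr)$ in $e(D_p)$. Each summand lies in $[0,1)$, so it suffices to show that the number of singular points of $D_p$ is $\mathcal{O}(d_p)$.
\begin{itemize}
\item Since $D_p$ is quasi-smooth, its singular points lie on $\mathrm{Sing}(Y)$.
\item $\mathrm{Sing}(Y)$ is a union of finitely many torus-invariant points and curves, determined only by the weights.
\item A well-formed quasi-smooth $D_p$ meets each singular curve in a number of points bounded by its intersection number with that curve. This number is linear in $d_p$ with coefficients depending only on the $a_i$.
\item In addition, $D_p$ contains at most the finitely many singular vertices.
\end{itemize}
Hence the correction is $\mathcal{O}(d_p)$, which lies well within $\mathcal{O}\bigl((\sum d_i)^2\bigr)$.

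The number of indices $p,q,r$ depends only on $s$, so summing all these error terms over them keeps the total error at $\mathcal{O}\bigl((\sum d_i)^2\bigr)$, and the proposition follows.
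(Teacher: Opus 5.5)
Your proposal is correct and follows the paper's proof. Like the paper, you substitute Lemma~\ref{lemma:strata_explicit} into Lemma~\ref{lemma:e_X}, discard everything of degree at most two in the $d_g$, and factor out $-2^{s-3}/(a_0a_1a_2a_3)$. Your argument that $D_p$ has only $\mathcal{O}(d_p)$ singular points goes further than the paper, which only remarks that the local orders $|G_y|$ depend on the weights and never bounds how many singular points there are. One small correction: on a singular edge $C$ the number of points of $D_p\cap C$ is bounded by a weight-dependent multiple of $D_p\cdot C$, not by $D_p\cdot C$ itself, because the local $\mathbb{Q}$-intersection multiplicities there can be fractional. The linear bound in $d_p$ that you need still holds.
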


\begin{proof}
To obtain the expression for $e(X)$, we replace the expressions from Lemma \ref{lemma:strata_explicit} into Lemma \ref{lemma:e_X}, and then we identify the monomials on $d_h$ raised to the highest powers.  We provide the details next. 
Let's recall that we denote
$A:=a_0a_1a_2a_3$ and $A_{\Sigma}:=\sum a_i$. The expressions from Lemma \ref{lemma:strata_explicit} and the fact that the index of the singularities only depends on $a_i$ and not on the degrees $d_h$ imply the following:
\begin{align*}
  e(D_{p,q,h}) &= \displaystyle\frac{d_pd_qd_h}{A} 
  &&
  \\
  e(D_{p,q})  &=  \frac{d_p d_q}{A}\,(A_\Sigma-d_p-d_q) 
  =
 - \frac{d_p d_q}{A}\,(d_p +d_q)  
 + \mathcal{O}\left( 
\left(
\sum d_i \right)^2
\right)
  \\
  e(D_{p})  &=  
\frac{d_p}{A} \left(
d_p^2
- d_p\sum_{i=0}^{3} a_i
+ \sum_{i<j} a_ia_j 
\right)
-
\sum_{y \in Sing(D_p)} \left( 1 - \frac{1}{|G_y|} \right)
=
\frac{d_p^3}{A} 
+ \mathcal{O}\left( 
\left(
\sum d_i \right)^2
\right)
 \end{align*} 
We now replace in the expression of $e(X)$ to obtain
\begin{align*}
 e(X) = & \; 2^se(Y)
-2^{s-1}
\sum_{p}e(D_p) 
+
2^{s-2}\sum_{p<q}e(D_{p,q})
- 
2^{s-3} \sum_{\substack{p < q < h \\ 
p+q+h \neq 0
} }
e(D_{p,q,h})
\\
&= \; 
-2^{s-1} \sum_{p} \frac{d_p^3}{A}  
- 2^{s-2}\sum_{p < q} \frac{d_p d_q}{A}\,(d_p+d_q) 
-
 2^{s-3}
\sum_{\substack{p < q < h  \\ 
p+q+h \neq 0
} }
 \displaystyle\frac{d_pd_qd_h}{A} 
+ \mathcal{O}\left( 
\left(
\sum d_i \right)^2
\right)
 \\
 & = 
- \frac{2^{s-3}}{A} \left(
4 \sum_{p} d_p^3  
+ 2\sum_{p < q}  d_p d_q\,(d_p+d_q) 
+
\sum_{\substack{p < q < h \\ 
p+q+h \neq 0
} }
 \displaystyle d_pd_qd_h 
 \right)
+ \mathcal{O}\left( 
\left(
\sum d_i \right)^2
\right)
\end{align*}
\end{proof}


\subsection{Finding the asymptotics of Chern ratios}\label{sec:ratios}
A key aspect of our work is that we focus on the ratio among degrees of the branch locus, as defined below, rather than on its particular values 
\begin{definition}\label{definition of ratios}
Let $\{ D_g \;| \; g \in \mathbb{Z}_2^s \}$ be building data, then we define its ratios as 
\begin{align*}
 r_g:= \frac{\deg(D_g)}{\sum_{h \in G\setminus 0} \deg(D_{h})},
 &&
 r_0 := 0
\end{align*}
 Notice that $0 \leq r_h \leq 1$ for all $h \in G$, and  $\sum_{h \in G} r_h =1$.
\end{definition}
\begin{definition}\label{def:limit}
Given $n=2^{s}-1$, let 
$\mathbf{x}_g = (x_g \, | \, g \in \mathbb{Z}^s_2 \setminus \{0\})$ be a vector indexed by the elements of the group, for example, the degrees of the divisors $D_g$.
Let $P:\mathbb{C}^n \to \mathbb{C}$ be a function, we denote 
\[
 P( \mathbf{x}_g)
 \sim Q( \mathbf{x}_g)
\] 
 if $\lim_{\mathbf{x}_g \rightarrow \infty} \displaystyle\frac{P( \mathbf{x}_g)}{Q( \mathbf{x}_g)} = 1$, where by ${\mathbf{x}_g \rightarrow \infty}$, we mean $x_g \rightarrow \infty$ for at least one $g \in \mathbb{Z}^s_2 \setminus \{0\}$
 \end{definition}

\begin{lemma}
\label{lemma:ratio_holomorphic_euler_limit_chi_K3}
Let $f:X \to \mathbb{P}(a_0, \ldots, a_3)$ be a $\mathbb{Z}^s_2$-cover with notation as in Lemma \ref{lemma:K3}. Then, 
 \begin{align*}
 \frac{\chi(\mathcal{O}_X)}{K_X^3}
  &\sim \frac{ -1}{3(2^{s+1})} 
 \sum_{\chi \in G^*} 
  \left( 
  \sum_{\chi(g) =-1}   
  r_g
  \right)^3 
 \end{align*}
 with $\sum_{g \in G \setminus 0}  r_g = 1$ and $0 \leq r_g \leq 1$.
\end{lemma}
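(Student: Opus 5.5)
We take the quotient of the leading-order expressions from Lemma~\ref{lemma:holomorphic_euler_limit_chi_K3} and Lemma~\ref{lemma:K3} and then normalize by $S:=\sum_{h\in G\setminus 0} d_h$. The weight factor $\prod a_i$ is the same in both lemmas, so it cancels. Lemma~\ref{lemma:K3} gives
\[
K_X^3=\frac{2^{s-3}}{\prod a_i}\,S^3+O(S^2),
\]
where $O(S^2)$ is a polynomial in the $d_g$ of total degree at most $2$. Lemma~\ref{lemma:holomorphic_euler_limit_chi_K3} gives
\[
\chi(\mathcal{O}_X)=\frac{-1}{48\prod a_i}\sum_{\chi\in G^*}\Bigl(\sum_{\chi(g)=-1}d_g\Bigr)^3+O(S^2).
\]
The trivial character contributes nothing, since its inner sum is empty.

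Next we divide the numerator and denominator by $S^3$. Since $d_g=r_gS$, we get $\sum_{\chi(g)=-1}d_g=S\sum_{\chi(g)=-1}r_g$. The ratio therefore becomes
\[
\frac{\chi(\mathcal{O}_X)}{K_X^3}=\frac{-\tfrac{1}{48}\sum_{\chi}\bigl(\sum_{\chi(g)=-1}r_g\bigr)^3+O(1/S)}{2^{s-3}+O(1/S)}.
\]
The constant works out as $\frac{1}{48\cdot 2^{s-3}}=\frac{1}{6\cdot 2^{s}}=\frac{1}{3\cdot 2^{s+1}}$, which matches the stated constant. The conditions $\sum r_g=1$ and $0\le r_g\le1$ hold by Definition~\ref{definition of ratios}.

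The main obstacle is making sense of $\sim$ in Definition~\ref{def:limit}. The error terms are $O(1/S)$ uniformly, because each is a polynomial of degree at most $2$ in the $d_g\ge0$ divided by $S^3$, and $d_g\le S$ for every $g$. Also, $\mathbf{x}_g\to\infty$ in the sense of Definition~\ref{def:limit} forces $S\to\infty$. The denominator of the ratio is the constant $2^{s-3}$ plus $o(1)$, so it causes no trouble. For the numerator to be asymptotic to its leading term, we need its leading term to stay bounded away from $0$. We check this on the simplex $\{r_g\ge0,\ \sum r_g=1\}$. Fix any $g_0$ with $r_{g_0}\ge 1/(2^s-1)$. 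Half of the characters satisfy $\chi(g_0)=-1$, and for each such character the inner sum is at least $r_{g_0}$. So the leading term is at least $2^{s-1}(2^s-1)^{-3}>0$.

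With this lower bound, both the numerator and the denominator are their leading terms times $1+o(1)$. The statement then follows by taking limits, interpreting $\sim$ as saying that the ratio of the two sides tends to $1$.
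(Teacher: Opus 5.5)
Your proposal is correct and follows the paper's proof, which likewise divides the leading term of Lemma~\ref{lemma:holomorphic_euler_limit_chi_K3} by that of Lemma~\ref{lemma:K3} and rewrites $d_g/\sum_h d_h$ as $r_g$. Your extra steps supply the justification of $\sim$ that the paper's one-line argument leaves implicit: the uniform $O(1/S)$ control of the error terms, and the lower bound $2^{s-1}(2^s-1)^{-3}$ that keeps the leading cubic sum away from $0$.
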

\begin{proof}
 By Lemmas \ref{lemma:K3} and 
 Lemma \ref{lemma:holomorphic_euler_limit_chi_K3}
 our previous calculation we have
 \begin{align*}
     \frac{\chi(\mathcal{O}_X)}{K_X^3}
  =
-  \left( \frac{ 1}{3(2^{s+1})} \right)
 \sum_{\chi \in G^*} 
  \left( 
  \sum_{g \, | \, \chi(g) = -1 }   
  \frac{   d_g}
    {\sum_{g \in G} d_{g}}
  \right)^3 
 \end{align*}
 So the statement follows from the expression of $r_g$ given in Definition \ref{definition of ratios}.   
\end{proof}

Here, we prove the expression for the ratio between $-K_X^3$ and $24 \chi\left( \mathcal{O}_X \right)$ that is given in Theorem \ref{thm:main_inv}.
\begin{lemma}
Let $f: X \rightarrow \mathbb{P}(a_0,a_1,a_2,a_3)$ be a generic and flat $\mathbb{Z}_2^s$-cover with branch data
$D_g$ and corresponding ratios $r_g$, see Definition \ref{definition of ratios}.
Then, it holds that
\begin{align*}
    \frac{e(X)}{K_X^3}
&\sim   
-
\left(
\sum_{p}2r_p^3 
+
2\sum_p r_p^2 
+
\frac{1}{6}
\sum_{p, q, h \in Q(s)}
r_pr_qr_h
\right)
\end{align*}
where 
$Q(s) = 
\{
(p,q,h) \in (\mathbb{Z}_2)^{3s}
\; | \;
p, q, h \text{ pairwise distinct, and }\;
p+q+h \neq 0 
\}$. 
\end{lemma}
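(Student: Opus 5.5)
The plan is to divide the asymptotic expression for $e(X)$ in Proposition~\ref{prop:e_k3} by the one for $K_X^3$ in Lemma~\ref{lemma:K3}, and then rewrite the quotient in terms of the ratios $r_g$ of Definition~\ref{definition of ratios}. Set $A = a_0a_1a_2a_3$ and $\Sigma := \sum_{g \in G} d_g$.

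\emph{Step 1 (leading terms).} By Lemma~\ref{lemma:K3},
\[
K_X^3 = \frac{2^{s-3}}{A}\,\Sigma^3 + O(\Sigma^2),
\]
and by Proposition~\ref{prop:e_k3},
\[
e(X) = -\frac{2^{s-3}}{A}\,P(d) + O(\Sigma^2),
\]
where $P(d)=4\sum_p d_p^3 + 2\sum_{p<q} d_pd_q(d_p+d_q) + \sum_{p<q<h,\ p+q+h\neq 0} d_pd_qd_h$.

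\emph{Step 2 (the quotient).} Divide numerator and denominator by $\Sigma^3$. Under the limit of Definition~\ref{def:limit}, some $d_g \to \infty$, so $\Sigma \to \infty$ and the error terms $O(\Sigma^2)/\Sigma^3$ tend to $0$. The denominator's leading coefficient is exactly $1$ after normalization, so it does not degenerate. Since $P$ is homogeneous of degree three, $P(d)/\Sigma^3 = P(r)$. The factors $2^{s-3}/A$ cancel, giving
\[
\frac{e(X)}{K_X^3} \sim -P(r).
\]

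\emph{Step 3 (rewriting $P(r)$ using $\sum_p r_p = 1$).} For the middle term,
\[
\sum_{p<q} r_pr_q(r_p+r_q) = \sum_{p\neq q} r_p^2 r_q = \sum_p r_p^2(1-r_p) = \sum_p r_p^2 - \sum_p r_p^3 .
\]
Hence $4\sum_p r_p^3 + 2\sum_{p<q} r_pr_q(r_p+r_q) = 2\sum_p r_p^3 + 2\sum_p r_p^2$. For the last term, the condition ``pairwise distinct and $p+q+h\neq 0$'' is symmetric in $(p,q,h)$. Each unordered triple therefore appears $3! = 6$ times in $Q(s)$, so the sum over $p<q<h$ equals $\tfrac16\sum_{(p,q,h)\in Q(s)} r_pr_qr_h$. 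Together these give the claimed formula.

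\emph{Main obstacle.} The algebra is routine; the delicate point is justifying the limit. One must check that the $O(\Sigma^2)$ remainders are uniform in the ratios. They are, because they are polynomials in the $d_g$ of degree at most two, with coefficients depending only on the weights $a_i$. For $e(D_p)$ this relies on the singularity correction $\sum_y (1 - 1/|G_y|)$ being bounded independently of $d_p$: there are only finitely many singular points of $\mathbb{P}(a_0,a_1,a_2,a_3)$, and the orders $|G_y|$ depend only on the $a_i$. With this uniformity, the quotient converges as stated.
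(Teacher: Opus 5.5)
Your proposal is correct and follows the paper's argument: divide the leading term of Proposition~\ref{prop:e_k3} by that of Lemma~\ref{lemma:K3}, then simplify using $\sum_p r_p = 1$ (so that $\sum_{p\neq q} r_p^2 r_q = \sum_p r_p^2 - \sum_p r_p^3$) and the $3!$-fold symmetry of the triple sum over $Q(s)$. One small correction to your uniformity remark: a well-formed $\mathbb{P}(a_0,a_1,a_2,a_3)$ can be singular along a curve (e.g.\ $\mathbb{P}(1,1,2,2)$ along $\{x_0=x_1=0\}$), so the number of singular points of $D_p$ can grow linearly in $d_p$ rather than stay bounded; the correction term is then still $O(\Sigma)$, so your conclusion is unaffected.
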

\begin{proof}
We use the expressions from Lemma \ref{lemma:K3} and Proposition \ref{prop:e_k3}. Then, we simplify. The details are given next. 
We first use the symmetry of the indices 
\begin{align*}
2\sum_{p<q}r_p r_q(r_p+r_q)
=
\sum_{p \neq q}r_p r_q(r_p+r_q)
&&
\sum_{\substack{p < q < h \\ p+q+h \neq 0} }
r_pr_qr_h
=
\frac{1}{6}
\sum_{\substack{p, q, h \\ 
p \neq q, p \neq h, q\neq h 
\\ p+q+h \neq 0 } }
r_pr_qr_h
\end{align*}
to obtain:
\begin{align*}
   \sum_{p}4r_p^3 
+
2\sum_{p<q}r_p r_q(r_p+r_q)
+
\sum_{\substack{p < q <h \\ p+q+h \neq 0} }
r_pr_qr_h
=
   \sum_{p}4r_p^3 
+
\sum_{p \neq q}r_p r_q(r_p+r_q)
+
\frac{1}{6}
\sum_{\substack{p, q, h \\ 
p \neq q, p \neq h, q\neq h 
\\
p+q+h \neq 0} }
r_pr_qr_h
\end{align*}
Next, we use that
\begin{align*}
 \sum_{\substack{p,q \\ p \neq q}} r_p^2r_q  
 =
 \sum_p r_p^2\sum_{q \,|q \neq p}r_q
 =
  \sum_p r_p^2(1-r_p)
\end{align*}
implies
\begin{align*}
 \sum_{p \neq q}r_p r_q(r_p+r_q)
 =
\sum_{p \neq q}r_p^2 r_q
+
\sum_{q \neq p}r_p r_q^2
 =
2\sum_p r_p^2(1-r_p) 
=
2\sum_p r_p^2- 2 \sum_p r_p^3
\end{align*}
To obtain the last simplification in the statement:
\begin{align*}
   \sum_{p}4r_p^3 
+
\sum_{p \neq q}r_p r_q(r_p+r_q)
+
\frac{1}{6}
\sum_{\substack{p, q, h \\ 
p \neq q, p \neq h, q\neq h 
\\ p+q+h \neq 0} }
r_pr_qr_h
&=
   \sum_{p}4r_p^3 
+
2\sum_p r_p^2(1-r_p) 
+
\frac{1}{6}
\sum_{\substack{p, q, g \\ 
p \neq q, p \neq h, q\neq h 
\\p+q+h \neq 0} }
r_pr_qr_h
\\
&=
   \sum_{p}2r_p^3 
+
2\sum_p r_p^2 
+
\frac{1}{6}
\sum_{\substack{p, q, g 
\\ 
p \neq q, p \neq h, q\neq h 
\\
p+q+h \neq 0} }
r_pr_qr_h
\end{align*}
\end{proof}

\subsection{Upper and lower bounds for the Chern ratios}
We need some preliminary results. For each nontrivial character $\chi \in G^* \setminus\{1\}$, we define
\begin{align}\label{eq:A_chi_S_chi}
A_\chi := \sum_{g | \substack{ \chi(g) = -1}} r_g
&&
S_\chi := \sum_{g \in G} \chi(g)\, r_g.
\end{align}
and denote for convenience that $r_0 := 0$ and 
$A_1 = 0$.
\begin{lemma}\label{lemma:A_chi}
Let $A_\chi$ and $S_\chi$ be as in Equation \eqref{eq:A_chi_S_chi}, then  
\begin{align*}
2A_\chi = 1 - S_\chi,
&&
\sum_{\chi \neq 1} A_\chi = 2^{s-1},
&&
A_\chi \leq 1.
\end{align*}
Moreover, if $s \geq 2$, and $r_g > 0$ for all $g \neq 0$, then  $A_{\chi} < 1$.
\end{lemma}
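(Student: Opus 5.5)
The plan is to derive the first identity directly and then obtain the remaining statements from it. The starting point is the convention $r_0=0$ together with $\sum_{g\in G} r_g = 1$.

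For a nontrivial $\chi$, split $G$ according to the sign of $\chi(g)$. Since $\chi(g)=\pm 1$, we get
\[
S_\chi=\sum_{\chi(g)=1} r_g-\sum_{\chi(g)=-1} r_g=\Bigl(1-A_\chi\Bigr)-A_\chi=1-2A_\chi .
\]
This is exactly $2A_\chi=1-S_\chi$. For $\chi=1$ the same identity holds with the convention $A_1=0$, since $S_1=1$.

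For the sum, exchange the order of summation:
\[
\sum_{\chi\neq 1}A_\chi=\sum_{g\in G} r_g\cdot\#\{\chi\in G^*:\chi(g)=-1\}.
\]
For $g\neq 0$, the map $\chi\mapsto\chi(g)$ is a surjective homomorphism $G^*\to\{\pm1\}$, because characters separate points of $G=\mathbb{Z}_2^s$. Its kernel therefore has index $2$, so exactly $2^{s-1}$ characters satisfy $\chi(g)=-1$, and all of them are nontrivial. The term $g=0$ contributes nothing because $r_0=0$. Hence $\sum_{\chi\neq1}A_\chi=2^{s-1}\sum_g r_g=2^{s-1}$.

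The bound $A_\chi\le 1$ holds because $A_\chi$ is a partial sum of the nonnegative numbers $r_g$, whose total is $1$. Equality $A_\chi=1$ would force $r_g=0$ for every $g\neq0$ with $\chi(g)=1$. When $s\ge2$, the kernel of $\chi$ has order $2^{s-1}\ge 2$, so it contains some $g\neq 0$. If all $r_g>0$, that $r_g$ makes $\sum_{\chi(g)=1} r_g>0$, and therefore $A_\chi<1$.

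None of these steps is a real obstacle. The only point needing care is the character count $\#\{\chi:\chi(g)=-1\}=2^{s-1}$ for $g\ne 0$, which rests on the nondegeneracy of the pairing $G\times G^*\to\{\pm1\}$ (equivalently, identifying $G^*$ with $\mathbb{F}_2^s$ via $\chi(g)=(-1)^{\chi\cdot g}$).
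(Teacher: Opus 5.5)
Your proposal is correct and follows the paper's proof essentially step by step: the split by the sign of $\chi(g)$ (the paper's $B_\chi = 1 - A_\chi$), the exchange of summation with the count of exactly $2^{s-1}$ characters satisfying $\chi(g)=-1$, and the partial-sum bound $A_\chi \le 1$. The only difference is cosmetic and comes in the strict inequality: you exhibit a nonzero element of $\ker\chi$ directly, whereas the paper argues by contradiction that two distinct nonzero $g_1, g_2$ would give $\chi(g_1+g_2)=1$, which is the same observation.
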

\begin{proof}
We have that \(\chi(g) \in \{1,-1\}\) because
$G = \mathbb{Z}_2^s$. Therefore if we denote
\( 
B_\chi := \sum_{g | \chi(g) = 1}
 r_g
\)
we obtain:
\begin{align*}
A_\chi + B_\chi = \sum_{g \in G} r_g = 1,
&&
S_\chi
= \sum_{g \, | \, \chi(g) = 1} r_g + 
\sum_{g \, | \,\chi(g) = -1} (-1)r_g
=  B_\chi- A_\chi
\end{align*}
We obtain $2A_\chi = 1 - S_\chi$ 
and $2B_\chi = 1 + S_\chi$ 
by solving these two last equations.

For the second expression, we observe that
\[
\sum_{\chi \neq 1} A_\chi
= \sum_{\chi \neq 1} 
\left( 
\sum_{\substack{g \, | \, \chi(g) = -1}} r_g
\right)
= \sum_{g \in G \setminus 0} 
\left(
r_g  \#\{\chi \in G^* \setminus\{1\} \, |\, \chi(g) = -1\}
\right).
\]
Fix $g \neq 0$.
Since $G$ is a vector space over $\mathbb{F}_2$, its character group $G^*$ has $2^s$ characters.
For a fixed nonzero $g$, half of the characters satisfy $\chi(g) = -1$ and half satisfy $\chi(g)=1$.
The trivial character has $\chi(g)=1$, so among the nontrivial characters there are still exactly $2^{s-1}$ with $\chi(g)=-1$
and $2^{s-1}-1$ with $\chi(g)=1$.
Hence
\[
\#\{\chi \in G^* \setminus\{1\} : \chi(g) = -1\} = 2^{s-1},
\]
which implies (recall that $r_0 = 0$):
\[
\sum_{\chi \neq 1} A_\chi
= \sum_{g \in G \setminus 0 } r_g \cdot 2^{s-1}
= 2^{s-1} \sum_{g \in G } r_g
= 2^{s-1}.
\]
To prove the inequality, we observe that by construction, $A_\chi$ is a sum of some of the $r_g$, then we have
\(
0 \leq  A_\chi \le 1.
\)
We now show $A_\chi < 1$ for all $\chi \neq 1$. Suppose for contradiction that $A_\chi = 1$ for some nontrivial $\chi$.
Then
\begin{align*}
\sum_{\substack{g \, | \, \chi(g) = -1}} r_g = 1
=
\sum_{g \in G} r_g 
\end{align*}
By hypothesis, $r_g>0$ for all $g\neq 0$, this would force $\chi(g) = -1$ for every $g \neq 0$.
By hypothesis $s \geq 2$, so we can find two non-distinct elements $g_1$ and $g_2$ with $g_1 +g_2 \neq 0$ and $\chi(g_1)=\chi(g_2)=-1$. Such elements will imply
\[
\chi(g_1+g_2) = \chi(g_1)\chi(g_2) = (-1)(-1) = 1.
\]
This contradicts the requirement that $\chi(g)=-1$ for all $g\neq 0$.
Thus, no such $\chi$ exists, and we conclude our inequality.
\end{proof}
\begin{lemma}
\label{lemma:bound_sum_cubs}
Let $s \geq 2$, and define
 \[
 Q(r):=
  \sum_{\chi \in G^*} 
   \left( 
  \sum_{g \, | \, \chi(g) =-1}   
  r_g
  \right)^3.
 \]
Then
\begin{align*}
\frac{2^{3s-3}}{(2^s-1)^2}    \;  \leq Q(r)  \;  \leq 2^{s-1}.
\end{align*}
The maximum is realized at the vertices, that is $r_h=1$ for a fixed $h$ and $r_g=0$ for all $g\neq h$. The minimum is realized at the barycenter 
\[
r_g = \frac{1}{2^s-1} \quad \text{for all } g \neq 0
\]
\end{lemma}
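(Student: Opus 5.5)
We work with the quantities $A_\chi$ of Lemma~\ref{lemma:A_chi}. Since $A_1=0$, we have $Q(r)=\sum_{\chi\neq 1}A_\chi^3$, and $Q$ is regarded as a function on the simplex $\Delta=\{r_g\ge 0,\ \sum_{g\neq 0} r_g=1\}$. Lemma~\ref{lemma:A_chi} gives two facts: $0\le A_\chi\le 1$ and $\sum_{\chi\neq 1}A_\chi=2^{s-1}$. Both bounds will come from these facts together with convexity of $t\mapsto t^3$ on $[0,1]$.

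\emph{Upper bound.} Since $0\le A_\chi\le 1$, we have $A_\chi^3\le A_\chi$. Summing over $\chi\neq 1$ gives
\[
Q(r)\le \sum_{\chi\neq 1}A_\chi = 2^{s-1}.
\]
Equality holds exactly when every $A_\chi\in\{0,1\}$.

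At a vertex $r_h=1$, $r_g=0$ for $g\neq h$, we get $A_\chi=1$ if $\chi(h)=-1$ and $A_\chi=0$ otherwise. There are exactly $2^{s-1}$ characters with $\chi(h)=-1$, so $Q=2^{s-1}$ and the bound is attained.

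Alternatively, $Q$ is a sum of cubes of nonnegative linear forms, hence convex on $\Delta$. So its maximum is attained at a vertex, and all vertices give the same value.

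\emph{Lower bound.} The $2^s-1$ nonnegative numbers $A_\chi$ ($\chi\neq 1$) have fixed sum $2^{s-1}$. By the power-mean inequality (Jensen for $t^3$),
\[
Q(r)=\sum_{\chi\neq 1}A_\chi^3\ \ge\ (2^s-1)\left(\frac{2^{s-1}}{2^s-1}\right)^3=\frac{2^{3s-3}}{(2^s-1)^2}.
\]
Equality requires $A_\chi=2^{s-1}/(2^s-1)$ for every $\chi\neq 1$.

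At the barycenter $r_g=1/(2^s-1)$, each nontrivial $\chi$ has exactly $2^{s-1}$ elements $g$ with $\chi(g)=-1$. Hence $A_\chi=2^{s-1}/(2^s-1)$ for all $\chi\neq 1$, and the minimum is attained there.

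\emph{Uniqueness of the minimizer.} If uniqueness is needed, use the relation $2A_\chi=1-S_\chi$ from Lemma~\ref{lemma:A_chi}. Constant $A_\chi$ means $S_\chi$ is constant for $\chi\neq 1$. By Fourier inversion on $\mathbb{Z}_2^s$, the function $g\mapsto r_g$ is then determined. This forces $r$ to be $c\,\delta_0$ plus a constant, and the conditions $r_0=0$, $\sum r_g=1$ pin it down to the barycenter.

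\emph{Main obstacle.} The key step is to recognize that the convexity argument should be applied to the $A_\chi$, whose sum is fixed, rather than directly to the $r_g$. Once that reduction is made, the remaining work is verifying that both extremal values are attained, which is a direct count.

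The hypothesis $s\ge 2$ is not used in the inequalities. It only ensures the statement is nondegenerate: for $s=1$ both bounds equal $1$.
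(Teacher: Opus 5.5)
Your proof is correct. The upper bound uses the same idea as the paper's ($A_\chi^3\le A_\chi$ together with $\sum_{\chi\neq 1}A_\chi=2^{s-1}$). Using $0\le A_\chi\le 1$ directly spares you the paper's split into an interior case and a boundary case.

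The lower bound follows a genuinely different route. The paper treats $Q$ as a convex function of $r$ on $\Delta$ and notes that it is invariant under $\mathrm{Aut}(G)\cong\operatorname{GL}_s(\mathbb{F}_2)$. It then averages $r$ over this group; the average is the barycenter because the action on $G\setminus\{0\}$ is transitive. Jensen gives $Q(r_{\mathrm{unf}})\le Q(r)$, and uniqueness is argued from strict convexity of $Q$ on $\Delta$.

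You instead apply the power-mean inequality to the $2^s-1$ nonnegative numbers $A_\chi$, whose sum is fixed. This gives the bound in one line with no group action. You then settle the equality case by Fourier inversion: constant $A_\chi$ means constant $S_\chi$ for $\chi\neq 1$, hence $r=c\,\delta_0+\text{const}$, and $r_0=0$ forces the barycenter.

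Your argument is shorter and makes the equality case fully explicit. The paper's symmetrization is more structural, since it works for any $\mathrm{Aut}(G)$-invariant convex functional of $r$. However, its uniqueness step tacitly needs the linear map $r\mapsto(A_\chi)_{\chi\neq 1}$ to be injective, which is exactly the Fourier-inversion fact you use openly. What the paper cites instead, strict convexity of $t\mapsto t^3$ together with $A_\chi<1$, does not by itself give strict convexity of $Q$ as a function of $r$.
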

\begin{proof}
By definition
\[
Q(r) = \sum_{\chi \in G^* } A_\chi^3.
\]
If $r_g >0$, then Lemma \ref{lemma:A_chi}
implies $A_\chi< 1$. Since, for $x \in (0,1)$ we have $x^3 < x$ and we obtain
\[
Q(r)
= \sum_{\chi \neq 1} A_\chi^3
< \sum_{\chi \neq 1} A_\chi
= 2^{s-1}.
\]
When $r_g = 0$ for some $g \in G$, some of the strict inequalities become non-strict, so we have
$Q(r) \leq 2^{s-1}$. To reach the equality 
 we fix some $g_0 \in G\setminus\{0\}$, we take $r_{g_0}=1$ and $r_g=0$ for $g\neq g_0$. This would give
\[
A_\chi =
\begin{cases}
1 & \text{if } \chi(g_0) = -1,\\
0 & \text{if } \chi(g_0) = 1,
\end{cases}
\]
and since exactly $2^{s-1}$ nontrivial characters satisfy $\chi(g_0)=-1$, we obtain that 
\(
Q(r) = 2^{s-1}.
\)
Next, we work on the lower bound.  Each $A_\chi(r)$ is an affine linear function of $(r_g)$, and
the map $x \mapsto x^3$ is strictly convex on $[0,\infty)$.
Thus $A_\chi(r)^3$ is convex in $r$, and so
\[
Q(r)
= \sum_{\chi \neq 1} A_\chi(r)^3
\]
is a convex function on the simplex
\[
\Delta = \left\{ (r_g)_{g\neq 0} \, | \;  r_g \ge 0,\ \sum r_g = 1 \right\},
\]
and in particular on its interior defined by $r_g > 0$. We notice that $Q$ is invariant under automorphisms of $G$ because 
\[
\chi(g) = -1 \iff 
\left( \chi \circ \varphi^{-1} \right) (\varphi(g)) =-1.
\]
It follows that $Q(\varphi \cdot r) = Q(r)$, and if
$r$ is any minimizer of $Q$ on $\Delta$, then $\varphi\cdot r$ is also a minimizer.

We consider the average
\[
\bar r_g \coloneqq \frac{1}{|\mathrm{Aut}(G)|}
\sum_{\varphi \in \mathrm{Aut}(G)} 
r_{\varphi^{-1}(g)} 
\]
whereby construction, $\bar r_g$ is independent of $g\neq 0$ because the automorphism group acts transitively on the non-zero elements of $(\mathbb{Z}_2)^s$. 
 Jensen's inequality for convex functions applies to $Q$. It implies that if $r_{unf}$ is the vector with all entries
 equal to $\frac{1}{2^s-1}$, then:
\[
Q(r_{unf})
\le \frac{1}{|\mathrm{Aut}(G)|}
\sum_{\varphi} Q(\varphi \cdot r )
= Q(r),
\]
By Lemma \ref{lemma:A_chi}, $A_\chi <1$ on the interior of $\Delta$ and $x \to x^3$ is strictly convex within the interior of $[0,1]$. This implies that $Q$ is strictly convex on the interior of $\Delta$, so the unique minimizer in the
region $r_g>0$ is
\[
r_g = \frac{1}{2^s-1}\quad \forall g\neq 0.
\]
and we have
\[
Q(r) \ge Q(r_{\mathrm{unf}})
= \frac{2^{3s-3}}{(2^s - 1)^2},
\]
with $r_{\mathrm{unf}}$ the only minimizer in the interior of $\Delta$.
To show the uniqueness of the minimizer, we show that it cannot be on the boundary of $\Delta$. For that, let $\hat{r}$ be a minimizer on the boundary. Since all the entries are non-negative. Then, 
$\frac{\hat{r}+r_{unf}}{2}$ is in the interior of $\Delta$ and 
\[
Q\left( 
\frac{\hat{r}+r_{unf}}{2}
\right)
<
Q(r_{unf})
\]
which is a contradiction.
\end{proof}

We continue with the ratio of the topological Euler characteristic to holomorphic Euler characteristic. First, we need a couple of preliminary lemmas.
\begin{lemma}\label{lemma:sum_s_chi}
Let
\begin{align*}
S_{\chi} := \sum_{h \in G} \chi(h)\, r_h,
&&
b := \sum_{h \in G} r_h^2
&& 
T := \sum_{\substack{p,q,h \\ 
p \neq q, p \neq h, q \neq h
\\
p+q+h = 0}}
r_p r_q r_h,
\end{align*}
where we recall that $r_0=0$.
Then
\begin{align*}
\sum_{\chi \in G^{\ast}
\setminus \{1 \}
} S_{\chi} &= -1,
&&
\sum_{\chi \in G^{\ast}
\setminus \{1 \}
} S_{\chi}^2 = 2^s\, b - 1,
&&
\sum_{\chi \in G^{\ast}
\setminus \{1 \}
} S_{\chi}^3
= 2^s S_{idp} - 1.
\end{align*}
where $1$ denotes the trivial character on $G$.

\end{lemma}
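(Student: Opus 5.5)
The plan is to use the orthogonality relation for characters of $G=\mathbb{Z}_2^s$:
\[
\sum_{\chi\in G^*}\chi(h)=2^s\,\delta_{h,0}.
\]
I will first compute each power sum over \emph{all} of $G^*$, and then subtract the contribution of the trivial character. Since $r_0=0$ and $\sum_h r_h=1$, the trivial character gives $S_1=\sum_h r_h=1$, so in every case we subtract $1^k=1$. This accounts for the $-1$ in all three formulas.

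\textbf{First moment.} Exchange the order of summation:
\[
\sum_{\chi\in G^*}S_\chi=\sum_h r_h\sum_\chi\chi(h)=2^s r_0=0 .
\]
Removing the trivial character gives $-1$.

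\textbf{Second moment.} Expand the square and use that $\chi$ is multiplicative, so $\chi(p)\chi(q)=\chi(p+q)$:
\[
\sum_\chi S_\chi^2=\sum_{p,q}r_pr_q\sum_\chi\chi(p+q)=2^s\sum_{p+q=0}r_pr_q .
\]
In $\mathbb{Z}_2^s$ the condition $p+q=0$ means $p=q$. Hence the right-hand side is $2^s\sum_p r_p^2=2^s b$, and subtracting $1$ gives the claim.

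\textbf{Third moment.} In the same way,
\[
\sum_\chi S_\chi^3=\sum_{p,q,h}r_pr_qr_h\sum_\chi\chi(p+q+h)=2^s\sum_{p+q+h=0}r_pr_qr_h .
\]
The main step, though elementary, is to identify this restricted sum with $T$. Terms with some index equal to $0$ vanish because $r_0=0$. If two indices coincide, say $p=q$, then $p+q+h=0$ forces $h=0$, so that term also vanishes. Consequently only triples of pairwise distinct nonzero elements with $p+q+h=0$ survive, and these are exactly the terms of $T$. Therefore $\sum_{\chi\ne1}S_\chi^3=2^sT-1$.

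In the statement the symbol $S_{idp}$ should be read as this quantity $T$, which is the only term defined in the lemma that can fill that slot.
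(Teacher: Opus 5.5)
Your proof is correct and follows the paper's argument exactly. Both use orthogonality of characters summed over all of $G^*$, and then subtract the trivial character's contribution $S_1^k=1$. Your explicit check that triples with a zero or repeated index drop out (because $r_0=0$, and $p=q$ forces $h=0$) supplies a step the paper passes over silently. Your reading of $S_{idp}$ as $T$ also agrees with what the paper's proof actually derives, namely $\sum_{\chi\neq 1}S_\chi^3=2^sT-1$.
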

\begin{proof}
We first sum $S_{\chi}$ over all characters of $G$:
\[
\sum_{\chi \in G^{\ast}} S_{\chi}
= \sum_{\chi \in G^{\ast}} \sum_{h \in G } \chi(h)\, r_h
= \sum_{h \in G } r_h \sum_{\chi \in G^{\ast}} \chi(h),
\]
where we have interchanged the order of summation. Recall that for a fixed $g$, it holds that:
\[
\sum_{\chi \in G^{\ast} 
} \chi(g)
=
\begin{cases}
|G^{\ast}| = |G| = 2^s, & \text{if } g = 0,\\[4pt]
0, & \text{if } g \neq 0.
\end{cases}
\]
Since our sum runs only over $g \in G \setminus \{0\}$, for each such $g \neq 0$ we have
\begin{align*}
\sum_{\chi \in G^{\ast}
} \chi(g) = 0,
&&
\sum_{\chi \in G^{\ast}
} S_{\chi}
= \sum_{h \in G \setminus \{0\}} r_h \cdot 0
= 0
\end{align*}
Now we separate the contribution of the trivial character $1 \in G^{\ast}$. For this character,
\[
S_{1} = \sum_{h \in G } 1 \cdot r_h
= \sum_{h \in G } r_h
= 1.
\]
Therefore,
\[
\sum_{\chi \in G^{\ast}} S_{\chi} 
=
S_{1}
+
\sum_{\chi \in G^{\ast} \setminus \{1\}} S_{\chi} 
= 0,
\]
and our first result follows.

Next, we continue with the second identity.
By definition,
\[
S_{\chi}^2
= \left( \sum_{l \in G} \chi(l)\, r_l \right)
  \left( \sum_{h \in G } \chi(h)\, r_h \right)
= \sum_{l \in G } \sum_{h \in G }
    \chi(l)\chi(h)\, r_glr_h.
\]
Summing over all $\chi \in G^{\ast}$, interchanging the order of summation and using $\chi(l)\chi(h) = \chi(l+h)$, we obtain
\[
\sum_{\chi \in G^{\ast}} S_{\chi}^2
= \sum_{l \in G } \sum_{h \in G }
    r_l r_h \sum_{\chi \in G^{\ast}} \chi(l)\chi(h)
=
\sum_{l \in G } \sum_{h \in G}
    r_l r_h \sum_{\chi \in G^{\ast}} \chi(l + h)
\]
Since
\[
\sum_{\chi \in G^{\ast}} \chi(l)\chi(h)
=
\begin{cases}
2^s, & \text{if } l = h,\\
0, & \text{if } l \neq h.
\end{cases}
\]
It follows that only the diagonal terms $l=h$ contribute, and hence
\[
\sum_{\chi \in G^{\ast}} S_{\chi}^2
= \sum_{h \in G} r_h^2 \cdot 2^s
= 2^s \sum_{h \in G } r_h^2
= 2^s b.
\]
Therefore,
\[
\sum_{\chi \in G^{\ast} \setminus \{1\}} S_{\chi}^2
= \sum_{\chi \in G^{\ast}} S_{\chi}^2 - S_1^2
= 2^s b - 1.
\]
The last identity is pretty similar. We start with
\[
S_{\chi}^3
= \left( \sum_{p \in G } \chi(p)\, r_p \right)
  \left( \sum_{q \in G } \chi(q)\, r_q \right)
  \left( \sum_{h \in G } \chi(h)\, r_h \right)
= \sum_{p \in G }
  \sum_{q \in G }
  \sum_{h \in G }
  \chi(p)\chi(q)\chi(h)\, r_p r_q r_h.
\]
As in the previous cases,
\[
\sum_{\chi \in G^{\ast}} \chi(p)\chi(q)\chi(h)
=
\begin{cases}
2^s, & \text{if } p+q+h = 0,\\[4pt]
0, & \text{otherwise}.
\end{cases}
\]
Substituting this back, we find
\begin{align*}
\sum_{\chi \in G^{\ast}} S_{\chi}^3
= \sum_{\substack{p,q,h \in G  \\ 
p \neq q, p \neq h, q \neq h
\\
p+q+h = 0}}
   r_p r_q r_h \cdot 2^s
= 2^s T,
&&
\sum_{\chi \in G^{\ast} \setminus \{1\}} S_{\chi}^3
= \sum_{\chi \in G^{\ast}} S_{\chi}^3 - S_1^3
= 2^s T - 1.
\end{align*}
\end{proof}
\begin{lemma}\label{lemma:cubic_rg}
Notation as before, it holds that
\begin{align}
\sum_{\chi \in G^{\ast} \setminus \{1\}} 
\left(
\sum_{h | \chi(h)=-1}r_h
\right)^3
 &= 2^{s-3}\bigl( 3b - T + 1 \bigr).
\end{align}
\end{lemma}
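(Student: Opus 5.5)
The plan is to reduce the cubic sum to the moment identities of Lemma~\ref{lemma:sum_s_chi} by expanding the cube binomially. For each nontrivial character $\chi$, the inner sum $\sum_{h \,|\, \chi(h)=-1} r_h$ is exactly $A_\chi$ from Equation~\eqref{eq:A_chi_S_chi}. By Lemma~\ref{lemma:A_chi} we have $A_\chi = \tfrac{1}{2}(1 - S_\chi)$, so
\[
A_\chi^3 = \frac{1}{8}\left(1 - 3S_\chi + 3S_\chi^2 - S_\chi^3\right).
\]

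Next I sum this over the $2^s - 1$ nontrivial characters and substitute the three identities of Lemma~\ref{lemma:sum_s_chi}:
\[
\sum_{\chi \neq 1} S_\chi = -1, \qquad \sum_{\chi\neq 1} S_\chi^2 = 2^s b - 1, \qquad \sum_{\chi \neq 1} S_\chi^3 = 2^s T - 1.
\]
The third identity is the one printed there with the typo ``$S_{idp}$'', which in the proof is $T$. Substituting gives
\[
\frac{1}{8}\Bigl( (2^s - 1) + 3 + 3(2^s b - 1) - (2^s T - 1) \Bigr) = \frac{2^s}{8}\,(1 + 3b - T),
\]
which is the claimed $2^{s-3}(3b - T + 1)$.

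The only point I expect to need care is the cubic identity, specifically the distinctness condition in the definition of $T$. Character orthogonality gives the sum over all triples $(p,q,h)$ with $p+q+h=0$, whereas $T$ restricts to pairwise distinct triples. I would check that the two agree because of the convention $r_0 = 0$. In $\mathbb{Z}_2^s$, if $p = q$ then $h = p + q = 0$, so the term contains the factor $r_0 = 0$, and the same holds for the other coincidences. Hence the non-distinct triples contribute nothing and the unrestricted sum equals $T$. With that verified, the lemma follows by direct substitution.
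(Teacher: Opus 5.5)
Your proposal is correct and is essentially the paper's proof: you write the inner sum as $A_\chi = (1-S_\chi)/2$, expand the cube, and substitute the three moment identities of Lemma~\ref{lemma:sum_s_chi} to get $\frac{1}{8}\bigl(2^s + 3\cdot 2^s b - 2^s T\bigr)$. Your check that any triple with $p+q+h=0$ and a repeated entry must contain $0$, and therefore vanishes because $r_0=0$, is a step the paper's derivation of the cubic identity skips; it is exactly what justifies replacing the unrestricted orthogonality sum by $T$.
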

\begin{proof}
By using the notation and results of Lemma \ref{lemma:A_chi}, for every 
non-trivial $\chi \in G^{\ast}$ we have
\[
\left(\sum_{h | \chi(h)=-1}r_h \right)^3
=
A_{\chi}^3 = \left(\frac{1-S_{\chi}}{2}\right)^3
= \frac{1}{8}\bigl(1 - 3S_{\chi} + 3S_{\chi}^2 - S_{\chi}^3\bigr).
\]
We simplify and use Lemma \ref{lemma:sum_s_chi} to obtain:
\begin{align*}
\sum_{\chi \in G^{\ast} \setminus \{1\}} 
\left(
\sum_{h | \chi(h)=-1}r_h
\right)^3
&= \sum_{\chi \in G^{\ast} \setminus \{1\}} A_{\chi}^3
\\
&= \frac{1}{8}\sum_{\chi \in G^{\ast} \setminus \{1\}}
   \bigl(1 - 3S_{\chi} + 3S_{\chi}^2 - S_{\chi}^3\bigr) 
\\
&=
\frac{1}{8}
\left(
(2^s-1)
-3(-1) + 3(2^sb-1) -(2^s T -1) 
\right)
\\
& = 
\frac{1}{8}
\left( 
2^{s} + 3(2^sb) - 2^sT
\right),
\end{align*}
and our claim follows. 
\end{proof}

\begin{lemma}\label{lemma:e_chi_rewrite}
Let notation be as before,  and define
\begin{align*}
Q(s) &:= 
\{
(p,q,h) \in (\mathbb{Z}_2)^{3s}
\; | \;
p, q, h \text{ pairwise distinct, and }\;
p+q+h \neq 0 
\}
\\
E(s)
&:= \{
(p,q,h) \in (\mathbb{Z}_2)^{3s}
\; | \;
p, q, h \text{ pairwise distinct, and }\;
p+q+h = 0 
\}
\end{align*}
together with 
\begin{align*}
 a:= \sum_{h\in G} r_h^3   
 &&
 b:= \sum_{h\in G} r_h^2 
 &&
 T = 
 \sum_{p,q,h \in E(s)}
r_p r_q r_h,
&&
S_{idp} = 
 \sum_{p,q,h \in Q(s)}
r_p r_q r_h,
\end{align*}
Then, it holds that
 \begin{align}\label{eq:ration_e_O}
  \frac{e(X)}{24\chi(\mathcal{O}_X)}
\sim
\frac{14a+9b+1-T}{3(3b-T+1)}
\end{align}   
\end{lemma}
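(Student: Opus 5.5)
We will divide the asymptotic expression for $e(X)/K_X^3$ from the previous lemma by the one for $\chi(\mathcal{O}_X)/K_X^3$ from Lemma \ref{lemma:ratio_holomorphic_euler_limit_chi_K3}. Then we rewrite everything in terms of the symmetric quantities $a,b,T$, using the character computations of Lemmas \ref{lemma:sum_s_chi} and \ref{lemma:cubic_rg}. Since $K_X^3\neq 0$ asymptotically, we have $e(X)/(24\chi(\mathcal{O}_X)) = (e(X)/K_X^3)\big/(24\chi(\mathcal{O}_X)/K_X^3)$, and $\sim$ is compatible with quotients. So it suffices to compute the two asymptotic ratios separately.

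\textbf{Denominator.} By Lemma \ref{lemma:ratio_holomorphic_euler_limit_chi_K3},
\[
\frac{24\chi(\mathcal{O}_X)}{K_X^3}\sim -\frac{24}{3\cdot 2^{s+1}}\,Q(r).
\]
The trivial character contributes $A_1=0$, so the sum over $G^*$ equals the sum over $G^*\setminus\{1\}$. Lemma \ref{lemma:cubic_rg} then gives $Q(r)=2^{s-3}(3b-T+1)$. The constants combine as $\frac{24}{3\cdot 2^{s+1}}\cdot 2^{s-3}=\frac12$, hence
\[
\frac{24\chi(\mathcal{O}_X)}{K_X^3}\sim -\tfrac12(3b-T+1).
\]
Here $T$ is the ordered sum over $E(s)$. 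We should check that the version of $T$ in Lemma \ref{lemma:sum_s_chi}, which has no distinctness restriction in the character computation, agrees with this one. A solution of $p+q+h=0$ with a repeated entry forces the remaining entry to be $0$, and $r_0=0$, so such terms vanish.

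\textbf{Numerator.} The previous lemma gives
\[
\frac{e(X)}{K_X^3}\sim -\Bigl(2a+2b+\tfrac16 S_{idp}\Bigr),
\]
with $S_{idp}$ the ordered sum over $Q(s)$. The key step is to eliminate $S_{idp}$ in favour of $T$. Ordered triples of pairwise distinct elements split as $Q(s)\sqcup E(s)$, so $S_{idp}+T$ equals the sum of $r_pr_qr_h$ over all ordered pairwise distinct triples. We expand $1=(\sum_h r_h)^3$ and subtract the terms with coincidences. The fully diagonal terms contribute $a$. The terms with exactly two indices equal contribute $3\sum_{p\neq q}r_p^2r_q=3(b-a)$, using $\sum_{q\neq p}r_q=1-r_p$. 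This gives
\[
S_{idp}=1-3b+2a-T.
\]

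\textbf{Combining.} Substituting, the ratio becomes
\[
\frac{2\bigl(2a+2b+\frac16(1-3b+2a-T)\bigr)}{3b-T+1}=\frac{14a+9b+1-T}{3(3b-T+1)},
\]
which is \eqref{eq:ration_e_O}. The only delicate point is bookkeeping: the factor $\frac16$ and the ordered-versus-unordered conventions for $Q(s)$ and $E(s)$ must be used consistently with how the previous lemma was stated, namely ordered triples. The rest is the routine algebra above.
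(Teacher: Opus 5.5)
Your proposal is correct and follows essentially the same route as the paper. Like the paper, you factor the ratio through $K_X^3$, rewrite the cubic character sum as $2^{s-3}(3b-T+1)$ via Lemma~\ref{lemma:cubic_rg}, and eliminate $S_{idp}$ using $1=(\sum_h r_h)^3=a+3(b-a)+T+S_{idp}$. Your remark that repeated indices in $p+q+h=0$ force a zero index, and so contribute nothing because $r_0=0$, is a correct detail the paper leaves implicit when matching the $T$ of Lemma~\ref{lemma:sum_s_chi} with the sum over $E(s)$.
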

\begin{proof}
By Lemma \ref{lemma:holomorphic_euler_limit_chi_K3} and Proposition \ref{prop:e_k3},  we have that 
\begin{align}
  \frac{e(X)}{24\chi(\mathcal{O}_X)}
&=\left( \frac{-e(X)}{K^3}\right)
\left( - \frac{K^3}{24\chi(\mathcal{O}_X)}
\right)
\notag
\\
& \sim \left( \frac{-e(X)}{K^3}\right)
\left( \frac{2^{s-2}}{\sum_{\chi \in G^*} 
  \left( 
  \sum_{h \, | \, \chi(h) =-1}   
  r_h
  \right)^3 }
\right)
\notag
\\
& \sim 
\frac{2^{s-2}}{\sum_{\chi \in G^*} 
  \left( 
  \sum_{h \, | \, \chi(h) =-1}   
  r_h
  \right)^3 } 
\left(
\sum_{p}2r_p^3 
+
2\sum_p r_p^2 
+
\frac{1}{6}
S_{idp}
\right)
\end{align}

Next, we show that 
\begin{align}\label{eq:numerator_e_chi}
\sum_{p} 2 r_p^3
\;+\; 2\sum_{p} r_p^2
\;+\; \frac{1}{6}
S_{idp}
\;&=\;
\frac{1}{6}\bigl(14a + 9b + 1 - T\bigr)
\end{align}
First, we rewrite
\begin{align}\label{eq:inter_rp}
\sum_p 2r_p^3
  \;+\; 2\sum_p r_p^2
  \;+\; \frac{1}{6} S_{idp}
= 2a + 2b \;+\; \frac{1}{6} S_{idp}, 
\end{align}
Throughout, all indices lie in 
$G\setminus\{0\}$, and all triple sums are over ordered triples. First observe that
\begin{align*}
\sum_{p,q,g} r_p r_q r_h
&= \Bigl(\sum_{p} r_p\Bigr)
  \Bigl(\sum_{q} r_q\Bigr)
  \Bigl(\sum_{h} r_h\Bigr)
= 1
\\
&=
\sum_{p} r_p^3 
+ 3\sum_{p, q | p \neq q} r_p^2r_q 
+  \sum_{\substack{p,q,h\\
                    p \neq q, p \neq h,q \neq h \\
                  p+q+h = 0
                   }}r_p r_q r_h
+
   \sum_{\substack{p,q,h\\
                    p \neq q, p \neq h,q \neq h \\
                  p+q+h \neq 0
                   }}r_p r_q r_h  
\\
&= a + 3(b-a) + T + S_{idp}
\end{align*}
where we used
\[ 
\sum_{p,q | p\ne q} 3 r_p^2 r_q
= 3\sum_p r_p^2\!\! \, \sum_{q | q\ne p} r_q
= 3\sum_p r_p^2(1 - r_p)
= 3\bigl(b - a\bigr).
\]
Replacing
\(
S_{\mathrm{idp}}
=
1 - 3b + 2a - T
\)
in Equation \eqref{eq:inter_rp} yields equation \eqref{eq:numerator_e_chi}. By Lemma \ref{lemma:cubic_rg}, we have
\begin{align*}
\frac{e(X)}{24\chi(\mathcal{O}_X)}
\sim
2^{s-2}
\frac{14a+9b+1-T}{6(2^{s-3})(3b-T+1)}
=
\frac{14a+9b+1-T}{3(3b-T+1)}
\end{align*}
\end{proof}
\begin{lemma}\label{lemma:inq_Fr}
Let notation be as in Lemma \ref{lemma:e_chi_rewrite}, 
and denote
\[
F(r):= \frac{14a+9b+1-T}{3(3b-T+1)}
\]
Then it holds that
 \begin{align*} 
F(r)
\leq 
2
 \end{align*}  
 \end{lemma}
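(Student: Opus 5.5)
We need $F(r)\le 2$, i.e. $14a+9b+1-T \le 6(3b-T+1)$, provided the denominator is positive. By Lemma~\ref{lemma:cubic_rg}, $3b-T+1 = 2^{3-s}\sum_{\chi\neq 1}A_\chi^3$. This is a sum of nonnegative cubes, and it is strictly positive because some $r_g>0$, so some $A_\chi>0$. Hence we may clear denominators, and the inequality becomes
\[
14a + 9b + 1 - T \le 18b - 6T + 6,
\qquad\text{equivalently}\qquad
14a + 5T \le 9b + 5 .
\]

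The plan is to rewrite $5$ using the expansion of $1=(\sum r_p)^3$ from the proof of Lemma~\ref{lemma:e_chi_rewrite}, namely $1 = a + 3(b-a) + T + S_{idp} = 3b - 2a + T + S_{idp}$. Then
\[
9b + 5 - 14a - 5T = 9b + 5(3b - 2a + T + S_{idp}) - 14a - 5T = 24b - 24a + 5S_{idp}.
\]
So it suffices to show $24(b-a) + 5S_{idp} \ge 0$. This is immediate: $b - a = \sum_p r_p^2(1-r_p) \ge 0$ because $0 \le r_p \le 1$, and $S_{idp} \ge 0$ as a sum of products of nonnegative numbers. Equality holds exactly when $S_{idp}=0$ and every $r_p\in\{0,1\}$, that is, at a vertex $r_h=1$. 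This matches the upper bound $2$ in Theorem~\ref{thm:main_inv}.

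The only point needing care, and thus the main obstacle, is making sure the identity $1 = 3b - 2a + T + S_{idp}$ uses the same ordered-triple conventions for $T$ and $S_{idp}$ as in $F$. Both are summed over ordered, pairwise distinct triples, as in Lemma~\ref{lemma:e_chi_rewrite}, and the same $T$ appears in Lemma~\ref{lemma:cubic_rg}, so the conventions agree. I will also check positivity of the denominator explicitly before cross-multiplying, as above.
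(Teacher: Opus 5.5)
Your proposal is correct and follows the paper's proof essentially verbatim: clear denominators, substitute $5 = 5(3b-2a+T+S_{idp})$ from the expansion of $(\sum r_p)^3$, and conclude from $24(b-a)+5S_{idp}\ge 0$. You also verify explicitly that $3b-T+1 = 2^{3-s}\sum_{\chi\neq 1}A_\chi^3>0$ before cross-multiplying, which the paper leaves implicit when it says the inequality is ``equivalent'' to the cleared form; this is a small but welcome addition.
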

\begin{proof}
By using Cauchy-Schwarz, we obtain 
\begin{align}\label{eq:a_b_ineq}
  \frac{1}{(2^s-1)^2} \leq b^2 \leq a \leq b \leq 1.
\end{align}
Every inequality is sharp at either the barycenter
$r_g = \frac{1}{2^s-1}$ or at the vertex 
$r_g =1 $ for a single $g$.
Since $F(r)$ is increasing in $a$, we can write
\[
\frac{14b^2+9b+1-T}{3(3b-T+1)} \leq F(r) 
\]
To evaluate $F(r)$ at a vertex $v$ of $\Delta$, say $r_{g_0}=1$, $r_g=0$ for $g\ne g_0$, we observe that at point $a = b =1$ and $T=0$, therefore
\[
F(r_{g_0}) = 2 
\]
Next, we show $F(\mathbf{r}) \leq 2$ for all our $\mathbf{r}$. This claim  is equivalent to
\[
14a + 9b + 1 - T \;\leq\; 6(3b - T + 1),
\]
which can be written as
\begin{equation}\label{eq:F_leq_2}
9b - 14a + 5 - 5T \;\geq\; 0.
\end{equation}
We replace the constant $5$ using $\left(\sum_{g} r_g\right)^3 = 1$. By the decomposition established in Equation~\eqref{eq:inter_rp},
\[
1 = a + 3(b - a) + T + S_{\mathrm{idp}},
\]
so $5 = -10a + 15b + 5T + 5S_{\mathrm{idp}}$. Substituting into \eqref{eq:F_leq_2} yields
\[
9b - 14a + 5 - 5T = 24(b - a) + 5\,S_{\mathrm{idp}}.
\]
By Equation \eqref{eq:a_b_ineq} and its definition, both terms are non-negative. Therefore, Equation \eqref{eq:F_leq_2} holds and $F(\mathbf{r}) \leq 2$. 
\end{proof}

\subsection{Proof of Theorem \ref{thm:main_inv} and corollaries}
\label{sec:proof_thm_inv}
\begin{proof}[Proof of Theorem \ref{thm:main_inv} ]
We start by the lower bound. For that purpose, we need the following identities:
Given, 
\begin{align*}
 a = \sum_{h \in G} r_h^3
 &&
 b = \sum_{h \in G} r_h^2
 &&
 \Phi := 2^{3-s}Q(r)
\end{align*}
Then, it holds that
\begin{align}\label{eq:exp_proof}
y &= \frac{-K_X^3}{24\,\chi(\mathcal{O}_X)}
=
\frac{2}{\Phi}, 
&&
x =  \frac{e(X)}{24\,\chi(\mathcal{O}_X)}
= \frac{14a + 6b + \Phi}{3\Phi},    
\\
&&
\operatorname{SCI}(x,y)+4 &= 
y(3x+1) = \frac{4(7a + 3b + \Phi)}{\Phi^2}.
\notag
\end{align}
Indeed, by Lemma~\ref{lemma:cubic_rg}, $Q(r) = 2^{s-3}(3b - T + 1)$, so $\Phi = 3b - T + 1$ and hence $T = 3b + 1 - \Phi$. Since $Q(r) = 2^{s-3}\Phi$, the ratio $y$ becomes
\[
y = \frac{2^{s-2}}{Q(r)} = \frac{2^{s-2}}{2^{s-3}\Phi} = \frac{2}{\Phi}.
\]
Substituting $T = 3b + 1 - \Phi$ into the expression for $x$ from Lemma~\ref{lemma:e_chi_rewrite} gives
\begin{align*}
x = \frac{14a + 9b + 1 - T}{3(3b - T + 1)} = \frac{14a + 6b + \Phi}{3\Phi}, 
\quad 
3x + 1 =  \frac{14a + 6b + 2\Phi}{\Phi},
\end{align*}
Therefore,
\[
y(3x+1) =  
\frac{2}{\Phi} \cdot \frac{14a + 6b + 2\Phi}{\Phi} = \frac{2(14a + 6b + 2\Phi)}{\Phi^2} = \frac{4(7a + 3b + \Phi )}{\Phi^2}.
\]
and our identities  are verified. 
Our next step is to obtain a lower bound for the function
\begin{align*}
 H(a,b,\Phi)  := 
 \frac{4(7a + 3b + \Phi )}{\Phi^2} 
\end{align*}
Indeed, we observe that
\begin{align}\label{eq:partial_H}
\frac{\partial H(a,b, \Phi)}{ \partial \Phi}
=
\frac{-4 ( 14 a + 6 b + \Phi )}{\Phi^3}
&&
a \geq 0, \quad b \geq 0, \quad 3b- \Phi +1 = T \geq  0
\end{align}
imply $H(a,b,\Phi)$ is decreasing with respect to $\Phi$. So $3b + 1 \geq \Phi$ and $a \geq b^2$, see Equation \eqref{eq:a_b_ineq}, yield
\begin{align*}
 H(a,b,\Phi)  
 \geq 
  \frac{4(7b^2 + 6b + 1 )}{(3b+1)^2},
  &&
  \frac{\partial }{\partial b}\left(
\frac{4(7b^2 + 6b + 1 )}{(3b+1)^2}  
  \right)
  = 
  \frac{-16b}{(1 + 3 b)^3} \leq 0, \; \text{ for } b \geq 0
\end{align*}
Since, this last function is decreasing with respect to $b$ and satisfy $b \leq 1$ by Equation \eqref{eq:a_b_ineq}. We obtain
\begin{align*}
  H(a,b,\Phi)  \geq \frac{7}{2}, 
  &&
\operatorname{SCI}(x,y) \geq  -\frac{1}{2}
\end{align*}
a direct calculation shows that the inequality is attained at $r_{h} =1$ for some $h$, and $r_g =0$ for all $g \neq h$. 

Next, we focus on the upper bound. 
First, we observe that $\Phi \geq (1+b)^2$. Indeed
by Lemma~\ref{lemma:A_chi},
$\sum_{\chi \neq 1} A_\chi = 2^{s-1}$ and 
 $A_\chi = (1 - S_\chi)/2$. Using
Lemma~\ref{lemma:sum_s_chi}:
\[
\sum_{\chi \neq 1} A_\chi^2
= \tfrac{1}{4}\bigl[(2^s-1) + 2 + (2^s b - 1)\bigr]
= 2^{s-2}(1+b).
\]
Applying the Cauchy--Schwarz inequality with
$u_\chi = A_\chi^{1/2}$ and $w_\chi = A_\chi^{3/2}$, we obtain
\begin{align*}
 \bigl(\sum A_\chi^2\bigr)^2
\leq \bigl(\sum A_\chi\bigr)\bigl(\sum A_\chi^3\bigr)   
\Longrightarrow
\left( 2^{s-2}(1+b) \right)^2 \leq 
2^{s-1}Q(r)
= 2^{s-1} \left( 2^{s-3}\Phi \right)
\end{align*}
from which $\Phi \geq (1+b)^2$ follows.
In Equation \eqref{eq:partial_H}, we learned that 
$\partial H/\partial\Phi < 0$ within our domain. By Equation \ref{eq:a_b_ineq}, 
$b^2 \leq a \leq b$, therefore
\[
y(3x+1) 
=
\frac{4(7a+3b+\Phi)}{\Phi^2}
\;\leq\; g(b) \;:=\;
  \frac{4(b^2 + 12b + 1)}{(1+b)^4}.
\]
Finally, we show that $g(b) \leq \frac{20}{3}$ on $[0,1]$. Since, the inequality $g(b) \leq 20/3$ is equivalent to
\[
R(b) =
b^4 + 4 b^3 + \frac{27 b^2}{5} - 
\frac{16 b}{5} + \frac{2}{5}
,\qquad b \in [0,1].
\]
A direct inspection, or the use of mathematical software, allows us to verify that $R'(b)$ has a unique real root $b_0$ within $[0,1]$. This root is the unique minimizer of $R$ on $[0,1]$ and it satisfies $R(b_0) > 0$

Finally, we show the other inequalities in the statement of the theorem.  By Lemma \ref{lemma:holomorphic_euler_limit_chi_K3}, it holds that 
\begin{align}\label{eq:ratio_K3_chi}
 \left( \frac{-K_X^3}{24\chi(\mathcal{O}_X)} \right) 
 &\sim
 \frac{2^{s-2}}{ 
 \sum_{\chi \in G^*} 
  \left( 
  \sum_{g \, | \, \chi(g) = - 1}   
  r_g
  \right)^3}
 \end{align}
 with $\sum_{g \in G \setminus e}  r_g = 1$ and $0 \leq r_g \leq 1$. 
By Lemma
\ref{lemma:bound_sum_cubs}, we have 
the upper and lower bounds 
\[
\frac{1}{2}
\leq
 \lim_{d_g \rightarrow \infty}
\left( \frac{-K_X^3}{24\chi(\mathcal{O}_X)} \right)
\leq 
2+ \frac{1}{2^{2s-1}} + \frac{1}{2^{s-2}}
\]
The lower bound $\tfrac{1}{2}$ is attained in the limit
$r_h \to 1$ for a single $h$, and the upper bound is attained
at the barycenter $r_g = 1/(2^s - 1)$ for all $g \neq 0$.  Similarly, by Lemmas \ref{lemma:e_chi_rewrite} and ~\ref{lemma:inq_Fr}, we have that
\[
  \frac{e(X)}{24\,\chi(\mathcal{O}_X)}
  \;\leq\;
  2\,.
\]
The lower bound is using by observing that
$\frac{7}{2} \leq y(3x+1)$ and 
$0 < y \leq y_{max}$ and $0 < y$ imply 
\begin{align*}
\frac{7}{2(3y_{max})} -\frac{1}{3} \leq x,
&&
\text{ which in our case is equal to} \Rightarrow
&&
\frac{3 (2^{2 s-2}) - 2^{s + 1} - 1}{3 (2^s + 1)^2}
\leq x
\end{align*}
\end{proof}
Next, we discuss Corollary \ref{cor:counter_example_hunt}.
We recall that in \cite[Sec 7.2.3-7.2.4]{huntthreefolds89}, 
Hunt calculated the Chern numbers of smooth complete intersections in $\mathbb{P}^N$. Then, he showed that  their accumulation points lie in the curve $y(3x+1) = 4$. Moreover, he defined the zone
\begin{align*}
  \text{Zone}(E):=
  \{(x,y) \, | \, 
  y(3x+1) > 4 \},
  &&
  (x,y) =  \left(
\frac{e(X)}{24\,\chi(\mathcal{O}_X)} , 
    \frac{-K_X^3}{24\,\chi(\mathcal{O}_X)} \right)
\end{align*}
and conjectured there are none smooth threefolds with invariants in such a zone.  To construct counter-examples to Hunt conjecture, we recall a construction of Pardini and Alexeev.
\begin{theorem}[Pardini-Alexeev]
A $(\mathbb{Z}_2)^s$-cover of $\mathbb{P}^3$ with $s\geq 3$ is called \emph{almost uniform} if there exist a fixed $1 \neq \chi_0 \in G^*$ such that the building data $D_g$ of the cover satisfies:
\begin{align*}
 \deg (D_g)  = \begin{cases}
     d_g > 0 & \chi_0(g) = - 1 \\
     d_g = 0 & \chi_0(g) = 1.
 \end{cases}   
\end{align*}
If $X$ is an almost uniform cover of $\mathbb{P}^3$ and it is of general type, then $X$ is smooth, minimal and  all its small deformations are Galois.
\end{theorem}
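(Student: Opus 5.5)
The plan is to prove the three assertions (smooth, minimal, all small deformations Galois) separately. Each is reduced to results already established earlier in the paper for $Y=\mathbb{P}^3$, which has all $a_i=1$, so $\sum a_i=4$. The cover is flat because $\operatorname{Pic}(\mathbb{P}^3)=\mathbb{Z}$ and every rank-one reflexive sheaf on $\mathbb{P}^3$ is a line bundle. Throughout I take the branch divisors $D_g$ generic in the sense of Definition~\ref{def:divisor_standard set-up}.

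\emph{Smoothness.} By Lemma~\ref{singularities of the cover}, since $\mathbb{P}^3$ is smooth, $X$ can only be singular over a point of $D_p\cap D_q\cap D_r$ with $p+q+r=0$. In the almost uniform case only the $D_g$ with $\chi_0(g)=-1$ are nonzero. If $p,q,r$ all satisfy $\chi_0=-1$, then
\[
\chi_0(p+q+r)=\chi_0(p)\chi_0(q)\chi_0(r)=-1,
\]
so $p+q+r\neq 0$ and no such triple point exists. In the same way, any set $I$ of elements of the coset $\{\chi_0=-1\}$ with $|I|\le 3$ is linearly independent over $\mathbb{F}_2$, because a relation would need an even number of terms. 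So Pardini's criterion \cite[Prop 3.1]{pardini1991abelian} gives smoothness at every point.

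\emph{Minimality.} By \eqref{eq:canonical_formula_Z2s}, $K_X=f^*\mathcal{O}_{\mathbb{P}^3}\bigl(\tfrac12\sum_g d_g-4\bigr)$, and $f$ is finite. Hence $K_X$ is big exactly when $\tfrac12\sum d_g>4$, which is then the general type hypothesis. In that case $K_X$ is the pullback of an ample class by a finite map, so it is ample and $X$ is minimal.

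\emph{Deformations.} I would apply the corollary proved later in the paper (\Cref{every def is an finite cover over a WPS reflexive}, \Cref{every def is an abelian cover over a WPS}) with $Y=\mathbb{P}^3$. The finiteness part is automatic, since $X$ is smooth and the eigensheaves are line bundles. Condition (3), $\sum d_g>2\cdot 4=8$, is exactly the general type condition. Condition (1) is genericity. It remains to check condition (2): $d_g<\tfrac12\sum_{\chi(g')=-1}d_{g'}$ for every $\chi\neq 1$ with $\chi(g)=1$. I split into two cases.
\begin{enumerate}
\item If $\chi_0(g)=1$, then $d_g=0$. The right side is positive, because $\{\chi_0=-1\}\cap\{\chi=-1\}$ is nonempty: it equals $\{\chi_0=-1\}$ if $\chi=\chi_0$, and otherwise has $2^{s-2}$ elements since $\chi\chi_0\neq 1$.
\item If $\chi_0(g)=-1$, the inequality is a genuine numerical constraint comparing $d_g$ with half the total degree on the $2^{s-2}\ge 2$ elements $g'$ with $\chi_0(g')=\chi(g')=-1$ (here $s\ge 3$ is used).
\end{enumerate}

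The main obstacle is case (2). It holds immediately for equal degrees, and more generally whenever no single $d_g$ dominates. However, it is not automatic for wildly unbalanced degrees, since it is the same condition as Pardini's vanishing $H^0(\mathcal{O}_{D_g}(D_g-L_\chi))=0$. My first attempt is to prove it directly by the count above under a balancing assumption. Failing that, I would invoke the precise form of the Alexeev--Pardini statement, where this inequality is part of the almost uniform hypothesis, together with Bott vanishing $H^1(T_{\mathbb{P}^3}(-\ell_\chi))=0$. Granting condition (2), the corollary yields that every small deformation is again a $\mathbb{Z}_2^s$-cover of $\mathbb{P}^3$, i.e.\ Galois.
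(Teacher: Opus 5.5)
The paper gives no proof of this statement; it is quoted from Alexeev--Pardini, so there is no internal argument to compare with. Your smoothness argument is correct: three distinct elements of the coset $C=\{\chi_0=-1\}$ are $\mathbb{F}_2$-independent, because any relation among them would need an even number of terms. Your minimality argument is also correct. Note that $\sum_g d_g=2l_{\chi_0}$ is even, so $K_X=f^*\mathcal{O}(l_{\chi_0}-4)$. Both rely on the genericity assumption you add, which the statement tacitly needs anyway.

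The deformation part has a genuine gap. You do not prove condition (2) of \Cref{every def is an abelian cover over a WPS}; you grant it, and it does not follow from the hypotheses.

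Your remark that it ``holds immediately for equal degrees'' is false for $s=3$. For $\chi\neq 1,\chi_0$ the set $C\cap\{\chi=-1\}$ has only $2^{s-2}=2$ elements. So with all positive degrees equal to $d$ you get $l_\chi=d$, and the required strict inequality reads $d<d$.

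In fact, for $s=3$ condition (2) can never hold. Put $P=C\cap\{\chi=-1\}$ and $Q=C\setminus P$.
\begin{itemize}
\item The conditions for $\chi$ say $2d_q<\sum_P d$ for all $q\in Q$.
\item The conditions for $\chi\chi_0$ say $2d_p<\sum_Q d$ for all $p\in P$.
\end{itemize}
Summing each family gives $\sum_Q d<\sum_P d<\sum_Q d$, a contradiction. For general $s$ the same summation only gives $\sum_Q d<2^{s-3}\sum_P d$ and its mirror image, which is no obstruction once $s\ge 4$.

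For $s\ge 4$ the condition still fails when the degrees are unbalanced. If $d_{g_0}\ge \tfrac12\sum_{g\in C,\,\chi(g)=-1}d_g$ for some $\chi$ with $\chi(g_0)=1$, it breaks. This is exactly the regime $r_{g_0}\ge 1/2$ used in the paper's counterexample to Hunt.

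Whenever condition (2) fails, the method says nothing, and the conclusion itself is in doubt. The failure gives $H^0(\mathcal{O}_{D_g}(d_g-l_\chi))\neq 0$ for some $\chi\neq 1$ with $\chi(g)=1$. These sections lie in a nontrivial eigenspace of $H^0(N_f')$, so the tangent map of $\mathbf{Def}^G(X/Y)\to\mathbf{Def}'(X/Y)$ is not surjective. Moreover, since $H^0(T_{\mathbb{P}^3}(-l_\chi))=0$ once $l_\chi\ge 2$, they inject into the $\chi$-eigenspace of $H^1(T_X)$. There they give first-order deformations that do not preserve the $G$-action. These are the directions of Pardini's natural deformations, the analogue of Catanese's non-Galois deformations of simple bidouble covers.

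So building the inequality into the hypothesis is not a proof, and for $s=3$ it would make the statement vacuous. What your argument actually proves is the case $s\ge 4$ under a balancing hypothesis, such as all positive $d_g$ being equal; condition (2) then reads $d<2^{s-3}d$. The statement as quoted, with $s\ge 3$ and arbitrary positive degrees, is not reached.
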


Using almost uniform covers of $\mathbb{P}^3$, we obtain examples of smooth threefolds of general type within the forbidden zone E.
\begin{corollary}
If $X(m) \rightarrow \mathbb{P}^3$ is a family of almost uniform $(\mathbb{Z}_2)^s$-covers of $\mathbb{P}^3$ with $s \geq 3$ and branch data $D_g(m)$ that satisfies the following condition: There exist an $1 \neq \chi_0 \in G^*$, and $0 \neq g_0 \in G$ with $\chi_0(g_0) = -1$, and a rational $r_a$ with $\frac{1}{2} \leq r_a \leq \frac{68}{100}$ such that
\begin{align*}
\lim_{m \to \infty}
 \frac{\deg(D_g(m))}{\sum_{h \in G} \deg(D_h(m))}
= \begin{cases}
r_a & \text{ if  $g= g_0$ }    
\\
\frac{1-r_a}{(2^{s-1}-1)} & \text{ if $\chi_0(g) =-1$ and $g \neq g_0$}   
\\
0 & \text{if $\chi_0(g) =1$}
\end{cases}
\end{align*}
Then, the limit 
\begin{align*}
\lim_{\deg(D_g) \to \infty}  \left(
\frac{e(X)}{24\,\chi(\mathcal{O}_X)} , 
    \frac{-K_X^3}{24\,\chi(\mathcal{O}_X)} \right)
\end{align*}
is contained in the forbidden Zone E.
\end{corollary}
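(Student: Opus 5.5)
The plan is to plug the prescribed limiting ratios into the closed formulas for the asymptotic Chern ratios obtained in the proof of Theorem~\ref{thm:main_inv}, and then reduce membership in $\operatorname{Zone}(E)$ to an elementary inequality in $r_a$ and $s$.

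\emph{Smoothness and passage to the limit.} By the Pardini--Alexeev theorem quoted above, each $X(m)$ is smooth and minimal as soon as it is of general type. General type holds for $m$ large, since $\sum d_g(m)\to\infty$ makes $K_{\mathbb{P}^3}+\frac12\sum D_g$ ample. The ratios $e(X)/24\chi$ and $-K_X^3/24\chi$ are asymptotic (Definition~\ref{def:limit}) to rational functions of the $r_g$. These are Lemma~\ref{lemma:ratio_holomorphic_euler_limit_chi_K3} and Lemma~\ref{lemma:e_chi_rewrite}, with denominators bounded away from $0$ by Lemma~\ref{lemma:bound_sum_cubs}. Hence the limit exists and equals the value of these functions at the limiting vector $r$. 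For that $r$ I will use the identities \eqref{eq:exp_proof}:
\[
y=\frac{2}{\Phi},\qquad x=\frac{14a+6b+\Phi}{3\Phi},\qquad y(3x+1)=\frac{4(7a+3b+\Phi)}{\Phi^2},\qquad \Phi=3b-T+1 .
\]

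\emph{Vanishing of $T$.} The support of $r$ lies in the coset $\{g:\chi_0(g)=-1\}$. For any three elements $p,q,h$ of this coset, $\chi_0(p+q+h)=(-1)^3=-1$, so $p+q+h\neq 0$. Therefore $T=0$ and $\Phi=3b+1$. The zone condition $y(3x+1)>4$ then becomes
\[
4(7a+6b+1)>4(3b+1)^2 ,\qquad\text{that is,}\qquad 7a>9b^2 .
\]

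\emph{Explicit $a$ and $b$.} Put $n=2^{s-1}-1$ and $c=(1-r_a)/n$. Then
\[
a=r_a^3+\frac{(1-r_a)^3}{n^2},\qquad b=r_a^2+\frac{(1-r_a)^2}{n}.
\]
To leading order the inequality reads $7r_a^3>9r_a^4$, i.e. $r_a<7/9$, which explains the upper cutoff $0.68$. The correction terms are positive in $a$ but enter $9b^2$ as $18r_a^2(1-r_a)^2/n+9(1-r_a)^4/n^2$. So I will prove
\[
7r_a^3+\frac{7(1-r_a)^3}{n^2}-9\Big(r_a^2+\frac{(1-r_a)^2}{n}\Big)^2>0
\]
on $[\tfrac12,0.68]$. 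The left side decreases in $n$'s penalty, so $n=3$ (i.e. $s=3$) is the critical case, and for $s\ge 4$ one bounds $n\ge 7$ uniformly.

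\emph{Expected obstacle.} The main obstacle is this final polynomial check at small $s$. A quick evaluation at $s=3$ gives $7a\approx 2.226$ and $9b^2\approx 2.219$ at $r_a=0.68$, so the inequality holds there but only barely. At $r_a=\tfrac12$, however, one gets $7a=\tfrac{35}{36}<1=9b^2$. So for $s=3$ the lower endpoint seems to fail, and the range of $r_a$ (or the requirement $s\ge 4$) may need adjusting. I would treat $s\ge4$ by the uniform bound $n\ge7$, which at $r_a=\tfrac12$ gives $0.893>0.735$. I would treat $s=3$ by locating the roots of the resulting quartic in $r_a$ numerically, and state the corollary on the interval where positivity actually holds.
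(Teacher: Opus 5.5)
Your argument is the paper's argument: $T=0$ because the support of $r$ lies in the coset $\{\chi_0=-1\}$, then $y(3x+1)>4$ reduces to $7a>9b^2$, then you substitute the explicit $a,b$, and the worst case is $s=3$. The paper gets the last reduction from $\partial_m F=2(1-t)^2(9t^2m-11t+2)/(m-1)^3>0$ with $m=2^{s-1}$.

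The obstruction you found at $s=3$ is genuine, and it is an error in the paper's proof and statement. The paper ends by asserting that
\[
F(4,t)=\tfrac19\bigl(-144t^4+200t^3-87t^2+15t-2\bigr)
\]
is positive on $[\frac12,\frac{68}{100}]$. In fact $F(4,\frac12)=-\frac1{36}$, which is exactly your $\frac{35}{36}-1$. On $[\frac12,1]$ this quartic is positive only strictly between its roots $t_-\approx 0.544$ and $t_+\approx 0.689$. So for $s=3$ the limit point lies in $\operatorname{Zone}(E)$ if and only if $t_-<r_a<t_+$. The corollary is therefore false as stated for $s=3$ and $\frac12\le r_a\le t_-$. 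Such families do exist: take degrees $(6k,2k,2k,2k)$ on the four elements of the coset, which gives $r_a=\frac12$.

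For $s\ge 4$ the stated range is correct. Your bound $n\ge 7$ can be made rigorous without any monotonicity argument. Put $t=r_a$ and $n=2^{s-1}-1$. Drop the last term below, which is positive for $t>\frac29$, and use $n\ge7$ in the middle term:
\[
7a-9b^2=t^3(7-9t)-\frac{18t^2(1-t)^2}{n}+\frac{(1-t)^3(9t-2)}{n^2}\ \ge\ \frac{t^2}{7}\bigl(-81t^2+85t-18\bigr).
\]
The right-hand side is positive on $[\frac12,\frac{68}{100}]$, since its roots are about $0.294$ and $0.755$.

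A correct version of the corollary is therefore one of the following:
\begin{itemize}
\item $s\ge4$ with $\frac12\le r_a\le\frac{68}{100}$;
\item $s=3$ with $t_-<r_a\le\frac{68}{100}$, for instance $r_a\in[0.55,0.68]$.
\end{itemize}
Either version still suffices for Corollary~\ref{cor:counter_example_hunt}.
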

\begin{proof}
We first observe that for an almost uniform cover, the term 
$$
T = \sum_{\substack{p,q,r \text{ distinct} \\ p+q+r=0}} r_p r_q r_r
$$
is zero. Indeed, by construction there is a fixed character $\chi_0$ such that $r_g = 0$ unless $g \in S := \{g : \chi_0(g) = -1\}$. 
Let $p,q,r$ be a triple in $T$, then we have  
$$
1 = \chi_0(0) = \chi_0(p+q+r) = \chi_0(p)\chi_0(q)\chi_0(r) = (-1)(-1)(-1) = -1
$$
which is a contradiction. So no such triple exists, and $T = 0$. The formulas for the invariants in Theorem \ref{thm:main_inv} now become
\[
y = \frac{1}{3b + 1}, \qquad x = \frac{14a + 9b + 1}{3(3b+1)}
\]
where $a = \sum r_g^3$, $b = \sum r_g^2$. Elementary algebra yields
\begin{align*}
y(3x+1) = C(a,b):=\frac{4(7a + 6b + 1)}{(3b+1)^2}, 
&&
C(a, b) \geq 4 \iff 7a \geq 9b^2
\end{align*}
We recall that $a=\sum_{h \in G} r_h^3$ 
and 
$b=\sum_{h \in G} r_h^2$. 
Then, for $m= 2^{s-1}$ and $t := r_a$, we have
\begin{align*}
 a &= t^3 + (2^{s-1}-1) \left( \frac{(1-t)^3}{(2^{s-1}-1)^3} \right) 
 \\
 b &= t^2 + (2^{s-1}-1) \left( \frac{(1-t)^2}{(2^{s-1}-1)^2}  \right)
 \\
 7a-9b^2 &:= F(m,t) = 
 \frac{(-9 m^2 t^4 + 7 m^2 t^3 + 22 m t^3 - 18 m t^2 - 15 t^2 + 15 t - 2)}{(m - 1)^2}
\end{align*}
such that 
\begin{align*}
\frac{\partial F(r,m)}{\partial m}
= \frac{2 (1 - t)^2 (9 t^2 m - 11 t + 2)}{(m - 1)^3}
\end{align*}
so $F(r,m) \geq 0$ for $s \geq 2$ and $t \geq 0$. This means $F(m,r)$ is increasing with respect to $m$ and it is enough to consider $s=3$. A direct calculation shows that 
$$
F(4,t) = \frac{1}{9} (-144 t^4 + 200 t^3 - 87 t^2 + 15 t - 2)
$$ is possitive for $\frac{1}{2} \leq t \leq \frac{68}{100}$.
\end{proof}

\section{Deformations of $\mathbb{Z}_2^s$ covers of weighted projective spaces}
\label{sec:deformation_theory}

In this section, our objective is to find some necessary conditions under which, the general deformation of a $\mathbb{Z}_2^s$ cover of a possibly singular scheme $Y$, is once again a $\mathbb{Z}_2^s$ cover of $Y$. This problem of course has been studied for abelian covers $f: X \to Y$ when $X$ and $Y$ are smooth in \cite{pardini1991abelian} and $f: X \to Y$ flat with $Y$ normal and $f$ locally simple (which implies $X$ is smooth) in \cite{Man95}. Combining the techniques above with those introduced by Wehler in \cite{wehler86deformations} and Schlessinger in \cite{schlessinger1971}, we generalize their results for non-flat abelian covers $f: X \to Y$ when both $X$ and $Y$ are normal and the codimension of the singular locus of $X$ is at least three. As a result, we construct examples of moduli components of stable varieties whose general element is a $\mathbb{Z}_2^s$ cover of a weighted projective space (see \Cref{new components}). We also give concrete examples of non-flat $\mathbb{Z}_2^s$ covers $f: X \to Y$, with $s \geq 4$, $K_X$ ample, branched along configurations of hyperplane arrangements such that any deformation of $X$ is once again a $\mathbb{Z}_2^s$ cover with the same configuration of the branch locus. 

\subsection{Deformation of finite non-flat covers}
We start with some preliminaries. Let $f: X \to Y$ be a finite morphism and let $f_*(\mathcal{O}_X) = \mathcal{O}_Y \oplus \mathcal{E}$.

\begin{definition}
    Let $\bf Def(f)$ denote the following functor of Artin rings. For an Artin ring $A$, $\bf Def(f)(A)$ is the set of isomorphism classes of Cartesian diagrams as follows
$$  \bf Def(f)(A) := \left\{ 
    \begin{tikzcd}
     X \arrow[r, hook] \arrow[d, "f"] & \mathcal{X} \arrow[d, "F"] \\
     Y \arrow[r, hook] \arrow[d] & \mathcal{Y} \arrow[d] \\
     \textrm{Spec}(\mathbf{k}) \arrow[r, hook] & \textrm{Spec}(A) 
    \end{tikzcd}
\right\}
$$

where $\mathcal{X} \to \operatorname{Spec}(A)$ and $\mathcal{Y} \to \operatorname{Spec}(A)$ are flat morphisms. Let $\delta_1: f^*\Omega_Y \to \Omega_X$ and $\delta_0: f^*\mathcal{O}_Y \to \mathcal{O}_X$ be the two morphisms induced by $f: X \to Y$. Then by \cite[Proposition $3.1$]{defofmaps89}, $\operatorname{Ext}^1(\delta_1, \delta_0)$ is the space of first order deformations of $\bf Def(f)$ and $\operatorname{Ext}^2(\delta_1, \delta_0)$ is a space of obstructions of $\bf Def(f)$ (see \cite[Section $2$]{defofmaps89} for the definition of these vector spaces).     
\end{definition}

\begin{definition}
\label{def with fixed target}
    Let $\bf Def(X/Y)$ denote the following functor of Artin rings. For an Artin ring $A$, $\bf Def(X/Y)(A)$ is the set of isomorphism classes of Cartesian diagrams as follows
$$  \bf Def(X/Y)(A) := \left\{ 
    \begin{tikzcd}
     X \arrow[r, hook] \arrow[d, "f"] & \mathcal{X} \arrow[d, "F"] \\
     Y \arrow[r, hook] \arrow[d] & Y \times \textrm{Spec}(A) \arrow[d] \\
     \textrm{Spec}(\mathbf{k}) \arrow[r, hook] & \textrm{Spec}(A) 
    \end{tikzcd}
\right\}
$$

where $\mathcal{X} \to \operatorname{Spec}(A)$ is a flat morphism. Define $\bf Def'(X/Y)$ to be the functor of Artin rings where for an Artin ring $A$, $\bf Def'(X/Y)(A)$ is the set of Cartesian diagrams as above with the additional condition that $\mathcal{X} \to \operatorname{Spec}(A)$ defines a locally trivial deformation of $X$.  
By \cite[Theorem $3.4.8$, Lemma $3.4.7$, Definition $3.4.5$]{sernesi2006deformations}, $H^0(N_f')$ is the space of first order deformations and $H^1(N_f')$ is the space of obstructions for $\bf Def'(X/Y)$ where $N_f'$ (called the equisingular normal sheaf) is defined by the exact sequence 
$$0 \to T_X \to f^*T_Y \to N_f' \to 0$$
If $f: X \to Y$ is a $G-$ cover with $G$ abelian, then let $\bf Def^G(X/Y)$ be the functor, where for any Artin ring $A$, we require the Cartesian diagram in \Cref{def with fixed target}, to satisfy the additional condition that the map $F: \mathcal{X} \to Y \times \operatorname{Spec}(A)$ is a $G-$ cover over $Y \times \operatorname{Spec}(A)$.   
\end{definition}


\begin{remark}\label{def that are abelian}
 Let $X$ and $Y$ be normal varieties with $H^1(\mathcal{O}_Y) = 0$. Let $f: X \to Y$ be a $G-$ cover, where $G = \mathbb{Z}_2^s$. If $Y$ does not have $2$- torsion in $\operatorname{Cl}(Y)$, by \Cref{eq:half_sum_relation}, we know that $X$ is determined by a set of divisors $\{D_{H,\psi}\}$, satisfying some fundamental relations in $\operatorname{Cl}(Y)$, where $H$ is a cyclic subgroup of $G$ and $\psi$ is a generator of $H^*$. Further, any set of divisors $\{D_{H,\psi}\}$ satisfying the set of fundamental relations mentioned in \Cref{eq:half_sum_relation} in $\operatorname{Cl}(Y)$, uniquely determines $G$- cover $f: X \to Y$. Now fix such a $G$-cover given by the data $\{D_{H,\psi}\}$ such that $f_*\mathcal{O}_X = \mathcal{O}_Y \oplus \mathcal{E}$ where $\mathcal{E} = \bigoplus_{\chi \in G^* \setminus \{1\}} L_{\chi}^{-1}$. Since $H^1(\mathcal{O}_Y) = 0$, the line bundles $L_{\chi}^{-1}$ do not have any non-trivial deformations when $Y$ is fixed. Hence the versal deformation space of $\bf Def^G(X/Y)$ is given by an open set of the smooth scheme 

 $$\bigoplus_{(H, \Psi)} H^0(\mathcal{O}_Y(D_{H,\Psi}))$$

\end{remark}

\begin{remark}\label{smoothness}
We recall a well-known fact from deformation theory. Let $F$ and $G$ be two functors of Artin rings and $f: F \to G$ be a morphism of functors. If $df: F(k[\epsilon]) \to G(k[\epsilon])$ is surjective and $F$ is less obstructed than $G$, then $f$ is smooth (see \cite[Proposition $2.3.6$]{sernesi2006deformations}). Consequently, $f(B): F(B) \to G(B)$ is surjective for every $B$. Further in this situation, $F$ is unobstructed if and only if $G$ is unobstructed (see \cite[Proposition $2.2.5$ (ii), (iii)]{sernesi2006deformations}).

\end{remark}

\begin{remark}\label{local-global}
Let $Y$ be a reduced scheme. Then there is a local-global Ext sequence for coherent sheaves $\mathcal{F}$ and $\mathcal{G}$ as follows:
\begin{equation*}
 0 \to H^1(\mathcal{H}om(\mathcal{F}, \mathcal{G})) \to \operatorname{Ext}^1(\mathcal{F}, \mathcal{G}) \to H^0(\mathcal{E}xt^1(\mathcal{F}, \mathcal{G})) \to H^2(\mathcal{H}om(\mathcal{F}, \mathcal{G})) \to \operatorname{Ext}^2(\mathcal{F}, \mathcal{G})
 \end{equation*}

Set $\mathcal{F} = \Omega_Y$ and $\mathcal{G} = \mathcal{E}$ where  $\mathcal{E}$ is a locally free sheaf on $Y$. Then the above sequence gives

\begin{equation}\label{twisted local-global}
    0 \to H^1(T_Y \otimes \mathcal{E}) \to \operatorname{Ext}^1(\Omega_Y, \mathcal{E}) \to H^0(\mathcal{T}_Y^1 \otimes \mathcal{E}) \to H^2(T_Y \otimes \mathcal{E}) \to \operatorname{Ext}^2(\Omega_Y, \mathcal{E})
\end{equation}
\end{remark}


\subsection{Deformation of non-flat 
$\mathbb{Z}_2^s$-covers}

\begin{theorem}\label{every def is a finite cover revised reflexive}
    Let $f: X \to Y$ be a finite cover, with $X$ and $Y$ reduced and non-singular in codimension one, such that $f_*\mathcal{O}_X = \mathcal{O}_Y \oplus \mathcal{E}$, where $\mathcal{E}$ is a reflexive sheaf on $Y$. Suppose that $H^1(\mathcal{H}om(\Omega_Y, \mathcal{E})) = H^0(\mathcal{E}xt^1(\Omega_Y, \mathcal{E})) = 0$. Then the natural forgetful map of functors $\bf Def(f) \xrightarrow{\pi_0} \bf Def(X)$ is smooth. Consequently, any deformation of $X$ can be realized as a finite cover over a deformation of $Y$. 
\end{theorem}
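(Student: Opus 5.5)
The plan is to use \Cref{smoothness}. It suffices to show that the differential of $\pi_0$ on first order deformations, $d\pi_0\colon \operatorname{Ext}^1(\delta_1,\delta_0)\to \operatorname{Ext}^1(\Omega_X,\mathcal{O}_X)$, is surjective, and that $\mathbf{Def}(f)$ is less obstructed than $\mathbf{Def}(X)$. Both will follow from the long exact sequences of Ran \cite[Section 2, Prop.~3.1]{defofmaps89} relating the groups $\operatorname{Ext}^i(\delta_1,\delta_0)$ to $\operatorname{Ext}^i(\Omega_X,\mathcal{O}_X)$ and $\operatorname{Ext}^i(\Omega_Y,\mathcal{O}_Y)$, together with the cohomology of the map $\delta_1$. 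Concretely, I would use the exact sequence
\[
\cdots\to \operatorname{Ext}^i(\delta_1,\delta_0)\xrightarrow{\pi_0}\operatorname{Ext}^i_X(\Omega_X,\mathcal{O}_X)\to \operatorname{Ext}^{i}_X(f^*\Omega_Y,\mathcal{O}_X)\to\cdots
\]
(or, in Wehler's formulation \cite{wehler86deformations}, the sequence in which the cokernel of $\pi_0$ injects into a group built from $\operatorname{Ext}^i(f^*\Omega_Y,\mathcal{O}_X)$ modulo $\operatorname{Ext}^i(\Omega_Y,\mathcal{O}_Y)$). The goal is to show that the obstruction to lifting sits in $\operatorname{Ext}^1_X(f^*\Omega_Y,\mathcal{O}_X)$ modulo the image of $\operatorname{Ext}^1_Y(\Omega_Y,\mathcal{O}_Y)$. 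This quotient should be controlled by $\operatorname{Ext}^1_Y(\Omega_Y,\mathcal{E})$.

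The key computation is adjunction for the finite map $f$: $\operatorname{Ext}^i_X(f^*\Omega_Y,\mathcal{O}_X)\cong \operatorname{Ext}^i_Y(\Omega_Y,f_*\mathcal{O}_X)$. For $i\le 1$ this needs care, since $f$ is not flat and $f^*$ is not exact. I would first establish it on the smooth loci $U\subset Y$ and $f^{-1}(U)$. Because $Y$ and $X$ are non-singular in codimension one and $\mathcal{E}$ is reflexive, the complement has codimension at least $2$, and depth arguments (reflexivity, $S_2$) then extend the identification of $\mathcal{H}om$'s and of first Ext classes. After that, the splitting $f_*\mathcal{O}_X=\mathcal{O}_Y\oplus\mathcal{E}$ gives
\[
\operatorname{Ext}^1_Y(\Omega_Y,f_*\mathcal{O}_X)=\operatorname{Ext}^1_Y(\Omega_Y,\mathcal{O}_Y)\oplus\operatorname{Ext}^1_Y(\Omega_Y,\mathcal{E}),
\]
and the first summand is exactly the image of the deformations of $Y$. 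So the cokernel of $d\pi_0$, and the relative obstruction, lie in $\operatorname{Ext}^1_Y(\Omega_Y,\mathcal{E})$. By the local-global sequence of \Cref{local-global},
\[
0\to H^1(\mathcal{H}om(\Omega_Y,\mathcal{E}))\to \operatorname{Ext}^1(\Omega_Y,\mathcal{E})\to H^0(\mathcal{E}xt^1(\Omega_Y,\mathcal{E})),
\]
and both outer terms vanish by hypothesis, so $\operatorname{Ext}^1(\Omega_Y,\mathcal{E})=0$.

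The argument then runs in three steps. First, $d\pi_0$ is surjective because its cokernel injects into $\operatorname{Ext}^1(\Omega_Y,\mathcal{E})=0$. Second, the next term of the same sequence gives an injection of obstruction spaces $\ker\bigl(\operatorname{Ext}^2(\delta_1,\delta_0)\to\operatorname{Ext}^2(\Omega_X,\mathcal{O}_X)\bigr)\hookrightarrow \operatorname{Ext}^1(\Omega_Y,\mathcal{E})=0$, so $\mathbf{Def}(f)$ is less obstructed than $\mathbf{Def}(X)$. Third, \Cref{smoothness} yields the smoothness of $\pi_0$, and hence surjectivity on every Artin ring, which is precisely the statement that every deformation of $X$ is a finite cover of some deformation of $Y$.

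The main obstacle will be the adjunction step in the non-flat, singular setting. The natural map $\operatorname{Ext}^1_Y(\Omega_Y,f_*\mathcal{O}_X)\to\operatorname{Ext}^1_X(f^*\Omega_Y,\mathcal{O}_X)$ must be injective, and Ran's sequence must be identified with the correct terms. I would handle this with the Grothendieck spectral sequence $\operatorname{Ext}^p_Y(L^{-q}f^*$-type$)$. Its correction terms involve higher Tor sheaves $\mathcal{T}or_q(\Omega_Y,f_*\mathcal{O}_X)$, which are supported in codimension $\ge 2$, and I would expect them to contribute nothing in degrees $\le 1$ by depth considerations, since $\mathcal{O}_X$ is $S_2$ and reflexive. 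If this becomes delicate, I would instead follow Wehler's original argument, working directly with the exact sequence $0\to f_*\mathcal{O}_X$-module extensions and checking that the comparison only requires vanishing of $\operatorname{Ext}^1_Y(\Omega_Y,\mathcal{E})$.
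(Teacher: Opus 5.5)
Your outline matches the paper's: Wehler's criterion, adjunction for the finite map $f$, the splitting $f_*\mathcal{O}_X=\mathcal{O}_Y\oplus\mathcal{E}$, and vanishing of the $\mathcal{E}$-part from the two hypotheses via the local-to-global sequence. The trouble is the step you flag as the main obstacle.

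\textbf{Degree $1$ needs nothing.} You use the underived mixed term $\operatorname{Ext}^i_X(f^*\Omega_Y,\mathcal{O}_X)$ and plan to repair adjunction in degrees $\le 1$ by depth arguments. In degree $1$ no repair is required. The natural map runs
$\operatorname{Ext}^1_X(f^*\Omega_Y,\mathcal{O}_X)\to\operatorname{Ext}^1_X(Lf^*\Omega_Y,\mathcal{O}_X)=\operatorname{Ext}^1_Y(\Omega_Y,f_*\mathcal{O}_X)$, which is the opposite direction to the one you wrote. It is always injective, and its cokernel lies in $\operatorname{Hom}_X(L_1f^*\Omega_Y,\mathcal{O}_X)$. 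That group vanishes because $L_1f^*\Omega_Y$ is supported over $\operatorname{Sing}(Y)$, hence torsion, while $\mathcal{O}_X$ is torsion-free since $X$ is reduced. No $S_2$ hypothesis is available or needed.

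\textbf{Degree $2$ is where the problem is.} Your obstruction step tacitly needs $\operatorname{Ext}^2_Y(\Omega_Y,\mathcal{O}_Y)$ to inject into the degree-$2$ mixed term. Otherwise a class in $\operatorname{Ext}^2(\delta_1,\delta_0)$ mapping to $(0,\eta)$ need not come from degree $1$. This injectivity is automatic for $\operatorname{Ext}^2_Y(\Omega_Y,f_*\mathcal{O}_X)$, where $\mathcal{O}_Y$ is a direct summand. It is not available for the underived group: $\operatorname{Ext}^2_X(f^*\Omega_Y,\mathcal{O}_X)$ is in general only a subgroup of $\operatorname{Ext}^2_X(Lf^*\Omega_Y,\mathcal{O}_X)$. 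Moreover, $\operatorname{Ext}^2_Y(\Omega_Y,\mathcal{O}_Y)$ maps naturally only into the derived group. So the sequence you wrote is not the right one exactly where you use it.

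\textbf{The fix is to work derived from the start, as the paper does.} With the cotangent complex one has, in every degree and for any finite $f$,
$\operatorname{Ext}^i(Lf^*\mathcal{L}_Y,\mathcal{O}_X)=\operatorname{Ext}^i(\mathcal{L}_Y,Rf_*\mathcal{O}_X)=\operatorname{Ext}^i(\mathcal{L}_Y,\mathcal{O}_Y)\oplus\operatorname{Ext}^i(\mathcal{L}_Y,\mathcal{E})$.
No flatness, Tor or depth analysis is needed. The map induced by $\mathcal{O}_Y\hookrightarrow f_*\mathcal{O}_X$ is then injective on $\operatorname{Ext}^2$ for free, and surjective on $\operatorname{Ext}^1$ if and only if $\operatorname{Ext}^1(\mathcal{L}_Y,\mathcal{E})=0$. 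It is also the cotangent-complex $\operatorname{Ext}^2$ that serves as the standard obstruction space when $X$ and $Y$ are singular.

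\textbf{The remaining work.} In exchange, one must match $\operatorname{Ext}^1(\mathcal{L}_Y,\mathcal{E})$ with the hypotheses. The paper does this with the spectral sequence $H^p(\mathcal{T}^q(Y,\mathcal{E}))\Rightarrow T^{p+q}(Y,\mathcal{E})$ and the identifications $\mathcal{T}^0(Y,\mathcal{E})=\mathcal{H}om(\Omega_Y,\mathcal{E})$ and $\mathcal{T}^1(Y,\mathcal{E})=\mathcal{E}xt^1(\Omega_Y,\mathcal{E})$. The second holds because the kernel of $I/I^2\to\Omega_P|_Y$ is torsion ($Y$ reduced) and $\mathcal{E}$ is torsion-free. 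That torsion argument is what replaces your depth considerations.
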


\begin{proof}
    We use \cite[Proposition $1.10$]{wehler86deformations}. Since $f$ is finite, we have, by adjunction (see \cite[Proposition $5.10$]{Hartresiduesandduality}), for $i = 1,2$ \par
    \noindent $\operatorname{Ext}^i(Lf^*\mathcal{L}_{Y}^{\cdot}, \mathcal{O}_X) = \operatorname{Ext}^i(\mathcal{L}_{Y}^{\cdot}, Rf_*\mathcal{O}_X) = \operatorname{Ext}^i(\mathcal{L}_{Y}^{\cdot}, f_*\mathcal{O}_X) = \operatorname{Ext}^i(\mathcal{L}_{Y}^{\cdot}, \mathcal{O}_Y) \oplus \operatorname{Ext}^i(\mathcal{L}_{Y}^{\cdot}, \mathcal{E})$
    Hence, we have to show that the map (which is inclusion in the first coordinate)
    $$\beta_1 : \operatorname{Ext}^1(\mathcal{L}_{Y}^{\cdot}, \mathcal{O}_Y) \to \operatorname{Ext}^1(\mathcal{L}_{Y}^{\cdot}, \mathcal{O}_Y) \oplus \operatorname{Ext}^1(\mathcal{L}_{Y}^{\cdot}, \mathcal{E})$$
    
    is surjective and the map 

    $$\beta_2 : \operatorname{Ext}^2(\mathcal{L}_{Y}^{\cdot}, \mathcal{O}_Y) \to \operatorname{Ext}^2(\mathcal{L}_{Y}^{\cdot}, \mathcal{O}_Y) \oplus \operatorname{Ext}^2(\mathcal{L}_{Y}^{\cdot}, \mathcal{E})$$
    is injective. \par

    While $\beta_2$ is always injective, $\beta_1$ is surjective if and only if 
    $\operatorname{Ext}^1(\mathcal{L}_{Y}^{\cdot}, \mathcal{E}) = 0$. Now there is a Spectral sequence
    $$E_2^{p,q} = H^p(\mathcal{T}^q(Y,\mathcal{E})) \implies T^{p+q}(Y,\mathcal{E})$$
    where $\mathcal{T}^q(Y,\mathcal{E}) = \mathcal{E}xt^q(\mathcal{L}_{Y}^{\cdot}, \mathcal{E})$ and $T^q(Y,\mathcal{E}) = \operatorname{Ext}^q(\mathcal{L}_{Y}^{\cdot}, \mathcal{E})$ .
    Hence in order to show that $T^1(Y, \mathcal{E}) = 0$, it is enough to show $H^1(\mathcal{T}^0(Y,\mathcal{E})) = 0$ and $H^0(\mathcal{T}^1(Y,\mathcal{E})) = 0$. 
    

We verify that $\mathcal{T}^i(Y,\mathcal{E}) = \mathcal{E}xt^i(\Omega_Y,\mathcal{E})$ for $i=0,1$.
Locally, choose a closed embedding $Y \hookrightarrow P$ with $P$ smooth, and let
$I$ be the ideal of $Y$ in $P$. The (naive) cotangent complex is the two-term
complex $c_\bullet = [I/I^2 \xrightarrow{d} \Omega_P \otimes \mathcal{O}_Y]$ in degrees $[-1,0]$,
so $\mathcal{T}^i(Y,\mathcal{E}) = H^i(\mathcal{H}om(c_\bullet, \mathcal{E}))$ vanishes for $i \geq 2$.
Let $N_0 = \operatorname{Im}(d)$, so that
$0 \to N_0 \to \Omega_P \otimes \mathcal{O}_Y \to \Omega_Y \to 0$
is exact, and let $K = \ker(d: I/I^2 \to \Omega_P \otimes \mathcal{O}_Y)$.
Since $\Omega_P \otimes \mathcal{O}_Y$ is locally free, applying $\mathcal{H}om(-,\mathcal{E})$
to the short exact sequence above gives
\[
\mathcal{E}xt^1(\Omega_Y, \mathcal{E})
= \operatorname{coker}\bigl(\mathcal{H}om(\Omega_P \otimes \mathcal{O}_Y, \mathcal{E})
\to \mathcal{H}om(N_0, \mathcal{E})\bigr).
\]
On the other hand, by definition,
\[
\mathcal{T}^1(Y,\mathcal{E})
= \operatorname{coker}\bigl(\mathcal{H}om(\Omega_P \otimes \mathcal{O}_Y, \mathcal{E})
\to \mathcal{H}om(I/I^2, \mathcal{E})\bigr).
\]
The surjection $I/I^2 \twoheadrightarrow N_0$ with kernel $K$ induces an injection
$\mathcal{H}om(N_0, \mathcal{E}) \hookrightarrow \mathcal{H}om(I/I^2, \mathcal{E})$
and hence a canonical injection
$\mathcal{E}xt^1(\Omega_Y, \mathcal{E}) \hookrightarrow \mathcal{T}^1(Y, \mathcal{E})$
whose cokernel is a subsheaf of $\mathcal{H}om(K, \mathcal{E})$.
Since $Y$ is reduced, $d$ is injective at every generic point of $Y$,
so $K$ is a torsion sheaf. Since $\mathcal{E}$ is torsion-free,
$\mathcal{H}om(K, \mathcal{E}) = 0$, and therefore
$\mathcal{T}^1(Y, \mathcal{E}) = \mathcal{E}xt^1(\Omega_Y, \mathcal{E})$.
The case $i=0$ is immediate:
$\mathcal{T}^0(Y,\mathcal{E}) = \ker(d^*) = \mathcal{H}om(\Omega_Y, \mathcal{E})$.

\end{proof}

\begin{remark}\label{when E is a vector bundle}
    Note that if $\mathcal{E}$ is a vector bundle, $\mathcal{E}xt^q(\Omega_Y, \mathcal{E}) = \mathcal{E}xt^q(\Omega_Y, \mathcal{O}_Y) \otimes \mathcal{E} = \mathcal{T}_Y^q \otimes \mathcal{E}$. Hence in this case, if $H^1(T_Y \otimes \mathcal{E}) = H^0(\mathcal{T}_Y^1 \otimes \mathcal{E}) = 0$, then the natural forgetful map of functors $\bf Def(f) \xrightarrow{\pi_0} \bf Def(X)$ is smooth. 
\end{remark}

In the following proposition, we prove a sufficient condition to ensure the vanishing of $H^0(\mathcal{E}xt^1(\Omega_X, L))$ on a scheme $X$.

\begin{proposition}\label{analog Schlessinger main}
    Let $p : Z \to X$ be a morphism of geometric local schemes with closed points $z$ and $x$ respectively. Suppose that $Z$ is smooth and that $X = Z/G$, where $G$ is a finite group of automorphisms of $Z$, having $z$ as the only fixed point. Let $L$ be a reflexive sheaf of rank one on $X$ which is locally free outside finitely many closed points. If $\operatorname{dim}(Z) \geq 3$ and $H_z^2(Z, T_Z \otimes p^*L) = 0$, then $H^0(\mathcal{E}xt^1(\Omega_X, L)) = 0$.
\end{proposition}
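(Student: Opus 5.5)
The plan is to follow Schlessinger's argument for rigidity of isolated quotient singularities, with the twisting sheaf $L$ carried along.

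Let $U = X\setminus\{x\}$ and $V = Z\setminus\{z\}$. Since $z$ is the only fixed point, $p|_V : V \to U$ is an étale $G$-cover, so $U$ is smooth. After shrinking, $L|_U$ is locally free, so $p^*L$ is locally free on $V$. Moreover $\Omega_U$ is locally free, so $\mathcal{E}xt^1(\Omega_X,L)$ is supported at $x$. The task is to show its stalk at $x$ is zero.

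\textbf{Step 1: embed the stalk into a cohomology group on $U$.} Choose a presentation $0\to N\to \mathcal{O}_X^{n}\to \Omega_X\to 0$ locally, coming from an embedding $X\hookrightarrow \mathbb{A}^n$, i.e.\ $N = N_0$ as in the proof of \Cref{every def is a finite cover revised reflexive}. Applying $\mathcal{H}om(-,L)$ gives
\[
\mathcal{E}xt^1(\Omega_X,L)=\operatorname{coker}\bigl(L^{n}\to \mathcal{H}om(N,L)\bigr).
\]
Both $L$ and $\mathcal{H}om(N,L)$ are reflexive, hence $S_2$. Since $\dim X\ge 3$ and $\operatorname{depth}_x L\ge 2$, restriction to $U$ identifies sections over $X$ with sections over $U$. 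On $U$ the sequence $0\to N|_U\to \mathcal{O}_U^n\to\Omega_U\to0$ is exact with locally free terms, so it stays exact after applying $\mathcal{H}om(-,L)$:
\[
0\to \mathcal{H}om(\Omega_U,L)\to L^n|_U\to \mathcal{H}om(N,L)|_U\to 0 .
\]
Taking $H^0(U,-)$ and using $H^0(U,L^n)=H^0(X,L^n)$ and $H^0(U,\mathcal{H}om(N,L))=H^0(X,\mathcal{H}om(N,L))$, the cokernel embeds:
\[
\mathcal{E}xt^1(\Omega_X,L)_x \hookrightarrow H^1\bigl(U, T_U\otimes L|_U\bigr).
\]
Here $\mathcal{H}om(\Omega_U,L)=T_U\otimes L$ because $L|_U$ is a line bundle.

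\textbf{Step 2: pass to $Z$.} Because $V\to U$ is étale Galois with group $G$, we have $p^*(T_U\otimes L)=T_V\otimes p^*L$. The sheaf on $U$ is the $G$-invariant part of $p_*$ of that sheaf, and $G$ is finite in characteristic zero, so
\[
H^1(U,T_U\otimes L)=H^1(V,T_V\otimes p^*L)^G\subseteq H^1(V,T_V\otimes p^*L).
\]

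\textbf{Step 3: local cohomology on $Z$.} Since $Z$ is affine (local), $H^1(Z,T_Z\otimes p^*L)=0$ for quasi-coherent sheaves. The excision sequence then gives
\[
H^1(V,\,\cdot\,)\hookrightarrow H^2_z(Z,\,T_Z\otimes p^*L),
\]
which is zero by hypothesis. Hence $H^1(V,T_V\otimes p^*L)=0$, and combining Steps 1 and 2 gives $\mathcal{E}xt^1(\Omega_X,L)=0$.

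\textbf{Main obstacle.} The delicate point is Step 1. It requires the depth and reflexivity bookkeeping: showing $\mathcal{H}om(N,L)$ is reflexive, hence has sections extending over $x$, which uses $\dim\ge 3$ and that $L$ is $S_2$. It also requires knowing that $p^*L$ need not be reflexive on $Z$ yet still agrees with the relevant sheaf on $V$, since only $V$ enters after the excision in Step 3. If $p^*L$ causes trouble at $z$, I would replace it by its reflexive hull $(p^*L)^{**}$, which is locally free on smooth $Z$ (rank one reflexive sheaves on a smooth scheme are invertible). The local cohomology hypothesis should then be read for that sheaf.
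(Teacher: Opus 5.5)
Your proposal is correct and follows essentially the same route as the paper: the paper's Lemma~\ref{exact sequence with coefficients in E} gives your Step~1 injection $H^0(\mathcal{E}xt^1(\Omega_X,L))\hookrightarrow H^1(U,T_X\otimes L)$ using the same $N_0$-sequence and the extension of sections of the dual sheaves across $x$. The proof then passes to $V$ using that $T_X|_U$ is a direct summand of $p_*T_Z|_U$, which is your $G$-invariants step, and concludes with the same excision sequence. The fallback in your last paragraph is unnecessary, because $H^1(Z,\mathcal{F})\to H^1(V,\mathcal{F})\to H^2_z(Z,\mathcal{F})$ is exact for every quasi-coherent $\mathcal{F}$, so the hypothesis applies to $T_Z\otimes p^*L$ exactly as stated.
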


To prove this Proposition, we first prove a lemma which is the analog of \cite[Lemma $(2)$]{schlessinger1971}.

\begin{lemma}\label{exact sequence with coefficients in E}
  Let $i: X \hookrightarrow Y$ be a closed immersion of geometric local schemes, and $x$ is a closed point of $X$. Let $U = X -x$. Let $L$ denote a reflexive sheaf of rank one on $X$. Assume
  \begin{enumerate}
      \item $\operatorname{depth}_xX \geq 2$
      \item $Y$ and $U$ are smooth.
  \end{enumerate}
Then there is an exact sequence 
$$0 \to H^0(\mathcal{E}xt^1(\Omega_X, L)) \to H^1(U,T_X \otimes L) \to H^1(U, i^*T_Y \otimes L)$$

\end{lemma}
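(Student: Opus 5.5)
We follow Schlessinger's argument for \cite[Lemma (2)]{schlessinger1971}, replacing $\mathcal{O}_X$ by $L$ throughout. Let $I$ be the ideal of $X$ in $Y$. The conormal sequence
\[
I/I^2 \xrightarrow{d} i^*\Omega_Y \to \Omega_X \to 0
\]
has $i^*\Omega_Y$ locally free. Let $N_0=\operatorname{Im}(d)$, so that $0\to N_0\to i^*\Omega_Y\to\Omega_X\to 0$ is exact. Applying $\mathcal{H}om(-,L)$ gives
\[
0\to \mathcal{H}om(\Omega_X,L)\to \mathcal{H}om(i^*\Omega_Y,L)\to \mathcal{H}om(N_0,L)\to \mathcal{E}xt^1(\Omega_X,L)\to 0,
\]
where the last zero holds because $\mathcal{E}xt^1(i^*\Omega_Y,L)=0$. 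Since $X$ is local, taking global sections of a sheaf is the same as taking its stalk at $x$. On $U$ both $X$ and $Y$ are smooth, so $N_0|_U$ is locally free (it is the conormal bundle of $U$) and $L|_U$ is locally free. Hence $\mathcal{E}xt^1(\Omega_X,L)$ is supported at $x$, and $H^0(\mathcal{E}xt^1(\Omega_X,L))$ is the cokernel of $H^0(i^*T_Y\otimes L)\to H^0(\mathcal{H}om(N_0,L))$. Here $\mathcal{H}om(i^*\Omega_Y,L)=i^*T_Y\otimes L$.

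The next step is to compare these global sections with sections on $U$. Because $L$ is reflexive of rank one and $\operatorname{depth}_x X\ge 2$, $L$ has depth $\ge 2$ at $x$ ($S_2$), so $H^0(X,L)=H^0(U,L)$ and $H^1_x(L)=0$. The same holds for any sheaf of the form $\mathcal{H}om(\mathcal{F},L)$: such sheaves are reflexive-type and inherit depth $\ge 2$. Concretely, $\mathcal{H}om(\mathcal{F},L)=j_*j^*\mathcal{H}om(\mathcal{F},L)$ with $j\colon U\hookrightarrow X$, since $L=j_*j^*L$. Therefore
\[
H^0(X,i^*T_Y\otimes L)=H^0(U,i^*T_Y\otimes L),\qquad H^0(X,\mathcal{H}om(N_0,L))=H^0(U,N_{U}\otimes L),
\]
and also $H^0(X,\mathcal{H}om(\Omega_X,L))=H^0(U,T_U\otimes L)$.

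On $U$ everything is locally free and we have the exact sequence $0\to T_U\otimes L\to i^*T_Y\otimes L\to N_U\otimes L\to 0$. Its long exact cohomology sequence on $U$ reads
\[
H^0(U,i^*T_Y\otimes L)\to H^0(U,N_U\otimes L)\to H^1(U,T_U\otimes L)\to H^1(U,i^*T_Y\otimes L).
\]
Combining this with the identifications above, the cokernel of the first map is exactly $H^0(\mathcal{E}xt^1(\Omega_X,L))$, which yields the desired injection into $H^1(U,T_X\otimes L)$ with image the kernel of the last map.

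The main obstacle is justifying the extension-of-sections step. Specifically, one must check that $\mathcal{H}om(N_0,L)$ satisfies $H^0(X,-)=H^0(U,-)$, given that $N_0$ need not be reflexive. I would argue this using the torsion-freeness of $L$ together with $L=j_*j^*L$: any section on $U$ of $\mathcal{H}om(N_0,L)$ gives a map $N_0\to j_*(L|_U)=L$ by adjunction, and injectivity of restriction follows because $L$ has no sections supported at $x$.
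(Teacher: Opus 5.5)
Your proposal is correct and follows essentially the same route as the paper: apply $\mathcal{H}om(-,L)$ to $0\to N_0\to i^*\Omega_Y\to\Omega_X\to 0$, identify global sections of the three $\mathcal{H}om$ sheaves with their sections over $U$, and compare with the long exact cohomology sequence of $0\to T_U\otimes L\to i^*T_Y\otimes L\to N_U\otimes L\to 0$ on $U$. The only difference is in how the extension of sections across $x$ is justified. The paper rewrites $\mathcal{H}om(\mathcal{F},L)=(\mathcal{F}\otimes L^{\vee})^{\vee}$ and invokes Schlessinger's Lemma (1), whereas you use $L=j_*j^*L$ together with $\mathcal{H}om(\mathcal{F},j_*j^*L)=j_*\mathcal{H}om(j^*\mathcal{F},j^*L)$; your version handles the possibly non-reflexive $N_0$ just as well.
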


\begin{proof}
    Let $\mathcal{I}$ denote the ideal sheaf of $X$ in $Y$. Then we have an exact sequence,
    $$\mathcal{I}/\mathcal{I}^2 \xrightarrow{d}  i^*\Omega_Y \to \Omega_X \to 0$$
    Let $N_0$ denote the image of $d$. Taking the $\mathcal{H}om( -, L)$, we have the exact sequence,
    $$0 \to \mathcal{H}om(\Omega_X, L) \to \mathcal{H}om(i^*\Omega_Y, L) \to \mathcal{H}om(N_0, L) \to \mathcal{E}xt^1(\Omega_X, L) \to \mathcal{E}xt^1(i^*\Omega_Y, L) = 0$$

The last vanishing is due to the fact that $Y$ is smooth and hence $\Omega_Y$ is locally free. Now since $L$ is reflexive, $L = \mathcal{H}om(L^{\vee}, \mathcal{O}_X)$. Hence if $\mathcal{F}$ is a coherent sheaf on $X$, we have 
$$\mathcal{H}om(\mathcal{F}, L) = \mathcal{H}om(\mathcal{F}, \mathcal{H}om(L^{\vee}, \mathcal{O}_X)) = \mathcal{H}om(\mathcal{F} \otimes L^{\vee}, \mathcal{O}_X) = (\mathcal{F} \otimes L^{\vee})^{\vee} $$

Therefore the last exact sequence gives

\begin{equation*}\label{exact sequence after Hom}
 0 \to (\Omega_X \otimes L^{\vee})^{\vee} \to (i^*\Omega_Y \otimes L^{\vee})^{\vee} \to (N_0 \otimes L^{\vee})^{\vee} \to \mathcal{E}xt^1(\Omega_X, L) \to 0   
\end{equation*}

Since the first three sheaves in the exact sequence are duals, by \cite[Lemma $(1)$]{schlessinger1971}, $\operatorname{depth}_x\mathcal{F}^{\vee} \geq 2$ and $H^0(U, \mathcal{F}) = H^0(X, \mathcal{F})$ for the first three sheaves. Since coherent sheaves on Noetherian affine schemes have no higher cohomology, by taking global sections of the last exact sequence, we have 

\begin{equation*}
   0 \to H^0(U,(\Omega_X \otimes L^{\vee})^{\vee}) \to H^0(U, (i^*\Omega_Y \otimes L^{\vee})^{\vee}) \to H^0(U, (N_0 \otimes L^{\vee})^{\vee}) \to H^0(\mathcal{E}xt^1(\Omega_X, L)) \to 0 
\end{equation*}

Now $U$ is smooth. Note that reflexive sheaves of rank one on a smooth scheme are locally free. Hence $L|_U$ is locally free.  Further on $U$, the sheaf $\Omega_X|_U$ is locally free. Consequently, $N_0|_U$ is also locally free. Since duals commute with tensor products on locally free sheaves, we have that

\begin{equation}\label{global sections of exact sequence of duals}
    0 \to H^0(U, T_X \otimes L) \to H^0(U, i^*T_Y\otimes L) \to H^0(U, N_0^{\vee} \otimes L) \to H^0(\mathcal{E}xt^1(\Omega_X, L)) \to 0
\end{equation}

On the other hand, since $\mathcal{E}xt^1(\Omega_X, L)$ is supported on the complement of $U$, restricting to $U$, we get

\begin{equation}\label{exact sequence of duals restrcited to U}
   0 \to (T_X \otimes L)|_U \to (i^*T_Y\otimes L)|_U \to (N_0^{\vee} \otimes L)|_U \to 0 
\end{equation}   

Taking the cohomology of the above sequence, we have

\begin{equation}\label{global sections of exact sequence of duals restricted to U}
    0 \to H^0(U, T_X \otimes L) \to H^0(U, i^*T_Y\otimes L) \to H^0(U, N_0^{\vee} \otimes L) \to H^1(U, T_X \otimes L) \to H^1(U, i^*T_Y\otimes L)
\end{equation}

Now, comparing \Cref{global sections of exact sequence of duals} and \Cref{global sections of exact sequence of duals restricted to U}, we have the exact sequence in the statement of the Theorem.

\end{proof}

\noindent\textit{Proof of \Cref{analog Schlessinger main}.} Pick a closed immersion $X \hookrightarrow Y$, where $Y$ is smooth. Let $U = X -\{x\}$. Since $\operatorname{dim}_z(Z) \geq 3$, we have that $\operatorname{depth}_xX \geq \operatorname{dim}_x(X)  \geq 3$. We show that under this situation, $H^1(U, T_X \otimes L) = 0$. Then the statement follows from \Cref{exact sequence with coefficients in E}. Let $V = Z-\{z\}$, so that $p$ induces a map $p: V \to U = V/G$. Consider the sheaf $T_Z \otimes p^*L$ restricted to $V$. Note that $p_*(T_Z \otimes p^*L) = p_*T_Z \otimes L$. Further $H^1(V,T_Z \otimes p^*L) = H^1(U, p_*T_Z \otimes L)$. By \cite[Corollary $3.1.23$ (see also Proposition $3.1.22$)]{sernesi2006deformations}, $T_X|_U$ is a direct summand of $p_*T_Z|_U$. Hence $H^1(U, T_X \otimes L)$ is a direct summand of $H^1(U, p_*T_Z \otimes L) = H^1(V,T_Z \otimes p^*L)$. So it is enough to show that $H^1(V,T_Z \otimes p^*L) = 0$. Now by the exact sequence of local-cohomology, we have 

\begin{equation}\label{local cohomology}
  H^1(Z, T_Z \otimes p^*L) \to H^1(V,T_Z \otimes p^*L) \to H_z^2(Z, T_Z \otimes p^*L)  
\end{equation}

The left hand side term is zero since $Z$ is a Noetherian affine scheme and the right hand side term is zero by the hypothesis. Hence $H^1(V,T_Z \otimes p^*L) = 0$.

\color{black}

\medskip

We now give a sufficient criterion for the vanishing of local cohomology groups which will allow us to apply \Cref{analog Schlessinger main}, to weighted projective spaces.

\begin{proposition}\label{vanishing of second local cohomology}
Let $Z$ be a smooth variety of dimension $n$ over a field, let $S\subset Z$ be a finite
set of closed points, and set $U:=Z\setminus S$. Let $\mathcal F$ be a locally free
sheaf on $Z$, and let $\mathcal G$ be a coherent sheaf on $Z$ such that
$\mathcal G|_U$ is locally free of rank $1$ (equivalently, $\mathcal G$ is generically
rank $1$ and locally free outside $S$).
Then for every $z\in Z$ one has
\[
H^i_z(\mathcal F\otimes \mathcal G)=0
\qquad\text{for all }\quad 2\le i\le n-1.
\]

\end{proposition}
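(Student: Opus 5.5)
The plan is to reduce to the case of a locally free sheaf. Two standard facts will be used. First, local cohomology at a closed point of a coherent sheaf supported in finitely many points vanishes in all degrees $\ge 1$. Second, on the smooth $n$-dimensional variety $Z$, any locally free sheaf $\mathcal V$ satisfies $H^i_z(\mathcal V)=0$ for $i<n$. The second fact follows from Grothendieck's depth characterization of local cohomology, because $\mathcal O_{Z,z}$ is regular of dimension $n$, hence Cohen--Macaulay, so $\operatorname{depth}_z \mathcal V = n$.

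\textbf{Step 1 (remove torsion).} Let $\mathcal T\subset \mathcal G$ be the torsion subsheaf. Since $\mathcal G|_U$ is locally free, $\mathcal T$ is supported on the finite set $S$ and has finite length. Tensoring $0\to\mathcal T\to\mathcal G\to\mathcal G'\to 0$ with the locally free sheaf $\mathcal F$ keeps it exact. The sheaf $\mathcal F\otimes\mathcal T$ is again supported on $S$, so $H^j_z(\mathcal F\otimes\mathcal T)=0$ for $j\ge1$. The long exact sequence of local cohomology then gives $H^i_z(\mathcal F\otimes\mathcal G)\cong H^i_z(\mathcal F\otimes\mathcal G')$ for all $i\ge 1$.

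\textbf{Step 2 (embed in a line bundle).} The sheaf $\mathcal G'$ is torsion-free of rank one. Let $\mathcal M:=\mathcal G'^{\vee\vee}$. It is reflexive of rank one on the smooth variety $Z$, hence invertible. The natural map $\mathcal G'\to\mathcal M$ is injective, and it is an isomorphism on $U$ because $\mathcal G'|_U$ is already locally free. Its cokernel $\mathcal Q$ is therefore supported on $S$ and has finite length. Tensor the sequence $0\to\mathcal G'\to\mathcal M\to\mathcal Q\to0$ with $\mathcal F$ and take local cohomology at $z$. This gives an exact segment
\[
H^{i-1}_z(\mathcal F\otimes\mathcal Q)\to H^i_z(\mathcal F\otimes\mathcal G')\to H^i_z(\mathcal F\otimes\mathcal M).
\]
For $i\ge2$ the left term vanishes, since $\mathcal F\otimes\mathcal Q$ is supported at points and $i-1\ge1$. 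For $i\le n-1$ the right term vanishes by the depth fact, since $\mathcal F\otimes\mathcal M$ is locally free. Hence $H^i_z(\mathcal F\otimes\mathcal G')=0$ for $2\le i\le n-1$, and Step 1 transfers this to $\mathcal F\otimes\mathcal G$.

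Points $z\notin S$ need no separate treatment: near such a point $\mathcal F\otimes\mathcal G$ is locally free, and the depth fact applies directly. The step needing the most care is the depth vanishing, which I would cite from Grothendieck's local cohomology theory (e.g.\ Hartshorne, Local Cohomology, Thm.~3.8). I would also note why the lower bound $i\ge 2$ is sharp: in degree $i=1$ the finite-length cokernel $\mathcal Q$ can contribute a nonzero $H^0_z(\mathcal F\otimes\mathcal Q)$.
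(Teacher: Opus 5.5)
Your proposal is correct and follows essentially the same route as the paper: you discard the finitely supported torsion, embed the torsion-free quotient into its reflexive hull (a line bundle because $Z$ is locally factorial) with a finite-length cokernel, and conclude from the long exact sequence of local cohomology together with $H^i_z(\mathcal V)=0$ for $i<n$ when $\mathcal V$ is locally free. Your extra remark explaining why $i=1$ must be excluded is also correct; the paper does not discuss this point.
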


\begin{proof}
We break the argument into steps. Let us denote $\mathcal G_{\operatorname{tor}}:=\operatorname{tors}(\mathcal G)$ and
$\mathcal G':=\mathcal G/\mathcal G_{\operatorname{tor}}$.

\begin{enumerate}
\item\emph{Reduction to the torsion-free quotient.}
Let $\mathcal G_{\operatorname{tor}}=\operatorname{tors}(\mathcal G)$ and $\mathcal G'=\mathcal G/\mathcal G_{\operatorname{tor}}$.
Since $\mathcal G|_U$ is locally free, it has no torsion on $U$, hence
$\operatorname{Supp}(\mathcal G_{\operatorname{tor}})\subseteq S$. In particular, for every $z\in Z$ the stalk
$(\mathcal G_{\operatorname{tor}})_z$ has finite length over $\mathcal O_{Z,z}$.
Tensoring the short exact sequence
$0\to \mathcal G_{\operatorname{tor}}\to \mathcal G\to \mathcal G'\to 0$
with $\mathcal F$ (exactness is preserved since $\mathcal F$ is locally free) yields
\[
0\to \mathcal F\otimes \mathcal G_{\operatorname{tor}}
\to \mathcal F\otimes \mathcal G
\to \mathcal F\otimes \mathcal G' \to 0.
\]
Fix $z\in Z$ and set $R=\mathcal O_{Z,z}$ and $\mathfrak m=\mathfrak m_z$.
Because $(\mathcal F\otimes \mathcal G_{\operatorname{tor}})_z\cong \mathcal F_z\otimes_R (\mathcal G_{\operatorname{tor}})_z$ and $\mathcal F_z$ is
free, the module $(\mathcal F\otimes \mathcal G_{\operatorname{tor}})_z$ also has finite length.
For a finite length $R$--module $M$ one has $H^j_{\mathfrak m}(M)=0$ for all $j>0$
(e.g.\ by the \v{C}ech complex computing local cohomology with respect to generators of $\mathfrak m$).
Therefore, applying local cohomology supported at $z$ gives isomorphisms
\[
H^i_z(\mathcal F\otimes \mathcal G)\ \cong\ H^i_z(\mathcal F\otimes \mathcal G')
\qquad\text{for all } i\ge 1.
\]
In particular, for $2\le i\le n-1$ it suffices to prove the desired vanishing with
$\mathcal G$ replaced by $\mathcal G'$; thus we may assume $\mathcal G$ is torsion-free.

\item\emph{Reduction to the reflexive hull.}
Assume from now on that $\mathcal G$ is torsion-free and that $\mathcal G|_U$ is
locally free of rank $1$.
Consider the natural morphism
\[
\mathcal G \longrightarrow \mathcal G^{\vee\vee}.
\]
Since $Z$ is smooth (hence normal) and $\mathcal G$ is torsion-free, this morphism is
injective. Moreover, it is an isomorphism on $U$, because $\mathcal G|_U$ is locally
free (hence reflexive) on $U$. Consequently the cokernel
\[
\mathcal Q:=\mathcal G^{\vee\vee}/\mathcal G
\]
is supported on $Z\setminus U=S$, i.e.\ $\operatorname{Supp}(\mathcal Q)\subseteq S$.
In particular, $\mathcal Q_z$ has finite length for every $z\in Z$.
So we have an exact sequence 
$0\to \mathcal G\to \mathcal G^{\vee\vee} \to \mathcal Q\to 0$

\item\emph{Rank-one reflexive sheaves on a smooth variety are invertible.}
Because $\mathcal G|_U$ has rank $1$, the reflexive hull $\mathcal G^{\vee\vee}$ is a
rank-one reflexive sheaf on $Z$. Since $Z$ is smooth, it is locally factorial; hence
every rank-one reflexive sheaf is invertible. Therefore
\[
\mathcal L:=\mathcal G^{\vee\vee}
\]
is a line bundle on $Z$.

\item\emph{Vanishing of local cohomology in degrees $2,\dots,n-1$.}
Tensor the exact sequence
$0\to \mathcal G\to \mathcal L\to \mathcal Q\to 0$ with $\mathcal F$:
\[
0\to \mathcal F\otimes \mathcal G \to \mathcal F\otimes \mathcal L \to \mathcal F\otimes \mathcal Q \to 0.
\]
Fix $z\in Z$ and apply $H^i_z(-)$.
Since $(\mathcal F\otimes \mathcal Q)_z$ has finite length, we have
$H^j_z(\mathcal F\otimes \mathcal Q)=0$ for all $j>0$.
Also, $\mathcal F\otimes \mathcal L$ is locally free, so at the regular local ring
$\mathcal O_{Z,z}$ of dimension $n$ one has
$H^i_z(\mathcal F\otimes \mathcal L)=0$ for all $i\neq n$; in particular this holds
for $2\le i\le n-1$.
Thus for $2\le i\le n-1$ the segment of the long exact sequence
\[
H^{i-1}_z(\mathcal F\otimes \mathcal Q)\to
H^i_z(\mathcal F\otimes \mathcal G)\to
H^i_z(\mathcal F\otimes \mathcal L)\to
H^i_z(\mathcal F\otimes \mathcal Q)
\]
shows that $H^i_z(\mathcal F\otimes \mathcal G)=0$.

\end{enumerate}
\end{proof}

\medskip

Under some additional hypothesis, we would like to strengthen \Cref{every def is a finite cover revised reflexive} so that every deformation of the abelian cover $X$ is once again an abelian cover of $Y$. To do this, we first need a lemma to compute the normal sheaf of a finite map.

\begin{lemma}\label{lem:log_tangent_exact_sequence_isolated_quotient}
Let $Y$ be a normal variety of dimension $n \geq 3$ with isolated singularities,
$D$ a reduced Weil divisor on $Y$, and
\[
j:U:=Y_{\mathrm{reg}}\hookrightarrow Y,
\qquad
D_U:=D\cap U.
\]
Assume that $D_U$ is smooth. Assume moreover that for every point
$y\in \operatorname{Sing}(Y)$ such that $D$ passes through $y$, there exists a
neighbourhood $W_y$ of $y$ with $W_y \cap \operatorname{Sing}(Y) = y$ together with a
quotient presentation
\[
f: \widetilde{W}_y \to W_y \simeq \widetilde{W}_y/G_y,
\]
where $\widetilde{W}_y$ is smooth affine, $G_y$ is finite with
$\operatorname{char}(k)\nmid |G_y|$,
$f: \widetilde{W}_y- \{f^{-1}(y)\} \to W_y- \{y\}$ is \'etale,
and the reduced pullback divisor
$D_y'\subset \widetilde{W}_y$ is smooth at every point lying over $y$.

Define
\[
T_Y(-\log D):=j_*T_U(-\log D_U).
\]
Then the sequence obtained by applying $j_*$ to the short exact sequence
\[
0\longrightarrow T_U(-\log D_U)
\longrightarrow T_U
\longrightarrow \mathcal O_{D_U}(D_U)
\longrightarrow 0
\]
is exact on the right, i.e.\ $R^1j_*T_U(-\log D_U)=0$,
and hence there is a short exact sequence
\[
0\longrightarrow T_Y(-\log D)
\longrightarrow T_Y
\longrightarrow j_*\mathcal O_{D_U}(D_U)
\longrightarrow 0.
\]
\end{lemma}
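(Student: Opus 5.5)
The sequence $0\to T_U(-\log D_U)\to T_U\to \mathcal O_{D_U}(D_U)\to 0$ on $U$ is the standard residue sequence for a smooth divisor. Applying $j_*$ gives the exact sequence $0\to j_*T_U(-\log D_U)\to j_*T_U\to j_*\mathcal O_{D_U}(D_U)\to R^1j_*T_U(-\log D_U)$. Since $Y$ is normal, $j_*T_U=T_Y$ (the tangent sheaf is reflexive and $\operatorname{codim}\operatorname{Sing}(Y)\geq 3$). So the whole statement reduces to the vanishing $R^1j_*T_U(-\log D_U)=0$.

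This vanishing is local on $Y$, and $R^1j_*$ is supported on $\operatorname{Sing}(Y)$, a finite set. I will therefore fix $y\in\operatorname{Sing}(Y)$ and show that $H^1(W_y\setminus\{y\},T_U(-\log D_U))=0$, using that $W_y$ is affine, so that $(R^1j_*\mathcal F)_y=\varinjlim H^1(W\setminus\{y\},\mathcal F)$.

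\textbf{Case $y\notin D$.} Shrink $W_y$ so that it misses $D$. Then $T_U(-\log D_U)=T_U$ near $y$, and I need $R^1j_*T_U=0$ at an isolated quotient singularity. This goes as in the proof of \Cref{analog Schlessinger main}. Let $V=\widetilde W_y\setminus f^{-1}(y)$. Then $H^1(W_y\setminus y,T)$ is the $G_y$-invariant part of $H^1(V,T_{\widetilde W_y})$, because $f$ is étale on $V$, so $T_U=(f_*T_V)^{G_y}$, and taking invariants is exact since $\operatorname{char}\nmid|G_y|$. Next, $H^1(V,T)\cong H^2_{f^{-1}(y)}(\widetilde W_y,T)$, using that $\widetilde W_y$ is affine. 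This local cohomology vanishes because $T$ is locally free on a smooth variety of dimension $n\geq3$ (depth $n\geq3$), or alternatively by \Cref{vanishing of second local cohomology}. If $y$ has several preimages, I first localize $\widetilde W_y$ to make the preimage finite; this is harmless.

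\textbf{Case $y\in D$.} Here I repeat the same argument with $T_{\widetilde W_y}(-\log D'_y)$. Because $D'_y$ is smooth at the points over $y$, after shrinking $\widetilde W_y$ this sheaf is locally free: in local coordinates it is generated by $x_1\partial_1,\partial_2,\dots,\partial_n$. Its local cohomology in degree $2$ at the finitely many points of $f^{-1}(y)$ therefore vanishes, since $2\le n-1$. It remains to identify the sheaf on $W_y\setminus y$ with the invariant pushforward. On $V$ the map $f$ is étale and $f^{-1}(D_U)=D'_y\cap V$, so $f^*T_U(-\log D_U)=T_V(-\log D'_y)$. Taking $G_y$-invariants of $f_*$ then recovers $T_U(-\log D_U)$, and exactness of invariants gives the vanishing of $H^1$ as before.

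The main obstacle is this compatibility check between the logarithmic tangent sheaf downstairs and the $G_y$-invariant part of the pushforward of the one upstairs. I also need $D'_y$ to be the full reduced preimage, so that pulling back the log condition along an étale map gives exactly the log condition upstairs. Once these are in place, the exact sequence follows by combining the vanishing with $j_*T_U=T_Y$.
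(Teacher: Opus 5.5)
Your route is the paper's: reduce to stalks of $R^1j_*T_U(-\log D_U)$ at singular points, and pass to the smooth cover. There you use that the \'etale map on the punctured neighbourhood pulls the log tangent sheaf back to the log tangent sheaf of the reduced preimage. You then realize the downstairs $H^1$ as a direct summand (the $G_y$-invariants) of the upstairs one, rewrite it as $H^2$ local cohomology at $f^{-1}(y)$, and kill it by depth. The differences are cosmetic. You compute the stalk as a direct limit over affine neighbourhoods rather than by flat base change to $\operatorname{Spec}\mathcal{O}_{Y,y}$. You also get depth $n\geq 3$ directly from local freeness of $T_{\widetilde W_y}(-\log D'_y)$ near $f^{-1}(y)$, whereas the paper uses the residue sequence upstairs together with the depths of $T_{\widetilde W_y}$ and $\mathcal{O}_{D'_y}(D'_y)$.

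The one step not supported by the hypotheses is your case $y\notin D$. There you ``shrink $W_y$'', but a quotient presentation is only assumed at singular points lying on $D$. Without it, the vanishing you want at $y$ can actually fail. Suppose $y$ is a threefold ordinary double point and $D$ avoids it. Apply Schlessinger's sequence (\Cref{exact sequence with coefficients in E} with $L=\mathcal{O}$, applied to the local ring of $Y$ at $y$ embedded in $\mathbb{A}^4$) and use $H^2_y(\mathcal{O}_Y)=0$. This gives $(R^1j_*T_U(-\log D_U))_y=(R^1j_*T_U)_y\cong(\mathcal{T}^1_Y)_y\cong k\neq 0$. So the literal claim $R^1j_*T_U(-\log D_U)=0$ requires the presentation at every singular point. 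The paper's proof makes the same tacit assumption, since it invokes the presentation at all $y\in\operatorname{Sing}(Y)$. This is harmless in its application, where all singularities are quotient singularities.

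The short exact sequence itself survives without that assumption. For $y\notin D$, the sheaf $j_*\mathcal{O}_{D_U}(D_U)$ vanishes on the open set $Y\setminus D\ni y$, so surjectivity of $T_Y\to j_*\mathcal{O}_{D_U}(D_U)$ at $y$ is automatic. Your $y\in D$ argument handles the remaining points. What your argument really establishes is that $R^1j_*T_U(-\log D_U)$ is supported on $\operatorname{Sing}(Y)\setminus D$, and that suffices for the exact sequence.
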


\begin{proof}
On $U$, since $U$ is smooth and $D_U$ is a smooth divisor,
we have the standard short exact sequence
\[
0 \longrightarrow T_U(-\log D_U) \longrightarrow T_U
\longrightarrow \mathcal{O}_{D_U}(D_U) \longrightarrow 0.
\]
Applying the left exact functor $j_*$ yields the exact sequence
\[
0 \longrightarrow j_*T_U(-\log D_U)
\longrightarrow j_*T_U
\longrightarrow j_*\mathcal{O}_{D_U}(D_U)
\longrightarrow R^1j_*T_U(-\log D_U).
\]
By definition, $j_*T_U(-\log D_U) = T_Y(-\log D)$.
Since $Y$ is normal of dimension at least $3$ with isolated singularities,
$\operatorname{Sing}(Y)$ has codimension at least $3 \geq 2$, and $T_Y$ is reflexive,
so $j_*T_U = T_Y$.
Therefore, if $R^1j_*T_U(-\log D_U) = 0$, we obtain the claimed
short exact sequence. Since $R^1j_*T_U(-\log D_U)$ is quasi-coherent,
it suffices to show that its stalk at every singular point $y$ vanishes,
i.e.\ that $(R^1j_*T_U(-\log D_U))_y = 0$.

Set $F := T_U(-\log D_U)$.
Let
\[
u:Y_y:=\operatorname{Spec}\mathcal O_{Y,y}\longrightarrow Y
\]
be the canonical localization morphism, and consider the cartesian square
\[
\begin{array}{ccc}
U_y:=Y_y\times_Y U & \xrightarrow{\ v\ } & U\\
\big\downarrow {j_y} & & \big\downarrow {j}\\
Y_y & \xrightarrow{\ u\ } & Y.
\end{array}
\]

Since \(F\) is quasi-coherent on \(U\), and \(j\) is quasi-compact and
quasi-separated, it follows that \(R^1j_*F\) is quasi-coherent on \(Y\).
The morphism \(u\) is flat, because on an affine neighborhood
\(V=\operatorname{Spec}A\subset Y\) with \(y\leftrightarrow \mathfrak p\),
the morphism \(u\) is induced by the localization map
$A\to A_{\mathfrak p}$, and localization is flat.
Hence flat base change yields an isomorphism
\[
u^*R^1j_*F \xrightarrow{\;\sim\;} R^1j_{y*}\,v^*F.
\]

Since \(Y_y=\operatorname{Spec}\mathcal O_{Y,y}\) is affine, the global
sections of the higher direct image compute the corresponding cohomology
on the source; thus
\[
H^0\bigl(Y_y,\,R^1j_{y*}\,v^*F\bigr)
\;=\;
H^1(U_y,\,v^*F).
\]

It remains to identify \(\bigl(R^1j_*F\bigr)_y\) with
\(H^0(Y_y,\,u^*R^1j_*F)\). More generally, if \(\mathcal{G}\) is any
quasi-coherent sheaf on \(Y\), then
\[
\mathcal{G}_y \;\cong\; H^0(Y_y,\,u^*\mathcal{G}).
\]
Indeed, choose an affine neighborhood \(V=\operatorname{Spec}A\subset Y\) of
\(y\), and let \(\mathfrak p\subset A\) correspond to \(y\). Since
\(\mathcal{G}\) is quasi-coherent, we may write
\(\mathcal{G}|_V \cong \widetilde M\)
for some \(A\)-module \(M\). Then \(Y_y\cong \operatorname{Spec}A_{\mathfrak p}\),
and by the affine description of pullback of quasi-coherent sheaves,
\(u^*\mathcal{G} \cong \widetilde{M_{\mathfrak p}}\).
Taking global sections on the affine scheme \(Y_y\), we obtain
\[
H^0(Y_y,\,u^*\mathcal{G})\;=\;M_{\mathfrak p}\;=\;\mathcal{G}_y.
\]

Applying this to \(\mathcal{G}=R^1j_*F\), we conclude
\[
\bigl(R^1j_*F\bigr)_y
\;\cong\;
H^0\bigl(Y_y,\,u^*R^1j_*F\bigr)
\;\cong\;
H^0\bigl(Y_y,\,R^1j_{y*}\,v^*F\bigr)
\;=\;
H^1(U_y,\,v^*F).
\]
Therefore
\[
\bigl(R^1j_*T_U(-\log D_U)\bigr)_y
\;\cong\;
H^1\bigl(U_y,\,T_{U_y}(-\log D_{U_y})\bigr).
\]
Now we prove
\[
H^1\bigl(U_y,\,T_{U_y}(-\log D_{U_y})\bigr)=0.
\]

By hypothesis, there is an affine neighbourhood $W_y$ of $y$ in $Y$
and a smooth affine $\widetilde{W}_y$ such that
$W_y\simeq \widetilde W_y/G_y$ at $y$.
Let $\widetilde{Y}_y = Y_y \times_Y \widetilde{W}_y$. We get a finite map
\[
\pi_y:\widetilde Y_y\longrightarrow Y_y,
\]
where $\widetilde Y_y$ is a smooth affine semilocal scheme,
$Y_y\simeq \widetilde Y_y/G_y$, and the reduced pullback divisor
\[
\widetilde D_{Y_y}:=(\pi_y^{-1}(D_{Y_y}))_{\operatorname{red}}
\]
is smooth. Let
\[
Z_y:=\pi_y^{-1}(y),
\qquad
\widetilde U_y:=\widetilde Y_y\setminus Z_y.
\]
Since $\widetilde{W}_y- \{f^{-1}(y)\} \to W_y- \{y\}$ is finite \'etale,
the restricted morphism
\[
p_y:=\pi_y|_{\widetilde U_y}:\widetilde U_y\longrightarrow U_y
\]
is finite \'etale. Let
$\widetilde D_{U_y} = \widetilde D_{Y_y} \cap \widetilde U_y$.
Since $p_y$ is \'etale and $\widetilde D_{U_y}$ is the scheme-theoretic
pullback of $D_{U_y}$ (which is already reduced, as \'etale morphisms
do not introduce multiplicities), we have
\[
p_y^*T_{U_y}(-\log D_{U_y})
\;\cong\;
T_{\widetilde U_y}(-\log \widetilde D_{U_y}).
\]
Moreover, since $\operatorname{char}(k)\nmid |G_y|$, taking
$G_y$-invariants is exact, and therefore
$T_{U_y}(-\log D_{U_y})$ is a direct summand of
$p_{y*}T_{\widetilde U_y}(-\log \widetilde D_{U_y})$.
Hence
$H^1\bigl(U_y,\,T_{U_y}(-\log D_{U_y})\bigr)$
is a direct summand of
\[
H^1\bigl(U_y,\,
p_{y*}T_{\widetilde U_y}(-\log \widetilde D_{U_y})\bigr)
\;=\;
H^1\bigl(\widetilde U_y,\,
T_{\widetilde U_y}(-\log \widetilde D_{U_y})\bigr),
\]
since $p_y$ is finite, hence affine.
It is therefore enough to show that
\[
H^1\bigl(\widetilde U_y,\,
T_{\widetilde U_y}(-\log \widetilde D_{U_y})\bigr)=0.
\]

Now $\widetilde Y_y$ is affine semilocal and
$\widetilde U_y=\widetilde Y_y\setminus Z_y$, so by comparing
cohomology of the punctured spectrum with local cohomology we get
\[
H^1\bigl(\widetilde U_y,\,
T_{\widetilde U_y}(-\log \widetilde D_{U_y})\bigr)
\;\cong\;
H^2_{Z_y}\bigl(
T_{\widetilde Y_y}(-\log \widetilde D_y)\bigr).
\]
Thus it suffices to prove that
\[
H^2_{Z_y}\bigl(
T_{\widetilde Y_y}(-\log \widetilde D_y)\bigr)=0.
\]

Since $\widetilde D_y$ is a smooth divisor on the smooth scheme
$\widetilde Y_y$, we have the short exact sequence
\[
0\longrightarrow T_{\widetilde Y_y}(-\log \widetilde D_y)
\longrightarrow T_{\widetilde Y_y}
\longrightarrow \mathcal O_{\widetilde D_y}(\widetilde D_y)
\longrightarrow 0.
\]
Applying local cohomology with support in $Z_y$, it is enough to
show that for every $x\in Z_y$,
\[
H^2_x\bigl(T_{\widetilde Y_y}\bigr)=0
\qquad\text{and}\qquad
H^1_x\bigl(\mathcal O_{\widetilde D_y}(\widetilde D_y)\bigr)=0.
\]
The first vanishing holds because
$T_{\widetilde Y_y}$ is locally free on the
smooth scheme $\widetilde Y_y$ of dimension $\ge 3$,
so it has depth at least $3$ at every point, hence
$H^i_x(T_{\widetilde Y_y}) = 0$ for $i < 3$.
For the second, if $x \notin \widetilde D_y$ then
$\mathcal{O}_{\widetilde D_y}(\widetilde D_y)$ is zero near $x$
and the vanishing is trivial. If $x \in \widetilde D_y$, then
$\mathcal O_{\widetilde D_y}(\widetilde D_y)$ is a line bundle on the
smooth divisor $\widetilde D_y$ of dimension at least $2$,
hence it has depth at least $2$ as an
$\mathcal{O}_{\widetilde Y_y,x}$-module, so
$H^1_x(\mathcal O_{\widetilde D_y}(\widetilde D_y))=0$.
Therefore the long exact sequence in local cohomology gives
\[
H^2_{Z_y}\bigl(
T_{\widetilde Y_y}(-\log \widetilde D_y)\bigr)=0,
\]
and hence
\[
H^1\bigl(U_y,\,T_{U_y}(-\log D_{U_y})\bigr)=0.
\]
\end{proof}

\begin{theorem}\label{every def is an abelian cover}

Let $f: X \to Y$ be an abelian cover with $\operatorname{dim}(X) = \operatorname{dim}(Y) \geq 3$, such that $X$ and $Y$ are normal with isolated quotient singularities. Let $f_*\mathcal{O}_X = \mathcal{O}_Y \oplus \mathcal{E}$, where $\mathcal{E} = \oplus_{\chi \in G^* \setminus \{1\}} L_{\chi}^{-1}$, 
\textcolor{black}{and each $L_{\chi}$ is a reflexive sheaf of rank one which is locally free outside a finite set of points}. Suppose that 
       \begin{enumerate}
        \item $H^1(T_Y) = 0$ and $ H^1(T_Y \otimes L_{\chi}^{-1}) = 0 $ for $\chi \in G^*-\{e\}$.
        \item $H^1(\mathcal{O}_Y) = 0$ and $H^1(L_{\chi}^{-1}) = 0$ $\forall \chi$
        \item $H^0(\mathcal{O}_Y(D_{H,\Psi}) \otimes L_{\chi}^{-1}) = 0$ for $(H,\Psi, \chi)$ such that $\chi|_H \neq \Psi^{m_H-1}$ and $\chi \neq 0$,
        \item The divisors $\{D_{H,\Psi}\}$ are normal and $D_{H,\Psi, U} = D_{H,\Psi} \cap U$ is smooth.
        \item for every divisor $D_{H,\Psi}$ and for every point
        $y\in \operatorname{Sing}(Y)$ such that $D_{H,\Psi}$ passes through $y$, there exists a neighbourhood $W_y$ of $y$ with $W_y \cap \operatorname{Sing}(Y) = y$, together with a
        quotient presentation
       \[
       f: \widetilde{W}_y \to W_y \simeq \widetilde{W}_y/G_y,
       \]
       where $\widetilde{W}_y$ is smooth affine, $G_y$ is finite, $f: \widetilde{W}_y- \{f^{-1}(y)\} \to W_y- \{y\}$ e\'tale
        and, if  $x_y\in \widetilde{W}_y$ denotes a point lying over $y$, then the reduced pullback
        divisor $D_{H,\Psi,y}'\subset \widetilde{W}_y$ of $D_{H,\Psi,y} = D_{H,\Psi} \cap W_y$  is smooth at $x_y$.
        \item Assume that there exists a smooth open set $i: U \hookrightarrow Y$ such that $\operatorname{codim}(Y-U) \geq 3$ and $f^{-1}(U) \subset X$ is smooth. Denote by $D_{H,\Psi, U} = D_{H,\Psi} \cap U$.
        \end{enumerate}
Then any deformation of $X$ can be realized as an abelian cover over $Y$. Further in this situation $X$ is unobstructed and represents a smooth point in its moduli.
\end{theorem}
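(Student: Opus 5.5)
We factor the problem into a chain of forgetful maps of deformation functors,
\[
\mathbf{Def}^G(X/Y)\xrightarrow{\ \alpha\ }\mathbf{Def}(X/Y)\xrightarrow{\ \iota\ }\mathbf{Def}(f)\xrightarrow{\ \pi_0\ }\mathbf{Def}(X),
\]
and show that the composite is smooth; by \Cref{smoothness} it suffices to check surjectivity on tangent spaces together with a comparison of obstructions.

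\textbf{Smoothness of $\pi_0$.} The last arrow should come from \Cref{every def is a finite cover revised reflexive} applied to $\mathcal{E}=\bigoplus_{\chi\neq 1}L_\chi^{-1}$. Here $\mathcal{H}om(\Omega_Y,L_\chi^{-1})$ is reflexive and agrees with $T_Y\otimes L_\chi^{-1}$ away from finitely many points, so hypothesis (1) gives the $H^1$ vanishing. The term $H^0(\mathcal{E}xt^1(\Omega_Y,L_\chi^{-1}))$ is supported at the isolated quotient singularities. At each such point we use \Cref{analog Schlessinger main}. Since $\dim\geq 3$, \Cref{vanishing of second local cohomology} gives $H^2_z(T_Z\otimes p^*L_\chi^{-1})=0$ as long as $\dim\ge 3$; the borderline $n=3$ case requires $2\le n-1$, which holds. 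When $G_y$ has more than one fixed point we pass to a smaller neighbourhood.

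\textbf{Surjectivity of $\iota$.} Next, every deformation of $f$ should come from one with $Y$ fixed, because $H^1(T_Y)=0$. Since $Y$ has isolated quotient singularities in dimension $\ge3$, Schlessinger rigidity gives $H^0(\mathcal{T}^1_Y)=0$, hence $\mathrm{Ext}^1(\Omega_Y,\mathcal{O}_Y)=0$ and $Y$ is rigid. Thus $\mathbf{Def}(X/Y)\to\mathbf{Def}(f)$ is surjective on every Artin ring; it is essentially an isomorphism up to automorphisms of $Y$. It therefore remains to show that $\alpha$ is smooth, i.e.\ that every deformation of $X$ over the fixed $Y$ is again a $G$-cover.

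\textbf{Smoothness of $\alpha$ (the main obstacle).} Here we compute $H^0(N'_f)$ as in Pardini. Work first on $U$ from hypothesis (6), where $f$ is a flat abelian cover of smooth varieties. There $f_*N'_f$ decomposes into eigensheaves. The invariant part is $\bigoplus \mathcal{O}_{D_{H,\Psi}}(D_{H,\Psi})$, via the log tangent sequence. The $\chi$-parts are $\bigoplus\mathcal{O}_{D_{H,\Psi}}(D_{H,\Psi})\otimes L_\chi^{-1}$, over those $(H,\Psi,\chi)$ with $\chi|_H\neq\Psi^{m_H-1}$. We extend this across $Y\setminus U$ (codimension $\geq3$) by $i_*$, using reflexivity and depth.

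For the invariant part, \Cref{lem:log_tangent_exact_sequence_isolated_quotient} (using (4) and (5)) gives $0\to T_Y(-\log D)\to T_Y\to j_*\mathcal{O}_D(D)\to0$. Combined with $H^1(T_Y)=0$ this shows $H^0(\mathcal{O}_D(D))$ is hit by $H^0(\mathcal{O}_Y(D))$, using $H^1(\mathcal{O}_Y)=0$ from (2). These are exactly the tangent directions of $\mathbf{Def}^G(X/Y)$ described in \Cref{def that are abelian}.

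For the non-invariant parts, we use the sequence $0\to L_\chi^{-1}\to \mathcal{O}_Y(D_{H,\Psi})\otimes L_\chi^{-1}\to \mathcal{O}_D(D)\otimes L_\chi^{-1}\to0$. Hypotheses (3) and (2) then force $H^0=0$. The delicate point is justifying this eigen-decomposition and the exactness of the pushforwards for reflexive, non-locally-free $L_\chi$ at the singular points. We expect this to again reduce to the depth $\ge 2$ arguments of \Cref{exact sequence with coefficients in E}, checked on the local quotient charts.

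\textbf{Conclusion.} Since $\mathbf{Def}^G(X/Y)$ is smooth by \Cref{def that are abelian} and the composite map is surjective on tangent spaces, the composite is smooth. Hence $\mathbf{Def}(X)$ is unobstructed and every deformation of $X$ is a $G$-cover of $Y$.
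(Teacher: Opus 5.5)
Your outline is sound, but after the fixed-target step it reaches $\mathbf{Def}(X)$ by a different route than the paper. The paper uses only the chain $\mathbf{Def}^G(X/Y)\to\mathbf{Def}'(X/Y)\to\mathbf{Def}(X)$, where $\mathbf{Def}'(X/Y)$ consists of the locally trivial deformations with fixed target. The first arrow is handled exactly as in your last step. For the second arrow, the paper takes cohomology of $0\to T_X\to f^*T_Y\to N'_f\to 0$. The cokernel of $H^0(N'_f)\to H^1(T_X)$ then lies in $H^1(f^*T_Y)=H^1(T_Y\otimes f_*\mathcal{O}_X)=H^1(T_Y)\oplus\bigoplus_{\chi\neq 1}H^1(T_Y\otimes L_\chi^{-1})=0$. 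Schlessinger's rigidity of the singularities of $X$ gives $\operatorname{Ext}^1(\Omega_X,\mathcal{O}_X)=H^1(T_X)$.

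You instead pass through $\mathbf{Def}(f)$. You use \Cref{every def is a finite cover revised reflexive}, and then the global rigidity of $Y$ (from $H^1(T_Y)=0$ and $H^0(\mathcal{T}^1_Y)=0$) to freeze the target. This is valid. Your reduction of $H^1(\mathcal{H}om(\Omega_Y,L_\chi^{-1}))$ to $H^1(T_Y\otimes L_\chi^{-1})$, via point-supported kernel and cokernel, is correct. Your route makes it transparent why the base does not move, but it costs more. It needs the local vanishing $H^0(\mathcal{E}xt^1(\Omega_Y,L_\chi^{-1}))=0$ at the singular points of $Y$ (\Cref{analog Schlessinger main} and \Cref{vanishing of second local cohomology}), plus rigidity of $Y$. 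The paper's direct computation of $H^1(f^*T_Y)$ bypasses both.

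Two points need tightening.

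First, $H^0(N'_f)$ is the tangent space of the locally trivial functor, not of $\mathbf{Def}(X/Y)$. You must add that every deformation of $X$ is locally trivial, because isolated quotient singularities of dimension $\geq 3$ are rigid ($\mathcal{T}^1_X=0$). This is the same input the paper uses.

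Second, the step you flag as delicate is settled by depth $3$, not depth $2$. Surjectivity of $H^0(\mathcal{O}_Y(D))\to H^0(i_*\mathcal{O}_{D_U}(D_U))$ requires $R^1i_*\mathcal{O}_U=0$, i.e.\ $H^2_y(\mathcal{O}_Y)=0$. This holds because quotient singularities are Cohen--Macaulay of dimension $\geq 3$. Combined with $H^1(\mathcal{O}_Y)=0$, this gives the surjection. The log tangent sequence and $H^1(T_Y)=0$ play no role here; the former only serves to identify $f_*N'_f$.

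For the $\chi$-twisted pieces, $-\otimes L_\chi^{-1}$ is only right exact. However, the failure of injectivity on the left is supported on $\operatorname{Sing}(Y)$. So $H^1(L_\chi^{-1})=0$ together with hypothesis (3) still forces $H^0=0$.
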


\begin{proof}
We will show that the forgetful maps $\bf Def^G(X/Y) \xrightarrow{\pi_2} \bf Def'(X/Y) \xrightarrow{\pi_1} \bf Def(X)$ are smooth. We first show that the map $\bf Def^G(X/Y) \xrightarrow{\pi_2} \bf Def'(X/Y)$ is smooth. Since $\bf Def^G(X/Y)$ is unobstructed (see \Cref{def that are abelian}), it is enough to show that the map $d_{\pi_2}$ between the spaces of first order deformations is surjective. By \Cref{def with fixed target}, the locally trivial first order deformations of the map $X \to Y$ with fixed base $Y$ is given by $H^0(N_f')$ where $N_f'$ (called the equisingular normal sheaf). First we make the following claim.

\textbf{Claim:}
    Let $f : X \to Y$ be an abelian cover with $X$ and $Y$ normal with isolated quotient singularities. 
    \begin{enumerate}
        \item Assume that there exists an smooth open set $i: U \hookrightarrow Y$ such that $\operatorname{codim}(Y-U) \geq 3$ and $f^{-1}(U) \subset X$ is smooth. Denote by $D_{H,\Psi, U} = D_{H,\Psi} \cap U$.

        \item for every divisor $D_{H,\Psi}$ and for every point
        $y\in \operatorname{Sing}(Y)$ such that $D_{H,\Psi}$ passes through $y$, there exists a neighbourhood $W_y$ of $y$ with $W_y \cap \operatorname{Sing}(Y) = y$, together with a
        quotient presentation
       \[
       f: \widetilde{W}_y \to W_y \simeq \widetilde{W}_y/G_y,
       \]
       where $\widetilde{W}_y$ is smooth affine, $G_y$ is finite, $f: \widetilde{W}_y- \{f^{-1}(y)\} \to W_y- \{y\}$ e\'tale
        and, if  $x_y\in \widetilde{W}_y$ denotes a point lying over $y$, then the reduced pullback
        divisor $D_{H,\Psi,y}'\subset \widetilde{W}_y$ of $D_{H,\Psi,y} = D_{H,\Psi} \cap W_y$  is smooth at $x_y$.
    \end{enumerate}
      Then 
    $$f_*N_f' = \oplus_{\chi \in G^*} \oplus_{(H,\Psi)| \chi|_H \neq \Psi^{m_H-1}} \big(i_*\mathcal{O}_{D_{H,\Psi, U}}(D_{H,\Psi, U})\big) \otimes L_{\chi}^{-1}$$

\noindent\textit{Proof of Claim:} We have 

$$0 \to T_X \to f^*T_Y \to N_f'  \to  0$$

Pushing forward, we have 

$$0 \to f_*T_X \to T_Y \otimes f_*\mathcal{O}_X \to f_*N_f' \to 0 $$

Note that $f_*T_X$ is reflexive sheaf. Since $\operatorname{codim}(Y-U) \geq 2$, we have that $f_*T_X = i_*f_*T_{f^{-1}(U)}$. On the other hand, by \cite[Proposition $4.1$ page $207$]{pardini1991abelian}, 
$$i_*f_*T_{f^{-1}(U)} = \bigoplus_{\chi \in G^*} i_*T_U(-\operatorname{log} D_{H,\Psi}|_U; \chi|_H \neq \Psi^{m_H-1}) \otimes L_{\chi}^{-1} = \bigoplus_{\chi \in G^*} T_Y(-\operatorname{log} D_{H,\Psi}; \chi|_H \neq \Psi^{m_H-1}) \otimes L_{\chi}^{-1}$$
and the last equality follows by definition of the logarithmic tangent sheaves (see \Cref{lem:log_tangent_exact_sequence_isolated_quotient}).

So $f_*N_f'$ is the quotient of $T_Y \otimes f_*\mathcal{O}_X = \oplus_{\chi \in G^*} T_Y \otimes L_{\chi}^{-1}$ by $\oplus_{\chi \in G^*} T_Y(-\operatorname{log} D_{H,\Psi}; \chi|_H \neq \Psi^{m_H-1})\otimes L_{\chi}^{-1}$. Now the claim follows from \Cref{lem:log_tangent_exact_sequence_isolated_quotient} since tensoring is right exact.  

Now note that for a Weil divisor on $U$, we have the exact sequence

\begin{equation*}
    0 \to \mathcal{O}_U \to \mathcal{O}_U(D_U) \to \mathcal{O}_{D_U}(D_U) \to 0
\end{equation*}

Applying $i_*$, we get 

\begin{equation*}
    0 \to \mathcal{O}_Y \to \mathcal{O}_Y(D) \to i_*\big(\mathcal{O}_{D_U}(D_U)\big) \to R^1i_*\mathcal{O}_U
\end{equation*}

We show that $R^1i_*\mathcal{O}_U = 0$. We only need to check that the stalk of the sheaf is $0$ at the isolated singularities. The question is local, so we can assume that assume that $Y$ is affine with an isolated singularity at $y$ and $U = Y-\{y\}$. Note that we have $\big(R^1i_*\mathcal{O}_U\big)_y = H_y^2(\mathcal{O}_Y)$ 
But since the depth of $y$ at $Y$ is at least $3$, we have $H_y^2(\mathcal{O}_Y) = 0$. Hence we have an exact sequence 
\begin{equation*}
    0 \to \mathcal{O}_Y \to \mathcal{O}_Y(D) \to i_*\big(\mathcal{O}_{D_U}(D_U)\big) \to 0
\end{equation*}

Now under conditions $(3)$ and $(4)$ of the Theorem, we have that $$H^0(N_f') = \oplus_{(H,\Psi)}H^0\big(i_*\mathcal{O}_{D_{H,\Psi,U}}(D_{H,\Psi,U})\big) = \oplus_{(H,\Psi)}H^0\big(\mathcal{O}_{D_{H,\Psi,U}}(D_{H,\Psi,U})\big)$$

Hence $d_{\pi_2}$ surjects by \Cref{def that are abelian}. Therefore the map $\bf Def^G(X/Y) \xrightarrow{\pi_2} \bf Def'(X/Y)$ is smooth. Since $\bf Def^G(X/Y)$ is unobstructed, we  have that $\bf Def'(X/Y)$ is unobstructed as well.

Now we show that $\bf Def'(X/Y) \xrightarrow{\pi_1} \bf Def(X)$ is smooth. Since we have $\bf Def'(X/Y)$ is unobstructed, we only need to show that the map $d_{\pi_1}$ at the level of first-order deformations is surjective. 

The space of first order deformations of $X$, is given by $\operatorname{Ext}^1(\Omega_X, \mathcal{O}_X)$. 
By \cite[Theorem $3$]{schlessinger1971}, we have that $H^0(\mathcal{T}_X^1) = H^0(\mathcal{T}_Y^1) = 0$.
    Setting $\mathcal{F} = \Omega_X$ and $\mathcal{G} = \mathcal{O}_X$ in the local global Ext sequence, we have 
    \begin{equation}\label{not twisted local-global}
    0 \to H^1(T_X) \to \operatorname{Ext}^1(\Omega_X, \mathcal{O}_X) \to H^0(\mathcal{T}_X^1) \to H^2(T_X)
\end{equation}
Now using the fact that $H^0(\mathcal{T}_X^1) = 0$, we conclude that $H^1(T_X) = \operatorname{Ext}^1(\Omega_X, \mathcal{O}_X)$.

Therefore the map at the level of first-order deformations induced by the forgetful map $\bf Def'(X/Y) \to \bf Def(X)$ is obtained by taking the cohomology of the exact sequence.

$$0 \to T_X \to f^*T_Y \to N_f' \to 0$$

Taking cohomology we get 

$$H^0(N_f') \to H^1(T_X) \to H^1(f^*T_Y) = H^1(T_Y) \oplus H^1(T_Y \otimes \mathcal{E}) $$

Now the conclusion follows from the assumptions.

So we have shown that the composition of the forgetful maps $$\bf Def^G(X/Y) \xrightarrow{\pi_2} \bf Def'(X/Y) \xrightarrow{\pi_1} \bf Def(X)$$ is smooth.

\end{proof}

\subsection{Applications to  $(\mathbb{Z}_2)^s$ covers of weighted projective spaces}

We start this section with an application of \Cref{every def is a finite cover revised reflexive}.
The reflexivity condition on the following results are automatically satisfied when $X$ is normal.
\begin{corollary}\label{every def is an finite cover over a WPS reflexive}
Let $f: X \to Y$ be a $\mathbb{Z}_2^s$ cover where $Y = \mathbb{P}(a_0,...,a_r)$, $r \geq 3$ and $\operatorname{gcd}(a_i, a_j) = 1$ for all $i,j$. Let $f_*\mathcal{O}_X = \mathcal{O}_Y \oplus \mathcal{E}$, where $\mathcal{E} = \oplus_{\chi \in G^* \setminus \{1\}} L_{\chi}^{-1}$ and $L_{\chi}$ are reflexive sheaves of rank one which are locally free outside a finite set of points. Then the natural forgetful map of functors $\bf Def(f) \to \bf Def(X)$ is smooth. Consequently any deformation of $X$ can be realized as a finite cover over a deformation of $Y$. 
    \end{corollary}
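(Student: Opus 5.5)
The plan is to apply \Cref{every def is a finite cover revised reflexive} directly. Since $X$ is normal, $\mathcal{E}$ is reflexive, so we only need to check
\[
H^1(\mathcal{H}om(\Omega_Y,\mathcal{E}))=0 \quad\text{and}\quad H^0(\mathcal{E}xt^1(\Omega_Y,\mathcal{E}))=0 .
\]
Because $\mathcal{E}=\oplus_{\chi\neq 1}L_\chi^{-1}$, both statements may be checked summand by summand. So fix a rank-one reflexive sheaf $L=L_\chi^{-1}\cong\mathcal{O}_Y(-\ell)$ for some integer $\ell$. This identification holds because $\operatorname{Cl}(Y)\cong\mathbb{Z}$, generated by $\mathcal{O}_Y(1)$. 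The condition $\gcd(a_i,a_j)=1$ makes $Y$ well-formed, and $\operatorname{Sing}(Y)$ is then a finite set of coordinate points $P_i$ of type $\frac{1}{a_i}(a_0,\dots,\widehat{a_i},\dots,a_r)$.

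\textbf{The global term.} On $Y$ the sheaves $\mathcal{O}_Y(k)$ are reflexive and $Y$ is normal, so
\[
\mathcal{H}om(\Omega_Y,\mathcal{O}_Y(-\ell)) = (T_Y\otimes\mathcal{O}_Y(-\ell))^{\vee\vee}.
\]
To handle this, I would use the generalized Euler sequence on weighted projective space:
\[
0\to\mathcal{O}_Y\to\bigoplus_{i=0}^r\mathcal{O}_Y(a_i)\to T_Y\to 0 .
\]
It is exact in the reflexive sense and is an honest exact sequence of reflexive sheaves on the smooth locus. Twisting by $\mathcal{O}_Y(-\ell)$ and taking reflexive hulls, equivalently pushing forward from the smooth locus $U$, should give a sequence
\[
0\to\mathcal{O}_Y(-\ell)\to\bigoplus_i\mathcal{O}_Y(a_i-\ell)\to T_Y\langle-\ell\rangle\to 0 .
\]
This stays exact on the right because $\operatorname{codim}\operatorname{Sing}(Y)\ge 3$ and $H^2$-local cohomology of the rank-one reflexive sheaves $\mathcal{O}_Y(k)$ vanishes at quotient singularities of dimension $\ge3$. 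That vanishing follows, as in \Cref{vanishing of second local cohomology}, from pulling back to the smooth cover $\mathbb{A}^{r+1}\setminus 0$ and taking invariants. The long exact sequence then gives
\[
H^1(\oplus_i\mathcal{O}_Y(a_i-\ell))\to H^1(T_Y\langle-\ell\rangle)\to H^2(\mathcal{O}_Y(-\ell)).
\]
The outer terms vanish by the standard cohomology of $\mathcal{O}_Y(k)$ on weighted projective space: $H^j=0$ for $0<j<r$, and $r\ge3$.

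\textbf{The local term.} $\mathcal{E}xt^1(\Omega_Y,L)$ is supported on the finitely many singular points, so it suffices to show its stalk vanishes at each $P_i$. Here I would invoke \Cref{analog Schlessinger main}. Locally at $P_i$ we have $Y\cong Z/\mu_{a_i}$ with $Z=\mathbb{A}^r$. Well-formedness, i.e. pairwise coprime weights, makes the origin the only fixed point, so the action is free in codimension one and the quotient map is étale off the origin. Also $\dim Z=r\ge 3$. The rank-one reflexive $L$ is locally free outside finitely many points, and $p^*L$ is locally free of rank one away from $z$. So \Cref{vanishing of second local cohomology}, applied with $\mathcal{F}=T_Z$, $\mathcal{G}=p^*L$ and $n=r\ge3$, gives $H^2_z(T_Z\otimes p^*L)=0$ since $2\le n-1$. \Cref{analog Schlessinger main} then yields $H^0(\mathcal{E}xt^1(\Omega_Y,L))=0$.

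\textbf{Expected obstacle.} The main delicate point is the global vanishing: justifying that the twisted Euler sequence is exact as a sequence of reflexive sheaves on $Y$, rather than only on $U$. I would first try to do the computation on $U$ and compare via $j_*$, using depth $\ge3$ at isolated quotient singularities to kill the relevant $R^1j_*$ terms, in the same way as in the proof of \Cref{lem:log_tangent_exact_sequence_isolated_quotient}. Once both vanishings hold, \Cref{every def is a finite cover revised reflexive} gives the smoothness of $\mathbf{Def}(f)\to\mathbf{Def}(X)$ and the stated consequence.
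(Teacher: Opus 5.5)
Your proposal is correct and follows the paper's proof. The paper likewise applies \Cref{every def is a finite cover revised reflexive}, kills $H^1(\mathcal{H}om(\Omega_Y,L_\chi^{-1}))$ via the Euler sequence and the vanishing of intermediate cohomology of $\mathcal{O}_Y(k)$ for $r\ge 3$, and kills $H^0(\mathcal{E}xt^1(\Omega_Y,L_\chi^{-1}))$ by combining \Cref{vanishing of second local cohomology} with \Cref{analog Schlessinger main} on the local quotient charts. The only difference is technical: you work directly with the reflexive hull of the twisted Euler sequence, while the paper tensors the Euler sequence with $L_\chi^{-1}$, discards a kernel supported at finitely many points, and then passes to $\mathcal{H}om(\Omega_Y^{\vee\vee},L_\chi^{-1})$; your version makes that last comparison slightly more transparent.
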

\begin{proof}
    First note that if $\Omega_Y^{\vee \vee}$ denote the double dual of $\Omega_Y$, then we have that $\mathcal{H}om(\Omega_Y, L_{\chi}^{-1}) = \mathcal{H}om(\Omega_Y^{\vee \vee}, L_{\chi}^{-1})$. This is because both of them are reflexive sheaves since $L_{\chi}^{-1}$ is a reflexive sheaf (see \cite[Lemma $31.12.8$]{stacks-project}) and they agree outside of a locus of codimension two. Now from the Euler exact sequence (see \cite[Theorem $8.1.6$]{cox2024toric}, we have $H^1(\mathcal{H}om(\Omega_Y^{\vee \vee}, L_{\chi}^{-1})) = 0$ since $r \geq 3$ (see the proof of $H^1(T_Y \otimes L_{\chi}^{-1}) = 0$ in \Cref{every def is an abelian cover over a WPS}) . By \Cref{every def is a finite cover revised reflexive} it remains to show that $H^0(\mathcal{E}xt^1(\Omega_Y, \mathcal{E})) = 0$. Note that by the condition on the weights of $\mathbb{P}(a_0,...,a_r)$, $Y$ has isolated singularities. Further locally around every singular point $y$, $Y$ is isomorphic to $\mathbb{A}^3/G_y$ where $G_y$ is a finite group. Since $\mathcal{L}_{\chi}^{-1}$ are reflexive sheaves, we have by \Cref{vanishing of second local cohomology}, $H_z^2(T_{\mathbb{A}^3}\otimes p^*\mathcal{L}_{\chi}^{-1}) = 0$, where $p: \mathbb{A}^3 \to \mathbb{A}^3/G_y$ is the quotient map. Now the vanishing follows from \Cref{analog Schlessinger main}. 
\end{proof}

We now prove the following result as consequence of \Cref{every def is an abelian cover}.

\begin{corollary}\label{every def is an abelian cover over a WPS}
Let $f: X \to Y$ be a $\mathbb{Z}_2^s$ cover where $Y = \mathbb{P}(a_0,...,a_r)$, with $r \geq 3$, $\operatorname{gcd}(a_i, a_j) = 1$ for all $i,j$ and $X$ has at worst isolated quotient singularities. Let $f_*\mathcal{O}_X = \mathcal{O}_Y \oplus \mathcal{E}$, where $\mathcal{E} = \oplus_{\chi \in G^*\setminus \{1\}} L_{\chi}^{-1}$ such that each $L_{\chi}$ 
\textcolor{black}{is a reflexive sheaf of rank one} which is locally free outside a finite set of points. Suppose that 
    \begin{enumerate}
        \item the branch divisors $D_g \in | \mathcal{O}_{\mathbb{P}}(d_g)|$ associated to the data of the abelian cover be quasismooth and generic (see \Cref{def:divisor_standard set-up}).
        
        \item For each pair $(g, \chi)$ such that $\chi \cdot g = 0$, 
        that is $\chi(g) =  (-1)^{\chi \cdot g} = 1$
        and $\chi \neq \operatorname{Id} \in G^*$, 
        \begin{align*}
        d_g < \displaystyle\frac{1}{2} \Sigma_{\chi \cdot g' = 1} d_{g'}
        =
        \displaystyle\frac{1}{2} 
        \Sigma_{\chi(g') = -1} d_{g'},
        \end{align*}

        \item $\Sigma_g d_g > 2 \Sigma_{j=0}^r a_j$
    \end{enumerate}

Then any deformation of $X$ can be realized as an abelian cover over $Y$. Further, in this situation, $X$ is unobstructed and represents a smooth point in its moduli.
    
\end{corollary}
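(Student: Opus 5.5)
The plan is to verify the six hypotheses of \Cref{every def is an abelian cover} for $Y=\mathbb{P}(a_0,\dots,a_r)$, with $G=\mathbb{Z}_2^s$. Each inertia subgroup is $H=\langle g\rangle\cong\mathbb{Z}_2$ with $\Psi$ the unique nontrivial character, so $D_{H,\Psi}=D_g$. The condition $\chi|_H\neq\Psi^{m_H-1}=\Psi$ means exactly $\chi(g)=1$. By \eqref{eq:half_sum_relation}, $L_\chi$ has degree $\ell_\chi=\frac12\sum_{\chi(g')=-1}d_{g'}$, so $L_\chi^{-1}\cong\mathcal{O}_Y(-\ell_\chi)$ as rank-one reflexive sheaves.

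Most hypotheses are routine. Since $\gcd(a_i,a_j)=1$, $Y$ has only the isolated cyclic quotient singularities at the coordinate points, and $X$ has isolated quotient singularities by assumption and \Cref{singularities of the cover}. Hypotheses (4) and (5) come from quasismoothness and genericity (\Cref{def:divisor_standard set-up}). Near a singular point $y$, the orbifold chart $\mathbb{A}^r\to\mathbb{A}^r/\mu_{a_i}$ is étale off the origin, and quasismoothness means the pullback of $D_g$ is smooth there. For (6), take $U$ to be the complement of the singular points of $Y$ together with the finitely many points of $D_{p,q,r}$ with $p+q+r=0$; the complement has codimension $r\ge3$, and $f^{-1}(U)$ is smooth by \Cref{singularities of the cover}.

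Hypothesis (2) follows from the vanishing of intermediate cohomology of $\mathcal{O}_Y(k)$ on weighted projective space, since $H^1(\mathcal{O}_Y(k))=0$ for all $k$ when $r\ge2$. For (1), we use the generalized Euler sequence $0\to\mathcal{O}_Y\to\bigoplus_i\mathcal{O}_Y(a_i)\to T_Y\to0$, which is exact on reflexive sheaves because it is exact away from codimension $\ge2$ and $H^1_y(\mathcal{O})$-type obstructions vanish by depth. Twisting by $\mathcal{O}_Y(-\ell_\chi)$ and taking reflexive hulls gives
\[
H^1(\textstyle\bigoplus_i\mathcal{O}_Y(a_i-\ell_\chi))\to H^1(T_Y\otimes L_\chi^{-1})\to H^2(\mathcal{O}_Y(-\ell_\chi)).
\]
Both outer groups vanish since $2\le r-1$. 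The delicate point is justifying $T_Y\otimes L_\chi^{-1}$ as the reflexive hull and checking that the twisted sequence stays exact on the right. This follows from the depth argument used in the proof of \Cref{lem:log_tangent_exact_sequence_isolated_quotient}: the relevant $R^1i_*$ is the local cohomology $H^2_y$ of a reflexive sheaf on a quotient of $\mathbb{A}^r$, which vanishes by \Cref{vanishing of second local cohomology} after passing to invariants.

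The main obstacle is hypothesis (3). For $\chi\neq1$ and $\chi(g)=1$ we need $H^0(\mathcal{O}_Y(d_g-\ell_\chi))=0$, and condition (2) of the corollary gives exactly $d_g<\ell_\chi$, so the degree is negative. We must also treat $\chi=1$: the claim's sum over $\chi(g)=1$ includes the trivial character, which contributes the summand $H^0(\mathcal{O}_{D_g}(D_g))$. That summand is exactly what $\mathbf{Def}^G(X/Y)$ sees through \Cref{def that are abelian}, via $H^0(\mathcal{O}_Y(D_g))\twoheadrightarrow H^0(\mathcal{O}_{D_g}(D_g))$, which is surjective because $H^1(\mathcal{O}_Y)=0$. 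With the remaining summands killed, $H^0(N_f')$ equals that space, as in the proof of \Cref{every def is an abelian cover}.

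Condition (3) of the corollary, $\sum d_g>2\sum a_j$, says that $K_X=f^*(K_Y+\frac12\sum D_g)$ is ample. We use it to put $X$ in a moduli space of canonically polarized varieties, so that ``smooth point in its moduli'' makes sense. It may also serve to make the degrees $\ell_\chi$ positive so that the twisted cohomology vanishings hold uniformly. Finally, we apply \Cref{every def is an finite cover over a WPS reflexive} for the smoothness of $\mathbf{Def}(f)\to\mathbf{Def}(X)$ and conclude with \Cref{every def is an abelian cover}.
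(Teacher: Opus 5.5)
Your proposal is correct and follows the paper's proof: both verify hypotheses (1)--(6) of the general deformation theorem for abelian covers. Both use the weighted Euler sequence and the vanishing of intermediate cohomology of $\mathcal{O}_Y(k)$ for (1)--(2), the identity $\ell_\chi=\frac12\sum_{\chi(g')=-1}d_{g'}$ to turn hypothesis (3) into condition (2), and quasismoothness with the orbifold charts $\mathbb{A}^r/\mathbb{Z}_{a_i}$ for (4)--(6). The differences are minor: you prove $H^1(T_Y\otimes L_\chi^{-1})=0$ via reflexive hulls and depth where the paper uses a kernel/image argument with a point-supported kernel, and the paper uses condition (3) concretely, through ampleness of $K_X$ and Nakano vanishing, to get $H^0(T_X)=0$ (the content behind your moduli remark), while your suggestion that (3) is also needed to make $\ell_\chi$ positive is unnecessary since the relevant vanishings hold in every degree.
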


\begin{proof}
 We apply \Cref{every def is an abelian cover}. First of all, under the conditions on the weights, $\mathbb{P}(a_0,...,a_r)$ has isolated quotient singularities. By \Cref{singularities of the cover}, we have that $X$ also has isolated quotient singularities. We have by \cite[Section $1.4$]{dolgachevwps82}, $H^1(L_{\chi}^{-1}) = 0$ for all $\chi \in G^*$.
 Further from the dual of the Euler exact sequence (see \cite[Theorem $8.1.6$]{cox2024toric} we have
 $$0 \to \mathcal{O}_Y \to \bigoplus_{i = 1}^r  \mathcal{O}_Y(a_i) \to T_Y \to 0$$
 Now taking the cohomology, we have that $H^1(T_Y) = 0$ since $r \geq 3$. Now we show that $H^1(T_Y \otimes L_{\chi}^{-1}) = 0$ for $\chi \in G^*-\{0\}$. Tensoring the above sequence by $L_{\chi}^{-1})$, we have a right exact sequence 
 $$L_{\chi}^{-1} \to \bigoplus_{i = 1}^r\mathcal{O}_Y(a_i) \otimes L_{\chi}^{-1} \to T_Y \otimes L_{\chi}^{-1} \to 0$$
 Now if we denote the kernel and the image of the map $L_{\chi}^{-1} \to \bigoplus_{i = 1}^r\mathcal{O}_Y(a_i)$ by $\mathcal{K}$ and $\mathcal{I}$ respectively, we have two exact sequences
 $$0 \to \mathcal{I} \to \bigoplus_{i = 1}^r\mathcal{O}_Y(a_i) \otimes L_{\chi}^{-1} \to T_Y \otimes L_{\chi}^{-1} \to 0$$

and 

$$0 \to \mathcal{K} \to L_{\chi}^{-1} \to \mathcal{I} \to 0$$

Now note that since $L_{\chi}^{-1}$ is a line bundle outside the finitely many singular points of $Y$, we have that $\mathcal{K}$ is supported on finitely many points. Now since $H^2(L_{\chi}^{-1}) = H^3(\mathcal{K}) = 0$, we conclude that $H^2(\mathcal{I}) = 0$. Further since $H^1(\mathcal{O}_Y(a_i) \otimes L_{\chi}^{-1}) = 0$, we conclude that $H^1(T_Y \otimes L_{\chi}^{-1}) = 0$.
 
 Since $D_g$'s are assumed to be quasismooth, they are smooth when restricted to the smooth locus of $\mathbb{P}(a_0,\cdots,a_r)$ and have isolated quotient singularities and are hence normal. Since $\mathbb{P}(a_0,\cdots,a_r)$ has isolated singularities, they appear only at the vertices and the chart $(x_i \neq 0)$ gives an affine chart around the $i$-th vertex which is of the form $\mathbb{A}^{r}/\mathbb{Z}_{a_i}$. Since the expression for the weighted projective space is taken to be well-formed, this gives the quotient representation required in condition $(5)$ of \Cref{every def is an abelian cover}. Further, quasismoothness forces the reduced pullback of $D_g$'s to be smooth in a local chart at a point lying above a singular point of $D_g$. Condition $(6)$ follows from the fact that both $X$ and $Y$ have isolated singularities and that $r \geq 3$.
 The third condition of \Cref{every def is an abelian cover}, translates to the first condition since by \cite[Proposition $2.1$]{pardini1991abelian} and the fact that $\operatorname{Cl}(Y)$ does not have $2-$ torsion implies, 
$$l_{\chi} = \displaystyle\frac{1}{2} \Sigma_{\chi \cdot g' = 1} d_{g'}$$
 Now by \cite[Prop 4.2]{pardini1991abelian}, the canonical divisor of the cover $X$ is given by 
 \begin{align*}
    K_X = f^* 
       \left( 
    K_Y + \sum_{H_i, \Psi_i} \frac{(|H|-1)}{|H|} D_{H_i, \Psi_i}
       \right)
   \end{align*}
Therefore, by condition $(3)$ of this Corollary, $K_X$ is ample. Then, $H^0(T_X) = H^{\operatorname{dim}(X)}(\Omega_X^{**} \otimes K_X) = 0$ by Nakano vanishing theorem.
\end{proof}

Before we discuss our applications to moduli theory, let's recall that a threefold $X$ is of general type if $K_X$ is $\mathbb{Q}$-Cartier and big; that is, the linear system $|mK_X|$ defines a birational map for $m \gg 0$. If $X$ is a reduced, projective variety over a field $k$, we say $X$ is a stable variety if it has semi-log-canonical singularities, $K_X$ is $\mathbb{Q}$-Cartier, and $K_X$ is ample (see \cite[Definition 1.41]{kollar2017families}). We construct stable varieties via abelian covers by using the following well-known result (for instance, see \cite[Thm 6.1.5]{alexeev2015moduli}).

\begin{lemma}\label{cover is stable}
Suppose that $f: X \to Y$ is a finite cover of $S_2$ varieties with double crossings in codimension 1, $B_X$ and $B_Y$ are $\mathbb{Q}$ divisors on $X$,
 $Y$, and suppose also that, for the canonical divisors, the following formula holds:
 \[
 K_X + B_X = f^*\left( K_Y + B_Y\right)
 \]
 Then, the pair $(X,B_X)$ is stable if and only if $(Y, B_Y)$ is stable.
\end{lemma}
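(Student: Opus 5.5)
The proof reduces to the two facts singled out in the hypotheses: stability is a combination of a singularity condition and an ampleness condition, and both can be moved across the finite map $f$. Stability of $(X,B_X)$ means three things: $X$ is $S_2$ with double crossings in codimension one, $K_X+B_X$ is $\mathbb{Q}$-Cartier and ample, and $(X,B_X)$ has slc singularities. The same is true for $(Y,B_Y)$. Since $f$ is finite and surjective, I will treat each condition separately using the identity $K_X+B_X=f^*(K_Y+B_Y)$.

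\textbf{Step 1 ($\mathbb{Q}$-Cartier).} The pullback of a $\mathbb{Q}$-Cartier divisor is $\mathbb{Q}$-Cartier, which gives one direction. For the converse, if $f^*(K_Y+B_Y)$ is $\mathbb{Q}$-Cartier I would use the norm, or equivalently $f_*$ applied to Cartier divisors. The norm $N_{X/Y}$ of a Cartier multiple of $K_X+B_X$ is Cartier on $Y$ and equals $\deg(f)\cdot m(K_Y+B_Y)$. This step requires the norm to be well defined for finite maps of $S_2$ demi-normal varieties. In our setting $f$ is a Galois $\mathbb{Z}_2^s$-cover, so I would instead take the $G$-invariant Cartier divisor $\sum_{g}g^*(m(K_X+B_X))$ and descend it.

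\textbf{Step 2 (ampleness).} Ampleness is preserved under pullback by finite maps. Conversely, if $f^*\mathcal{L}$ is ample and $f$ is finite and surjective, then $\mathcal{L}$ is ample by Hartshorne, Exercise III.5.7, or by Nakai--Moishezon applied to curves, since $f^*\mathcal{L}\cdot C'=\deg(C'/C)\,\mathcal{L}\cdot C$ for a curve $C'$ mapping onto $C$.

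\textbf{Step 3 (slc, the main obstacle).} Here I would invoke the standard comparison of discrepancies under finite morphisms, Kollár--Mori Prop.~5.20 and its slc version in Kollár's \emph{Singularities of the MMP}, §2.14. This states that when $K_X+B_X=f^*(K_Y+B_Y)$, the pair $(X,B_X)$ is lc if and only if $(Y,B_Y)$ is lc. The argument compares divisorial valuations: every divisor $E$ over $Y$ is dominated by a divisor $E'$ over $X$ with ramification index $r$, and
\[
a(E',X,B_X)+1=r\bigl(a(E,Y,B_Y)+1\bigr).
\]
Hence log discrepancies $\ge 0$ on one side correspond to $\ge 0$ on the other. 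To go from lc to slc, I would pass to the normalizations $X^\nu\to Y^\nu$. The hypotheses of $S_2$ and double crossings in codimension one ensure that slc is equivalent to lc for the normalization pairs $(X^\nu,B_{X^\nu}+\Delta_X)$ with conductor $\Delta_X$, and that the conductors correspond compatibly under the induced map, so that $K_{X^\nu}+B+\Delta_X=(f^\nu)^*(K_{Y^\nu}+B+\Delta_Y)$. This compatibility is the delicate point. I would verify it in codimension one, where the local models are a normal crossing point and its double covers, and then extend by the $S_2$ property.

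Combining Steps 1--3 gives the equivalence asserted in the statement.
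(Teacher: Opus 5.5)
The paper gives no proof of this lemma; it quotes it as well known from Alexeev's book (Thm.~6.1.5), and your argument is the standard one behind that citation (ampleness passes both ways along finite surjections, and slc reduces via the normalizations $f^\nu\colon X^\nu\to Y^\nu$ to Koll\'ar--Mori 5.20), so it is correct and essentially the same approach. Two refinements: the normalization identity you call delicate is formal once $K_Y+B_Y$ is $\mathbb{Q}$-Cartier (pull back along $f\circ\nu_X=\nu_Y\circ f^\nu$ and use $\nu_X^*(K_X+B_X)=K_{X^\nu}+\nu_{X*}^{-1}B_X+\Delta_X$ and its analogue on $Y$), and it should not be read as saying the conductors match, because a double-locus component of $X$ can lie over a coefficient-one component of $B_Y$ along which $Y$ is smooth (the $a_E=1$ Hurwitz case); and your switch from the norm to $G$-invariant descent in Step~1 is necessary rather than merely convenient, because for a non-Galois finite map such as the normalization of a surface self-glued along two curves with mismatched differents, $(X,B_X)$ is stable while $K_Y+B_Y$ (with $f^*$ read in codimension one) is not $\mathbb{Q}$-Cartier.
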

By the work of Koll\'ar, Shepherd-Barron,  and Alexeev, there exists a projective scheme $\overline{M}_c$ that parametrizes stable threefolds with $K^3=c$, see \cite{kollar2023families}. In our next application, we construct new moduli components of stable threefolds such that the general member is a $\mathbb{Z}_2^s$ cover of a weighted projective threefold.

\begin{example}\label{new components}

Let $M > 2$ be an even positive integer $f: X \to \mathbb{P}(1,1,1,M)$ be a $\mathbb{Z}_2^4$ cover with branch divisors $D_{g_0} \in |\mathcal{O}_{\mathbb{P}}(2)|$ and $D_{g} \in |\mathcal{O}_{\mathbb{P}}(M)|$ for $g \neq g_0, g \neq 0$. Then one can check that $f_*(\mathcal{O}_X) = \oplus_{\{\chi | \chi(g_0) = 1\}} \mathcal{O}_{\mathbb{P}}(-(1+\frac{7}{2}M)) \oplus_{\{\chi | \chi(g_0) = 0\}} \mathcal{O}_{\mathbb{P}}(-4M)$. Since the pair $(\mathbb{P}(1,1,1,M), D_{g_0})$ is a stable pair, we have that $X$ is stable by \Cref{cover is stable}. Further, $(1+\frac{7}{2}M)$ is not divisible by $M$ and hence $f$ is a non-flat cover. This example satisfies all the conditions of \Cref{every def is an abelian cover over a WPS} and hence defines a new component of the moduli of stable threefolds.
    
\end{example}

\begin{corollary}\label{hyperplane arrangements}
    Let $Y = \mathbb{P}(a_0,...,a_r)$ with $\textrm{gcd}(a_i,a_j) = 1$ for $0 \leq i < j \leq r$ and $r \geq 3$. Now consider $G = \mathbb{Z}_2^s$ covers branched along either one of the following configurations of hyperplane arrangements.
    \begin{enumerate}
        \item $s \geq 3$, $d_g = 1$ for $g \in G-\{0\}$ and $\Sigma_{j = 0}^r a_j < \displaystyle\frac{2^s-1}{2}$.
        \item $s \geq 4$, fix a linear subspace $L \subset G$, with $\operatorname{dim}(L) \geq 2$, where we look at $G$ as a $\mathbb{Z}_2$ vector space. Then $d_g = 1$ if and only if $g \in G\setminus L$. In this case assume
        $\Sigma_{j = 0}^r a_j < \displaystyle\frac{2^s-2^{\operatorname{dim}(L)}}{2}$ 
    \end{enumerate}
Then for a generic choice of branch divisors $\{D_g \in |\mathcal{O}_Y(d_g)|\}_{g \in G-\{0\}}$, such that $D_g$ is quasismooth for each $g \neq 0$, there exists a $G = \mathbb{Z}_2^s$ cover $f: X \to Y$, with $K_X$ ample, $X$ unobstructed and all deformations of $X$ are once again $\mathbb{Z}_2^s$ covers of $Y$ branched along the same configuration of hyperplane arrangements.
\end{corollary}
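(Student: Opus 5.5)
The plan is to deduce the statement directly from \Cref{every def is an abelian cover over a WPS}: for each configuration I will check that the multidegrees $\{d_g\}$ are admissible and that conditions (2) and (3) of that corollary hold. Condition (1) (quasismoothness and genericity of the $D_g$) is built into the hypotheses once the hyperplanes are chosen generically, so that at most three meet at a point and triple points avoid the vertices. Ampleness of $K_X$ and the hypothesis that $X$ has isolated quotient singularities come from \Cref{singularities of the cover} together with the canonical class formula $K_X=f^*(K_Y+\frac12\sum D_g)$.

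\emph{Existence and the key quantity.} For each nontrivial $\chi$, by \eqref{eq:half_sum_relation} we need $\sum_{\chi(g)=-1} d_g$ to be even, and this sum is exactly $2\ell_\chi$.
\begin{itemize}
\item In case (1), $\#\{g:\chi(g)=-1\}=2^{s-1}$, so $\ell_\chi=2^{s-2}$.
\item In case (2), the relevant count is $\#\{g\notin L:\chi(g)=-1\}$. If $\chi|_L$ is trivial this is $2^{s-1}$. Otherwise it is $2^{s-1}-2^{\dim L-1}$, since $\chi$ takes the value $-1$ on exactly half of $L$. Both counts are even because $\dim L\ge 2$ and $s\ge 4$.
\end{itemize}
Since $\operatorname{Cl}(Y)\cong\mathbb{Z}$ has no torsion, the building data then determine the cover. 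The covers are non-flat whenever $\ell_\chi$ is not divisible by the weights, but that case is exactly what \Cref{every def is an abelian cover over a WPS} allows.

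\emph{Condition (3).} This is $\sum_g d_g>2\sum a_j$. In case (1) we have $\sum_g d_g=2^s-1$, and in case (2) we have $\sum_g d_g=2^s-2^{\dim L}$, so the numerical hypotheses give it exactly. The same inequality shows that $K_Y+\frac12\sum D_g$ has positive degree, so $K_X$ is ample.

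\emph{Condition (2).} We need $d_g<\ell_\chi$ whenever $\chi(g)=1$ and $\chi\neq 1$. Since every $d_g\le 1$, it suffices to show $\ell_\chi\ge 2$.
\begin{itemize}
\item In case (1), $\ell_\chi=2^{s-2}\ge 2$ because $s\ge 3$.
\item In case (2), the minimum is $\ell_\chi=2^{s-2}-2^{\dim L-2}$, attained when $\chi|_L\ne 1$. This is at least $2$ provided $\dim L\le s-1$, and it is $\ge 1$ in general. In the equality case $\ell_\chi=1$, i.e.\ $\dim L=s-1$, we must check that $\chi(g)=1$ forces $d_g=0$.
\end{itemize}
If $\dim L=s$ the configuration is empty and the hypothesis $\sum a_j<0$ fails, so this case does not occur. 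Suppose $\dim L=s-1$ and $\chi|_L\neq 1$. If some $g\notin L$ had $\chi(g)=1$, then $\ell_\chi\ge 2$ after recounting. I expect this boundary subcase, where $\ell_\chi$ is small, to be the main obstacle. If it fails, I will check whether the numerical hypothesis $\sum a_j<(2^s-2^{\dim L})/2=2^{s-2}$ excludes it, or fall back on verifying the vanishing $H^0(\mathcal{O}_Y(D_g)\otimes L_\chi^{-1})=0$ directly, since that is all condition (3) of \Cref{every def is an abelian cover} needs. In that fallback, degree $d_g-\ell_\chi\le 0$ suffices except for degree exactly $0$, which must then be ruled out by parity.

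\emph{Conclusion.} Applying \Cref{every def is an abelian cover over a WPS} gives unobstructedness and shows that every deformation is a $\mathbb{Z}_2^s$-cover of $Y$. Since $\bf Def^G(X/Y)$ is versally parametrized by $\bigoplus_g H^0(\mathcal{O}_Y(d_g))$ (\Cref{def that are abelian}), the deformed branch divisors are again hyperplanes, i.e.\ sections of $\mathcal{O}_Y(1)$, in the same combinatorial configuration.
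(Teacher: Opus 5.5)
Your proposal is essentially the paper's proof. Both reduce to \Cref{every def is an abelian cover over a WPS} and check integrality of $\ell_\chi$ and $\ell_\chi\ge 2$ through the same count $2\ell_\chi=\#\{g:d_g=1,\ \chi(g)=-1\}$; the paper phrases this as $\sum_{a\in A}a=0$ and $|A\cap H|\ge 4$ for affine hyperplanes $H\not\ni 0$. You also make condition (3) and the exclusion of $\dim L=s$ explicit, which the paper leaves implicit.

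The ``boundary subcase'' you flag is vacuous. For $\dim L=s-1$ the minimum is $\ell_\chi=2^{s-2}-2^{s-3}=2^{s-3}\ge 2$ since $s\ge 4$; the value $\ell_\chi=1$ would need $(s,\dim L)=(3,2)$. So your bound for $\dim L\le s-1$ already finishes the argument and no fallback is needed.

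The one soft spot, which the paper's proof shares (it never checks condition (1)), is your claim that condition (1) is automatic. It fails as soon as some $a_i\ge 2$. Then every section of $\mathcal{O}_Y(1)$ is a linear form in the weight-one variables, so it vanishes at the $i$-th coordinate point. All the $D_g$ therefore pass through that singular point, contrary to Definition~\ref{def:divisor_standard set-up}(3)--(4).
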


We break the proof of the Corollary into several parts.
Let \( V = (\mathbb{Z}/2\mathbb{Z})^s \). We consider functions \( h: V \to \{0,1\} \) with \( h(0) = 0 \) that satisfy:
\begin{enumerate}
    \item \textbf{Evenness Condition}: For every \( g \in V \),
        \[
        L_g = \sum_{\substack{g' \in V \\ g \cdot g' = 1}} h(g') \quad \text{is even}.
        \]
    This condition ensures that if we choose a collection of divisors $\{D_g\}_{g \in G^*-\{0\}}$ such that $D_g \in \lvert \mathcal{O}_{Y}(d_g) \rvert$, where $d_g = h(g)$, then the collection $\{D_g\}_{g \in G-\{0\}}$ satisfies the fundamental relations and consequently, there exists a $\mathbb{Z}_2^s$ cover $f: X \to Y$ branched along $\Sigma_g D_g$, with the inertia group of $D_g$ being generated by the cyclic subgroup generated by $g$. 
    \item \textbf{Deformation Inequality}: For all nonzero \( g, g' \in V \) with \( g \cdot g' = 0 \),
        \[
        h(g) < \frac{1}{2} L_{g'}.
        \]
    These set of inequalities ensure by \Cref{every def is an abelian cover over a WPS}, that a general deformation of the abelian cover $f: X \to Y$ constructed by a collection of divisors $\{D_g \in |\mathcal{O}_Y(d_g)|\}$ is once again an abelian cover of $Y$.  
\end{enumerate}

\begin{lemma}\label{evenness condition}
    The evenness condition is equivalent to the condition that
\[
\sum_{a \in A} a = 0 \quad \text{in } \mathbb{F}_2^s,
\]
where \( A = \{ g' \in V \setminus \{0\} : h(g') = 1 \} \).
\end{lemma}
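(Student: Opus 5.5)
The plan is to interpret the evenness condition as the vanishing of a single linear functional on $V$. For fixed $g \in V$, the parity of $L_g$ is
\[
L_g \equiv \sum_{\substack{g' \in V\\ g\cdot g' = 1}} h(g') \equiv \sum_{g' \in A} g\cdot g' \pmod 2 .
\]
This holds because $h$ takes values in $\{0,1\}$, so a term $g'$ contributes $1$ to $L_g$ exactly when $g' \in A$ and $g\cdot g' = 1$. Also $h(0)=0$, so $0 \notin A$ and nothing needs to be excluded.

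By bilinearity of the standard dot product over $\mathbb{F}_2$,
\[
\sum_{g'\in A} g\cdot g' = g \cdot \Bigl(\sum_{a\in A} a\Bigr) \quad \text{in } \mathbb{F}_2 .
\]
Hence $L_g$ is even for every $g$ if and only if $g\cdot \sigma = 0$ for all $g\in V$, where $\sigma := \sum_{a\in A} a$.

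Finally, the pairing $V\times V\to \mathbb{F}_2$, $(g,g')\mapsto g\cdot g'$, is nondegenerate. Concretely, if $\sigma\neq 0$ has some coordinate $\sigma_i = 1$, then the standard basis vector $e_i$ satisfies $e_i\cdot\sigma = 1$. So $g\cdot\sigma = 0$ for all $g$ is equivalent to $\sigma = 0$, which gives the stated equivalence. The converse direction is contained in the same chain of equivalences.

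No step is a genuine obstacle. The only point needing care is the reduction of $L_g$ modulo $2$ to $\sum_{a\in A} g\cdot a$. In terms of the paper's identification $\chi(g') = (-1)^{g\cdot g'}$, this is the statement that $L_g \equiv \#\{a\in A : \chi_g(a) = -1\}$. Equivalently, $L_g$ is even exactly when $\prod_{a\in A}\chi_g(a) = \chi_g\bigl(\sum_{a \in A} a\bigr) = 1$, so one could also phrase the argument with characters and use that the characters separate points of $G$.
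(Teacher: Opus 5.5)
Your proposal is correct and follows essentially the same route as the paper. Both reduce $L_g$ modulo $2$ to $g\cdot\bigl(\sum_{a\in A} a\bigr)$ by bilinearity, then use nondegeneracy of the dot product on $\mathbb{F}_2^s$ to force the sum to vanish; the only difference is that you make nondegeneracy explicit via a standard basis vector $e_i$, where the paper simply invokes it.
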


\begin{proof}
Note that \( L_g = \sum_{\substack{g' \in V \\ g \cdot g' = 1}} h(g') \). The condition that \( L_g \) is even is equivalent to \( L_g \equiv 0 \pmod{2} \). Now observe:
\[
L_g = \sum_{g' \in V} (g \cdot g') h(g') \pmod{2},
\]
because if \( g \cdot g' = 0 \), then the term contributes 0 modulo 2, and if \( g \cdot g' = 1 \), then it contributes \( h(g') \) modulo 2. So we have:
\[
L_g \equiv \sum_{g' \in V} (g \cdot g') h(g') \pmod{2}.
\]
Define the vector
\[
v = \sum_{g' \in V} h(g') g' \in \mathbb{F}_2^s.
\]
Then
\[
\sum_{g' \in V} (g \cdot g') h(g') = g \cdot v.
\]
Thus, the evenness condition becomes:
\[
g \cdot v \equiv 0 \pmod{2} \quad \text{for all } g \in V.
\]
Since the dot product is non-degenerate over \( \mathbb{F}_2^s \), this implies \( v = 0 \). Hence,
\[
\sum_{g' \in V} h(g') g' = 0.
\]
Given that \( h(0) = 0 \), we have:
\[
\sum_{g' \in V \setminus \{0\}} h(g') g' = 0,
\]
which is exactly
\[
\sum_{a \in A} a = 0.
\]
This completes the proof of the lemma.
\end{proof}

\begin{lemma}\label{intersection with affine hyperplanes}
Let $V = (\mathbb{Z}/2\mathbb{Z})^s$, and let $h: V \to \{0,1\}$ with $h(0) = 0$.
Define 
$A = \{g \in V \setminus \{0\} : h(g) = 1\}$. Suppose that for all affine hyperplanes $H$ of $V$ with $0 \notin H$, we have:
    \[
    |A \cap H| \geq 4.
    \]
Then for all nonzero $g, g' \in V$ with $g \cdot g' = 0$, we have:
    \[
    h(g) < \frac{1}{2} L_{g'}
    \]
    
\end{lemma}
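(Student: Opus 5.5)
The plan is to recognise $L_{g'}$ as counting the points of $A$ on an affine hyperplane that avoids the origin, and then use the hypothesis directly. Fix a nonzero $g' \in V$ and set
\[
H_{g'} := \{ x \in V : g' \cdot x = 1 \}.
\]
Since $g' \neq 0$, the linear functional $x \mapsto g'\cdot x$ is nonzero, so $H_{g'}$ is an affine hyperplane of $V$. It does not contain $0$, because $g'\cdot 0 = 0$. By definition of $h$ and $A$ (and using $h(0)=0$),
\[
L_{g'} = \sum_{\substack{x \in V \\ g'\cdot x = 1}} h(x) = |A \cap H_{g'}|.
\]
The hypothesis then gives $L_{g'} \geq 4$, hence $\tfrac12 L_{g'} \geq 2$.

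Next I bound the left-hand side. Since $h$ takes values in $\{0,1\}$, we have $h(g) \leq 1 < 2 \leq \tfrac12 L_{g'}$. This yields the desired strict inequality for every pair of nonzero $g, g'$, in particular for those with $g\cdot g' = 0$. The orthogonality condition is not needed, because the bound on $L_{g'}$ is uniform over all nonzero $g'$.

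There is no real obstacle here. The only point needing care is the identification $L_{g'} = |A \cap H_{g'}|$: one must check that the index set $\{x : g'\cdot x = 1\}$ is exactly an affine hyperplane missing the origin, which holds because $g' \neq 0$. Once that is in place, the argument is a one-line comparison, since the hypothesis $|A\cap H|\ge 4$ is much stronger than what is needed (already $|A\cap H| \geq 3$ would suffice).
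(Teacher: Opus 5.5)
Your proposal is correct and follows essentially the same route as the paper's proof: both identify $L_{g'} = |A \cap H_{g'}|$ for the affine hyperplane $H_{g'} = \{x : g' \cdot x = 1\}$ missing the origin, and then use $h(g) \leq 1 < 2 \leq \tfrac{1}{2}L_{g'}$. Your additional remarks, that the orthogonality hypothesis is never used and that $|A \cap H| \geq 3$ would already suffice, are both accurate.
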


\begin{proof}
Let $g, g'$ be nonzero vectors with $g \cdot g' = 0$. Let $H_{g'} = \{k \in V : g' \cdot k = 1\}$.
Note that since $h(g) \in \{0,1\}$, we have $L_{g'} = |A \cap H_{g'}| \geq 4$ by assumption. Hence
\[
 h(g) \leq 1 < \frac{1}{2} L_{g'} 
\]


\end{proof}

\begin{proof}
 \noindent\textit{Proof of \Cref{hyperplane arrangements}.}
 First note that by \Cref{evenness condition} and \Cref{intersection with affine hyperplanes}, it is enough to prove the evenness condition and that for all affine hyperplanes $H$ of $V$ with $0 \notin H$,
    \[
    |A \cap H| \geq 4.
    \] 
    for cases $(1)$ and $(2)$. Let us first prove for case $(1)$. \\
 Define \( h(g) = 1 \) for all \( g \neq 0 \), and \( h(0) = 0 \). Let \( A = \{ g \in V \setminus \{0\} : h(g) = 1 \} = V \setminus \{0\} \). Then
\[
\sum_{a \in A} a = \sum_{v \in V} v - 0.
\]
In \( \mathbb{F}_2^s \), there are \( 2^s \) vectors. For each coordinate \( i \), exactly half of the vectors (i.e., \( 2^{s-1} \)) have a 1 in that coordinate. Since \( 2^{s-1} \) is even for \( s \geq 2 \), the sum modulo 2 in each coordinate is 0. Thus,
\[
\sum_{a \in A} a = 0,
\]
so the evenness condition holds. \\
Let \( g, g' \in V \setminus \{0\} \) with \( g \cdot g' = 0 \). Then
\[
L_{g'} = \sum_{\substack{v \in V \\ g' \cdot v = 1}} h(v) = |\{ v \in V \setminus \{0\} : g' \cdot v = 1 \}|.
\]
The set \( \{ v \in V : g' \cdot v = 1 \} \) has size \( 2^{s-1} \). Therefore, we have \( L_{g'} = 2^{s-1} \).

Then
\[
\frac{1}{2} L_{g'} = 2^{s-2}.
\]
Since \( s \geq 3 \), we have \( 2^{s-2} \geq 2 \). As \( h(g) = 1 \), the inequality
\[
1 < 2^{s-2}
\]
holds. Hence, the deformation inequality is satisfied. \\

Now we prove both the conditions for configuration $(2)$.
Then $|L| = 2^k$, $|A| = 2^s - 2^k$.
We have:
\[
\sum_{a \in A} a = \sum_{v \in V} v - \sum_{l \in L} l.
\]
Since $\sum_{v \in V} v = 0$ and $\sum_{l \in L} l = 0$ (for $k \geq 2$), 
we get $\sum_{a \in A} a = 0$. Thus the first condition is verified. \\
Now, let $H$ be an affine hyperplane with $0 \notin H$. Then $H = \{x \in V : u \cdot x = 1\}$ for some $u \neq 0$. We compute $|A \cap H| = |H| - |L \cap H| = 2^{s-1} - |L \cap H|$.

There are two subcases:

\subsubsection*{Subcase 2.1: $u \in L^\perp$}
Then for all $l \in L$, $u \cdot l = 0$, so $L \subseteq H^0$ (the linear hyperplane $\{x : u \cdot x = 0\}$).
Thus $L \cap H = \emptyset$, so $|A \cap H| = 2^{s-1} \geq 8 \geq 4$.

\subsubsection*{Subcase 2.2: $u \notin L^\perp$}
Then $\exists l \in L$ such that $u \cdot l = 1$ or $l \in L \cap H$. Further, the restriction $u|_L : L \to \mathbb{F}_2$ is a nonzero linear functional.
Its kernel has dimension $k-1$, so $|L \cap H^0| = 2^{k-1}$.
Since $H = H^0 + l$, and $L$ is a subspace,
we have $|L \cap H| = 2^{k-1}$. Thus $|A \cap H| = 2^{s-1} - 2^{k-1}$.
We need to show $2^{s-1} - 2^{k-1} \geq 4$ for $s \geq 4$ and $2 \leq k \leq s-1$. Since $k \leq s-1$, we have $2^{k-1} \leq 2^{s-2}$, so:
\[
2^{s-1} - 2^{k-1} \geq 2^{s-1} - 2^{s-2} = 2^{s-2}.
\]
For $s \geq 4$, $2^{s-2} \geq 4$. Equality occurs when $s = 4$ and $k = s-1 = 3$. Thus in all cases, $|A \cap H| \geq 4$.
\end{proof}

\begin{example}
For $s = 4$, we get the following $7$ irreducible families of threefolds with ample canonical bundle, which are general in their moduli. A general $X$ in this family is a $\mathbb{Z}_2^4$ cover of a weighted projective threefold $\mathbb{WP}^3$, where
    \begin{enumerate}
        \item $\mathbb{WP}^3$ is either $\mathbb{P}(1,1,1,1)$ or $ \mathbb{P}(1,1,1,2)$ and the divisorial part of the branch locus consists of $12$ hyperplanes, each corresponding to a non-zero element of $\mathbb{Z}_2^4 \setminus L$, where $L$ is a subspace of dimension $2$. Different choices of subspaces give the same component. 
        \begin{enumerate}
            \item The cover $f: X \to \mathbb{P}(1,1,1,1)$ coincides with the canonical map of $X$.
            \item The cover $f: X \to \mathbb{P}(1,1,1,2)$ coincides with the bicanonical map of $X$.
        \end{enumerate}
        When $\mathbb{WP}^3 = \mathbb{P}(1,1,1,2)$, these are examples of non-flat pluricanonical covers since the pushforward of the structure sheaf decomposes as 
        
        \[f_*\mathcal{O}_X = \mathcal{O}_{\mathbb{P}} \oplus\left(\bigoplus_{\chi \in L^{\perp} \setminus \{1\}} \mathcal{O}_{\mathbb{P}}(-4)\right) \oplus \left( \bigoplus_{\chi \in G^* \setminus L^{\perp}} \mathcal{O}_{\mathbb{P}}(-3) \right)\]
        
        \item $\mathbb{WP}$ is either $\mathbb{P}(1,1,1,1), \mathbb{P}(1,1,1,2), \mathbb{P}(1,1,1,3), \mathbb{P}(1,1,1,4), \mathbb{P}(1,1,2,3)$ and the divisorial part of the branch locus consists of $15$ hyperplanes, each corresponding to a non-zero element of $\mathbb{Z}_2^4$.
        When $\mathbb{WP}^3 = \mathbb{P}(1,1,1,3)$ or $\mathbb{P}(1,1,2,3)$, the above once again are examples of non-flat covers since the pushforward of the structure sheaf decomposes as 
        \[
        f_*\mathcal{O}_X = \mathcal{O}_{\mathbb{P}} \oplus\left(\bigoplus_{\chi \in G^* \setminus \{1\}} \mathcal{O}_{\mathbb{P}}(-4)\right)
        \]
    \end{enumerate}

\end{example}

\section{Pluricanonical $\mathbb{Z}_2^s$ covers of weighted projective spaces}\label{sec:pluricanonical}

Let $X$ be a normal variety. If $\mathcal{O}_X(mK_X)$ is an ample and base point line bundle then we call the morphism given by the complete linear series of $\mathcal{O}_X(mK_X)$ the $m$-canonical morphism of $X$. The main theme of this section is the proof of Theorem \ref{thm:flat_pluricanonical_covers} which classifies $m$-canonical $\mathbb{Z}_2^s$ covers of weighted projective threefolds. If $f: X \to Y$ is a $\mathbb{Z}_2^s$ cover then, we say that $f$ is an $m$-canonical cover if $f$ fits into a diagram as follows for some $m \geq 1$.
\[
\begin{tikzcd}
    X \arrow[d, "f"] \arrow[dr, "\varphi_{|mK_X|}"] & \\
    Y \arrow[r, hook] & \mathbb{P}^N
\end{tikzcd}
\]
The strategy is to start with a numerical criterion for a $Z_2^s$-cover to be the $m$-canonical morphism and then, to classify the branch data that satisfy such a criterion.

\begin{proposition}\label{prop:pluricanonical_maps_of_threefolds}
Let $f: X \to Y$ be a $\mathbb{Z}_2^s$ cover where $Y = \mathbb{P}(a_0,...,a_r)$. Let $f_*\mathcal{O}_X = \mathcal{O}_Y \oplus \mathcal{E}$, where $\mathcal{E} = \oplus_{\chi \in G^*\setminus\{1\}} L_{\chi}^{-1}$, where $L_{\chi}^{-1}$ is a reflexive sheaf of rank one for all $\chi \in G^*$. Further let the divisors associated to the data of the abelian cover be $\{D_g \in |\mathcal{O}_{\mathbb{P}}(d_g)|\}_{g \in G-\{0\}}$. Let $m \geq 1$ be an integer. Then $\mathcal{O}_X(mK_X)$ is an ample and base point free line bundle and $f$ is the $m-$ canonical morphism of $X$ if and only if
\begin{enumerate}
    \item $\forall \chi \in G^*-\{0\}$, $$h^0(\mathcal{O}_Y(\displaystyle\frac{1}{2}m \Sigma_{g \in G-\{0\}} d_g - m \Sigma_j a_j -\displaystyle\frac{1}{2} \Sigma_{g
     \mid \chi \cdot g = 1} d_g)) = 0$$ 
     This happens in particular if $\displaystyle\frac{1}{2}m \Sigma_{g \in G-\{0\}} d_g - m \Sigma_j a_j < \displaystyle\frac{1}{2} \Sigma_{g
     \mid \chi \cdot g = 1} d_g $ for all $\chi \in G^*-\{0\}$
    \item $\displaystyle\frac{1}{2}m \Sigma_{g \in G-\{0\}} d_g - m \Sigma_j a_j$ is a positive integer multiple of $\operatorname{lcm}(a_0,...,a_r)$.
\end{enumerate}
Further in this case, if $d = \displaystyle\frac{1}{2}m \Sigma_{g \in G-\{0\}} d_g - m \Sigma_j a_j$, then $h^0(mK_X) = p_m(X) = h^0(\mathcal{O}_{\mathbb{P}}(d))-1$.

\end{proposition}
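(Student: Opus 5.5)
The plan is to compute $H^0(X,mK_X)$ by pushing forward to $Y$ and splitting into eigenspaces, and then to compare the result with $H^0(Y,\mathcal{O}_Y(d))$. Write $N:=\tfrac{1}{2}\Sigma_{g\neq 0} d_g - \Sigma_j a_j$, so $d=mN$. By Lemma~\ref{lemma:general_canonical_formula_Z2s} we have $K_X = f^*(K_Y + \tfrac12\sum_g D_g)$ as $\mathbb{Q}$-Cartier divisors, and hence $mK_X = f^*\mathcal{O}_Y(d)$ as Weil divisor classes.

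\textbf{Step 1: push forward.} Using the eigensheaf decomposition \eqref{eq:decomposition} and the projection formula on the smooth locus, then extending reflexively across the finitely many singular points (all sheaves involved are reflexive, and codimension $\geq 2$ suffices), I obtain
\[
f_*\mathcal{O}_X(mK_X) \;=\; \Bigl(\mathcal{O}_Y(d)\otimes \bigl(\mathcal{O}_Y\oplus \textstyle\bigoplus_{\chi\neq 1} L_\chi^{-1}\bigr)\Bigr)^{\vee\vee}
= \mathcal{O}_Y(d)\oplus \bigoplus_{\chi\neq 1}\mathcal{O}_Y\bigl(d-\ell_\chi\bigr),
\]
with $\ell_\chi=\tfrac12\Sigma_{\chi\cdot g=1}d_g$ given by \eqref{eq:half_sum_relation}. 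Here I use that $\operatorname{Cl}(Y)\cong\mathbb{Z}$ is torsion-free, so that $L_\chi=\mathcal{O}_Y(\ell_\chi)$. Consequently $H^0(mK_X)=H^0(\mathcal{O}_Y(d))\oplus\bigoplus_{\chi\neq1}H^0(\mathcal{O}_Y(d-\ell_\chi))$.

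\textbf{Step 2: the reverse implication.} Assume (1) and (2). Condition (2) says that $d$ is a positive multiple of $L=\operatorname{lcm}(a_i)$. Then $\mathcal{O}_Y(d)$ is an ample, base-point-free line bundle, and it is very ample on $Y$ in the appropriate sense: $|\mathcal{O}_Y(d)|$ embeds $Y$ because $L\mid d$, using Dolgachev/Delorme for weighted projective spaces. Its pullback $\mathcal{O}_X(mK_X)$ is then an ample base-point-free line bundle, since $f$ is finite. Condition (1) kills the nontrivial eigenspaces, so every section of $mK_X$ is $G$-invariant and is therefore pulled back from $Y$. It follows that $\varphi_{|mK_X|}$ factors as $f$ composed with the embedding $\varphi_{|\mathcal{O}_Y(d)|}$, which is what the statement requires. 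The dimension count $p_m=h^0(\mathcal{O}_Y(d))$ follows, and I would reconcile it with the "$-1$" in the statement as the projective dimension.

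\textbf{Step 3: the forward implication.} Suppose $f$ is the $m$-canonical morphism. Then $|mK_X|$ is pulled back from a linear system on $Y$, so its sections are $G$-invariant, and the anti-invariant summands must vanish; this gives (1). Moreover $mK_X$ is Cartier and globally generated, with $\varphi$ factoring through $Y\hookrightarrow\mathbb{P}^N$ by $\mathcal{O}_Y(d)$. At a singular vertex $P_i$, where $f$ is generically étale or a double cover, the invertibility of $f^*\mathcal{O}_Y(d)$ together with the requirement that $\mathcal{O}_Y(d)$ be an embedding line bundle (the hyperplane class of $Y\subset\mathbb{P}^N$) forces $\mathcal{O}_Y(d)$ to be Cartier at every vertex. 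That means $a_i\mid d$ for all $i$, hence $L\mid d$. Positivity of $d$ follows from ampleness. This gives (2).

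The main obstacle is the forward direction of (2): deducing that $\mathcal{O}_Y(d)$ itself, and not just its pullback, is locally free at the vertices. My first attempt would identify $\mathcal{O}_Y(d)$ with $\varphi^*\mathcal{O}_{\mathbb{P}^N}(1)$ restricted to $Y$, which is automatically invertible. The issue to settle is that the embedding $Y\hookrightarrow\mathbb{P}^N$ in the diagram is the one induced by $|\mathcal{O}_Y(d)|$; this holds because the invariant sections are exactly $H^0(\mathcal{O}_Y(d))$. With that settled, local freeness of $\mathcal{O}_Y(d)$ at $P_i$ is equivalent to $a_i\mid d$.
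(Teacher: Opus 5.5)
Your route is the paper's route: identify $mK_X$ with $f^*\mathcal{O}_Y(d)$, push forward, kill the $\chi\neq 1$ eigensheaves, and read off $L\mid d$ from Cartierness. But it inherits a step that fails: ``$K_X=f^*(K_Y+\tfrac12\sum_g D_g)$ as $\mathbb{Q}$-Cartier divisors, and hence $mK_X=f^*\mathcal{O}_Y(d)$ as Weil divisor classes.'' Torsion-freeness of $\operatorname{Cl}(Y)$ says nothing about torsion in $\operatorname{Cl}(X)$.

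Integrally, $K_X=f^*K_Y+\sum_g R_g$ with $f^*D_g=2R_g$. So for $m$ even you do get $mK_X=f^*(mK_Y+\tfrac m2\sum_g D_g)\sim f^*\mathcal{O}_Y(d)$. For $m$ odd you need $\sum_g R_g\sim f^*E$ for some Weil divisor $E$ on $Y$. A rational function with $G$-invariant divisor is a $\chi$-eigenfunction $\psi w_\chi$ with $\psi\in k(Y)$, and its order along $R_g$ is odd exactly when $\chi(g)=-1$. Hence $\sum_g R_g\sim f^*E$ holds iff some $\chi_0$ satisfies $\chi_0(g)=-1$ for every $g$ with $d_g>0$.

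When that fails, the conclusion is actually false. Take $s=2$, $m=1$, $Y=\mathbb{P}^3$, $(d_{10},d_{01},d_{11})=(6,6,2)$. Then $\ell_\chi=4,4,6$ and $d=3$, so (1) and (2) hold. But duality $f_*\omega_X=\bigoplus_\chi\omega_Y\otimes L_\chi$ gives $p_g(X)=0+1+1+10=12\neq 20=h^0(\mathcal{O}_{\mathbb{P}^3}(3))$. Moreover, for general branch divisors $X$ has $\tfrac12(1,1,1)$ points over $D_{10}\cap D_{01}\cap D_{11}$, where $K_X$ is not even Cartier. In general, for odd $m$ one has $f_*\mathcal{O}_X(mK_X)\cong\bigoplus_\chi\mathcal{O}_Y\bigl((m-1)N-\sum_j a_j+\ell_\chi\bigr)$ with $\ell_1=0$, and this matches your Step 1 only under the support condition above. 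So your Step 1, and with it the reverse implication in Step 2, break down for odd $m$.

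This is not a defect relative to the paper. Its proof makes the same move (``there are no degree zero line bundles on $Y$. Therefore, the above is equivalent to having $d=\dots$''), and the $(6,6,2)$ cover is listed in its table for $s=2$, $m=1$ with $p_1=20$.

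Your forward direction (Step 3) can be made sound, and this also settles the point you flagged as the main obstacle. There $mK_X=\varphi^*\mathcal{O}(1)=f^*\iota^*\mathcal{O}(1)$ is a pullback by hypothesis. Pushing forward the $\mathbb{Q}$-equivalence $f^*\iota^*\mathcal{O}(1)\sim_{\mathbb{Q}} f^*\mathcal{O}_Y(d)$ and using $\operatorname{Cl}(Y)\cong\mathbb{Z}$ gives $\iota^*\mathcal{O}(1)\cong\mathcal{O}_Y(d)$, which is therefore Cartier. The converse needs an extra hypothesis, such as $m$ even or the support condition (then $\sum_g R_g\sim f^*L_{\chi_0}$).

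Two smaller points. First, the ``$-1$'' in the formula for $p_m$ is a typo, not something to reconcile: the paper's proof and tables use $p_m=h^0(\mathcal{O}_Y(d))$, exactly as your count gives. Second, $L\mid d$ does not by itself make $|\mathcal{O}_Y(d)|$ an embedding; for example, $\mathcal{O}(30)$ on $\mathbb{P}(1,6,10,15)$ is not very ample. So Step 2 needs very ampleness as an assumption or a case-by-case check, which the paper also leaves tacit.
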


\begin{proof}
 $f: X \to Y$ is the $m-$ canonical morphism if and only if the morphism $\varphi_{|mK_X|}$ induced by the complete linear series $|mK_X|$ for some $m \geq 1$ factors as follows.
\[
\begin{tikzcd}
    X \arrow[d, "f"] \arrow[dr, "\varphi_{|mK_X|}"] & \\
    Y \arrow[r, "\varphi_{|\mathcal{O}_{Y}(d)|}", hook] & \mathbb{P}^N
\end{tikzcd}
\]

Then we have the following two constraints which are in fact sufficient conditions as well :
\begin{enumerate}
    \item[(i)] $f^*(\mathcal{O}_{Y}(d)) = mK_X$
    \item[(ii)] $h^0(\mathcal{O}_{Y}(d)) = N+1 = h^0(mK_X)$
\end{enumerate}

We first analyze (i). This gives
\begin{align*}
    f^*(\mathcal{O}_Y(d)) = f^* 
       \left( 
    mK_Y + m\sum_{H_i, \Psi_i} \frac{(|H|-1)}{|H|} D_{H_i, \Psi_i}
       \right)
   \end{align*}
Since $\operatorname{Pic}(Y) = \mathbb{Z}\mathcal{O}_Y(\operatorname{lcm}(a_0,...,ar))$ (see \cite[Theorem $3.2.4$]{dolgachevwps82}), there are no degree zero line bundles on $Y$. Therefore, the above is equivalent to having 
$$d = \displaystyle\frac{1}{2}m \Sigma_{g \in G-\{0\}} d_g - m \Sigma_j a_j$$
Since we are assuming $\mathcal{O}_Y(d)$ to be a line bundle, we have the second statement that $\displaystyle\frac{1}{2}m \Sigma_{g \in G-\{0\}} d_g - m \Sigma_j a_j$ should be a positive integer multiple of $\operatorname{lcm}(a_0,...,a_r)$.
Now condition (ii) implies that $h^0(f^*(\mathcal{O}_Y(d))) = h^0(\mathcal{O}_Y(d))$. By pushing forward, this is equivalent to $h^0(\mathcal{O}_Y(d) \otimes L_{\chi}^{-1}) = 0$ $ \forall \chi \in G^*-\{0\}$. By the fundamental relations, this implies that 
$$d < \displaystyle\frac{1}{2} \Sigma_{g
     \mid \chi(g) = 1} d_g $$
and this implies the first condition.
\end{proof}

\subsection{Fourier Transform of functions on $\mathbb{Z}_2^s$}

In this section, we prove some results on the Fourier transform of functions on $\mathbb{Z}_2^s$ which are crucially used for the classification of flat pluricanonical covers of weighted projective threefolds.

Let \( G = (\mathbb{Z}_2)^s \) with additive notation.  
Let \( G^* \) be the dual group of \( G \).  
Fix an isomorphism \( G \to G^* \) given by \( g \mapsto \chi_g \), where \( \chi_g(x) = (-1)^{g \cdot x} \), with \( g \cdot x = \sum_{i=1}^s g_i x_i \pmod{2} \).  
Via this isomorphism, we identify \( G^* \) with \( G \), so that a character \( \chi \in G^* \) corresponds to a unique \( g \in G \), and we write \( \chi \cdot x = g \cdot x \in \mathbb{Z}_2 \).

\begin{lemma}\label{lemma: Fourier}

Let \( d: G \to \mathbb{Z}_{\ge 0} \) with \( d(0) = 0 \), and let \( l: G^* \setminus \{0\} \to \mathbb{Z}_{\ge 0} \). Define the Fourier transform of \( d \) as:
\[
\widehat{d}(\chi) = \sum_{x \in G} d(x) \, (-1)^{\chi \cdot x}.
\]

Suppose for all nontrivial \( \chi \neq 0 \):
\begin{equation}\label{l_chi}
\sum_{\substack{x \in G \\ \chi \cdot x = 1}} d(x) = 2 l(\chi). 
\end{equation}

Then for all \( \chi \neq 0 \):
\[
\widehat{d}(0) - \widehat{d}(\chi) = 4 l(\chi),
\]
where \( \widehat{d}(0) = \sum_{x \in G} d(x) \), and \( 0 \) denotes the trivial character.

\end{lemma}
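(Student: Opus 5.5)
The plan is a direct computation: split the sum defining \(\widehat{d}(\chi)\) according to the value of \(\chi \cdot x \in \mathbb{Z}_2\), and then use the defining relation \eqref{l_chi}. This is the same bookkeeping already used in Lemma~\ref{lemma:A_chi}, where \(2A_\chi = 1 - S_\chi\) was obtained. Fix a nontrivial \(\chi\). Partition \(G\) into the two sets \(\{x : \chi\cdot x = 0\}\) and \(\{x : \chi \cdot x = 1\}\), and set
\[
B(\chi) := \sum_{\substack{x\in G\\ \chi\cdot x = 0}} d(x), \qquad A(\chi) := \sum_{\substack{x \in G \\ \chi \cdot x = 1}} d(x).
\]
Since \((-1)^{\chi\cdot x}\) equals \(1\) on the first set and \(-1\) on the second, we get \(\widehat{d}(\chi) = B(\chi) - A(\chi)\). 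Summing \(d\) over all of \(G\) gives \(\widehat{d}(0) = B(\chi) + A(\chi)\), because the trivial character takes the value \(1\) everywhere.

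Subtracting the two identities yields \(\widehat{d}(0) - \widehat{d}(\chi) = 2A(\chi)\). By hypothesis \eqref{l_chi}, \(A(\chi) = 2l(\chi)\), so \(\widehat{d}(0)-\widehat{d}(\chi) = 4l(\chi)\). The condition \(d(0)=0\) is not actually needed for the identity, since \(0\) always lies in the set \(\chi\cdot x = 0\). I will only remark that it matches the paper's convention \(D_0 = 0\).

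There is no real obstacle here. The only point needing care is the identification of \(G^*\) with \(G\) fixed above: the trivial character corresponds to \(0\) and satisfies \(\chi\cdot x = 0\) for all \(x\), which justifies writing \(\widehat{d}(0)=\sum_x d(x)\). With that noted, the proof is a two-line computation.
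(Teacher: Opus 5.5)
Your proof is correct and matches the paper's argument: the paper also splits $\widehat{d}(\chi)=S_0(\chi)-S_1(\chi)$ according to the value of $\chi\cdot x$, writes $\widehat{d}(0)=S_0(\chi)+S_1(\chi)$, and subtracts to get $2S_1(\chi)=4l(\chi)$. Your remark that the hypothesis $d(0)=0$ plays no role in this identity is also accurate.
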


\begin{proof}

Let \( \chi \in G^* \setminus \{0\} \).  
The set \( \{ x \in G : \chi \cdot x = 1 \} \) has size \( |G|/2 = 2^{s-1} \), since \( \chi \cdot (\cdot) \) is a nonzero linear functional.

Split the sum for \( \widehat{d}(\chi) \):
\[
\widehat{d}(\chi) = \sum_{\chi \cdot x = 0} d(x) \cdot 1 + \sum_{\chi \cdot x = 1} d(x) \cdot (-1).
\]

Let \( S_0(\chi) = \sum_{\chi \cdot x = 0} d(x) \), \( S_1(\chi) = \sum_{\chi \cdot x = 1} d(x) \).  
Then:
\[
\widehat{d}(\chi) = S_0(\chi) - S_1(\chi).
\]

Now, \( \widehat{d}(0) = \sum_{x \in G} d(x) \, (-1)^{0 \cdot x} \).  
But \( 0 \cdot x = 0 \) for all \( x \in G \), so \( (-1)^{0 \cdot x} = 1 \).  
Hence:
\[
\widehat{d}(0) = \sum_{x \in G} d(x) = S_0(\chi) + S_1(\chi),
\]
since \( \{x : \chi \cdot x = 0\} \) and \( \{x : \chi \cdot x = 1\} \) partition \( G \).

From \Cref{l_chi}, \( S_1(\chi) = 2 l(\chi) \).

Thus:
\[
\widehat{d}(0) - \widehat{d}(\chi) = [S_0 + S_1] - [S_0 - S_1] = 2 S_1(\chi) = 4 l(\chi).
\]

This holds for all nontrivial \( \chi \), completing the proof.

\end{proof}

\begin{lemma} \label{lemma: Inverse Fourier Transform}

Let \( G = (\mathbb{Z}_2)^s \) with additive notation, and let \( G^* \) be its dual group, identified with \( G \) via \( \chi_g(x) = (-1)^{g \cdot x} \).  
Let \( d: G \to \mathbb{C} \) be any function, and define its Fourier transform \( \widehat{d}: G^* \to \mathbb{C} \) by
\[
\widehat{d}(\chi) = \sum_{x \in G} d(x) \, (-1)^{\chi \cdot x}.
\]

Then \( d \) can be recovered from \( \widehat{d} \) via the inverse formula:
\[
d(x) = \frac{1}{|G|} \sum_{\chi \in G^*} \widehat{d}(\chi) \, (-1)^{\chi \cdot x}.
\]

\end{lemma}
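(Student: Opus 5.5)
The plan is to substitute the definition of $\widehat{d}$ into the right-hand side of the inverse formula and exchange the order of summation. The exchange reduces everything to the orthogonality relation for characters of $G=(\mathbb{Z}_2)^s$. Concretely, for fixed $x\in G$ we write
\[
\frac{1}{|G|}\sum_{\chi\in G^*}\widehat{d}(\chi)(-1)^{\chi\cdot x}
=\frac{1}{|G|}\sum_{y\in G} d(y)\sum_{\chi\in G^*}(-1)^{\chi\cdot y}(-1)^{\chi\cdot x}
=\frac{1}{|G|}\sum_{y\in G} d(y)\sum_{\chi\in G^*}(-1)^{\chi\cdot (x+y)}.
\]
Here we use that $\chi\cdot(-)$ is $\mathbb{F}_2$-linear, so the two signs multiply to $(-1)^{\chi\cdot(x+y)}$.

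The key step is the orthogonality identity
\[
\sum_{\chi\in G^*}(-1)^{\chi\cdot z}=\begin{cases}|G|, & z=0,\\ 0, & z\neq 0.\end{cases}
\]
This identity was already used in the proof of Lemma~\ref{lemma:sum_s_chi}, and we would cite it from there. For completeness, it can also be checked directly through the identification $G^*\cong G$. If $z=0$, every term equals $1$. If $z\neq 0$, some coordinate $z_i$ equals $1$. Then $\chi\mapsto\chi+e_i$ is a bijection of $G^*$ that flips the sign of $(-1)^{\chi\cdot z}$, so the sum equals its own negative and hence vanishes. Equivalently, as in Lemma~\ref{lemma: Fourier}, the functional $\chi\mapsto\chi\cdot z$ is nonzero, so it takes each of the values $0$ and $1$ on exactly $2^{s-1}$ characters.

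Applying the identity with $z=x+y$, only the term $y=x$ survives, because $x+y=0$ if and only if $y=x$ in characteristic $2$. The right-hand side therefore equals $\frac{1}{|G|}\,d(x)\,|G|=d(x)$, which is the claimed inversion formula. There is no genuine obstacle in this argument. The only point requiring care is the orthogonality step, and the sign-flipping bijection handles it uniformly for all $s$. Nothing about nonnegativity or integrality of $d$ is used, so the formula holds for arbitrary $d\colon G\to\mathbb{C}$, as stated.
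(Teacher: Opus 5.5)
Your proposal is correct and follows essentially the same argument as the paper: substitute the definition of $\widehat{d}$, interchange the sums, and apply the orthogonality relation with $z=x+y$, so that only $y=x$ survives. Your sign-flipping bijection $\chi\mapsto\chi+e_i$ is a concrete proof of the orthogonality step, which the paper simply states as a standard fact about nontrivial characters.
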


\begin{proof}

We substitute the definition of \( \widehat{d}(\chi) \) into the right-hand side:
\[
\frac{1}{|G|} \sum_{\chi \in G^*} \widehat{d}(\chi) \, (-1)^{\chi \cdot x}
= \frac{1}{|G|} \sum_{\chi \in G^*} \left[ \sum_{y \in G} d(y) \, (-1)^{\chi \cdot y} \right] (-1)^{\chi \cdot x}.
\]

Interchanging the order of summation:
\[
= \frac{1}{|G|} \sum_{y \in G} d(y) \sum_{\chi \in G^*} (-1)^{\chi \cdot (y + x)}.
\]

Now, the inner sum \( \sum_{\chi \in G^*} (-1)^{\chi \cdot z} \) (where \( z = y + x \)) is a standard character sum:

\begin{enumerate}
    \item If \( z = 0 \), then \( (-1)^{\chi \cdot 0} = 1 \) for all \( \chi \), so the sum equals \( |G^*| = |G| \).
    \item If \( z \neq 0 \), then \( \chi \mapsto (-1)^{\chi \cdot z} \) is a nontrivial character of \( G^* \), so the sum over \( \chi \in G^* \) is 0.
\end{enumerate}

Thus:
\[
\sum_{\chi \in G^*} (-1)^{\chi \cdot z} = 
\begin{cases}
|G| & \text{if } z = 0, \\
0 & \text{if } z \neq 0.
\end{cases}
\]

Therefore, the only \( y \) contributing to the sum is \( y = x \), giving:
\[
\frac{1}{|G|} \cdot d(x) \cdot |G| = d(x).
\]

This completes the proof.

\end{proof}

\begin{lemma}[First moment constraint for the unnormalized Fourier transform]\label{lem:first-moment}
Let $G=(\mathbb Z_2)^s$ written additively, and identify the dual group $G^*$ with $G$
via $\chi_g(x)=(-1)^{g\cdot x}$, where $g\cdot x\in\mathbb Z_2$ is the standard dot product.
For a function $f:G\to\mathbb C$, define its (unnormalized) Fourier transform by
\[
\widehat f(\chi)\;=\;\sum_{x\in G} f(x)\,(-1)^{\chi\cdot x},\qquad \chi\in G^*.
\]
Then
\[
\sum_{\chi\in G^*}\widehat f(\chi)\;=\;|G|\cdot f(0).
\]
In particular, if $f(0)=0$ then $\sum_{\chi\in G^*}\widehat f(\chi)=0$.
\end{lemma}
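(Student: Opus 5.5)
The plan is to expand the definition of $\widehat f$ inside the sum over characters, swap the two finite sums, and evaluate the resulting inner character sum with the orthogonality relation already used in the proof of \Cref{lemma: Inverse Fourier Transform}. Concretely, I would write
\[
\sum_{\chi\in G^*}\widehat f(\chi)=\sum_{\chi\in G^*}\sum_{x\in G} f(x)\,(-1)^{\chi\cdot x}=\sum_{x\in G} f(x)\sum_{\chi\in G^*}(-1)^{\chi\cdot x}.
\]
The interchange is harmless because both sums are finite.

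Next I would invoke the character-sum identity $\sum_{\chi\in G^*}(-1)^{\chi\cdot z}=|G|$ if $z=0$ and $0$ otherwise. This is established verbatim in the proof of \Cref{lemma: Inverse Fourier Transform}. It can also be seen directly: for $z\neq 0$, the map $\chi\mapsto \chi\cdot z$ is a nonzero linear functional on $G^*\cong\mathbb{F}_2^s$. It therefore takes the values $0$ and $1$ equally often, and the signs cancel.

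Substituting this identity, only the term $x=0$ survives, and the sum equals $|G|\,f(0)$. The special case $f(0)=0$ follows immediately. Equivalently, one can apply \Cref{lemma: Inverse Fourier Transform} at $x=0$, which gives $f(0)=\frac{1}{|G|}\sum_\chi \widehat f(\chi)$; this yields the claim in one line, and I would likely present it that way.

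There is no real obstacle here. The only point requiring care is consistency with the identification $G^*\cong G$ via the dot product, so that the orthogonality relation is applied to sums over $\chi$ rather than over $x$. With the identification fixed at the start of this subsection, this is a symmetric statement.
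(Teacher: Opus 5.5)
Your proposal is correct and is essentially the paper's proof. Like the paper, you expand $\widehat f$, interchange the two finite sums, and apply the orthogonality relation $\sum_{\chi\in G^*}(-1)^{\chi\cdot x}=|G|$ for $x=0$ and $0$ otherwise, so only the $x=0$ term survives. Your alternative of evaluating the inverse Fourier formula at $x=0$ is equally valid, since that lemma rests on the same orthogonality relation.
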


\begin{proof}
We expand and interchange the order of summation:
\[
\sum_{\chi\in G^*}\widehat f(\chi)
=\sum_{\chi\in G^*}\sum_{x\in G} f(x)\,(-1)^{\chi\cdot x}
=\sum_{x\in G} f(x)\left(\sum_{\chi\in G^*}(-1)^{\chi\cdot x}\right).
\]
We use the standard orthogonality of characters:
\[
\sum_{\chi\in G^*}(-1)^{\chi\cdot x}
=
\begin{cases}
|G|,& x=0,\\
0,& x\neq 0,
\end{cases}
\]
since for $x\neq 0$ the map $\chi\mapsto (-1)^{\chi\cdot x}$ is a nontrivial character of
$G^*$ and its sum over the whole group is $0$. Hence only the term $x=0$ contributes, and we
obtain
\[
\sum_{\chi\in G^*}\widehat f(\chi)=f(0)\cdot |G|.
\]
This proves the claim.
\end{proof}

\begin{lemma}[Parseval / inner product identity for the unnormalized Fourier transform]\label{lemma: Parseval}
Let $G=(\mathbb Z_2)^s$ and identify $G^*\cong G$ via
\[
\chi_g(x)=(-1)^{g\cdot x},\qquad g\cdot x\in \mathbb Z_2.
\]
For $f,g:G\to \mathbb C$, define the \emph{unnormalized} Fourier transform
\[
\widehat f(\chi)=\sum_{x\in G} f(x)\,(-1)^{\chi\cdot x},\qquad \chi\in G^*.
\]
Then one has the Parseval identity
\[
\sum_{\chi\in G^*}\widehat f(\chi)\,\overline{\widehat g(\chi)}
\;=\;
|G|\sum_{x\in G} f(x)\,\overline{g(x)}.
\]
\end{lemma}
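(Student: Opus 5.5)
The plan is to expand both Fourier transforms, sum over characters, and let the orthogonality relation already used in the proof of \Cref{lemma: Inverse Fourier Transform} and \Cref{lem:first-moment} kill all the off-diagonal terms.

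First substitute the definitions into the left-hand side. Since each value $(-1)^{\chi\cdot y}$ is real, complex conjugation only affects $g$. This gives
\[
\sum_{\chi\in G^*}\widehat f(\chi)\,\overline{\widehat g(\chi)}
=\sum_{\chi\in G^*}\sum_{x\in G}\sum_{y\in G} f(x)\,\overline{g(y)}\,(-1)^{\chi\cdot x}(-1)^{\chi\cdot y}.
\]

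Next, because all sums are finite, interchange the order of summation so that the sum over $\chi$ is innermost. Use bilinearity of the dot product over $\mathbb F_2$ to write $(-1)^{\chi\cdot x}(-1)^{\chi\cdot y}=(-1)^{\chi\cdot(x+y)}$. Then apply the character sum
\[
\sum_{\chi\in G^*}(-1)^{\chi\cdot z}=
\begin{cases}
|G|, & z=0,\\
0, & z\neq 0,
\end{cases}
\]
which was established in the proof of \Cref{lemma: Inverse Fourier Transform}. In $G=(\mathbb Z_2)^s$ we have $x+y=0$ if and only if $y=x$, so only the diagonal terms survive. The triple sum therefore collapses to $|G|\sum_{x\in G} f(x)\overline{g(x)}$, which is the claim.

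No real obstacle is expected. The only points needing care are the reality of the characters, which lets the conjugate act on $g$ alone, and the fact that every element of $G$ is its own inverse, which turns the condition $x+y=0$ into $x=y$.
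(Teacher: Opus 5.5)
Your proposal is correct and follows the paper's proof essentially step for step. You expand both transforms, use the reality of $(-1)^{\chi\cdot y}$ so the conjugate falls on $g$ alone, and interchange the finite sums to combine the characters into $(-1)^{\chi\cdot(x+y)}$. Character orthogonality, together with $x+y=0 \iff x=y$ in $(\mathbb Z_2)^s$, then leaves only the diagonal terms.
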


\begin{proof}
We expand and rearrange:
\begin{align*}
\sum_{\chi\in G^*}\widehat f(\chi)\,\overline{\widehat g(\chi)}
&=
\sum_{\chi\in G^*}\left(\sum_{x\in G} f(x)(-1)^{\chi\cdot x}\right)
\overline{\left(\sum_{y\in G} g(y)(-1)^{\chi\cdot y}\right)}\\
&=
\sum_{\chi\in G^*}\left(\sum_{x\in G} f(x)(-1)^{\chi\cdot x}\right)
\left(\sum_{y\in G} \overline{g(y)}\,\overline{(-1)^{\chi\cdot y}}\right).
\end{align*}
Since $(-1)^{\chi\cdot y}\in\{\pm 1\}\subset \mathbb R$, we have
$\overline{(-1)^{\chi\cdot y}}=(-1)^{\chi\cdot y}$, hence
\begin{align*}
\sum_{\chi\in G^*}\widehat f(\chi)\,\overline{\widehat g(\chi)}
&=
\sum_{\chi\in G^*}\sum_{x\in G}\sum_{y\in G}
f(x)\,\overline{g(y)}\,(-1)^{\chi\cdot x}\,(-1)^{\chi\cdot y}\\
&=
\sum_{x\in G}\sum_{y\in G}
f(x)\,\overline{g(y)}
\sum_{\chi\in G^*}(-1)^{\chi\cdot(x+y)}.
\end{align*}
We now use the standard character-sum orthogonality:
\[
\sum_{\chi\in G^*}(-1)^{\chi\cdot z}
=
\begin{cases}
|G| & \text{if } z=0,\\
0   & \text{if } z\neq 0.
\end{cases}
\]
Applying this with $z=x+y$ (note $x+y=0$ iff $x=y$), the inner sum equals $|G|$
when $x=y$ and vanishes otherwise. Therefore the double sum collapses to
\[
\sum_{\chi\in G^*}\widehat f(\chi)\,\overline{\widehat g(\chi)}
=
|G|\sum_{x\in G} f(x)\,\overline{g(x)},
\]
as claimed.
\end{proof}

\begin{lemma}\label{Plancherel for the unnormalized Fourier transform}
With notation as above, for any $f:G\to\mathbb C$ one has
\[
\sum_{\chi\in G^*}\bigl|\widehat f(\chi)\bigr|^2
\;=\;
|G|\sum_{x\in G}|f(x)|^2.
\]
\end{lemma}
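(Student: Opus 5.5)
This is the special case $g=f$ of the Parseval identity, Lemma~\ref{lemma: Parseval}. Setting $g=f$ there, the left-hand side becomes $\sum_{\chi\in G^*}\widehat f(\chi)\,\overline{\widehat f(\chi)}$. Since $z\overline{z}=|z|^2$ for any complex number, each summand equals $|\widehat f(\chi)|^2$. On the right-hand side, $f(x)\overline{f(x)}=|f(x)|^2$, so the sum becomes $|G|\sum_{x\in G}|f(x)|^2$. This is exactly the asserted identity.

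If one prefers not to route through Lemma~\ref{lemma: Parseval}, the argument can be done directly. First write $|\widehat f(\chi)|^2=\widehat f(\chi)\overline{\widehat f(\chi)}$ and expand both factors as sums over $x,y\in G$. The characters take values $\pm1$, so they are real and the conjugation does not affect them. Next interchange the order of summation so that the sum over $\chi$ is innermost. This inner sum is $\sum_{\chi}(-1)^{\chi\cdot(x+y)}$. By character orthogonality it equals $|G|$ when $x=y$ and $0$ otherwise, exactly as used in the proofs of Lemma~\ref{lem:first-moment} and Lemma~\ref{lemma: Inverse Fourier Transform}. Only the diagonal terms $x=y$ survive, which gives $|G|\sum_x|f(x)|^2$.

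There is no real obstacle here; the only point to note is that the Fourier transform is unnormalized, so the factor $|G|$ must appear on the right-hand side. For the real-valued functions $d$ used in the classification, the statement reads $\sum_\chi \widehat d(\chi)^2=2^s\sum_x d(x)^2$.
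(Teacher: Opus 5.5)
Your proof is correct and takes the same approach as the paper, which also sets $g=f$ in the Parseval identity (Lemma~\ref{lemma: Parseval}) and uses $z\overline{z}=|z|^2$ on both sides. Your alternative direct expansion is simply the Parseval proof specialized to $g=f$, so it is also valid.
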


\begin{proof}
Apply Parseval with $g=f$. Then $\overline{\widehat f(\chi)}$ is the complex
conjugate of $\widehat f(\chi)$, so the left-hand side becomes
$\sum_{\chi} \widehat f(\chi)\overline{\widehat f(\chi)}
=\sum_{\chi}|\widehat f(\chi)|^2$.
Similarly, the right-hand side becomes $|G|\sum_x f(x)\overline{f(x)}
=|G|\sum_x |f(x)|^2$. This is exactly the stated identity.
\end{proof}

\begin{lemma}[Fourier transform of convolution and the cubic moment identity]\label{lem:fourier-convolution-cubic}
Let $G=(\mathbb Z_2)^s$ written additively, and identify $G^*$ with $G$ via
$\chi(x)=(-1)^{\chi\cdot x}$, where $\chi\cdot x\in\mathbb Z_2$.
For a function $f:G\to\mathbb C$, define the (unnormalized) Fourier transform
\[
\widehat f(\chi)=\sum_{x\in G} f(x)\,(-1)^{\chi\cdot x},\qquad \chi\in G^*,
\]
and define convolution by
\[
(f*g)(z)=\sum_{\substack{x,y\in G\\ x+y=z}} f(x)g(y),\qquad z\in G.
\]
Then:
\begin{enumerate}
\item \textbf{(Convolution becomes pointwise product.)} For all $\chi\in G^*$,
\[
\widehat{f*g}(\chi)=\widehat f(\chi)\,\widehat g(\chi).
\]
\item \textbf{(Cubic moment at the origin.)} For any $d:G\to\mathbb C$,
\[
(d*d*d)(0)=\frac{1}{|G|}\sum_{\chi\in G^*}\widehat d(\chi)^3.
\]
In particular, if $d=\mathbf{1}_S$ is the indicator function of a subset $S\subseteq G$,
then
\[
(d*d*d)(0)=\#\{(x,y,z)\in S^3:\ x+y+z=0\}\ \in\ \mathbb Z_{\ge 0}.
\]
\end{enumerate}
\end{lemma}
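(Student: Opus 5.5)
The plan is a direct expansion with interchange of summation, using the character orthogonality relation already used in \Cref{lemma: Inverse Fourier Transform} and \Cref{lem:first-moment}:
\[
\sum_{\chi\in G^*}(-1)^{\chi\cdot w}=|G|\,\delta_{w,0}.
\]

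For part (1), I would write
\[
\widehat{f*g}(\chi)=\sum_{z}\sum_{x+y=z}f(x)g(y)(-1)^{\chi\cdot z}.
\]
Reindexing by the pair $(x,y)\in G^2$, with $z=x+y$, gives $(-1)^{\chi\cdot(x+y)}=(-1)^{\chi\cdot x}(-1)^{\chi\cdot y}$. This holds because $\chi\cdot(-)$ is additive modulo $2$. The double sum then factors as $\widehat f(\chi)\widehat g(\chi)$. Since everything is a finite sum, no convergence issues arise.

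For part (2), I would apply (1) twice to get $\widehat{d*d*d}=\widehat d^{\,3}$. Then \Cref{lemma: Inverse Fourier Transform}, evaluated at $x=0$ where all signs are $+1$, gives
\[
(d*d*d)(0)=\frac{1}{|G|}\sum_{\chi}\widehat d(\chi)^3.
\]
One should check that convolution is associative, so that $d*d*d$ is unambiguous. It is: directly,
\[
(d*d*d)(0)=\sum_{x+y+z=0}d(x)d(y)d(z),
\]
the sum over all ordered triples. Alternatively, I could avoid convolution entirely by expanding $\sum_\chi\widehat d(\chi)^3$ as a triple sum and collapsing it with orthogonality at $w=x+y+z$.

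For the indicator case $d=\mathbf 1_S$, the product $d(x)d(y)d(z)$ equals $1$ exactly when $x,y,z\in S$ and $0$ otherwise. The triple-sum formula therefore counts ordered triples in $S^3$ with $x+y+z=0$, which is a non-negative integer.

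There is no real obstacle here. The only point needing care is bookkeeping: the identification $G^*\cong G$ and the additivity $\chi\cdot(x+y)=\chi\cdot x+\chi\cdot y$ modulo $2$, which justifies multiplying the signs.
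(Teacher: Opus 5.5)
Your proposal is correct and matches the paper's proof essentially step for step. For (1) you expand the convolution and factor the signs using additivity of $\chi\cdot(-)$; for (2) you apply (1) twice, evaluate the inverse transform at $0$, and read off the indicator case from the triple-sum form of $(d*d*d)(0)$. Your remark on associativity, or alternatively bypassing convolution by collapsing the triple sum with orthogonality at $x+y+z$, is a small extra check the paper leaves implicit.
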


\begin{proof}
\begin{enumerate}
\item Fix $\chi\in G^*$. By definition and expanding the convolution,
\[
\widehat{f*g}(\chi)
=\sum_{z\in G} (f*g)(z)\,(-1)^{\chi\cdot z}
=\sum_{z\in G}\left(\sum_{x+y=z} f(x)g(y)\right)(-1)^{\chi\cdot z}.
\]
Reindexing by $z=x+y$ gives
\[
\widehat{f*g}(\chi)=\sum_{x,y\in G} f(x)g(y)\,(-1)^{\chi\cdot(x+y)}.
\]
Since $(-1)^{\chi\cdot(x+y)}=(-1)^{\chi\cdot x}(-1)^{\chi\cdot y}$, the sum factors:
\[
\widehat{f*g}(\chi)
=\left(\sum_{x\in G} f(x)(-1)^{\chi\cdot x}\right)
 \left(\sum_{y\in G} g(y)(-1)^{\chi\cdot y}\right)
=\widehat f(\chi)\,\widehat g(\chi),
\]
as required.

\item Apply (1) twice to obtain $\widehat{d*d*d}(\chi)=\widehat d(\chi)^3$ for all $\chi$.
Now use the inverse Fourier transform (with this unnormalized convention):
\[
h(0)=\frac{1}{|G|}\sum_{\chi\in G^*}\widehat h(\chi)\,(-1)^{\chi\cdot 0}
=\frac{1}{|G|}\sum_{\chi\in G^*}\widehat h(\chi),
\]
and take $h=d*d*d$. This yields
\[
(d*d*d)(0)=\frac{1}{|G|}\sum_{\chi\in G^*}\widehat{d*d*d}(\chi)
=\frac{1}{|G|}\sum_{\chi\in G^*}\widehat d(\chi)^3.
\]
Finally, if $d=\mathbf{1}_S$, then by the definition of convolution,
\[
(d*d*d)(0)=\sum_{x+y+z=0} d(x)d(y)d(z)
=\#\{(x,y,z)\in S^3:\ x+y+z=0\},
\]
which is a nonnegative integer.
\end{enumerate}
\end{proof}

\subsection{Lemmas related to the pluricanonical map}
Here, we prove some combinatorial lemmas required for the classification of pluricanonical covers of weighted projective threefolds.  Fix four positive integers $a_0,a_1,a_2,a_3$ satisfying
$\gcd(a_i, a_j, a_k) = 1$ for every triple of distinct indices $i,j,k$. Define:
\[
W := a_0 + a_1 + a_2 + a_3,
\quad
L := \operatorname{lcm}(a_0,a_1,a_2,a_3).
\]
For $g$ and $\chi$ in $(\mathbb{Z}_2)^s \setminus 0$ define the dot product $\chi \cdot g$ as the usual dot product modulo $2$. For each $g \in 
(\mathbb{Z}_2)^s \setminus 0$, we associate an integer $d_g \ge 0$, and we have
\begin{align*}
D := \sum_{g \in (\mathbb{Z}_2)^s \setminus 0} d_g
&&
l_\chi \;=\; 
\frac{1}{2}\sum_{\substack{g \in (\mathbb{Z}_2)^s \setminus 0 \\ \chi \cdot g = 1}} d_{g}
&&
M:= \frac{m}{2}D- mW
\end{align*}
Next, we encode in a definition the conditions given by \Cref{prop:pluricanonical_maps_of_threefolds}.

\begin{definition}
The list of numbers
$(d_g,\, m,\, s) \in \mathbb{Z}^{|2^s-1|}_{\geq 0}\times \mathbb{Z}_{+}^2$ is called a \textbf{admissible pluricanonical solution} for the weights $(a_0,a_1,a_2,a_3)$ if it satisfies:
\begin{enumerate}
\item For all $\chi \in (\mathbb{Z}_2)^s \setminus 0$, $l_{\chi}$ is an integer.
\item For all $\chi \in  (\mathbb{Z}_2)^s \setminus 0$, it holds that
$M$ is a positive integer, divisible by $L$, and strictly less than $l_{\chi}$.
\end{enumerate}
The solution is called \textbf{flat} if $l_\chi$ is divisible by $L$ for 
 all $\chi$. 
\end{definition}

\begin{lemma}
\label{lemma:ineq_comb}
Let the notations be as above in this subsection, we suppose that 
there is an admissible pluricanonical solution with respect to some weights.
Then, the following holds:
\begin{itemize}
    \item[1.] If $L \geq 2$, then $3L \geq  2L + 2 \geq  W.$
    \item[2.] Let's denote $M=kL$, if there exists an admissible flat solution for given weights, then the following inequality holds 
\begin{align*}
W
\geq 
\left( (k+1)(2 -2^{1-s})- \frac{k}{m}\right)L 
&&
l_\chi \geq (k+1)L, \; \; \forall \chi
\end{align*}
\item[3.] Assume $L \geq 2$. Then any admissible flat solution with $s \geq 2$ and $m \geq 3$ satisfies $3 \geq L$ and $M=L$.
\end{itemize}
\end{lemma}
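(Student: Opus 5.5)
The lemma has three parts, and I would prove them in order. The first is a purely arithmetic statement about the weights. The second is an averaging argument over characters. The third combines the first two.

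\emph{Part 1.} Each $a_i$ divides $L$, so $a_i\le L$, and any $a_i\neq L$ satisfies $a_i\le L/2$. Since $3L\ge 2L+2$ is equivalent to $L\ge 2$, only $W\le 2L+2$ needs proof. I would split into cases by the number of weights equal to $L$.

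\begin{itemize}
\item Three weights cannot equal $L$: their gcd would be $L\ge 2$, contradicting $\gcd(a_i,a_j,a_k)=1$.
\item If exactly two weights equal $L$, then for each remaining weight $a$ we have $\gcd(L,L,a)=a$, so $a=1$, and $W\le 2L+2$.
\item If exactly one weight equals $L$, call the other three $b,c,e$. Then $\gcd(L,b,c)=\gcd(b,c)=1$, and similarly for the other pairs, so $b,c,e$ are pairwise coprime divisors of $L$. Hence $bce\mid L$. Each of $b,c,e$ is at most $L/2$, and if the largest of them, $b$ say, is at least $3$, then $c+e\le ce+1\le L/b+1$, so $b+c+e\le L/2+3$. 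This gives $W\le 3L/2+3\le 2L+2$ once $L\ge 2$; the tiny residual cases can be checked by hand.
\item If no weight equals $L$, then $W\le 4\cdot L/2=2L$.
\end{itemize}

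The third bullet is where the care goes: one must extract from the gcd hypothesis that the three proper divisors are pairwise coprime with product dividing $L$.

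\emph{Part 2.} Flatness gives $L\mid l_\chi$, and admissibility gives $l_\chi>M=kL$. Together these force $l_\chi\ge (k+1)L$ for every $\chi\neq 0$. Next I sum over the $2^s-1$ nontrivial characters. Each $g\neq 0$ satisfies $\chi\cdot g=1$ for exactly $2^{s-1}$ characters (as in Lemma \ref{lemma:A_chi}), so $\sum_{\chi\neq0} l_\chi=2^{s-2}D$. From $M=\tfrac m2 D-mW=kL$ we get $D=2W+2kL/m$. Therefore
\[
2^{s-1}\Bigl(W+\frac{kL}{m}\Bigr)\;\ge\;(2^s-1)(k+1)L ,
\]
and dividing by $2^{s-1}$ gives exactly the stated bound $W\ge\bigl((k+1)(2-2^{1-s})-\tfrac km\bigr)L$.

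\emph{Part 3.} For $s\ge2$ we have $2-2^{1-s}\ge 3/2$, and for $m\ge3$ we have $k/m\le k/3$. So Part 2 yields $W\ge\bigl(\tfrac{7k}{6}+\tfrac32\bigr)L$. Combining this with Part 1, $W\le 2L+2$:

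\begin{itemize}
\item If $k\ge2$, we get $\tfrac{23}{6}L\le 2L+2$, i.e.\ $L\le 12/11$, contradicting $L\ge2$.
\item Hence $k=1$, that is $M=L$, and then $\tfrac83 L\le 2L+2$ gives $L\le 3$.
\end{itemize}
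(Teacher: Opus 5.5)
Your proof is correct. Parts 2 and 3 follow the paper essentially verbatim. You sum $l_\chi\ge (k+1)L$ over the $2^s-1$ nontrivial characters using $\sum_{\chi\neq 0} l_\chi=2^{s-2}D$ and substitute $D=2W+2kL/m$. You then play the resulting bound $W\ge(\tfrac{7k}{6}+\tfrac32)L$ against $W\le 2L+2$ to force $k=1$ and then $L\le 3$.

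Part 1 is where you genuinely differ, and your version is the more complete one. The paper writes $W/L=\sum_i 1/b_i$ with $b_i=L/a_i$. It handles the cases with two or three $b_i=1$ as you do, and asserts that the remaining cases follow easily. That is true when all $b_i\ge 2$. But when exactly one $b_i=1$, the crude bound only gives $W/L\le 1+\tfrac32=\tfrac52$, which exceeds $2+\tfrac2L$ once $L\ge 5$. Your observation is exactly the input needed there: $\gcd(L,b,c)=\gcd(b,c)$, so the three remaining weights are pairwise coprime divisors of $L$ and hence $bce\mid L$.

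Two small remarks on that step. First, the passage from $b+c+e\le b+L/b+1$ to $b+c+e\le L/2+3$ uses that $b\mapsto b+L/b$ is convex on $[3,L/2]$, so its maximum is attained at an endpoint. Since $b\ge 3$ forces $L\ge 6$, the endpoint $b=L/2$ is the larger one; you should state this. Second, a shorter route is the elementary inequality $b+c+e\le bce+2$ for positive integers. When the smallest is $1$ it reduces to $(c-1)(e-1)\ge 0$, and it is clear when all three are at least $2$. Combined with $bce\mid L$, this gives $b+c+e\le L+2$, hence $W\le 2L+2$, in one line, with no residual cases to check by hand.
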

\begin{proof}
We start with \textbf{statement $1$.} Let  $b_i:=L/a_i \in\mathbb{Z}_{>0}$. Then
\[
\frac{W}{L}=\sum_{i=0}^3\frac{a_i}{L}=\sum_{i=0}^3\frac{1}{b_i}.
\]
 If $b_i \geq 2$ for four or three indices then the inequality $\sum_i \frac{1}{b_i}\le 2+\frac{2}{L}$ follows easily. Suppose that there are exactly two $i$ such that $b_i =1$.  Without loss of generality $b_0=b_1=1$, i.e. $a_0=a_1=L$.
From $\gcd(a_0,a_1,a_2)=1$ we get $\gcd(L,a_2)=1$, and since $a_2\mid L$, we must have $a_2=1$. Similarly $a_3=1$. Thus $b_2=b_3=L$, and
\[
\sum_{i=0}^3 \frac{1}{b_i}=1+1+\frac{1}{L}+\frac{1}{L}=2+\frac{2}{L}\le 3
\quad\text{because }L\ge 2.
\]
If $b_0=b_1 =b_2=1$, then $a_0=a_1=a_2=L$, so $\gcd(a_0,a_1,a_2)=L>1$, contradicting the hypothesis on the weights. Then, this case cannot occur
when $L>1$. The same argument applies when  $b_i=1$ for all $i$.

We continue with \textbf{statement 2.}
By the definition of $l_\chi$ and the condition that $l_\chi > M$ with $M=kL$ for some positive integer $k$, we obtain that 
\begin{align*}
    2^{\,s-2}\,D \;=\; \sum_{\chi}l_\chi,
    &&
    l_\chi > kL
\end{align*}
Moreover, if the solution is flat, we have 
\begin{align}
    2^{\,s-2}\,D \;=\; \sum_{\chi}l_\chi,
    &&
    l_\chi \geq (k+1)L
\label{eq: divisor sum and line bundle sum}
\end{align}
From these two last expressions, we obtain

\begin{align}
  2^{s-2}D \geq (2^s-1)(k+1)L 
   \Longrightarrow\;\;
   D \geq (4-2^{2-s})(k+1)L
\label{eq:lower_bound_D flat}    
\end{align}
Since, we have
\begin{align}
 \frac{m}{2}D-mW  = kL     \;\;\Longrightarrow\;\;
 W =  \frac{1}{2}D - \frac{k}{m}L
\label{eq:W_D_M}
\end{align}
, we obtain the lower bound

\begin{align}
 W &=  \frac{1}{2}D - \frac{k}{m}L   
 \notag
 \\ 
 & \geq
\frac{(k+1)}{2}\left( 4 -2^{2-s}\right)L - \frac{k}{m}L
\label{eq:lower_bound_W flat}
\\ & 
\geq 
\left( (k+1)(2 -2^{1-s})- \frac{k}{m}\right)L  
\notag
\end{align}
if the solution is flat.
We finish with \textbf{statement 3:} 
By statements 1 and 2, it holds that
\begin{align*}
2 + \frac{2}{L} \geq \frac{W}{L}  \geq 
 (k+1)(2 -2^{1-s})- \frac{k}{m}
\end{align*}
For $s \geq 2$, we have that $(2 -2^{1-s}) \geq \frac{3}{2}$, therefore
\begin{align*}
(k+1)(2 -2^{1-s}) \geq \frac{3}{2}(k+1)   
   \Longrightarrow\;\;
(k+1)(2 -2^{1-s}) 
- \frac{k}{m}
\geq \frac{3}{2} (k+1)- \frac{k}{m}     
\end{align*}
the condition $m \geq 3$ implies
\begin{align*}
    \frac{1}{m} \leq \frac{1}{3}
   \Longrightarrow\;\;
    - \frac{k}{m} \geq 
    - \frac{k}{3}
       \Longrightarrow\;\;
       \frac{3}{2}(k+1) - \frac{k}{m} 
       \geq 
       \frac{3}{2}(k+1) - \frac{k}{3}
\end{align*}
Then, we have that 
\begin{align*}
\frac{W}{L} \geq
 (k+1)(2 -2^{1-s}) 
- \frac{k}{m}
\geq 
\frac{7k}{6} + \frac{3}{2}
\end{align*}
\textcolor{black}{Since $\frac{W}{L} \leq 3$, we have that $k = 1$. We further have that}

\begin{align*}
2 + \frac{2}{L} \geq  \frac{W}{L}  \geq \frac{7k}{6}+ \frac{3}{2} \geq \frac{8}{3}
\end{align*}
\textcolor{black}{which implies $L \leq 3$.} \\

\end{proof}

\begin{lemma}
\label{lemma:forbidden_flat_admissible}
Let the notations be as above in this subsection, we suppose $L \geq 2$. 
Let $s \geq 2$, and $m \geq 1$, if $s$ and $m$ satisfy one of the following inequalities. 
\begin{align*}
s \geq 2, \; m\geq 4
&& 
s \geq 3, \; m \geq 3
&&
s\geq 4, \;  m\geq 2
&& 
s \geq 6, m \geq 1
\end{align*}
Then, there are no flat admissible solutions. \\
\end{lemma}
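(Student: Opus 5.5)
The plan is to combine the two inequalities of Lemma \ref{lemma:ineq_comb} with integrality and divisibility constraints coming from flatness, and then to eliminate the few borderline configurations by hand. Throughout, write $M=kL$ with $k\ge 1$ and $c_s:=2-2^{1-s}$.

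\textbf{Step 1: the generic estimate.} Statement 1 of Lemma \ref{lemma:ineq_comb} gives $W/L\le 2+2/L\le 3$. Statement 2 gives $W/L\ge (k+1)c_s-k/m$. The right-hand side is increasing in $k$ as soon as $c_s>1/m$, which always holds. So it suffices to treat $k=1$, where the bound reads
\[
2+\frac{2}{L}\ \ge\ \frac{W}{L}\ \ge\ 2c_s-\frac{1}{m},
\]
and any larger $k$ strictly strengthens it. The next step is to check, in each of the four regimes, how far this already goes:
\begin{itemize}
\item[(a)] For $s\ge 2, m\ge 4$ the lower bound is at least $3-\tfrac14$.
\item[(b)] For $s\ge 3, m\ge 3$ it is at least $\tfrac72-\tfrac13$.
\item[(c)] For $s\ge 4, m\ge 2$ it is at least $\tfrac{15}{4}-\tfrac12$.
\item[(d)] For $s\ge 6, m\ge 1$ it is at least $\tfrac{63}{16}-1$.
\end{itemize}
Comparing with $2+2/L$, cases (b) and (c) give an immediate contradiction for every $L\ge 2$, since $2+2/L\le 3$. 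Cases (a) and (d) force $L=2$, because for $L\ge 3$ we have $2+2/L\le \tfrac83<2.75$. In case (d), and in case (a) with $m\ge 5$, the bound also forces $k=1$, since $k=2$ would already exceed $3$.

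\textbf{Step 2: the borderline case $L=2$.} Here the hypothesis $\gcd(a_i,a_j,a_k)=1$ restricts the weights to $(1,1,1,2)$ and $(1,1,2,2)$, so $W\in\{5,6\}$. The flatness relations give two constraints:
\begin{itemize}
\item[(i)] From $\tfrac m2 D-mW=kL$ we get $D=2W+2kL/m$, which must be an integer.
\item[(ii)] Summing $l_\chi$ over the nontrivial characters gives $\sum_\chi l_\chi=2^{s-2}D$. Each $l_\chi$ is a multiple of $L$ with $l_\chi\ge (k+1)L$, so $2^{s-2}D\equiv 0 \pmod L$ and $2^{s-2}D\ge (2^s-1)(k+1)L$.
\end{itemize}
In case (a), with $k=1$ and $L=2$, condition (i) requires $m\mid 4$, so $m=4$ and $D=2W+1$ is odd. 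For $s=2$ this contradicts $L\mid D$. For $s\ge 3$, the inequality $2^{s-2}(2W+1)\ge 4(2^s-1)$ with $W\le 6$ forces $2^{s-2}\cdot 13\ge 2^{s+2}-4$, which fails for $s\ge 3$. If $k\ge 2$ is still allowed (only when $m=4$), the same computation with $D=2W+k$ and $l_\chi\ge 2(k+1)$ is even worse.

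In case (d), with $m=1$, $k=1$ and $L=2$, condition (i) gives $D=2W+4\le 16$. Then (ii) requires $2^{s-2}\cdot 16\ge (2^s-1)\cdot 4$, which is true, so the inequality alone is not enough. For these two weight vectors I would instead use the sharper structure. Every $l_\chi\ge 4$, and $l_\chi$ is half the $d$-weight of an affine hyperplane. With $D\le 16$ and $2^s-1\ge 63$ characters, the average of $l_\chi$ is $2^{s-2}D/(2^s-1)<4.07$, so almost all $l_\chi$ equal exactly $4$.

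At that point I would pass to Fourier language. By Lemma \ref{lemma: Fourier} we have $\widehat d(\chi)=D-4l_\chi$, which is then essentially constant and equal to $D-16\le 0$. The first-moment identity of Lemma \ref{lem:first-moment} says $\sum_\chi\widehat d(\chi)=0$, hence $D-16\cdot(\text{most})\approx -\text{(small)}$. Combining this with Plancherel (Lemma \ref{Plancherel for the unnormalized Fourier transform}) should produce a contradiction: Plancherel gives $\sum_\chi\widehat d(\chi)^2=2^s\sum_g d_g^2\ge 2^s D$, whereas nearly constant small $\widehat d$ makes the left side too small.

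\textbf{Main obstacle.} I expect the hardest part to be this last sub-case: $s\ge 6$, $m=1$, $L=2$. The crude inequalities of Lemma \ref{lemma:ineq_comb} are exactly tight there, so an extra moment or Parseval argument is needed. I would first try the Plancherel estimate just sketched. If it leaves residual cases, I would finish them with the inverse transform (Lemma \ref{lemma: Inverse Fourier Transform}), showing that the forced $d$ is not a nonnegative integer-valued function.
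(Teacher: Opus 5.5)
Your proof is correct. For $s\ge 3,m\ge 3$ and $s\ge 4,m\ge 2$ it is the paper's argument: the flat lower bound for $W/L$ set against $W/L\le 2+2/L\le 3$.

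For $s=2$, $m\ge 4$ your bookkeeping differs slightly. The paper first forces $D=6L$, $d_g=2L$ and $W=(3-\tfrac1m)L$. It then excludes $L=3$ by the inequality and $L=2$ by integrality of $W$. You exclude $L\ge 3$ directly ($\tfrac{11}{4}>\tfrac83$). For $L=2$ you use integrality of $D=2W+4/m$ to get $m=4$ and $D$ odd, which contradicts $L\mid D$. Your extra sub-case $k\ge 2$ at $m=4$ is vacuous, since $3\cdot\tfrac32-\tfrac24=4>3$.

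The genuine difference is the sub-case $s\ge 6$, $m=1$. The paper derives $W=6$ and $D=16$, then lists the two admissible $\ell$-distributions: one $\ell_\chi=8$, or two equal to $6$, with the rest $4$. It then uses the inverse transform to show $d$ would take the non-integral values $32/2^s$ or $16/2^s$.

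Your Plancherel route also works, but write it out rather than saying it ``should'' produce a contradiction:
\begin{itemize}
\item From $W\ge 2\cdot\tfrac{47}{16}$ you get $W=6$ and $D=16$.
\item Hence $\sum_{\chi\ne 0}(\ell_\chi-4)=2^{s+2}-4(2^s-1)=4$, with nonnegative summands.
\item For $\chi\ne 0$ we have $\widehat d(\chi)=-4(\ell_\chi-4)$, so
\[
\sum_{\chi\in G^*}\widehat d(\chi)^2=256+16\sum_{\chi\ne 0}(\ell_\chi-4)^2\le 256+16\cdot 16=512<1024\le 2^s\cdot 16\le 2^s\sum_{g}d_g^2 ,
\]
which contradicts Plancherel.
\end{itemize}
Your first-moment remark is unnecessary; it is equivalent to $\sum_\chi\ell_\chi=2^{s-2}D$, which you already used.

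What your version buys: it needs neither the parity of $\ell_\chi$ nor the case split on $\ell$-distributions. It is also sharp: at $s=5$ both sides equal $512$ for the genuine solution $d=\mathbf 1_{\{\chi_0\cdot g=1\}}$. What the paper's version buys: its pointwise inverse-transform computation directly produces the explicit solutions for $s=4,5$ that are reused later in the classification.
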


\begin{proof}
First, we consider the case $s \geq 3$ and $m \geq 3$. 
\textcolor{black}{and suppose the solution is flat.} By statement 3 in Lemma \ref{lemma:ineq_comb}, we have that $k=1$. 
By \eqref{eq:lower_bound_D flat}, $s\geq 3$ imply
\begin{align*}
\frac{D}{L} \geq 7
&&
3 \geq  \frac{W}{L} = \frac{1}{2}\frac{D}{L} - \frac{1}{m}  
\geq
 \frac{7}{2} - \frac{1}{m}
\end{align*}
We expand the last inequality to obtain $m \leq 2$ which is a contradiction with $m \geq 3$.

Next, we consider the case $s \geq 2$ and $m \geq 4$. By Lemma~\ref{lemma:ineq_comb}(3) we have
$k=1 $  and $L\le 3.$ From \eqref{eq:lower_bound_D flat} with $s=2$ we obtain
\[
\frac{D}{L}\;\ge\;\bigl(4-2^{\,2-2}\bigr)(k+1)\;=\;3\cdot 2\;=\;6.
\]
Using $M=\frac{m}{2}D-mW=kL$ with $k=1$ we have
\begin{align}
\frac{W}{L}\;=\;\frac12\,\frac{D}{L}-\frac{1}{m}.
\label{eq:wls2m4}
\end{align}
Lemma \ref{lemma:ineq_comb}.(1) and the fact that $L\ge 2$ and $m\ge 4$ imply
\[
\frac{W}{L}\le 2+\frac{2}{L}\le 3, 
\quad
\quad 
\frac12\,\frac{D}{L}-\frac{1}{m}\;\le\;2+\frac{2}{L}
\quad\Longrightarrow\quad
\frac{D}{L}\;\le\;4+\frac{4}{L}+\frac{2}{m}\;\le\;\frac{13}{2}.
\]
For $s=2$ we have $2^{\,s-2}D=\sum_{\chi}l_\chi=D$, and by flatness $L\mid l_\chi$ for all $\chi$, so $L\mid D$; therefore $\frac{D}{L}\in\mathbb Z$. 
Then, we obtain that
\[
6 \leq 
\frac{D}{L}  \le \frac{13}{2}.
\quad\Longrightarrow\quad
\frac{D}{L}=6,\qquad D=6L.
\]
We use  $l_\chi\ge 2L$  and
\( 
\sum_{\chi}l_\chi=D=6L,
\)
it follows that
\[
l_{10}=l_{01}=l_{11}=2L.
\]
Using the fundamental relations for $G=(\mathbb Z_2)^2$,
\[
2l_{10}=d_{10}+d_{11},\qquad
2l_{01}=d_{01}+d_{11},\qquad
2l_{11}=d_{10}+d_{01},
\]
we solve and get \( d_{10}=d_{01}=d_{11}=2L.\) Finally, from 
Equation \eqref{eq:wls2m4} and $D=6L$ we obtain
\[
W=\Bigl(3-\frac{1}{m}\Bigr)L.
\]
If $L=3$, then $\frac{W}{L}=3-\frac{1}{m}\ge \frac{11}{4}>2+\frac{2}{3}$ which is a contradiction. If $L=2$, then
\[
W=6-\frac{2}{m}\notin\mathbb Z\quad\text{for every }m\ge 4,
\]
which is impossible since $W=\sum_i a_i$ is an integer. Therefore, no flat admissible solution exists when $s=2$ and $m\ge 4$.

Next, we consider the case $s\geq 4$ and $m\geq 2$. By \eqref{eq:lower_bound_W flat} we have
\[
\frac{W}{L}\;\geq\; (k+1)\!\left(2-2^{1-s}\right)-\frac{k}{m}.
\]
For $s\geq 4$ we get $2-2^{1-s}\geq 2-2^{-3}=\tfrac{15}{8}$, and for $m\geq 2$ we have $\tfrac{k}{m}\leq \tfrac{k}{2}$. Hence
\[
\frac{W}{L}\;\geq\; (k+1)\,\frac{15}{8}-\frac{k}{2}
\;=\;\frac{15}{8}+\frac{11}{8}\,k
\;\geq\;\frac{15}{8}+\frac{11}{8}
\;=\;\frac{13}{4}
\;>\;3.
\]
On the other hand, by Lemma \ref{lemma:ineq_comb}, we have $\frac{W}{L}\leq 2+\frac{2}{L}\leq 3$, a contradiction.

Finally, we work on the case $s \geq 6$ and $m=1$.  
For $s\ge 6$ we get $2-2^{1-s}\ge 2-2^{-5}=\tfrac{63}{32}$ and since $m=1$
Equation \eqref{eq:lower_bound_W flat} implies
\[
\frac{W}{L}\;\ge\;(k+1)\!\left(2-2^{1-s}\right)-k
\ge\;\frac{63}{32}(k+1)-k
=\frac{31}{32}\,k+\frac{63}{32}.
\]
This last inequality implies $k=1$. Otherwise $W/L > 3$, which is a contradiction. 
We arrive to 
\begin{align*}
 2 + \frac{2}{L} \geq \frac{W}{L} \geq   \frac{47}{16}
\end{align*}
which implies $L =2$. Using $M=\frac{m}{2}D-mW$ with $m=1$ and $M=kL=2$, we obtain
\[
\frac{1}{2}D-W=2\quad\Longrightarrow\quad W=\frac{1}{2}D-2, \quad W\le 2L+2=6
\]
Previous discussion implies
\[
\frac{47}{8} \leq W \leq 6
\quad\Longrightarrow\quad
W=6, \quad D = 16 
\]
Now note that $l_{\chi} \geq 4$ and $2 \mid l_{\chi}$ for all $\chi \neq 0$. Further $\Sigma_{\chi \in G^* -\{0\}} l_{\chi} = 2^{s-2}D = 2^{s+2}$. On the other hand there are $2^s-1$ non-zero characters. This forces the following two tuples of $\{l_{\chi}\}$
\begin{enumerate}
    \item There exists \( \chi_0 \neq 0 \) such that \( l(\chi_0) = 8 \), and \( l(\chi) = 4 \) for all \( \chi \neq \{0, \chi_0\} \) or
    \item There exist distinct \( \chi_0, \chi_1 \neq 0 \) such that \( l(\chi_0) = l(\chi_1) = 6 \), and \( l(\chi) = 4 \) for all \( \chi \neq \{0, \chi_0, \chi_1\} \).
\end{enumerate}
However, such solution does not exist by \Cref{lemma: nonexistence for s6}

\end{proof}

\begin{lemma} \label{lemma: nonexistence for s6}
Let \( G = (\mathbb{Z}_2)^s \) with \( s \ge 6 \), and let \( G^* \) be its dual group, identified with \( G \) via \( \chi_g(x) = (-1)^{g \cdot x} \).  
Let \( l: G^* \to \mathbb{Z}_{\ge 0} \) with \( l(0) = 0 \), and suppose either:
\begin{enumerate}
    \item There exists \( \chi_0 \neq 0 \) such that \( l(\chi_0) = 8 \), and \( l(\chi) = 4 \) for all \( \chi \neq \{0, \chi_0\} \) or
    \item There exist distinct \( \chi_0, \chi_1 \neq 0 \) such that \( l(\chi_0) = l(\chi_1) = 6 \), and \( l(\chi) = 4 \) for all \( \chi \neq \{0, \chi_0, \chi_1\} \).
\end{enumerate}

Then there is no function \( d: G \to \mathbb{Z}_{\ge 0} \) with \( d(0) = 0 \) satisfying
\[
\sum_{\substack{g \in G \\ \chi \cdot g = 1}} d(g) = 2 l(\chi) \quad \text{for all } \chi \neq 0.
\]

\end{lemma}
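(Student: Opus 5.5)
The plan is to use the Fourier transform machinery of Lemmas~\ref{lemma: Fourier} and~\ref{lemma: Inverse Fourier Transform}: the prescribed $l$ will determine $\widehat d$ completely, hence $d$ itself, and we will then check that the resulting $d$ cannot be integer-valued once $s\ge 6$.

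\textbf{Step 1: compute $D=\widehat d(0)$.} Suppose such a $d$ exists and set $D:=\sum_{x} d(x)=\widehat d(0)$. Summing the relations $\sum_{\chi\cdot g=1} d(g)=2l(\chi)$ over all $\chi\neq 0$, each $g\neq 0$ is counted by exactly $2^{s-1}$ characters. This gives
\[
2^{s-1}D=2\sum_{\chi\ne 0} l(\chi),
\]
which is the identity $2^{s-2}D=\sum_\chi l_\chi$ already used in \eqref{eq: divisor sum and line bundle sum}.
\begin{itemize}
\item In case (1), $\sum_\chi l(\chi)=8+4(2^s-2)=2^{s+2}$.
\item In case (2), $\sum_\chi l(\chi)=12+4(2^s-3)=2^{s+2}$.
\end{itemize}
So in both cases $D=16$.

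\textbf{Step 2: compute $\widehat d$.} By Lemma~\ref{lemma: Fourier}, $\widehat d(\chi)=16-4l(\chi)$ for every $\chi\neq 0$.
\begin{itemize}
\item In case (1): $\widehat d(0)=16$, $\widehat d(\chi_0)=-16$, and $\widehat d(\chi)=0$ for all other $\chi$.
\item In case (2): $\widehat d(0)=16$, $\widehat d(\chi_0)=\widehat d(\chi_1)=-8$, and $\widehat d(\chi)=0$ otherwise.
\end{itemize}
As a consistency check, both are compatible with Lemma~\ref{lem:first-moment}, since $d(0)=0$ forces $\sum_\chi\widehat d(\chi)=0$.

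\textbf{Step 3: invert and obtain the contradiction.} Lemma~\ref{lemma: Inverse Fourier Transform} now gives $d$ explicitly.
\begin{itemize}
\item In case (1),
\[
d(x)=2^{-s}\bigl(16-16(-1)^{\chi_0\cdot x}\bigr).
\]
This equals $32/2^s=2^{5-s}$ whenever $\chi_0\cdot x=1$. Such $x$ exist because $\chi_0\neq 0$. For $s\ge 6$ this value lies strictly between $0$ and $1$, so it is not an integer, a contradiction.
\item In case (2),
\[
d(x)=2^{-s}\bigl(16-8(-1)^{\chi_0\cdot x}-8(-1)^{\chi_1\cdot x}\bigr).
\]
Since $\chi_0,\chi_1$ are distinct and nonzero, they are linearly independent over $\mathbb F_2$. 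Hence there is $x$ with $\chi_0\cdot x=1$ and $\chi_1\cdot x=0$, and at that $x$ we get $d(x)=16/2^s=2^{4-s}$. This is not an integer for $s\ge 5$, again a contradiction.
\end{itemize}

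No step here is a genuine obstacle. The only points needing care are the double-counting in Step 1 that pins down $D=16$, and the existence, in case (2), of an $x$ separating $\chi_0$ from $\chi_1$, which is exactly their linear independence.
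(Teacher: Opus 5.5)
Your proof is correct and follows the paper's argument essentially step for step. Both arguments determine $\widehat d(0)=16$, read off $\widehat d$ from $\widehat d(\chi)=16-4l(\chi)$, and invert to exhibit the non-integral values $2^{5-s}$ and $2^{4-s}$. The only differences are cosmetic: you obtain $D=16$ by double counting (the identity $2^{s-2}D=\sum_\chi l_\chi$) where the paper uses the first-moment relation $\sum_\chi \widehat d(\chi)=|G|\,d(0)=0$, and you explicitly justify, via linear independence of $\chi_0,\chi_1$, the existence of an $x$ realizing a non-integral value in case (2), which the paper leaves implicit.
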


\begin{proof}

Assume such a \( d \) exists. By \Cref{lemma: Fourier},
\[
\widehat{d}(0) - \widehat{d}(\chi) = 4 l(\chi) \quad \forall \chi \neq 0,
\]
where \( \widehat{d}(\chi) = \sum_{x \in G} d(x) (-1)^{\chi \cdot x} \), and \( \widehat{d}(0) = \sum_{x} d(x) \).

\medskip

\noindent \textbf{Step 1: Determine \( \widehat{d}(0) \) from \( d(0) = 0 \)} \\

Apply the inverse Fourier transform at \( x = 0 \):
\[
d(0) = \frac{1}{|G|} \sum_{\chi \in G^*} \widehat{d}(\chi) (-1)^{\chi \cdot 0} = \frac{1}{|G|} \sum_{\chi \in G^*} \widehat{d}(\chi).
\]
Since \( d(0) = 0 \), we have
\[
\sum_{\chi \in G^*} \widehat{d}(\chi) = 0.
\]
Split the sum: \( \widehat{d}(0) + \sum_{\chi \neq 0} \widehat{d}(\chi) = 0 \).

From \( \widehat{d}(0) - \widehat{d}(\chi) = 4 l(\chi) \), we get \( \widehat{d}(\chi) = \widehat{d}(0) - 4 l(\chi) \).

So:
\[
\widehat{d}(0) + \sum_{\chi \neq 0} \left[ \widehat{d}(0) - 4 l(\chi) \right] = 0.
\]
There are \( |G| - 1 = 2^s - 1 \) nonzero \( \chi \). Thus:
\[
\widehat{d}(0) + (2^s - 1) \widehat{d}(0) - 4 \sum_{\chi \neq 0} l(\chi) = 0.
\]
That is:
\[
2^s \widehat{d}(0) - 4 \sum_{\chi \neq 0} l(\chi) = 0 \quad \Rightarrow \quad \widehat{d}(0) = \frac{4}{2^s} \sum_{\chi \neq 0} l(\chi).
\]

\medskip

\noindent \textbf{Step 2: Compute \( \widehat{d}(0) \) for each case} \\
\textbf{Case 1:}  
One \( \chi_0 \) with \( l = 8 \), and \( 2^s - 2 \) others with \( l = 4 \).  
So:
\[
\sum_{\chi \neq 0} l(\chi) = 8 + (2^s - 2) \cdot 4 = 8 + 4 \cdot 2^s - 8 = 4 \cdot 2^s.
\]
Thus \( \widehat{d}(0) = \frac{4}{2^s} \cdot 4 \cdot 2^s = 16 \). \\
\textbf{Case 2:}  
Two \( \chi \) with \( l = 6 \), and \( 2^s - 3 \) with \( l = 4 \).  
So:
\[
\sum_{\chi \neq 0} l(\chi) = 2 \cdot 6 + (2^s - 3) \cdot 4 = 12 + 4 \cdot 2^s - 12 = 4 \cdot 2^s.
\]
Again \( \widehat{d}(0) = 16 \). So in both cases, \( \widehat{d}(0) = 16 \).

\medskip

\noindent \textbf{Step 3: Determine the values of \( \widehat{d}(\chi) \) } \\

\textbf{Case 1:}  
\( \widehat{d}(\chi_0) = 16 - 4 \cdot 8 = -16 \),  
\( \widehat{d}(\chi) = 16 - 4 \cdot 4 = 0 \) for \( \chi \neq 0, \chi_0 \).

\textbf{Case 2:}  
\( \widehat{d}(\chi_0) = \widehat{d}(\chi_1) = 16 - 4 \cdot 6 = -8 \),  
\( \widehat{d}(\chi) = 0 \) for \( \chi \neq 0, \chi_0, \chi_1 \).

\medskip

\noindent \textbf{Step 4: Apply inverse Fourier transform} \\
By \Cref{lemma: Inverse Fourier Transform},:
\[
d(x) = \frac{1}{2^s} \sum_{\chi \in G^*} \widehat{d}(\chi) (-1)^{\chi \cdot x}.
\]

\textbf{Case 1:}  
Only \( \chi = 0 \) and \( \chi = \chi_0 \) contribute:
\[
d(x) = \frac{1}{2^s} \left[ 16 + (-16)(-1)^{\chi_0 \cdot x} \right] = \frac{16}{2^s} \left[ 1 - (-1)^{\chi_0 \cdot x} \right].
\]
If \( \chi_0 \cdot x = 0 \), \( d(x) = 0 \).  
If \( \chi_0 \cdot x = 1 \), \( d(x) = \frac{16}{2^s} \cdot 2 = \frac{32}{2^s} \).

For \( s \ge 6 \), \( 2^s \ge 64 \), so \( \frac{32}{2^s} \le \frac{1}{2} \), and is an integer only if \( 2^s \mid 32 \), i.e., \( s \le 5 \).  
Thus for \( s \ge 6 \), \( d(x) \) is not an integer for some \( x \).

\textbf{Case 2:}  
Contributions from \( \chi = 0, \chi_0, \chi_1 \):
\[
d(x) = \frac{1}{2^s} \left[ 16 + (-8)(-1)^{\chi_0 \cdot x} + (-8)(-1)^{\chi_1 \cdot x} \right].
\]
Let \( a = (-1)^{\chi_0 \cdot x} \), \( b = (-1)^{\chi_1 \cdot x} \), each \( \pm 1 \).

\begin{enumerate}
    \item \( a = 1, b = 1 \): \( d(x) = \frac{1}{2^s}[16 - 16] = 0 \).
    \item \( a = 1, b = -1 \): \( d(x) = \frac{1}{2^s}[16 - 0] = \frac{16}{2^s} \).
    \item \( a = -1, b = 1 \): \( d(x) = \frac{16}{2^s} \).
    \item \( a = -1, b = -1 \): \( d(x) = \frac{1}{2^s}[16 + 16] = \frac{32}{2^s} \).
\end{enumerate}
For \( s \ge 6 \), \( \frac{16}{2^s} \) and \( \frac{32}{2^s} \) are not integers.
\medskip
Thus in both cases, \( d(x) \) is not integer-valued for \( s \ge 6 \), contradicting the assumption.  
Therefore, no such \( d \) exists.

\end{proof}

\subsection{Proof Theorem \ref{thm:flat_pluricanonical_covers}}
By Lemma \ref{lemma:forbidden_flat_admissible}, we only need to consider the list of $s$ and $m$ within the statement. In principle, there is an infinite number of possible weights and degrees to consider. Our first step is to bound the number of possible weights and degrees to a final list of tuples
$(L,W,D,k)$ that must be satisfied by any solution. We then find all possible cases using computational techniques.

We start by bounding the number of possible cases.  We recall the identities and bounds already established:
\[
M=\frac{m}{2}D-mW=kL,\qquad L\ge 2,\qquad W\le 2L+2,
\]
and, writing $\beta(s):=2-2^{1-s}$,
\begin{equation}
\label{eq:WLB-again}
\frac{W}{L}\ \ge\ (k+1)\,\beta(s)\ -\ \frac{k}{m},
\qquad
D\ \ge\ \bigl(4-2^{2-s}\bigr)(k+1)\,L,
\end{equation}
which are \eqref{eq:lower_bound_W flat}, \eqref{eq:lower_bound_D flat} and Statement~1 of Lemma~\ref{lemma:ineq_comb}.
We also use the exact relation
\begin{equation}
\label{eq:Link-again}
D\ =\ 2W+\frac{2k}{m}\,L,
\end{equation}
and the divisibility $m\mid 2kL$ (since $m(D-2W)=2kL$).
For $s=2$, it also holds that $L\mid D$ because $D=\sum_{\chi}l_\chi$ and $L\mid l_\chi$ for all $\chi$. 

\smallskip 

\noindent\textbf{Proof of Case $(s,m)=(2,3)$}\label{s = 2, m = 3}
Case . By Lemma~\ref{lemma:ineq_comb}.3 (with $s\ge 2$, $m\ge 3$) we have $k=1$ and $L\le 3$.
Since $3\mid 2L$, necessarily $L=3$. From \eqref{eq:WLB-again}:
\[
\frac{W}{L}\ \ge\ 2\beta(2)-\frac{1}{3}\ =\ 2\cdot\frac{3}{2}-\frac{1}{3}
\ =\ \frac{8}{3}.
\]
Together with $W/L\le 2+\frac{2}{L}=2+\frac{2}{3}$ we obtain $W=8$. Then \eqref{eq:Link-again} gives
$D=2W+\tfrac{2}{3}L=16+2=18$. For $s=2$, $L\mid D$ holds.
This implies each $l_\chi=6$ because $\sum_\chi l_\chi=D=18$ and each $l_\chi\ge 2L=6$ and multiple of $L$. Therefore,  $d_{10}=d_{01}=d_{11}=6$. 

\smallskip

\noindent\textbf{Proof of Case $(s,m)=(2,2)$}\label{s = 2, m = 2}
\textbf{Case $(s,m)=(2,2)$}. From \eqref{eq:WLB-again} and $W/L\le 3$, we obtain $k=1$.
This implies
\[
\frac{W}{L}\ \in\Bigl[\ \tfrac{5}{2}\ ,\ 2+\tfrac{2}{L}\ \Bigr],\qquad L\ge 2.
\]
Our possible values $L\in\{2,3,4\}$. A brief check gives
\[
(L,W)\in\{(2,5),(2,6),(4,10), (3,8)\},
\]
From \eqref{eq:Link-again}, $D = 2W+L$ implies that for $(L,W) = (3,8)$, $D = 19$. But since $3 \nmid 19$, the pair $(3,8)$ is excluded. Hence again from \eqref{eq:Link-again},
$D=2W+L\in\{12,14,24\}$; 
\color{black}
So we consider the pairs $(L,W,D) = (2,5,12), (2,6,14), (4,10,24)$. Note that since $s = 2$, by \Cref{eq: divisor sum and line bundle sum}, $\Sigma_{\chi \in G^*} l_{\chi} = D$. Moreover since $l_{\chi} \geq 2L = 4$ and even, we have that in each of these cases, $\{l_{\chi}\}_{\chi \in G^*} = (4,4,4), (4,4,6), (8,8,8)$. Now calculating the branch divisors $\{d_g\}_{g \in G}$ gives the result.

\color{black}

\noindent\textbf{Proof of Case $(s,m)=(2,1)$}\label{s = 2, m = 1}
\textbf{Case $(s,m)=(2,1)$:}
From \eqref{eq:WLB-again}
\[
\frac{W}{L}\ \ge\ (k+1)\frac{3}{2}-k\ =\ \frac{k+3}{2}.
\]
Since $W/L\le 3$, necessarily $k\in\{1,2,3\}$. We enumerate per $k$, enforcing $L\ge 2$, $W\le 2L+2$, and \eqref{eq:Link-again}. First, we suppose $k=1$. \textcolor{black}{In this case, we note that $2L+2 \geq W \geq 2L$. This means $W \in \{2L, 2L+1, 2L+2\}$. If $W = 2L+1$, we have that $D = 2W+\frac{2k}{m}L = 6L+2$. Since $s = 2$, we have that $L \mid D$ and hence $L = 2$. Hence we get $(L,W,D) = (2,5,14)$. Similar analysis with $W = 2L+2$ yields $L = 2$ or $L = 4$. $L = 2$ yields $(L,W,D) = (2,6,16)$ while $L = 4$ yields $(4,10,28)$. Now consider $W = 2L$. We do not directly get an upper bound for $L$ as above, but similar analysis yields $(L,W,D) = (L,2L,6L)$. But now it is easy to see that the only solutions for a set of positive integers $(a_0,a_1,a_2,a_3)$ such that $\Sigma_ia_i = 2L$ and $\operatorname{lcm}(a_0,a_1,a_2,a_3) = L$ are $(1,1,1,3), (1,1,2,4), (1,2,3,6)$ and in particular $L = 3,4,6$. This yields the triples $(L,W,D) = (3,6,18), (4,8,24), (6,12,36)$.} Hence the triples we obtain are
\[
(L,W,D)\in\{(2,5,14),\ (2,6,16),\ (3,6,18),\ (4,8,24),\ (4,10,28),\ (6,12,36)\}.
\]
\textcolor{black}{Consider $(L,W,D) = (2,5,14)$. This yields $\mathbb{P}(1,1,1,2)$. Since $\Sigma_{\chi \in G^*-\{0\}}l_{\chi} = D = 14$, $2 \mid l_{\chi}$ and $l_{\chi} \geq 2L = 4$, we have $\{l_{\chi}\} = (4,4,6)$ which yields $d_g = (2,6,6)$. Consider $(L,W,D) = (2,6,16)$. This yields $\mathbb{P}(1,1,2,2)$ and $d_g = (8,8,0)$ or $(4,4,8)$. Consider $(L,W,D) = (4,10,28)$. This gives $\mathbb{P}(1,1,4,4)$ and $d_g = (4,12,12)$. Consider $(L,W,D) = (3,6,18)$. This gives $\mathbb{P}(1,1,1,3)$ and $d_g = (6,6,6)$. Consider $(L,W,D) = (4,8,24)$. This gives $\mathbb{P}(1,1,2,4)$ and $d_g = (8,8,8)$. Consider $(L,W,D) = (6,12,36)$. This gives $\mathbb{P}(1,2,3,6)$ and $d_g = (12,12,12)$.}

Next, we suppose $k=2$. We obtain
\[
(L,W,D)\in\{(2,5,18),\ (2,6,20),\ (4,10,36),\ \textcolor{black}{(3,9,30),\ (4,12,40)} \},
\]
Note that there is no choice of $\mathbb{P}(a_0,a_1,a_2,a_3)$ with $W = 9$ and $L = 3$ or $W = 12$ and $L = 4$. Now $(2,5,18)$ yields $d=(6,6,6)$, $(2,6,20)$ yields $d = (4,8,8)$ and $(4,10,36)$ yields $d=(12,12,12)$   

Finally, we suppose $k=3$. We obtain uniquely
\[
(L,W,D)=(2,6,24),
\]
with $d=(8,8,8)$ forced.

\smallskip

\noindent\textbf{Proof of Case $(s,m)=(3,2)$}\label{s = 3, m = 2}
\textbf{Case $(s,m)=(3,2)$:}
From \eqref{eq:WLB-again} we have
\[
\frac{W}{L}\ \ge\ (k+1)\frac{7}{4}-\frac{k}{2}\ =\ \frac{5k+7}{4}.
\]
The bound $W/L\le 3$ forces $k=1$, and then $W/L\ge 3$. Since $2+\frac{2}{L}\le 3$, this yields $L=2$, $W=6$.
By \eqref{eq:Link-again}, $D=14$. Setting $N_{\chi} = 2l_{\chi}$, we have $\sum_\chi N_\chi=2^{2}D=56$ and $N_\chi\in 4\mathbb Z$, $N_\chi\ge 8$;
hence each $N_\chi=8$ and $l_\chi=4$. 

\smallskip

\noindent\textbf{Proof of Case $(s,m)=(3,1)$}\label{s = 3, m = 1}
\textbf{Case $(s,m)=(3,1)$.}
From \eqref{eq:WLB-again} we have $\frac{W}{L}\ge (k+1)\frac{7}{4}-k=\frac{3k+7}{4}$; thus $k=1$.
Hence
\[
\frac{W}{L}\in\Bigl[\tfrac{5}{2},\ 2+\tfrac{2}{L}\Bigr],\qquad L\ge 2,
\]
\textcolor{black}{which yields $(L,W,D)\in\{(2,5,14),(2,6,16),(3,8,22), (3,9,24), (4,10,28), (4,11,30), (4,12,32)\}$. But now there are no solutions of well-formed $\mathbb{P}(a_0,a_1,a_2,a_3)$ with $(W = 9, L = 3)$, $(W = 11, L = 4)$ and $(W = 12, L = 4)$. The triple $(2,5,14)$ yields $\mathbb{P}(1,1,1,2)$ and $\{l_{\chi}\} = (4,4,4,4,4,4,4)$, the triple $(2,6,16)$ yields $\mathbb{P}(1,1,2,2)$ and $\{l_{\chi}\} = (8,4,4,4,4,4,4)$ or $(4,4,6,6,4,4,4)$. The triple $(3,8,22)$ yields $\Sigma_{\chi \in G^*-\{0\}} l_{\chi} = 44$, $l_{\chi} \geq 6$ and $3 \mid l_{\chi}$ for all $\chi$. This does not yield any solution for $\{l_{\chi}\}$. Finally the triple $(4,10,28)$ yields $\mathbb{P}(1,1,4,4)$ and $\{l_{\chi}\} = (8,8,8,8,8,8,8)$. Now the values of $d_g$ can be computed from $\{l_{\chi}\}$ using the fundamental equations and the value of $D$.}

\smallskip

\noindent\textbf{Proof of Case $(s,m)=(4,1), (5,1)$}\label{s = 4,5 m = 1}

We first show that in both cases, we must have $L = 2$, $k = 1$, $W = 6$ and $D = 16$. We show this for $s = 4$. For $s = 5$, the computation is verbatim.
From \eqref{eq:WLB-again}, $\frac{W}{L}\ge (k+1)\frac{15}{8}-k=\frac{7k+15}{8}$; thus $k=1$. Then
$\frac{W}{L}\ge \frac{11}{4}$, and since $2+\frac{2}{L}\ge \frac{11}{4}$ forces $L=2$, we get $W=6$ and, by \eqref{eq:Link-again},
$D=16$.

Assume flatness and the standing notation of this subsection. Then any flat admissible solution with $s=4$ and $m=1$
satisfies
\[
k=1,\qquad L=2,\qquad W=6,\qquad D=16,
\]

Now note that $l_{\chi} \geq (k+1)L = 4$ and $2 \mid l_{\chi}$. Further $\Sigma_{\chi \in G^*-\{0\}}l_{\chi} = 2^{s-2}D = 64$. Hence there are two cases

\begin{enumerate}
    \item Case $(1)$: There exists a $\chi_0 \in G^*-\{0\}$ such that $l_{\chi_0} = 8$ and $l_{\chi} = 4$ for $\chi \neq \chi_0$ and $\chi \in G^*-\{0\}$.
    \item Case $(2)$: There exists a $\chi_0, \chi_1 \in G^*-\{0\}$ such that $l_{\chi_0} = l_{\chi_1} = 6$ and $l_{\chi} = 4$ for $\chi \neq \chi_0$, $\chi \neq \chi_1$ and $\chi \in G^*-\{0\}$.
\end{enumerate}

For $s = 4$, applying \Cref{lemma: Inverse Fourier Transform}, we get 

\begin{enumerate}
    \item In case $(1)$, $d_g = 0$ if $\chi_0 \cdot g = 0$ (there are $7$ such values of $g$) and $d_g = 2$ if $\chi_0 \cdot g = 1$ (there are $8$ such values of $g$).
    \item In case $(2)$, $d_g = 0$ if $\chi_0 \cdot g = 0$ and $\chi_1 \cdot g = 0$ (there are $3$ such values of $g$) , $d_g = 1$ if $\chi_0 \cdot g = 1$ and $\chi_1 \cdot g = 0$ or vice versa (there are $8$ such values of $g$) and finally $d_g = 2$ if $\chi_0 \cdot g = 1$ and $\chi_1 \cdot g = 1$ (there are $4$ such values of $g$).
    
\end{enumerate}

For $s = 5$, applying \Cref{lemma: Inverse Fourier Transform} and \Cref{lemma: Fourier}, we get 

\begin{enumerate}
    \item In case $(1)$, $d_g = 0$ if $\chi_0 \cdot g = 0$ (there are $15$ such values of $g$) and $d_g = 1$ if $\chi_0 \cdot g = 1$ (there are $16$ such values of $g$).
    \item In case $(2)$, there are no integer solutions for $\{d_g\}$.
    
\end{enumerate}

\subsection{Pluricanonical covers of $\mathbb{P}(1,1,1,1)$}

We continue with the notation of this section to classify the pluricanonical $\mathbb{Z}_2^s$ covers of $\mathbb{P}(1,1,1,1)$. Hence $W = 4$ and $L = 1$. Note that in this case $M = k = m(\displaystyle\frac{1}{2}D-4)$, where $D = \sum_g d_g$ and hence $D = 8+\frac{2k}{m}$. Further recall that $s = 1$ has already been classified.

\begin{lemma}\label{m atleast 3}
   \begin{enumerate}
       \item If $m \geq 3$ and $s \geq 2$, then $k = 2$, $m = 4$ and $\sum_g d_g = 9$
       \item If $m \geq 3$ and $s \geq 3$, then there are no admissible solutions.
       \item If $m \geq 5$ and $s \geq 2$, then there are no admissible solutions.
       
   \end{enumerate}
    
\end{lemma}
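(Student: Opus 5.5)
The plan is to treat this as a purely numerical consequence of the admissibility conditions, specialised to $W=4$ and $L=1$. Note that here the solution need not be flat, but since $L=1$ every integer is divisible by $L$. The two relations to exploit are the following.
\begin{itemize}
\item Admissibility gives $l_\chi$ an integer with $l_\chi > M = k$, hence $l_\chi \geq k+1$ for every nontrivial $\chi$.
\item Summing the defining relation of $l_\chi$ over the nontrivial characters gives $\sum_{\chi \neq 0} l_\chi = 2^{s-2}D$, exactly as in \eqref{eq: divisor sum and line bundle sum}. Each $g \neq 0$ pairs nontrivially with exactly $2^{s-1}$ characters.
\end{itemize}
Combining these gives the analogue of \eqref{eq:lower_bound_D flat} without using flatness:
\[
D \;\geq\; \bigl(4-2^{2-s}\bigr)(k+1).
\]
On the other side, $D = 8 + \tfrac{2k}{m}$ is an integer, so $m \mid 2k$.

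For statement (1), take $s \geq 2$, so that $4-2^{2-s} \geq 3$, and $m \geq 3$. Then
\[
8 + \frac{2k}{3} \;\geq\; 8 + \frac{2k}{m} \;=\; D \;\geq\; 3(k+1),
\]
which gives $\tfrac{7k}{3} \leq 5$, so $k \in \{1,2\}$.
\begin{itemize}
\item If $k=1$, the divisibility $m \mid 2$ contradicts $m \geq 3$.
\item If $k=2$, then $m \mid 4$ and $m \geq 3$ force $m=4$, and then $D = 8 + 1 = 9$.
\end{itemize}
This is statement (1). Statement (3) follows immediately, since (1) forces $m=4$ whenever $m \geq 3$ and $s \geq 2$, which excludes $m \geq 5$.

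For statement (2), take $s \geq 3$. Then (1) already gives $k=2$ and $D=9$, but the sharper factor $4-2^{2-s} \geq 7/2$ yields
\[
D \;\geq\; \tfrac{7}{2}\cdot 3 \;=\; \tfrac{21}{2} \;>\; 9,
\]
a contradiction. So no admissible solution exists.

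The only delicate point is making sure the lower bound on $D$ is used in its non-flat form. It relies only on the strict inequality $l_\chi > k$ together with integrality of $l_\chi$, both of which are part of the definition of an admissible solution, and not on divisibility of $l_\chi$ by $L$ (which is trivial here anyway). Once that is settled, the rest is the short case check above. As a consistency check, equality $9 = 3\cdot 3$ holds in the case $s=2$, matching the entry $m=4$, $(d_{10},d_{01},d_{11}) = (3,3,3)$ in Table \ref{tab:table2}.
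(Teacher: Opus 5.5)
Your proposal is correct and follows essentially the paper's proof: you bound $k\le 2$ via the same averaging inequality from Lemma~\ref{lemma:ineq_comb}(2), written in terms of $D$ rather than $W$, and then use the divisibility $m \mid 2k$ coming from $D = 8 + \tfrac{2k}{m}$. The one difference is that you deduce (2) and (3) directly from (1), whereas the paper re-runs the case split $k\in\{1,2\}$ for each; this is a slightly more economical arrangement of the same argument.
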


\begin{proof}
  \noindent\textit{Proof of $(1)$}:  By \Cref{lemma:ineq_comb}, $(2)$, we have that 
    \begin{align*}
    W \geq \left( (k+1)(2 -2^{1-s})- \frac{k}{m}\right)L &&
l_\chi \geq (k+1)L, \; \; \forall \chi
\end{align*}

Since $W = 4$ and $L = 1$, we have 
\begin{equation*}
    \left( (k+1)(2 -2^{1-s})- \frac{k}{m}\right) \leq 4
\end{equation*}

Since $s \geq 2$ and $m \geq 3$, we have that the LHS $ \geq \displaystyle\frac{7k}{6}+\frac{3}{2}$. Hence $k \leq 2$. \\

Suppose that $k = 1$. Then $M = kL = 1$. Recall that $M = m(\displaystyle\frac{1}{2} \sum_g d_g-4)$. Hence we have 
\begin{equation*}
m(\displaystyle\frac{1}{2} \sum_g d_g-4) = 1
\end{equation*}

This implies that $\sum_g d_g = 8+\displaystyle\frac{2}{m}$. This implies that $m = 1$ or $m = 2$, contradicting $m \geq 3$. \\

If $k = 2$, then $M = 2$ and $\sum_g d_g = 8+\displaystyle\frac{4}{m}$. Then $m \mid 4$ and $m \geq 3 \implies m = 4$. Therefore $\sum_g d_g = 9$. \\

\noindent\textit{Proof of $(2)$.} Now suppose that $m \geq 3$ and $s \geq 3$. From part $(1)$, $k = 1$ or $k = 2$. Suppose that $k = 2$. Then we have from \Cref{lemma:ineq_comb}, $(2)$, 

\begin{equation*}
    \left( (3)(2 -2^{1-s})- \frac{2}{m}\right) \leq 4
\end{equation*}

which, since $s \geq 3$, gives $m \leq \displaystyle\frac{8}{5}$ contradicting $m \geq 3$. \\
Now assume $k = 1$. This implies $M = 1$ and hence $\sum_g d_g = 8+\displaystyle\frac{2}{m}$. This implies $m \mid 2$ and contradicts $m \geq 3$. \\

\noindent\textit{Proof of $(3)$.} If $m \geq 5$ and $s \geq 2$, once again by part $(1)$, we have $k = 1$ or $k = 2$. $k = 2 \implies M = 2 \implies \sum_g d_g = 8+\displaystyle\frac{4}{m} \implies m \mid 4$, which is a contradiction. Similarly $k = 1 \implies m \mid 2$, which leads to a contradiction. \\



\end{proof}

\begin{lemma}\label{m atmost 2}
    \begin{enumerate}
        \item If $m = 2$ and $k \geq 2$, then $s \leq 2$
        \item If $m = 2$ and $k \geq 3$, then $s \leq 1$
        \item If $m = 1$ and $k \geq 3$, then $s \leq 3$
        \item If $m = 1$ and $k \geq 4$, then $s \leq 2$
        \item If $m = 1$ and $k \geq 6$, then $s \leq 1$
    \end{enumerate}
\end{lemma}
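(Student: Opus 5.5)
The plan is to argue exactly as in the proof of \Cref{m atleast 3}, using the lower bound of \Cref{lemma:ineq_comb}(2) specialized to $W=4$ and $L=1$. For $\mathbb{P}(1,1,1,1)$ every admissible solution is automatically flat, since $L=1$ divides every $l_\chi$. The bound therefore reads
\[
4 \;\geq\; (k+1)\bigl(2-2^{1-s}\bigr)-\frac{k}{m}.
\]
The right-hand side is increasing in $s$, because $2-2^{1-s}$ increases with $s$. It is also increasing in $k$ as long as $2-2^{1-s}>1/m$, which holds for all $s\ge 2$ and $m\ge 1$. So in each item it suffices to plug in the smallest allowed $k$ and the smallest forbidden $s$ and check that the inequality fails.

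The five checks are as follows.
\begin{itemize}
\item[(1)] Take $m=2$, $k=2$, $s=3$. The right-hand side is $3\cdot\frac{7}{4}-1=\frac{17}{4}>4$, a contradiction, so $s\le 2$.
\item[(2)] Take $m=2$, $k=3$, $s=2$. The right-hand side is $4\cdot\frac{3}{2}-\frac{3}{2}=\frac{9}{2}>4$, so $s\le 1$.
\item[(3)] Take $m=1$, $k=3$, $s=4$. The right-hand side is $4\cdot\frac{15}{8}-3=\frac{9}{2}>4$, so $s\le 3$.
\item[(4)] Take $m=1$, $k=4$, $s=3$. The right-hand side is $5\cdot\frac{7}{4}-4=\frac{19}{4}>4$, so $s\le 2$.
\item[(5)] Take $m=1$, $k=6$, $s=2$. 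The right-hand side is $7\cdot\frac{3}{2}-6=\frac{9}{2}>4$, so $s\le 1$.
\end{itemize}
To be careful about monotonicity in $k$ for fixed $s$ and $m$, I will write the right-hand side as $k\bigl(2-2^{1-s}-\frac1m\bigr)+\bigl(2-2^{1-s}\bigr)$ and note that the coefficient of $k$ is positive, which settles the claim. Monotonicity in $s$ for fixed $k$ is immediate, since the factor $k+1$ is positive.

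The computations themselves are routine. The one point I expect to need care is justifying that \Cref{lemma:ineq_comb}(2) applies here. That statement was derived from $l_\chi\ge (k+1)L$, which comes from flatness together with $l_\chi>M=kL$. With $L=1$ this is simply $l_\chi\ge k+1$ for integers, which holds. The derivation also uses the identity $\sum_\chi l_\chi=2^{s-2}D$, which holds for any $s$ and needs no hypothesis $L\ge 2$. Once this applicability is checked, no Fourier-analytic input is needed for this lemma.
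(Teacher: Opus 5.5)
Your proposal is correct and is essentially the paper's argument: both specialize the bound of Lemma~\ref{lemma:ineq_comb}(2) to $W=4$, $L=1$ and show that $(k+1)(2-2^{1-s})-\frac{k}{m}\le 4$ fails at the forbidden values. The paper rearranges this inequality into $2^s\le \frac{4k+4}{3k-4}$ for $m=2$ and $2^s\le\frac{2k+2}{k-2}$ for $m=1$ and then plugs in $k$. You instead evaluate at the extremal pair $(k,s)$ and justify that reduction by monotonicity in $k$ and $s$, which is only a cosmetic difference; your explicit monotonicity and flatness checks are, if anything, slightly more careful than the paper's.
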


\begin{proof}
\noindent\textit{Proof of $(1)$ and $(2)$.} Recall that we have 
\begin{equation*}
\left( (k+1)(2 -2^{1-s})- \frac{k}{m}\right) \leq 4
\end{equation*}

If $m = 2$, the above gives

\begin{equation*}
    2^s \leq \displaystyle\frac{4k+4}{3k-4}
\end{equation*}

Hence $k \geq 2 \implies s \leq 2$ and if $k \geq 3 \implies s \leq 1$. \\

\noindent\textit{Proof of $(3)$, $(4)$ and $(5)$.} 
Using 
\begin{equation*}
\left( (k+1)(2 -2^{1-s})- \frac{k}{m}\right) \leq 4
\end{equation*}

if $m = 1$, we get

\begin{equation*}
    2^s \leq \displaystyle\frac{2k+2}{k-2}
\end{equation*}

Now the conclusion follows by plugging in the corresponding values of $k$.
    
\end{proof}

\begin{corollary}\label{list of cases}
  The values of $m$, $s$, $k$ and $D$ for which there are possible admissible solutions are as follows: 
    \begin{enumerate}
        \item $m = 1$, $s \geq 2$, $k = 1 \iff D = 10$
        \item $m = 1$, $s \geq 2$, $k = 2 \iff D = 12$
        \item $m = 1$, $s = 2$, $k = 3 \iff D = 14$
        \item $m = 1$, $s = 3$, $k = 3 \iff D = 14$
        \item $m = 1$, $s = 2$, $k = 4 \iff D = 16$
        \item $m = 1$, $s = 2$, $k = 5 \iff D = 18$
        \item $m =2$, $s \geq 2$, $k = 1 \iff D = 9$
        \item $m = 2$, $s = 2$, $k = 2 \iff D = 10$
        \item $m = 4$, $s = 2$, $k = 2 \iff D = 9$
        
    \end{enumerate}
\end{corollary}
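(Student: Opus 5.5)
The plan is to work entirely with the numerical data for $\mathbb{P}(1,1,1,1)$, where $W=4$ and $L=1$. The relation $M=k=m\bigl(\tfrac12 D-4\bigr)$ gives
\[
D=8+\frac{2k}{m},
\]
so every admissible solution is pinned down by the pair $(m,k)$, together with the constraint $m\mid 2k$ coming from integrality of $D$. I will list the pairs $(m,k)$ allowed by Lemma~\ref{m atleast 3} and Lemma~\ref{m atmost 2}, and then read off $D$ from this formula in each case.

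\textbf{Step 1: $m\ge 3$.}
\begin{itemize}
\item Lemma~\ref{m atleast 3}(1) forces $k=2$, $m=4$, $D=9$ when $s\ge 2$.
\item Part (2) of the same lemma excludes $s\ge 3$, so here $s=2$.
\item Part (3) excludes $m\ge 5$.
\end{itemize}
This produces item (9).

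\textbf{Step 2: $m=2$.} Since $m\mid 2k$ holds automatically, $D=8+k$.
\begin{itemize}
\item For $k=1$ we get $D=9$, with no restriction on $s$ from Lemma~\ref{m atmost 2}; this is item (7).
\item For $k=2$ we get $D=10$, and Lemma~\ref{m atmost 2}(1) gives $s\le 2$. Since $s=1$ is already classified, $s=2$; this is item (8).
\item For $k\ge 3$, Lemma~\ref{m atmost 2}(2) forces $s\le 1$, which is excluded.
\end{itemize}

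\textbf{Step 3: $m=1$.} Here $D=8+2k$.
\begin{itemize}
\item For $k=1,2$ we get $D=10,12$ for all $s\ge 2$; these are items (1) and (2).
\item For $k=3$, Lemma~\ref{m atmost 2}(3) gives $s\le 3$, so $s\in\{2,3\}$ and $D=14$; these are items (3) and (4).
\item For $k=4,5$, Lemma~\ref{m atmost 2}(4) gives $s=2$, with $D=16,18$; these are items (5) and (6).
\item For $k\ge 6$, Lemma~\ref{m atmost 2}(5) forces $s\le1$, which is excluded.
\end{itemize}

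\textbf{The equivalences.} Each ``$\iff D=\cdot$'' follows immediately from $D=8+2k/m$, since for fixed $m$ the map $k\mapsto D$ is injective. The statement only asks for \emph{possible} cases, so nothing further has to be verified at this stage.

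\textbf{Main obstacle.} The main point to check is that the inequality
\[
(k+1)\bigl(2-2^{1-s}\bigr)-\frac{k}{m}\le 4
\]
has been used with the correct $s$-thresholds. In particular, for $m=1$, $k=3$ the bound is $2^s\le 8$, which allows $s=3$; and for $m=2$, $k=1$ the expression $2^s\le (4k+4)/(3k-4)$ is vacuous because the denominator is negative. I would recheck these boundary values directly from the inequality rather than rely on the rounded statements in Lemma~\ref{m atmost 2}.
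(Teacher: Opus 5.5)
Your proposal is correct and is essentially the paper's argument. The paper's proof is the one-line remark that the list follows by combining Lemma~\ref{m atleast 3} and Lemma~\ref{m atmost 2}, and your enumeration over $m\ge 3$, $m=2$, $m=1$ using $D=8+2k/m$ spells that combination out; your boundary checks (e.g.\ $2^s\le 8$ for $m=1$, $k=3$, and the vacuous bound for $m=2$, $k=1$) agree with the inequality $(k+1)(2-2^{1-s})-k/m\le 4$.
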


\begin{proof}
   Follows by combining \Cref{m atleast 3} and \Cref{m atmost 2}. 
\end{proof}

\begin{theorem}
Upto $\operatorname{GL}(s,\mathbb{F}_2)$ action
 \begin{enumerate}
     \item For $m = 4$, $s=2$ and $k = 2$ and $D = 9$, $(d_{10}, d_{01}, d_{11}) = (3,3,3)$ 
     \item For $m = 2$, $s=2$ and $k = 2$ and $D = 10$, $(d_{10}, d_{01}, d_{11}) = (4,4,2)$ 
     \item For $m = 1$, $s=2$ and $k = 5$ and $D = 18$, $(d_{10}, d_{01}, d_{11}) = (6,6,6)$
     \item For $m = 1$, $s=2$ and $k = 4$ and $D = 16$, $(d_{10}, d_{01}, d_{11}) = (6,6,4)$
     \item For $m = 1$, $s=2$ and $k = 3$ and $D = 14$, $(d_{10}, d_{01}, d_{11}) = (6,6,2)$ or $(d_{10}, d_{01}, d_{11}) = (4,4,6)$
     \item For $m = 1$, $s=3$ and $k = 3$ and $D = 14$, 
     $(d_{100},d_{010},d_{110},d_{001},d_{101},d_{011},d_{111}) = (2,2,2,2,2,2,2)$

 \end{enumerate}   
    
\end{theorem}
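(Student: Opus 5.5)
The plan is to work in the normalized setting of this subsection: $W=4$, $L=1$, so every admissible solution is automatically flat, and $M=k=m\bigl(\tfrac12 D-4\bigr)$. By \Cref{prop:pluricanonical_maps_of_threefolds} and the definition of an admissible pluricanonical solution, the unknowns are the integers $l_\chi$, $\chi\in G^*\setminus\{0\}$. They are subject to three conditions:
\begin{align*}
l_\chi\geq k+1 \ \text{ for all } \chi\neq 0, \qquad \sum_{\chi\neq 0} l_\chi = 2^{s-2}D, \qquad d_g\in\mathbb{Z}_{\geq 0} \ \text{ for all } g\neq 0,
\end{align*}
where the $d_g$ are recovered from the $l_\chi$. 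The first condition is the strict inequality $l_\chi>M$; the second is \eqref{eq: divisor sum and line bundle sum}. So for each of the six cases of the statement, which are exactly the $s\in\{2,3\}$ cases of \Cref{list of cases}, I will first list the possible multisets $\{l_\chi\}$. This reduces to partitioning $2^{s-2}D$ into $2^s-1$ integers, each at least $k+1$. Then I will invert to obtain the $d_g$.

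For $s=2$ the inversion is explicit. The fundamental relations read $2l_{10}=d_{10}+d_{11}$, $2l_{01}=d_{01}+d_{11}$ and $2l_{11}=d_{10}+d_{01}$, which give
\begin{align*}
d_{11}=l_{10}+l_{01}-l_{11}, \qquad d_{10}=l_{10}+l_{11}-l_{01}, \qquad d_{01}=l_{01}+l_{11}-l_{10}.
\end{align*}
These are automatically integers, so I only need to check nonnegativity. The enumeration is then short:
\begin{itemize}
\item $m=4$, $k=2$, $D=9$: $l_\chi\geq 3$ with sum $9$ forces $(3,3,3)$, hence $d=(3,3,3)$.
\item $m=2$, $k=2$, $D=10$: $l_\chi\geq 3$ with sum $10$ forces $\{3,3,4\}$, hence $d=(4,4,2)$ up to order.
\item $m=1$, $k=5$, $D=18$: $l_\chi\geq 6$ forces $(6,6,6)$, hence $d=(6,6,6)$.
\item $m=1$, $k=4$, $D=16$: $l_\chi\geq 5$ forces $\{5,5,6\}$, hence $d=(6,6,4)$.
\item $m=1$, $k=3$, $D=14$: $l_\chi\geq 4$ allows $\{4,4,6\}$ and $\{4,5,5\}$, which give $(6,6,2)$ and $(4,4,6)$ respectively.
\end{itemize}
To reduce "up to order" to "up to $\operatorname{GL}(2,\mathbb{F}_2)$", I will use that $\operatorname{GL}(2,\mathbb{F}_2)\cong S_3$ acts simultaneously on the three nonzero $g$ and on the three nonzero $\chi$, compatibly with the relations above. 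Every permutation of the triple $(d_{10},d_{01},d_{11})$ is therefore realized. The two solutions in the last case are genuinely distinct orbits, since their multisets $\{6,6,2\}$ and $\{4,4,6\}$ differ.

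For $s=3$, $k=3$, $D=14$, the sum of the seven values $l_\chi$ is $2\cdot 14=28$ and each is at least $4$, so $l_\chi=4$ for all $\chi\neq 0$. I will then apply \Cref{lemma: Fourier} and \Cref{lemma: Inverse Fourier Transform}:
\begin{align*}
\widehat d(0)=14, \qquad \widehat d(\chi)=14-16=-2 \ \ (\chi\neq 0), \qquad d(x)=\tfrac18\bigl(14-2\textstyle\sum_{\chi\neq 0}(-1)^{\chi\cdot x}\bigr).
\end{align*}
For $x\neq 0$ the character sum $\sum_{\chi\neq 0}(-1)^{\chi\cdot x}$ equals $-1$, which gives $d(x)=2$ for every $x\neq 0$.

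The main obstacle is bookkeeping rather than depth. First, I must make sure the condition "$l_\chi>M$" from \Cref{prop:pluricanonical_maps_of_threefolds} is really equivalent to the required vanishing $h^0(\mathcal{O}(k)\otimes L_\chi^{-1})=0$; on $\mathbb{P}^3$ this is clear, since $h^0(\mathcal{O}(k-l_\chi))=0$ exactly when $l_\chi>k$. Second, I must confirm that no partition has been missed and that the $\operatorname{GL}(2,\mathbb{F}_2)$-orbit identification is done correctly.
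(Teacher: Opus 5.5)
Your proposal is correct and follows the paper's own route. It enumerates the multisets $\{l_\chi\}$ forced by $l_\chi\ge k+1$ and $\sum_{\chi\neq 0}l_\chi=2^{s-2}D$, then recovers the $d_g$ from $2l_\chi=\sum_{\chi\cdot g=1}d_g$. Your explicit inversion formulas for $s=2$, the Fourier inversion for $s=3$, and the observation that $\operatorname{GL}(2,\mathbb{F}_2)\cong S_3$ realizes every permutation of $(d_{10},d_{01},d_{11})$ fill in details that the paper's one-line proof leaves implicit.
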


\begin{proof}
    The proof follows from \Cref{list of cases}, and the facts that $\sum_{\chi \in G^*} l_{\chi} = 2^{s-2}D$, $l_{\chi} \geq (k+1)$ for $\chi \neq 0$, and $2l_{\chi} = \sum_{\{g \in G|\chi \cdot g = 1\}} d_g$.
\end{proof}

The remaining cases are $(1)$, $(2)$ and $(7)$ of \Cref{list of cases}. However, since $m = 1, k = 1$ corresponds to canonical covers of $\mathbb{P}^3$ with $p_g = 4$, this is done in \cite{dugao16}. So we analyze cases $(2)$ and $(7)$ of \Cref{list of cases}. 

We first state a Proposition to bound the value of $s$, given an upper bound to $D$.

\begin{proposition}\label{bound to $s$}
Let $G=(\mathbb Z_2)^s$. Let $d:G\to\mathbb Z_{\ge 0}$ satisfy
\[
d(0)=0,\qquad \sum_{g\in G} d(g)=C,
\]
and suppose there exists an integer $p\ge 1$ such that for every nonzero
character $\chi\in G^*$,
\[
\sum_{\chi(g)=1} d(g)\ge p.
\tag{$\star$}
\]
Let
\[
S:=\{g\in G : d(g)>0\}
\]
Then
\begin{enumerate}
    \item  $s \leq |S|$
    \item Suppose $s \geq t+1$ and let $g_1,\dots,g_t \in S$ be distinct. Then 
    \[
    \sum_{i=1}^t d(g_i)\le C-p.
    \]
    \item In particular, 
    \[ s \le C-p+1. \]
\end{enumerate}

\end{proposition}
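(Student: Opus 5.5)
The plan is to exploit one elementary fact: whenever a set of elements of $G$ fails to span $G$, there is a nonzero character that is trivial on all of them. We first fix how to read $(\star)$. In every application in this section the relevant quantity is $2l_\chi=\sum_{\chi\cdot g=1}d_g$, the sum over $g$ with $\chi(g)=-1$. We will therefore read $(\star)$ as $\sum_{\chi(g)=-1}d(g)\ge p$ for all nonzero $\chi$. The literal reading with $\chi(g)=1$ does not give (1). For example, take $s=4$, $S=\{e_1,e_2,e_1+e_2\}$ with $d\equiv 1$ on $S$ and $p=1$. For any nonzero $\chi$, either $\chi$ is trivial on $\langle e_1,e_2\rangle$ (sum $3$), or exactly one of the three elements has $\chi(g)=1$ (sum $1$). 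So the literal condition holds, yet $|S|=3<4=s$.

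\emph{Step (1).} Suppose $|S|<s$. Then the $\mathbb F_2$-span $\langle S\rangle$ is a proper subspace of $G\cong\mathbb F_2^s$. Hence there is a nonzero linear functional $\chi\in G^*$ vanishing on $\langle S\rangle$, i.e.\ $\chi(g)=1$ for every $g\in S$. For this $\chi$ we get $\sum_{\chi(g)=-1}d(g)=0$, because $d$ vanishes off $S$. This contradicts $(\star)$ since $p\ge 1$, so $s\le|S|$.

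\emph{Step (2).} Let $g_1,\dots,g_t\in S$ be distinct with $t\le s-1$. Their span has dimension at most $t<s$. As in Step (1), choose a nonzero $\chi$ with $\chi(g_i)=1$ for all $i$. The set $\{g:\chi(g)=-1\}$ then avoids every $g_i$. Since $d\ge 0$, this gives
\[
p\le \sum_{\chi(g)=-1}d(g)\le \sum_{g\ne g_1,\dots,g_t}d(g)=C-\sum_{i=1}^t d(g_i),
\]
which is the claimed inequality.

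\emph{Step (3).} By Step (1), $|S|\ge s$, so we may pick $s-1$ distinct elements $g_1,\dots,g_{s-1}\in S$. Apply Step (2) with $t=s-1$, which is allowed since $s\ge t+1$. Each $d(g_i)$ is a positive integer, so $s-1\le\sum_{i=1}^{s-1} d(g_i)\le C-p$, i.e.\ $s\le C-p+1$.

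The only real obstacle is the sign convention in $(\star)$, discussed at the start. Once it is aligned with $l_\chi$, the rest reduces to the existence of a nonzero functional vanishing on a proper subspace, together with nonnegativity and integrality of $d$.
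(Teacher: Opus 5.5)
Your proof is correct and follows the paper's argument essentially step for step: a nonzero character vanishing on a non-spanning set gives (1) and (2), and (3) comes from combining them with $d(g)\ge 1$ on $S$. The only cosmetic difference is that you take $t=s-1$ directly, while the paper argues by contradiction with $t=C-p+1$. On the sign issue, the paper's proof writes characters additively (``$\chi(g)=0$ for all $g\in S$''), so its condition $\sum_{\chi(g)=1}d(g)\ge p$ already means $\sum_{\chi\cdot g=1}d(g)\ge p$, which is exactly the reading $2l_\chi\ge p$ you adopted.
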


\begin{proof}
Let
\[
S:=\{g\in G : d(g)>0\}
\]
denote the support of $d$. Then $|S|\le C$.

\medskip

\noindent\textit{Proof of $(1)$.} Suppose $\langle S\rangle$ is a proper subspace of $G$. Then there exists a nonzero character $\chi\in G^*$ such that $\chi(g)=0$ for all $g\in S$. For
this $\chi$,
\[
\sum_{\chi(g)=1} d(g)=0,
\]
contradicting $(\star)$. Hence $\langle S\rangle=G$, and therefore
\[
s=\dim G\le |S|\le C.
\]

\medskip

\noindent
\noindent\textit{Proof of $(2)$.} Let $t$ be a positive integer. If $s\ge t+1$, then for any choice of $t$ distinct elements $g_1,\dots,g_t\in G$, there exists a nonzero character
$\chi\in G^*$ such that
\[
\chi(g_1)=\cdots=\chi(g_t)=0.
\]
Assume $s\ge t+1$ and let $g_1,\dots,g_t\in S$ be distinct. Choose $\chi$ as in
Step~$2$. Then
\[
\sum_{\chi(g)=1} d(g)
\leq C - \sum_{i=1}^t d(g_i).
\]
By $(\star)$, we have
\[
\sum_{i=1}^t d(g_i)\le C-p.
\tag{1}
\]
Thus, if $s\ge t+1$, every $t$-subset of the support has total weight at most
$C-p$.

\noindent\textit{Proof of $(3)$.} Set
\[
t := C-p+1.
\]
If $s\ge t+1=C-p+2$, then by (1) every subset of $S$ of size $t$ has total
weight at most $C-p$ which is a contradiction since $d(g) > 0$ for $g \in S$.
\end{proof}

Now we use the above proposition as a base case and sharpen the upper bound to $s$ for each of cases $(2)$ and $(7)$ in \Cref{list of cases}. From now on, define $m_i:=\#\{g\neq 0:d(g)=i\}$ and $n_i:=\#\{\chi\neq 0:\ell(\chi)=i\}$. Recall that $d(0) = \ell(0) = 0$
Then

\begin{lemma}\label{divisor counts in list of cases}

Let $S:=\{g\in G : d(g)>0\}$. In \Cref{list of cases},

\begin{enumerate}
    \item in item $(2)$, $s \leq 7$
    \item in item $(2)$, if $s = 7$, $|S| = 12$ and $d_g = 1$ for all $g \in S$.
    \item in item $(2)$, if $s = 6$, then $d_g \in \{1, 2\}$. If $m_i = | g \in S, d_g = i|$, then the possible tuples are $(m_1) = 12$ $(m_1, m_2) = (10, 1)$.
    \item in item $(2)$, if $s = 5$, then for $g \in S$, $d_g \in \{1,2,3\}$. If $m_i = | g \in S, d_g = i|$, then the possible tuples $(m_1, m_2, m_3)$ are $(m_1, m_3) = (9,1)$, $(m_1, m_2) = (8,2)$, $(m_1, m_2) = (10,1)$, $(m_1) = (12)$.
    \item in item $(2)$, if $s = 4$, then for $g \in S$, $d_g \in \{1,2,3, 4\}$. If $m_i = | g \in S, d_g = i|$, then the possible tuples $(m_1, m_2, m_3, m_4)$ are $(m_1, m_4)= (8,1)$, $(m_1,m_2,m_3) = (7,1,1)$, $(m_1,m_3) = (9,1)$, $(m_2) = (6)$, $(m_1, m_2) = (2,5)$, $(m_1, m_2) = (4,4)$, $(m_1, m_2) = (6,3)$, $(m_1, m_2) = (8,2)$, $(m_1, m_2) = (10,1)$, $(m_1) = (12)$.

    \item in item $(7)$, $s \leq 6$.
    \item in item $(7)$, if $s = 6$, then $|S| = 9$ and $d_g = 1$ for all $g \in S$.
    \item in item $(7)$, if $s = 5$, then $d_g \in \{1, 2\}$. If $m_i = | g \in S, d_g = i|$, then the possible tuples $(m_1, m_2)$ are $(9,0)$ and $(7,1)$.
    \item in item $(7)$, if $s = 4$, then $d_g \in \{1, 2, 3\}$. If $m_i = | g \in S, d_g = i|$, then the possible tuples $(m_1, m_2, m_3)$ are $(9,0,0)$, $(7,1,0)$, $(5,2,0)$, $(6,0,1)$.
    
\end{enumerate}

\end{lemma}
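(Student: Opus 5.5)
We work throughout in the setting of items $(2)$ and $(7)$ of \Cref{list of cases}, where $\mathbb{WP}^3=\mathbb{P}^3$, so $L=1$, $W=4$, and all $l_\chi$ are positive integers. Our main tool is \Cref{bound to $s$}, applied to $d(g)=d_g$. For the hypothesis $(\star)$ we read $\chi(g)=1$ in that proposition as $\chi\cdot g=1$, i.e.\ $\chi(g)=-1$ in multiplicative notation; this is the only reading under which the proof of that proposition goes through.

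\textbf{Translating the data into $(C,p)$.}
\begin{itemize}
\item In item $(2)$ we have $m=1$, $k=2$ and $D=12$. Part $2$ of \Cref{lemma:ineq_comb} gives $l_\chi\ge (k+1)L=3$ for every $\chi\neq 0$. Hence $\sum_{\chi\cdot g=1}d_g=2l_\chi\ge 6$, so $(\star)$ holds with $C=12$ and $p=6$.
\item In item $(7)$ we have $m=2$, $k=1$ and $D=9$. Then $l_\chi\ge 2$, so $(\star)$ holds with $C=9$ and $p=4$.
\end{itemize}
Part $(3)$ of \Cref{bound to $s$} then gives $s\le 12-6+1=7$ in item $(2)$ and $s\le 9-4+1=6$ in item $(7)$. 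This proves statements $(1)$ and $(6)$.

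\textbf{Refined constraints for each $s$.} For every value of $s$ we combine two facts:
\begin{itemize}
\item part $(1)$ of \Cref{bound to $s$}, which gives $|S|\ge s$ and so guarantees enough support elements to choose from;
\item part $(2)$ with $t=s-1$, which says that any $s-1$ distinct elements of $S$ have total weight at most $C-p$.
\end{itemize}
Together with $\sum_g d_g=C$, this becomes a finite partition problem.

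For item $(2)$, where $C-p=6$:
\begin{itemize}
\item \emph{$s=7$.} Any $6$ support elements weigh at most $6$, so each weighs exactly $1$. If some $d_g\ge 2$, pick it together with $5$ others from $S$ (possible since $|S|\ge 7$) to get weight at least $7$. So $d_g=1$ on $S$ and $|S|=12$.
\item \emph{$s=6$.} Any $5$ elements weigh at most $6$. This forbids a value $\ge 3$ (weight $\ge 7$) and forbids two $2$'s (weight $\ge 8$). So we get $(m_1)=(12)$ or $(m_1,m_2)=(10,1)$.
\item \emph{$s=5$.} Any $4$ elements weigh at most $6$. The largest three values plus the smallest remaining one must satisfy this. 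This leaves exactly $(9,0,1)$, $(8,2,0)$, $(10,1,0)$ and $(12,0,0)$.
\item \emph{$s=4$.} The sum of the three largest values must be at most $6$, with $\sum d_g=12$ and $|S|\ge 4$. Listing the partitions of $12$ whose top three parts sum to at most $6$ gives the stated list: a single $4$ with $1$'s, $3+2+1$, $3$ with $1$'s, and mixtures of $2$'s and $1$'s.
\end{itemize}

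For item $(7)$, where $C-p=5$:
\begin{itemize}
\item \emph{$s=6$.} Any $5$ elements weigh at most $5$, so every $d_g=1$ and $|S|=9$.
\item \emph{$s=5$.} Any $4$ elements weigh at most $5$, so there is at most one $2$ and no larger value. This gives $(9,0)$ or $(7,1)$.
\item \emph{$s=4$.} The top three values sum to at most $5$. This gives $(9,0,0)$, $(7,1,0)$, $(5,2,0)$ and $(6,0,1)$.
\end{itemize}

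\textbf{Main obstacle.} The only delicate point is making sure the enumeration of partitions at $s=4$ in item $(2)$ is exhaustive. The cleanest way is to organize it by the largest part $\max d_g\in\{4,3,2,1\}$ and, for each, bound the second and third largest parts by $6-\max d_g$. Note that the lemma only asserts necessary conditions, so no realizability check is needed at this stage.
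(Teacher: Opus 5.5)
Your proposal is correct and takes essentially the paper's route. The paper's proof is the single remark that everything follows from repeated use of parts $(2)$ and $(3)$ of the proposition bounding $s$, and you carry this out with $(C,p)=(12,6)$ and $(9,4)$, taking $t=s-1$ and using part $(1)$ for $|S|\ge s$; your enumerations reproduce each stated list exactly, including the ten tuples for item $(2)$ at $s=4$ when organized by the largest part.
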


\begin{proof}
    The proofs of all these statements come from the repeated use of \Cref{bound to $s$}, part $(2)$ and $(3)$.
\end{proof}


\begin{proposition}\label{m = 2; k = 1; D = 9 impossibility}
Suppose $m = 2, k = 1$ and $D = 9$. Let $S:=\{g\in G : d(g)>0\}$ and $m_i = | g \in S, d_g = i|$.  Then there do not exist admissible solutions for 
\begin{enumerate}
    \item $s = 6, (m_1) = (9)$
    \item $s = 5, (m_1) = (9)$
    \item $s = 5, (m_1,m_2) = (7,1)$
    \item $s = 4,  (m_1, m_2) = (5,2)$
    \item $s = 4,  (m_1, m_3) = (6,1)$
    
\end{enumerate}
\end{proposition}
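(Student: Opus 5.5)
Throughout we are on $\mathbb{P}(1,1,1,1)$, so $W=4$ and $L=1$, and we have $m=2$, $k=1$, $M=1$. An admissible solution then means $2\ell(\chi)=\sum_{\chi\cdot g=1}d(g)$ is an even integer with $\ell(\chi)\ge 2$ for every $\chi\ne 0$. I would translate each case entirely into Fourier data using \Cref{lemma: Fourier}. With $\widehat d(0)=D=9$ we get $\widehat d(\chi)=9-4\ell(\chi)$. Writing $\ell(\chi)=2+t_\chi$ with $t_\chi\in\mathbb Z_{\ge 0}$, this becomes $\widehat d(\chi)=1-4t_\chi$ for $\chi\neq 0$. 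The unknowns are then the $t$-distributions $\{n_i:=\#\{\chi\ne0: t_\chi=i\}\}$, and the goal is to show that none of them is compatible with the prescribed $d$-distribution $(m_i)$.

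The first two filters are the moment identities. By \Cref{lem:first-moment}, $\sum_{\chi\ne0}(1-4t_\chi)=-9$, hence $\sum_\chi t_\chi=(2^s+8)/4$. By \Cref{Plancherel for the unnormalized Fourier transform}, $81+\sum_{\chi\ne0}(1-4t_\chi)^2=2^s\sum_g d(g)^2$, where the right side is fixed by $(m_i)$; for instance it equals $2^s\cdot 9$ in cases (1) and (2). Combining the two identities gives a fixed value of $\sum t_\chi^2$. For example, when $s=6$ and $(m_1)=(9)$ we get $\sum t=18$ and $\sum t^2=36$. Together with $n_0+n_1+\dots=2^s-1$, only a short finite list of distributions $(n_i)$ survives in each of the five cases, and I would enumerate it.

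Next I would apply the cubic identity \Cref{lem:fourier-convolution-cubic}:
\[
729+\sum_{\chi\ne0}(1-4t_\chi)^3=2^s\,(d*d*d)(0).
\]
This forces divisibility of the left side by $2^s$. The right side is also bounded, since $(d*d*d)(0)=\sum_{x+y+z=0}d(x)d(y)d(z)$ only counts ordered triples of pairwise distinct support elements lying on a common $2$-dimensional subspace ($d(0)=0$ kills degenerate triples). For a support of $9$ or $7$ points spanning $\mathbb F_2^s$ (spanning is forced by \Cref{bound to $s$}(1)), this count is small and explicitly bounded; in cases (1)--(3) it is bounded by the number of lines meeting the support in three points. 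I expect most distributions to die here, by non-integrality or by exceeding this bound.

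For any distributions that survive, and I expect these mainly in the $s=4$ cases (4) and (5), where the support is small relative to $2^s$, I would use \Cref{lemma: Inverse Fourier Transform}: $d(x)=2^{-s}\bigl(9+\sum_{\chi\ne0}(1-4t_\chi)(-1)^{\chi\cdot x}\bigr)$. This determines which sets of characters may carry a given $t$-value, since $d$ must be a nonnegative integer with exactly the prescribed multiplicities $(m_i)$. In parallel I would use the hyperplane argument from \Cref{bound to $s$}(2): choose $\chi$ vanishing on $s-1$ chosen support points, including the heavy ones of weight $2$ or $3$. The remaining weight must be at least $4$ and even, which rules out configurations such as $(m_1,m_3)=(6,1)$ at $s=4$, where killing the weight-$3$ point and two others leaves weight $4$ with parity forced. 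The main obstacle is this last step for $s=4$: several moment-compatible $t$-distributions may remain, and I would have to check carefully, using the $\mathrm{GL}_4(\mathbb F_2)$ normalization of the heavy points, that every placement fails integrality or the evenness condition.
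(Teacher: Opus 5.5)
Your plan is essentially the paper's proof: the paper also writes $\widehat d(\chi)=9-4\ell(\chi)$, and it first notes $2\ell(\chi)\le D=9$, so $\ell(\chi)\in\{2,3,4\}$. With that bound, the first-moment and Plancherel identities force a unique $(n_2,n_3,n_4)$ in each case, and the cubic moment then makes $(d*d*d)(0)$ negative in cases (1), (2), (3) and (5). So your hyperplane argument for $(m_1,m_3)=(6,1)$ is not needed; as stated, it only forces the four remaining weight-one points onto $\chi=1$ and is not yet a contradiction.

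The only real survivor is case (4): the forced distribution is $(11,2,2)$ and the cubic moment gives exactly $(d*d*d)(0)=0$. There the paper carries out precisely your inverse-transform step. Set $m=\ell-2$ with $m(0)=-2$, so that $d(g)=-\tfrac14\widehat m(g)$ for $g\neq 0$. The four exceptional characters (say $\alpha_1,\alpha_2$ with $\ell=3$ and $\beta_1,\beta_2$ with $\ell=4$) must be linearly independent, since otherwise $d=-1$ on their common kernel. Then a $g$ with $\alpha_2\cdot g=1$ and $\alpha_1\cdot g=\beta_1\cdot g=\beta_2\cdot g=0$ gives $d(g)=-\tfrac12$, a contradiction.
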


\begin{proof}
\noindent\textit{Proof of $(1)$} Let $\widehat d$ be the unnormalized Fourier transform
\[
\widehat d(\chi)=\sum_{x\in G} d(x)\,(-1)^{\chi\cdot x}.
\]
By \Cref{lemma: Fourier}, for every $\chi\neq 0$ we have
\[
\widehat d(0)-\widehat d(\chi)=4\ell(\chi).
\]
Since $\widehat d(0)=\sum_x d(x)=D=9$, it follows that for $\chi\neq 0$,
\[
\widehat d(\chi)=9-4\ell(\chi).
\]
Moreover $\ell(\chi)=\tfrac12\sum_{\chi\cdot x=1} d(x)$, and the sum on the right is at most
$D=9$, hence $\ell(\chi)\le 4$. Together with $\ell(\chi)\ge 2$, we conclude
\[
\ell(\chi)\in\{2,3,4\}
\qquad\Longrightarrow\qquad
\widehat d(\chi)\in\{1,-3,-7\}\quad(\chi\neq 0).
\]

Let
\[
n_2=\#\{\chi\neq 0:\ell(\chi)=2\},\quad
n_3=\#\{\chi\neq 0:\ell(\chi)=3\},\quad
n_4=\#\{\chi\neq 0:\ell(\chi)=4\}.
\]
Then
\[
n_2+n_3+n_4=63.
\]

\medskip\noindent
\emph{Step 1: first and second moment constraints.}
Since $d(0)=0$, Lemma~\ref{lem:first-moment} gives the first moment constraint
\[
\sum_{\chi\in G^*}\widehat d(\chi)=|G|\cdot d(0)=0.
\]
Using $\widehat d(0)=9$, this becomes
\[
9+\sum_{\chi\neq 0}\widehat d(\chi)=0
\qquad\Longrightarrow\qquad
9+(n_2-3n_3-7n_4)=0,
\]
i.e.
\begin{equation}\label{eq:first-moment-ns}
n_2-3n_3-7n_4=-9.
\end{equation}

Next, \Cref{Plancherel for the unnormalized Fourier transform} yields
\[
\sum_{\chi\in G^*}|\widehat d(\chi)|^2 = |G|\sum_{x\in G}|d(x)|^2.
\]
Here $|G|=64$ and $d(x)^2=d(x)$ since $d$ is $\{0,1\}$--valued, so $\sum_x d(x)^2=D=9$.
Hence
\[
\sum_{\chi\in G^*}|\widehat d(\chi)|^2=64\cdot 9=576.
\]
Subtracting the $\chi=0$ term ($|\widehat d(0)|^2=81$) gives
\[
\sum_{\chi\neq 0}|\widehat d(\chi)|^2=576-81=495,
\]
i.e.
\begin{equation}\label{eq:plancherel-ns}
n_2\cdot 1^2+n_3\cdot 3^2+n_4\cdot 7^2
=
n_2+9n_3+49n_4
=
495.
\end{equation}

Solving the system consisting of
\[
n_2+n_3+n_4=63,
\qquad
\eqref{eq:first-moment-ns},
\qquad
\eqref{eq:plancherel-ns},
\]
gives
\[
n_4=9,\qquad n_3=0,\qquad n_2=54.
\]

In particular,
\begin{equation}\label{eq:rigid-spectrum}
\widehat d(\chi)=
\begin{cases}
-7,& \text{for exactly }9\text{ characters }\chi\neq 0,\\
\ \ 1,& \text{for the remaining }54\text{ characters }\chi\neq 0,
\end{cases}
\qquad\text{and }\widehat d(0)=9.
\end{equation}

\medskip\noindent
\emph{Step 2: cubic moment contradiction.}
Consider the triple convolution $d*d*d$ and evaluate at $0$.
If $S=\{x\in G:d(x)=1\}$, then by
Lemma~\ref{lem:fourier-convolution-cubic}(2),
\[
(d*d*d)(0)=\#\{(x,y,z)\in S^3:\ x+y+z=0\}\in\mathbb Z_{\ge 0}.
\]
On the other hand, the cubic moment identity in the same lemma gives
\[
(d*d*d)(0)=\frac{1}{|G|}\sum_{\chi\in G^*}\widehat d(\chi)^3.
\]
Using \eqref{eq:rigid-spectrum} and $|G|=64$, we compute
\[
\sum_{\chi\in G^*}\widehat d(\chi)^3
=
9^3+54\cdot 1^3+9\cdot(-7)^3
=
729+54-3087=-2304.
\]
Therefore
\[
(d*d*d)(0)=\frac{-2304}{64}=-36,
\]
which contradicts $(d*d*d)(0)\in\mathbb Z_{\ge 0}$. This contradiction shows that no such
function $d$ can exist.  

\noindent\textit{Proof of $(2)$.} In this case, we have
\begin{equation}\label{eq:n1}
n_2+n_3+n_4=31.
\end{equation}

We have
\[
\sum_{\chi\neq 0}\ell_\chi = 2^{s-2}D.
\]
Here $s=5$ and $D=9$, so $\sum_{\chi\neq 0}\ell_\chi=2^{3}\cdot 9=72$, i.e.
\begin{equation}\label{eq:n2}
2n_2+3n_3+4n_4=72.
\end{equation}

Next apply Parseval:
\[
\sum_{\chi\in G^*}\widehat d(\chi)^2
=|G|\sum_{x\in G} d(x)^2.
\]
Since $d\in\{0,1\}$, we have $\sum_x d(x)^2=\sum_x d(x)=9$, and $|G|=32$, so
\[
\sum_{\chi}\widehat d(\chi)^2 = 32\cdot 9=288.
\]
Subtracting the $\chi=0$ term gives
\[
\sum_{\chi\neq 0}\widehat d(\chi)^2
=288-\widehat d(0)^2
=288-81
=207.
\]
But for $\chi\neq 0$, $\widehat d(\chi)=9-4\ell_\chi$, hence
\[
\widehat d(\chi)=
\begin{cases}
1 & \text{if }\ell_\chi=2,\\
-3 & \text{if }\ell_\chi=3,\\
-7 & \text{if }\ell_\chi=4.
\end{cases}
\]
Therefore
\begin{equation}\label{eq:n3}
n_2\cdot 1^2 + n_3\cdot (-3)^2 + n_4\cdot (-7)^2
\;=\;
n_2+9n_3+49n_4
\;=\; 207.
\end{equation}

Solving \eqref{eq:n1}, \eqref{eq:n2}, \eqref{eq:n3} yields
\[
(n_2,n_3,n_4)=(24,4,3).
\]
Indeed, from \eqref{eq:n1} and \eqref{eq:n2} we get $n_3+2n_4=10$, and substituting
$n_2=31-n_3-n_4$ into \eqref{eq:n3} gives $n_3+6n_4=22$, hence $n_4=3$,
$n_3=4$, and $n_2=24$.

Finally, consider the triple convolution at $0$:
\[
(d*d*d)(0)=\sum_{x+y+z=0} d(x)d(y)d(z)\ \ge\ 0,
\]
since $d\ge 0$. On the other hand, Fourier theory gives
\[
(d*d*d)(0)=\frac{1}{|G|}\sum_{\chi\in G^*}\widehat d(\chi)^3.
\]
Using $\widehat d(0)=9$ and the multiplicities $(n_2,n_3,n_4)=(24,4,3)$, we compute
\[
\sum_{\chi}\widehat d(\chi)^3
=9^3 + 24\cdot 1^3 + 4\cdot(-3)^3 + 3\cdot(-7)^3
=729+24-108-1029=-384.
\]
Thus
\[
(d*d*d)(0)=\frac{-384}{32}=-12<0,
\]
a contradiction. Hence no such function $d$ exists.

\noindent\textit{Proof of $(3)$} We have
\begin{equation}\label{eq1}
n_2+n_3+n_4=31.
\end{equation}
Using the standard first-moment identity
\[
\sum_{\chi\neq 0}\ell_\chi=2^{s-2}D,
\]
we obtain for $s=5$, $D=9$:
\begin{equation}\label{eq2}
2n_2+3n_3+4n_4=72.
\end{equation}
Subtracting $2\times$\eqref{eq1} from \eqref{eq2} gives
\begin{equation}\label{eq3}
n_3+2n_4=10.
\end{equation}

Now apply Parseval:
\[
\sum_{\chi\in G^*}\widehat d(\chi)^2
=|G|\sum_{g\in G} d(g)^2.
\]
Here
\[
\sum_{g} d(g)^2 = 7\cdot 1^2 + 1\cdot 2^2 = 11,
\]
so
\[
\sum_{\chi}\widehat d(\chi)^2 = 32\cdot 11 = 352.
\]
Since $\widehat d(0)=9$, we get
\[
\sum_{\chi\neq 0}\widehat d(\chi)^2 = 352-81=271.
\]
But for $\chi\neq 0$,
\[
\widehat d(\chi)=
\begin{cases}
1 & \ell_\chi=2,\\
-3 & \ell_\chi=3,\\
-7 & \ell_\chi=4,
\end{cases}
\]
hence
\begin{equation}\label{eq4}
n_2+9n_3+49n_4=271.
\end{equation}
Substituting $n_2=31-n_3-n_4$ into \eqref{eq4} yields
\[
31+8n_3+48n_4=271
\quad\Rightarrow\quad
n_3+6n_4=30.
\]
Together with \eqref{eq3}, this gives
\[
n_4=5,\qquad n_3=0,\qquad n_2=26.
\]

Finally, since $d\ge 0$ we have $(d*d*d)(0)\ge 0$, while Fourier theory gives
\[
(d*d*d)(0)=\frac{1}{|G|}\sum_{\chi\in G^*}\widehat d(\chi)^3.
\]
Using $\widehat d(0)=9$ and the above multiplicities,
\[
\sum_{\chi}\widehat d(\chi)^3
=9^3+26\cdot 1^3+0\cdot(-3)^3+5\cdot(-7)^3
=729+26-1715=-960.
\]
Thus
\[
(d*d*d)(0)=\frac{-960}{32}=-30<0,
\]
a contradiction. Hence no such function $d$ exists.

\noindent\textit{Proof of $(5)$.} In this case, we have $n_2+n_3+n_4=15$.

Using the standard first-moment identity
\[
\sum_{\chi\neq 0}\ell_\chi=2^{s-2}D,
\]
we have 
\begin{equation}\label{eq:firstmoment}
2\cdot n_2 + 3\cdot n_3 + 4\cdot n_4= 36.
\end{equation}

Parseval for the unnormalized transform gives
\[
\sum_{g\in G} d_g^2=\frac1{|G|}\sum_{\chi\in G^*}\widehat d(\chi)^2.
\]
From $(m_1,m_2,m_3)=(6,0,1)$ we compute
\[
\sum_{g\in G} d_g^2 = 6\cdot 1^2 + 1\cdot 3^2 = 6+9=15,
\]
hence
\[
\sum_{\chi\in G^*}\widehat d(\chi)^2 = |G|\cdot 15 = 16\cdot 15=240.
\]
Subtracting the trivial character term $\widehat d(0)^2=9^2=81$ yields
\[
\sum_{\chi\neq 0}\widehat d(\chi)^2 = 240-81=159.
\]
In terms of $(n_2,n_3,n_4)$ this becomes
\begin{equation}\label{eq:secondmoment}
1\cdot n_2 + 9\cdot n_3 + 49\cdot n_4 =159.
\end{equation}

Together with $n_2+n_3+n_4=15$, the linear system
\eqref{eq:firstmoment}--\eqref{eq:secondmoment} has the unique solution
\[
(n_2,n_3,n_4)=(12,0,3).
\]
Equivalently, among the $15$ nontrivial characters, $\ell_\chi=2$ occurs $12$ times,
$\ell_\chi=3$ occurs $0$ times, and $\ell_\chi=4$ occurs $3$ times.

The cubic-moment / triple-convolution identity (with unnormalized Fourier transform) is
\[
(d*d*d)(0)=\frac1{|G|}\sum_{\chi\in G^*}\widehat d(\chi)^3.
\]
Using $\widehat d(0)=9$, and for $\chi\neq 0$ the multiset
\[
\widehat d(\chi)=\underbrace{1,\dots,1}_{12\ \text{times}},\ \underbrace{-7,\dots,-7}_{3\ \text{times}},
\]
we compute
\[
\sum_{\chi\in G^*}\widehat d(\chi)^3
= 9^3 + 12\cdot 1^3 + 3\cdot(-7)^3
=729+12-1029=-288,
\]
hence
\[
(d*d*d)(0)=\frac{-288}{16}=-18.
\]
But by definition,
\[
(d*d*d)(0)=\sum_{\substack{a+b+c=0}} d_a d_b d_c \ \ge\ 0,
\]
a contradiction. Therefore no such function $d$ can exist.

\noindent\textit{Proof of $(4)$.} 
We have $n_2+n_3+n_4=15$. 

Next we apply the identity
\[
\sum_{\chi\neq 0}\ell_\chi = 2^{s-2}D,
\]
valid for $s=4$. Thus $\sum_{\chi\neq 0}\ell_\chi=2^{2}\cdot 9=36$, i.e.
\begin{equation}\label{eq:sum-l}
2n_2+3n_3+4n_4=36.
\end{equation}
Finally, Parseval for the unnormalized Fourier transform gives
\[
\sum_{g\in G} d_g^2=\frac{1}{|G|}\sum_{\chi\in G^*}\widehat d(\chi)^2.
\]
From $(m_1,m_2)=(5,2)$ we compute
\[
\sum_{g\in G} d_g^2 = 5\cdot 1^2 + 2\cdot 2^2 = 5+8=13.
\]
Hence
\[
\sum_{\chi\in G^*}\widehat d(\chi)^2 = |G|\cdot 13 = 16\cdot 13=208.
\]
Subtracting the trivial character contribution $\widehat d(0)^2=9^2=81$ yields
\[
\sum_{\chi\neq 0}\widehat d(\chi)^2 = 208-81=127,
\]
so in terms of $n_2,n_3,n_4$ we have
\begin{equation}\label{eq:parseval-n}
n_2+9n_3+49n_4=127.
\end{equation}

Solving the system
\[
n_2+n_3+n_4=15,\qquad
2n_2+3n_3+4n_4=36,\qquad
n_2+9n_3+49n_4=127
\]
gives $n_4=2$, $n_3=2$, and $n_2=11$. Therefore $(n_2,n_3,n_4)=(11,2,2)$, as claimed. However, in this case, the cubic moment identity does not yield a contradiction. So we proceed as follows.
Assume for contradiction that $(n_2,n_3,n_4)=(11,2,2)$.  Define
$m:G^*\to\mathbb Z$ by $m(\chi)=l(\chi)-2$ for all $\chi$, so that $m(0)=-2$,
exactly two nonzero characters $\alpha_1,\alpha_2$ satisfy $m(\alpha_i)=1$,
exactly two nonzero characters $\beta_1,\beta_2$ satisfy $m(\beta_j)=2$, and
$m(\chi)=0$ for the remaining eleven nonzero characters.

Let $\widehat m:G\to\mathbb Z$ be the unnormalized Walsh transform
\[
\widehat m(g)=\sum_{\chi\in G^*} m(\chi)(-1)^{\chi\cdot g}.
\]
Then for every $g\in G$,
\[
\widehat m(g)
=
-2+(-1)^{\alpha_1\cdot g}+(-1)^{\alpha_2\cdot g}
+2\bigl((-1)^{\beta_1\cdot g}+(-1)^{\beta_2\cdot g}\bigr).
\]
From Fourier inversion (equivalently from $\widehat d(\chi)=9-4l(\chi)$) one has
\[
d(g)=\frac1{16}\sum_{\chi\in G^*}\widehat d(\chi)(-1)^{\chi\cdot g}
=\frac1{16}\sum_{\chi}(1-4m(\chi))(-1)^{\chi\cdot g}
=-\frac14\cdot\frac1{4}\sum_{\chi}m(\chi)(-1)^{\chi\cdot g}
=-\frac14\,\widehat m(g)\qquad(g\neq 0),
\]

Since $d(g)\in\mathbb Z_{\ge 0}$, it follows that
\[
\widehat m(g)\in\{0,-4,-8\}\qquad (g\neq 0).
\tag{$\ast$}
\]

Consider the subspace
\[
K:=\{g\in G:\alpha_1\cdot g=\alpha_2\cdot g=\beta_1\cdot g=\beta_2\cdot g=0\}.
\]
If $g\in K$, then all signs in the above formula equal $+1$, so $\widehat m(g)=4$.
Hence $K$ cannot contain any nonzero element, and therefore $K=\{0\}$.  This
implies that the four characters $\alpha_1,\alpha_2,\beta_1,\beta_2$ are
linearly independent in $G^*$.

Consequently the map
\[
G\longrightarrow (\mathbb F_2)^4,\qquad
g\longmapsto(\alpha_1\cdot g,\alpha_2\cdot g,\beta_1\cdot g,\beta_2\cdot g)
\]
is a bijection.  Choose $g\in G$ such that
\[
\alpha_1\cdot g=0,\qquad
\alpha_2\cdot g=1,\qquad
\beta_1\cdot g=0,\qquad
\beta_2\cdot g=0.
\]
Then $g\neq 0$ and
\[
\widehat m(g)
=
-2+(+1)+(-1)+2\bigl((+1)+(+1)\bigr)=2,
\]
which contradicts $(\ast)$.  This contradiction shows that the $l$--tuple
$(11,2,2)$ cannot occur.

\end{proof}

\begin{proposition}\label{m = 2; k = 1; D = 9 possibility}
Suppose $m = 2, k = 1, s = 4$ and $D = 9$. Let $S:=\{g\in G : d(g)>0\}$ and $m_i = | g \in S, d_g = i|$.  Then in the cases 
\begin{enumerate}
    \item $(m_1, m_2) = (7,1)$, then there is a unique function $d$ upto $\operatorname{GL}_4(\mathbb{F}_2)$ action given by 
    \[
d_g \;=\;
\begin{cases}
2, & g=(1,1,1,1),\\[4pt]
1, & g\in\{(0,1,0,0),\ (1,0,0,0),\ (1,0,0,1),\ (1,0,1,0),\\
   & \hphantom{g\in\{} (1,1,0,0),\ (1,1,0,1),\ (1,1,1,0)\},\\[6pt]
0, & \text{otherwise.}
\end{cases}
\]
Moreover, in this case, $(n_2, n_3, n_4) = (10, 4, 1)$
    \item $(m_1, m_2) = (9,0)$, then there is a unique function $d$ upto $\operatorname{GL}_4(\mathbb{F}_2)$ action given by 
    \[
d_g \;=\;
\begin{cases}
1, & g\in\{(0,1,0,1),(0,1,1,0),(0,1,1,1),\\
   & \hphantom{g\in\{} (1,0,0,1),(1,0,1,0),(1,0,1,1),\\
   & \hphantom{g\in\{} (1,1,0,1),(1,1,1,0),(1,1,1,1)\},\\[6pt]
0, & \text{otherwise.}
\end{cases}
\]
Moreover, in this case, $(n_2, n_3, n_4) = (9, 6, 0)$

\end{enumerate}   
\end{proposition}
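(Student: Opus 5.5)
We follow the same Fourier strategy as in \Cref{m = 2; k = 1; D = 9 impossibility}. First we pin down the $\ell$-distribution $(n_2,n_3,n_4)$ from the moment identities. Then we use Fourier inversion to reconstruct $d$ and show it is unique up to $\operatorname{GL}_4(\mathbb{F}_2)$.

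The setup is as before. Here $s=4$, $|G|=16$, $\widehat d(0)=D=9$, and for $\chi\neq 0$ we have $\widehat d(\chi)=9-4\ell(\chi)$ with $\ell(\chi)\in\{2,3,4\}$.

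\textbf{Determining $(n_2,n_3,n_4)$.} Three linear relations are available:
\begin{itemize}
\item counting: $n_2+n_3+n_4=15$;
\item first moment: $2n_2+3n_3+4n_4=2^{s-2}D=36$;
\item Plancherel (\Cref{Plancherel for the unnormalized Fourier transform}): $n_2+9n_3+49n_4=16\sum_g d_g^2-81$.
\end{itemize}
In case (1), $\sum d_g^2=7+4=11$, so the right side of Plancherel is $95$. Subtracting the first two relations gives $n_3+2n_4=6$ and $n_3+6n_4=10$, hence $(n_2,n_3,n_4)=(10,4,1)$. In case (2), $\sum d_g^2=9$, so the right side is $63$. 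This gives $n_3+6n_4=6$ together with $n_3+2n_4=6$, hence $(9,6,0)$. This settles the ``moreover'' claims.

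\textbf{Reconstructing $d$.} Write $m(\chi)=\ell(\chi)-2$, so $m(0)=-2$. As in the proof of part (4) of \Cref{m = 2; k = 1; D = 9 impossibility}, for $g\neq 0$ we get $d(g)=-\tfrac14\widehat m(g)$. We also need $d(g)\in\{0,1,2\}$ with the prescribed counts.

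\emph{Case (1).} Let $\beta$ be the unique character with $\ell=4$ and $\alpha_1,\dots,\alpha_4$ those with $\ell=3$. Then
\[
\widehat m(g)=-2+2(-1)^{\beta\cdot g}+\sum_{i}(-1)^{\alpha_i\cdot g}.
\]
We will run the argument of the $K$-subspace trick.
\begin{itemize}
\item The common kernel $K$ of $\beta,\alpha_1,\dots,\alpha_4$ must be trivial, since any $g\in K$ has $\widehat m(g)=4$.
\item So these five characters span $G^*\cong\mathbb F_2^4$ and satisfy exactly one linear relation.
\item Testing the condition $\widehat m(g)\in\{0,-4,-8\}$ against the possible relation types fixes the configuration up to $\operatorname{GL}_4(\mathbb F_2)$. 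The relation types are: $\beta$ in the span of some of the $\alpha$'s, or a relation among the $\alpha$'s alone, of length $2$, $3$ or $4$. Each can be checked on a few explicit sign patterns, exactly as the single choice of $g$ was used in the impossibility proof.
\item The surviving configuration should be $\alpha_1+\alpha_2+\alpha_3+\alpha_4=\beta$ or something similar. Choosing that basis, evaluating $-\tfrac14\widehat m$ recovers the displayed $d$, which we double-check against $\ell(\chi)=\tfrac12\sum_{\chi\cdot g=1}d_g$.
\end{itemize}

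\emph{Case (2).} There are six characters with $\ell=3$ and none with $\ell=4$, so
\[
\widehat m(g)=-2+\sum_{i=1}^6(-1)^{\alpha_i\cdot g}.
\]
The condition $\widehat m(g)\in\{0,-4\}$ for all $g\neq 0$ means that for every $g\neq 0$ either $4$ or $2$ of the $\alpha_i$ take the value $+1$ at $g$; equivalently, each nonzero $g$ lies in exactly $2$ or $4$ of the six kernels. The plan is to show the six $\alpha_i$ form a specific configuration, namely the complement in $\mathbb P^3(\mathbb F_2)$ of a plane-minus-line, i.e.\ the $6$ points off a Fano plane together with $\ldots$. We identify it by computing pairwise sums and using the fact that $\sum_{g\neq0}$ of these incidence counts is fixed. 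The dual statement is that the support of $d$ (the $9$ listed $g$'s) is the complement of $\{0\}\cup W_1\cup W_2$ for two $2$-dimensional subspaces $W_1,W_2$ meeting in $0$. Indeed, the listed set consists of the vectors with both $(g_1,g_2)\neq 0$ and $(g_3,g_4)\neq 0$, and this is visibly $\operatorname{GL}_4$-unique.

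\textbf{Main obstacle.} The hardest part is the uniqueness step: ruling out the other linear configurations of the characters. This is a finite but fiddly case analysis. I would try first to convert it to the support side. Using $\ell(\chi)\ge 2$, i.e.\ $|S\setminus\chi^\perp|$ weighted $\ge 4$, together with the exact $\ell$-multiset, we can classify $S$ as a subset of $\mathbb F_2^4\setminus\{0\}$ directly. If that fails, a brute-force check over the $15$ points is small enough to verify by computer.
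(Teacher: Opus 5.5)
Your moment computations are correct and match the paper: the counting, first-moment and Plancherel relations give $(n_2,n_3,n_4)=(10,4,1)$ and $(9,6,0)$. The gap is in the uniqueness step, which is the real content of the proposition and which you leave as a plan.

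\textbf{Case (1).} Your relation-type analysis does work, but you have not carried it out, and your predicted survivor is wrong. If $\beta=\alpha_1+\alpha_2+\alpha_3+\alpha_4$ with the $\alpha_i$ a basis, then the $g$ with $(\alpha_i\cdot g)_i=(1,0,0,0)$ gives $\widehat m(g)=-2$. The types $\alpha_1+\alpha_2+\alpha_3=0$, $\beta=\alpha_1+\alpha_2$ and $\beta=\alpha_1+\alpha_2+\alpha_3$ all fail at the $g$ on which $\alpha_4$ is the only one of the five characters equal to $1$. Such a $g$ exists because $\alpha_4$ is not in the relation, and there $\widehat m(g)=2$. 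The only survivor is $\alpha_1+\alpha_2+\alpha_3+\alpha_4=0$ with $\beta\notin\langle\alpha_i\rangle$, i.e.\ the four $\alpha$'s form an affine $2$-plane.

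The paper avoids the case split. Since $2\ell(\beta)=8=D-1$, there is a unique $h\in\beta^\perp$ with $d(h)=1$, and the unique $k$ with $d(k)=2$ has $\beta\cdot k=1$. Evaluating $\widehat m$ at $h$ and $k$ forces $\alpha\cdot h=\alpha\cdot k=1$ for all four $\alpha$, so $A=\{\alpha:\alpha\cdot h=\alpha\cdot k=1\}$. Uniqueness then follows from transitivity of $\operatorname{GL}_4(\mathbb F_2)$ on triples $(\beta,h,k)$.

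\textbf{Case (2).} Here the key idea is genuinely missing. Your character-side description is wrong: a plane minus a line has $4$ points, so its complement in $\mathbb P^3(\mathbb F_2)$ has $11$, and the complement of a Fano plane has $8$. In fact the six characters with $\ell=3$ form two skew lines, $T=(A\setminus\{0\})\sqcup(B\setminus\{0\})$ with $A\oplus B=G^*$, and nothing in your sketch forces this.

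Fixed incidence counts do not help. Let $p(g)$ be the number of the six kernels containing $g$. Then $\sum_{g\neq 0}p(g)$ and $\sum_{g\neq 0}p(g)^2$ take the same values, $42$ and $132$, for every six distinct nonzero characters, and they only recover $|S|=9$.

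The paper's argument is a convolution. With $f=\mathbf 1_T$, your condition says $\widehat f(g)=\pm 2$ for all $g\neq 0$. So $\widehat{f*f}=\widehat f^{\,2}$ equals $36$ at $0$ and $4$ elsewhere, and Fourier inversion gives $(f*f)(u)=2$ for every $u\neq 0$. Thus every nonzero character is the sum of exactly one unordered pair from $T$. For $t\in T$, that pair $\{a,b\}$ spans a $2$-plane $\{0,a,b,t\}$ whose nonzero part lies in $T$. Uniqueness of the pair forces distinct such planes to meet only in $0$, and $|T|=6$ then gives two complementary planes. Dualizing yields $S=G\setminus(A^\perp\cup B^\perp)$, which is your (correct) support-side description.

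A computer search over the $\binom{15}{9}$ candidate supports would confirm the answer, but it does not replace this argument.
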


We break the proof of the \Cref{m = 2; k = 1; D = 9 possibility} into the following lemmas.

\begin{lemma}[Rigidity in the $(10,4,1)$ case]
Let $G=(\mathbb Z_2)^4$.  Let $d:G\to\mathbb Z_{\ge0}$ satisfy $d(0)=0$ and
$\sum_{g\in G} d(g)=9$.  Define $\ell:G^*\to\mathbb Z_{\ge0}$ by $l(0)=0$ and, for
$\chi\neq0$,
\[
2\ell(\chi)=\sum_{\chi\cdot g=1} d(g),
\]
and assume $\ell(\chi) \ge 2$ for all $\chi\neq0$.  

Then $(n_2,n_3,n_4) = (10,4,1)$ and
the function $d$ (equivalently $\ell$) is uniquely determined by the data of a
triple $(\beta,h,k)$, where
\begin{enumerate}
\item $\beta\in G^*\setminus\{0\}$ is the unique character with $\ell(\beta)=4$;
\item $h,k\in G\setminus\{0\}$ satisfy
\[
\beta\cdot h=0,\qquad \beta\cdot k=1,
\]
and
\[
d(h)=1,\qquad d(k)=2.
\]
\end{enumerate}
\end{lemma}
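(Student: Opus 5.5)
The hypothesis the lemma is used under, namely $(m_1,m_2)=(7,1)$ from \Cref{m = 2; k = 1; D = 9 possibility}(1), must be made explicit. The lemma as stated does not list it, yet the spectrum $(10,4,1)$ is only forced once $\sum_g d(g)^2=11$ is known. The proof then has two parts. First we pin down the $\ell$-spectrum with the moment identities, exactly as in \Cref{m = 2; k = 1; D = 9 impossibility}. Second we reconstruct $d$ combinatorially from the unique character with $\ell=4$.

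\textbf{Step 1 (spectrum).} Since $\widehat d(0)=D=9$, \Cref{lemma: Fourier} gives $\widehat d(\chi)=9-4\ell(\chi)$. Also $2\ell(\chi)\le 9$, so $\ell(\chi)\in\{2,3,4\}$ for $\chi\neq0$. Counting the fifteen nontrivial characters gives $n_2+n_3+n_4=15$. The identity $\sum_{\chi\ne0}\ell(\chi)=2^{s-2}D=36$ (equivalently \Cref{lem:first-moment}) gives $2n_2+3n_3+4n_4=36$, hence $n_3+2n_4=6$. For Parseval (\Cref{Plancherel for the unnormalized Fourier transform}) we use $\sum_g d(g)^2=7\cdot1+1\cdot4=11$. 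Then
\[
\sum_{\chi\neq0}\widehat d(\chi)^2=16\cdot 11-81=95,
\]
that is, $n_2+9n_3+49n_4=95$. Substituting $n_2=15-n_3-n_4$ gives $n_3+6n_4=10$. Hence $n_4=1$, $n_3=4$, $n_2=10$. In particular there is a unique $\beta\ne0$ with $\ell(\beta)=4$.

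\textbf{Step 2 (reconstruction from $(\beta,h,k)$).} The condition $\ell(\beta)=4$ means the affine hyperplane $H_\beta=\{g:\beta\cdot g=1\}$, which has $8$ elements, carries weight $8$. Hence exactly one unit of weight lies in the linear hyperplane $\ker\beta$. That unit must sit on an element $h$ with $d(h)=1$ and $\beta\cdot h=0$, because an element of weight $2$ there would carry too much. So the unique element $k$ with $d(k)=2$ lies in $H_\beta$. The remaining weight $6$ on $H_\beta\setminus\{k\}$ consists of six elements of weight $1$, so exactly one element $z\in H_\beta\setminus\{k\}$ has $d(z)=0$.

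Next we apply the evenness condition. Integrality of $\ell$ is exactly the statement that $\sum_g d(g)\,g=0$ in $\mathbb F_2^4$ (\Cref{evenness condition}, applied with multiplicities mod $2$). We also use that the sum of the eight elements of an affine hyperplane in $\mathbb F_2^4$ is $0$. Together these give
\[
0=h+\sum_{g\in H_\beta\setminus\{k,z\}}g=h+k+z,
\]
so $z=h+k$. Therefore $d$ is completely determined by $(\beta,h,k)$: it equals $1$ at $h$, $2$ at $k$, $1$ on $H_\beta\setminus\{k,h+k\}$, and $0$ elsewhere. Since $\ell$ is computed from $d$, it is determined as well.

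\textbf{Step 3 (consistency).} It remains to check that every triple with $\beta\cdot h=0$ and $\beta\cdot k=1$ actually yields $\ell(\chi)\ge2$ for all $\chi\neq0$. Since $\mathrm{GL}_4(\mathbb F_2)$ acts transitively on such triples, it suffices to check one representative, and we take the explicit function in \Cref{m = 2; k = 1; D = 9 possibility}(1). Because that function has $d\in\{0,1,2\}$ with seven ones and one two, Step 1 automatically recovers the spectrum $(10,4,1)$ for it; the only remaining check is that every nontrivial $\ell(\chi)$ is at least $2$, which is a finite check over the fifteen characters.

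The main obstacle is Step 2. The moment identities alone leave freedom in where the zero of $H_\beta$ sits, and the key input that resolves it is the parity (evenness) relation, which forces $z=h+k$. Transitivity of $\mathrm{GL}_4(\mathbb F_2)$ on the triples then gives the uniqueness claimed in \Cref{m = 2; k = 1; D = 9 possibility}(1).
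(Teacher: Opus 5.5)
Your proof is correct, and you are right that the hypothesis $(m_1,m_2)=(7,1)$ must be added. The paper's proof assumes it from its first line. Without it, the $(9,6,0)$ function of \Cref{lem:rigidity-960} satisfies every stated hypothesis but violates the conclusion.

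Your Step~1 is essentially the paper's moment computation. Step~2, however, takes a genuinely different route.

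The paper works on the dual side. It sets $m=\ell-2$ and uses the inversion formula $d(g)=-\tfrac14\widehat m(g)$ for $g\neq 0$. Evaluating at $k$ and at $h$ shows that every character $\alpha$ with $\ell(\alpha)=3$ satisfies $\alpha\cdot h=\alpha\cdot k=1$. This affine $2$-flat in $G^*$ has exactly four points, and there are four characters with $\ell=3$, so the level sets of $\ell$ are forced; $d$ then follows by inversion.

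You work on the group side instead. Weight counting on $\ker\beta$ and on $H_\beta$ leaves exactly one zero $z\in H_\beta\setminus\{k\}$. The parity relation $\sum_g d(g)\,g=0$ (equivalent to integrality of $\ell$), combined with $\sum_{g\in H_\beta}g=0$, then forces $z=h+k$.

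Your version is more elementary and needs from Step~1 only a character $\beta$ with $\ell(\beta)=4$. The paper needs the full spectrum, both $n_4=1$ and $n_3=4$, to identify the $\ell=3$ characters with that four-point flat. Your version also gives the closed form of $d$ directly. That makes your Step~3 consistency check, and the match with the representative in \Cref{m = 2; k = 1; D = 9 possibility}, immediate.

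The paper's version instead gives an explicit description of $\ell$ itself: the four characters with $\ell=3$ are exactly those with $\alpha\cdot h=\alpha\cdot k=1$. It also stays within the Fourier-transform framework the paper uses for the other cases.
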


\begin{proof}
First let us show that when $(m_1, m_2) = (7,1)$, we have $(n_2, n_3, n_4) = (10,4,1)$. 
We have $\sum_{g\in G} d(g)^2=7\cdot 1^2+1\cdot 2^2=11$.
For $\chi\neq 0$ one has $\widehat d(\chi)=9-4\ell(\chi)$, hence
$\widehat d(\chi)\in\{1,-3,-7\}$.
Then
\[
n_2+n_3+n_4=2^4-1=15,
\qquad
2n_2+3n_3+4n_4=\sum_{\chi\neq 0}\ell(\chi)=2^{\,4-2}D=4\cdot 9=36.
\]
Moreover, by Plancherel for the unnormalized transform,
\[
\sum_{\chi\in G^*}\widehat d(\chi)^2=|G|\sum_{g\in G} d(g)^2=16\cdot 11=176.
\]
Since $\widehat d(0)=D=9$, we get
\[
\sum_{\chi\neq 0}\widehat d(\chi)^2=176-9^2=95,
\]
i.e.
\[
n_2\cdot 1^2+n_3\cdot 3^2+n_4\cdot 7^2=n_2+9n_3+49n_4=95.
\]
Solving the above three linear equations gives
\[
(n_2,n_3,n_4)=(10,4,1).
\]
Set $m(\chi)=l(\chi)-2$ for all $\chi\in G^*$; then $m(0)=-2$.  Since
$(n_2,n_3,n_4)=(10,4,1)$, among nonzero characters exactly four satisfy $m(\chi)=1$,
exactly one satisfies $m(\chi)=2$ (namely $\beta$), and the remaining ten satisfy
$m(\chi)=0$.  Let
\[
A:=\{\alpha\in G^*\setminus\{0\}: m(\alpha)=1\},
\]
so $|A|=4$.

Let $\widehat m:G\to\mathbb Z$ denote the unnormalized Walsh transform
\[
\widehat m(g)=\sum_{\chi\in G^*} m(\chi)(-1)^{\chi\cdot g}.
\]
Since $m$ is supported on $\{0\}\cup A\cup\{\beta\}$, for every $g\in G$ we have
\[
\widehat m(g)
=
-2+\sum_{\alpha\in A}(-1)^{\alpha\cdot g}
+2(-1)^{\beta\cdot g}.
\tag{1}
\]
As before, Fourier inversion gives, for all $g\neq0$,
\[
d(g)=-\tfrac14\,\widehat m(g).
\tag{2}
\]

In the present case, Parseval's identity (or the previously established count
lemma) implies that the $d$--count tuple is $(m_0,m_1,m_2)=(7,7,1)$.  In particular,
there is a unique element $k\neq0$ with $d(k)=2$, and since $2l(\beta)=8$ we have
\[
\sum_{\beta\cdot g=0} d(g)=1,
\]
which forces the existence of a unique element $h\neq0$ with $\beta\cdot h=0$ and
$d(h)=1$.  Thus the triple $(\beta,h,k)$ is canonically associated to $d$.

We now show that $d$ is uniquely determined by this triple.  Evaluating
\emph{(1)} at $g=k$, we have $\widehat m(k)=-8$ by \emph{(2)}.  Since
$\beta\cdot k=1$, it follows that
\[
-8=-2+\sum_{\alpha\in A}(-1)^{\alpha\cdot k}-2,
\]
hence $\sum_{\alpha\in A}(-1)^{\alpha\cdot k}=-4$.  Therefore
\[
\alpha\cdot k=1
\qquad\text{for all }\alpha\in A.
\tag{3}
\]

Similarly, evaluating \emph{(1)} at $g=h$, we have $\widehat m(h)=-4$ and
$\beta\cdot h=0$, so
\[
-4=-2+\sum_{\alpha\in A}(-1)^{\alpha\cdot h}+2,
\]
which yields $\sum_{\alpha\in A}(-1)^{\alpha\cdot h}=-4$, and hence
\[
\alpha\cdot h=1
\qquad\text{for all }\alpha\in A.
\tag{4}
\]

Consider the affine subspace of $G^*$ defined by the two linear conditions
\[
\alpha\cdot k=1,\qquad \alpha\cdot h=1.
\]
Since $\beta\cdot h\neq\beta\cdot k$, we have $h\neq k$, and these two conditions
are independent.  Hence the set
\[
A':=\{\alpha\in G^*:\ \alpha\cdot k=1,\ \alpha\cdot h=1\}
\]
has cardinality $2^{4-2}=4$.  By \emph{(3)} and \emph{(4)}, we have $A\subseteq A'$,
and since $|A|=4$ it follows that $A=A'$.

Thus the set $A$ is uniquely determined by $(h,k)$, and $\beta$ is already part of
the data.  Consequently $m$ is uniquely determined, hence so is $\widehat m$, and
therefore $d$ is uniquely determined on $G\setminus\{0\}$ by \emph{(2)} (with
$d(0)=0$).  This completes the proof.
\end{proof}

\begin{lemma}[Transitivity on admissible triples]
Let $G=(\mathbb F_2)^4$.  Consider triples $(\beta,h,k)$ with
\begin{enumerate}
\item $\beta\in G^*\setminus\{0\}$,
\item $h,k\in G\setminus\{0\}$,
\item $\beta\cdot h=0$ and $\beta\cdot k=1$.
\end{enumerate}
The group $GL(4,2)$ acts transitively on the set of such triples via
\[
T\cdot(\beta,h,k):=(T\cdot\beta,\;Th,\;Tk),
\qquad
(T\cdot\beta)(g):=\beta(T^{-1}g).
\]
Equivalently, given two triples $(\beta,h,k)$ and $(\beta',h',k')$ satisfying the
same relations, there exists $T\in GL(4,2)$ such that
\[
T\cdot\beta=\beta',\qquad Th=h',\qquad Tk=k'.
\]
\end{lemma}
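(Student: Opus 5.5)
The plan is to reduce transitivity on triples to the standard facts that $GL(4,2)$ acts transitively on nonzero vectors, and that the stabilizer of a vector acts transitively on suitable complements. The cleanest route is to build an adapted basis for each triple and let $T$ be the change of basis.

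Fix $(\beta,h,k)$ satisfying the relations. Since $\beta\cdot h=0\neq 1=\beta\cdot k$, the vectors $h$ and $k$ are distinct and nonzero. The vector $h$ also lies in $\ker\beta$, while $k$ does not, so $h$ and $k$ are linearly independent. Extend $h$ to a basis $h,e_2,e_3$ of the $3$-dimensional subspace $\ker\beta$; this is possible since $h\neq 0$ lies in it. Then $h,e_2,e_3,k$ is a basis of $G$, because $k\notin\ker\beta$. In this basis $\beta$ is the coordinate functional dual to $k$: it vanishes on $h,e_2,e_3$ and takes the value $1$ on $k$.

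Do the same for $(\beta',h',k')$ to obtain a basis $h',e_2',e_3',k'$. Define $T\in GL(4,2)$ by $Th=h'$, $Te_i=e_i'$, $Tk=k'$. Then $Th=h'$ and $Tk=k'$ hold by construction. For the character, compute $(T\cdot\beta)(g)=\beta(T^{-1}g)$ on the basis $h',e_2',e_3',k'$. Since $T^{-1}$ maps this basis back to $h,e_2,e_3,k$, the values are $0,0,0,1$. These are exactly the values of $\beta'$ on that basis, so $T\cdot\beta=\beta'$ by linearity. One should also check that the action formula is a genuine group action and that it preserves the relations, so that the set of triples is stable. This is immediate from $(T\cdot\beta)(Th)=\beta(h)$.

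There is no real obstacle here. The only point needing care is the linear independence of $h$ and $k$, together with the fact that $\ker\beta$ is a complement-friendly hyperplane containing $h$; both follow directly from the two dot-product conditions. I would also remark that the argument works verbatim for any $s\geq 2$, with $\ker\beta$ of dimension $s-1$.
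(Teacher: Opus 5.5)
Your proof is correct, and it takes a different route from the paper's. The paper argues by three successive reductions. It first uses transitivity of $GL(4,2)$ on nonzero characters to assume $\beta=\beta'$. It then shows that $\operatorname{Stab}(\beta)$ induces all of $GL(\ker\beta)\cong GL(3,2)$, which lets it arrange $h=h'$. Finally it matches $k$ using the transvection $T_u(g)=g+(\beta\cdot g)\,u$ with $u=k'-k\in\ker\beta$, which fixes both $\beta$ and $h$. You instead complete each triple to an adapted basis $h,e_2,e_3,k$, where $h,e_2,e_3$ span $\ker\beta$, and take $T$ to be the change of basis. You then verify $T\cdot\beta=\beta'$ by evaluating $\beta\circ T^{-1}$ on $h',e_2',e_3',k'$. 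Your version is shorter. It needs no separate check that the stabilizer surjects onto $GL(\ker\beta)$, and none that $T_u$ is an involution preserving $\beta$. It also makes the extension to arbitrary $s\ge 2$ immediate. In fact, the paper's Step~2 quietly uses your adapted-basis idea when it extends an automorphism of $\ker\beta$ by fixing a vector $w$ with $\beta\cdot w=1$. In exchange, the paper's stepwise argument exhibits explicit elements of the stabilizer of $(\beta,h)$, namely the transvections $T_u$. That is extra structural information about the action, though the lemma itself does not need it.
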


\begin{proof}
We first note that the pairing is preserved by the action: for any
$T\in GL(4,2)$, $\beta\in G^*$, and $g\in G$,
\[
(T\cdot\beta)\cdot(Tg)=\beta\cdot g,
\]
since $(T\cdot\beta)(Tg)=\beta(T^{-1}Tg)=\beta(g)$.  Hence the relations
$\beta\cdot h=0$ and $\beta\cdot k=1$ are invariant under the action.

\medskip
\noindent\emph{Step~1: reduction to fixed $\beta$.}
The group $GL(4,2)$ acts transitively on $G^*\setminus\{0\}$, so given
$(\beta,h,k)$ and $(\beta',h',k')$ there exists $T_1\in GL(4,2)$ with
$T_1\cdot\beta=\beta'$.  Replacing $(\beta,h,k)$ by $T_1\cdot(\beta,h,k)$, we may
assume $\beta=\beta'$.  It therefore suffices to show that the stabilizer
\[
\operatorname{Stab}(\beta):=\{T\in GL(4,2):T\cdot\beta=\beta\}
\]
acts transitively on pairs $(h,k)$ with $\beta\cdot h=0$ and $\beta\cdot k=1$.

\medskip
\noindent\emph{Step~2: transitivity on $\ker(\beta)\setminus\{0\}$.}
Let $\ker(\beta)=\{g\in G:\beta\cdot g=0\}$, a $3$--dimensional subspace.
Choose a basis of $G$ such that $\beta(g)=g_1$; then $\ker(\beta)=\{g_1=0\}$.
Any linear automorphism of $\ker(\beta)$ extends to an element of $\operatorname{Stab}(\beta)$
by acting trivially on a chosen vector $w\in G$ with $\beta\cdot w=1$ and acting
as the given automorphism on $\ker(\beta)$.  Consequently, the induced action of
$\operatorname{Stab}(\beta)$ on $\ker(\beta)$ is the full group $GL(3,2)$, which is transitive
on $\ker(\beta)\setminus\{0\}$.  Hence, given $h,h'\in\ker(\beta)\setminus\{0\}$,
there exists $T_2\in\operatorname{Stab}(\beta)$ with $T_2h=h'$.  Replacing $(h,k)$ by
$(T_2h,T_2k)$, we may assume $h=h'$.

\medskip
\noindent\emph{Step~3: transitivity on the affine hyperplane $\beta\cdot g=1$.}
Fix $\beta$ and $h\in\ker(\beta)\setminus\{0\}$, and let
\[
H_\beta:=\{g\in G:\beta\cdot g=1\}.
\]
Let $k,k'\in H_\beta$.  Then $\beta\cdot(k'-k)=0$, so $u:=k'-k\in\ker(\beta)$.
Define a linear map $T_u:G\to G$ by
\[
T_u(g):=g+(\beta\cdot g)\,u.
\]

We verify the required properties.
First, $T_u$ is invertible, since
\[
T_u(T_u(g))=g+(\beta\cdot g)u+(\beta\cdot(g+(\beta\cdot g)u))u=g,
\]
using $\beta\cdot u=0$.  Next,
\[
\beta\cdot T_u(g)=\beta\cdot g+(\beta\cdot g)(\beta\cdot u)=\beta\cdot g,
\]
so $T_u\in\operatorname{Stab}(\beta)$.  Moreover, since $h\in\ker(\beta)$,
\[
T_u(h)=h+(\beta\cdot h)u=h,
\]
and therefore $T_u\in\operatorname{Stab}(\beta,h)$.  Finally, because $\beta\cdot k=1$,
\[
T_u(k)=k+u=k'.
\]
Thus $\operatorname{Stab}(\beta,h)$ acts transitively on $H_\beta$.

\medskip
Combining Steps~1--3, we conclude that $GL(4,2)$ acts transitively on triples
$(\beta,h,k)$ satisfying $\beta\cdot h=0$ and $\beta\cdot k=1$.
\end{proof}

\begin{example}[A representative in the $(10,4,1)$ case]
Let $G=(\mathbb F_2)^4$, with elements written as $g=(g_1,g_2,g_3,g_4)$.
Define $d:G\to\mathbb Z_{\ge0}$ by
\[
d_g \;=\;
\begin{cases}
2, & g=(1,1,1,1),\\[4pt]
1, & g\in\{(0,1,0,0),\ (1,0,0,0),\ (1,0,0,1),\ (1,0,1,0),\\
   & \hphantom{g\in\{} (1,1,0,0),\ (1,1,0,1),\ (1,1,1,0)\},\\[6pt]
0, & \text{otherwise.}
\end{cases}
\]
\end{example}

\begin{proof}
We have $\sum_{g\in G} d_g = 2+7=9$.  Recall that for $\chi\neq 0$,
\[
2l(\chi)=\sum_{\chi\cdot g=1} d_g .
\]
We compute the $l$--values.

\begin{enumerate}
\item
Let $\beta=(1,0,0,0)\in G^*$. Then $\beta\cdot g=g_1$. Among the points with
$d_g\neq 0$, exactly four of the seven points with $d_g=1$ have first coordinate
equal to $1$, and the point $(1,1,1,1)$ also satisfies $\beta\cdot g=1$.
Therefore
\[
\sum_{\beta\cdot g=1} d_g = 4\cdot 1 + 2 = 8,
\]
and hence $l(\beta)=4$.

\item
Let $k=(1,1,1,1)$ and let $h=(0,1,0,0)$. Consider the set
\[
A=\{\chi\in G^*:\ \chi\cdot k=1,\ \chi\cdot h=1\}.
\]
This is an affine subspace of $G^*$ of codimension $2$, hence $|A|=4$.
For any $\chi\in A$, a direct check shows that exactly three of the seven points
with $d_g=1$ satisfy $\chi\cdot g=1$, and the point $k$ also satisfies
$\chi\cdot k=1$. Consequently
\[
\sum_{\chi\cdot g=1} d_g = 3\cdot 1 + 2 = 6,
\]
so $l(\chi)=3$ for all $\chi\in A$.

\item
For any remaining nonzero character $\chi\notin A\cup\{\beta\}$, one checks that
\[
\sum_{\chi\cdot g=1} d_g = 4,
\]
and hence $l(\chi)=2$ for all such $\chi$.
\end{enumerate}

Since $G^*\setminus\{0\}$ has $15$ elements, the above shows that exactly one
nonzero character has $l=4$, exactly four have $l=3$, and the remaining ten have
$l=2$.  Thus the $l$--count tuple is $(n_2,n_3,n_4)=(10,4,1)$, as claimed.
\end{proof}

\begin{lemma}[Rigidity in the $(9,6,0)$ case]\label{lem:rigidity-960}
Let $G=(\mathbb F_2)^4$ and let $d:G\to\{0,1\}$ satisfy $d(0)=0$ and
$\sum_{g\in G} d(g)=9$.  For $\chi\in G^*\setminus\{0\}$ define
\[
2l(\chi)=\sum_{\chi\cdot g=1} d(g),
\]
and assume $l(\chi)\ge 2$ for all $\chi\ne 0$.  Suppose that the $\ell$--count
tuple is $(n_2,n_3,n_4)=(9,6,0)$, i.e.\ exactly six nonzero characters satisfy
$l(\chi)=3$ and the remaining nine satisfy $l(\chi)=2$.
Then, up to the natural action of $GL(4,2)$ on $G$ (and hence on $G^*$), the set
$S:=\{g\in G:\ d(g)=1\}$ is
\[
S=\{(a,b,c,d)\in(\mathbb F_2)^4:\ (a,b)\ne(0,0)\ \text{and}\ (c,d)\ne(0,0)\},
\]
equivalently $d=\mathbf 1_{G\setminus(U\cup V)}$ on $G\setminus\{0\}$ for a decomposition
$G=U\oplus V$ with $\dim U=\dim V=2$.  In particular, there is a unique
$GL(4,2)$--orbit of such functions $d$.
\end{lemma}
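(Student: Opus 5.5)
The plan is to follow the same Walsh-transform strategy as in the $(10,4,1)$ rigidity lemma. Set $m(\chi)=\ell(\chi)-2$ for all $\chi\in G^*$, so $m(0)=-2$. The support of $m$ among the nonzero characters is then a set $B\subset G^*\setminus\{0\}$ with $|B|=6$ and $m|_B\equiv 1$. As in that lemma, Fourier inversion combined with $\widehat d(\chi)=9-4\ell(\chi)$ (\Cref{lemma: Fourier}, \Cref{lemma: Inverse Fourier Transform}) gives, for $g\neq 0$,
\[
d(g)=-\tfrac14\,\widehat m(g),\qquad \widehat m(g)=-2+\sum_{\beta\in B}(-1)^{\beta\cdot g}.
\]
Since $d(g)\in\{0,1\}$, we need $\widehat m(g)\in\{0,-4\}$ for all $g\neq0$. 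Equivalently, writing $N(g):=\#\{\beta\in B:\beta\cdot g=1\}$, we have $\widehat m(g)=4-2N(g)$, so the requirement is $N(g)\in\{2,4\}$ for every $g\neq 0$. Moreover $d(g)=1$ exactly when $N(g)=4$.

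The core of the proof is the classification of $6$-subsets $B$ of $G^*\setminus\{0\}\cong(\mathbb F_2)^4\setminus\{0\}$ such that every nonzero $g$ is orthogonal to exactly $2$ or $4$ elements of $B$. First I would show that $B$ spans $G^*$; otherwise some $g\ne0$ has $N(g)=0$. Next I would use the moment identities from earlier in the paper. We have $\sum_{g\neq0}N(g)=6\cdot 8=48$. The second moment is $\sum_{g\ne0}N(g)^2=\sum_{\beta,\beta'\in B}\#\{g\neq 0:\beta\cdot g=\beta'\cdot g=1\}=6\cdot8+30\cdot4=168$. If $x$ denotes the number of $g\neq0$ with $N(g)=4$, these give $2(15-x)+4x=48$, hence $x=9$, and the second moment is consistent. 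This recovers $|S|=9$, but it does not pin down $B$, so more structure is needed.

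The expected key claim is that $B=(U^\perp\setminus\{0\})\sqcup(V^\perp\setminus\{0\})$ for complementary $2$-planes $U,V$. To prove it, I would count the triples in $B$ summing to $0$ using the cubic moment identity \Cref{lem:fourier-convolution-cubic}, applied to $\mathbf 1_B$ on $G^*$ with the values of $\widehat{\mathbf 1_B}(g)=6-2N(g)\in\{2,-2\}$ and $\widehat{\mathbf 1_B}(0)=6$. This gives $\#\{(x,y,z)\in B^3:x+y+z=0\}=\frac1{16}\bigl(216+6\cdot 8-9\cdot 8\bigr)=12$, i.e.\ exactly two unordered lines $\{x,y,x+y\}\subset B$. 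Two such lines cannot share a point: otherwise together they would span a $3$-dimensional space containing $5$ points of $B$, and then checking $N$ on a $g$ orthogonal to that span would give a contradiction with the spanning property and the counts. So $B$ is the disjoint union of two $2$-planes minus $0$. These planes must intersect trivially, since $B$ spans $G^*$, so they are $U^\perp$ and $V^\perp$ for complementary $U,V$.

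With $B$ identified, $N(g)=4$ holds exactly when $g\notin U\cup V$, so $S=G\setminus(U\cup V)$. Transitivity of $GL(4,2)$ on decompositions $G=U\oplus V$ into $2$-planes then yields the normal form with $U=\{c=d=0\}$ and $V=\{a=b=0\}$, and hence the uniqueness of the orbit. The main obstacle is the step excluding two lines of $B$ sharing a point. If the parity argument there becomes messy, I would instead enumerate directly: by transitivity, fix one line as $\langle e_1,e_2\rangle\setminus\{0\}$, observe that the remaining three points must form a line avoiding it, and use the constraint $N(g)\in\{2,4\}$ at a few test vectors $g$ to finish.
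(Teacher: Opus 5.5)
Your argument is correct, and its setup (passing to $m=\ell-2$, the inversion formula $d(g)=-\tfrac14\widehat m(g)$, and the constraint $N(g)\in\{2,4\}$, i.e.\ $\widehat{\mathbf 1_B}(g)=\pm2$ for $g\neq0$) is exactly the paper's. The key step is different.

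The paper uses the \emph{pairwise} convolution. It computes $(\mathbf 1_B*\mathbf 1_B)(u)=\tfrac1{16}(36-4)=2$ for every $u\neq0$, so each nonzero character is a sum of two elements of $B$ in exactly one unordered way. Applied to $u=t\in B$, this gives a line $\{a,b,t\}\subset B$ through every $t$ and forces distinct such lines to be disjoint. The decomposition of $B$ into two punctured $2$-planes therefore comes out in one stroke.

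You instead use the cubic moment of \Cref{lem:fourier-convolution-cubic} at $0$. This only counts lines (twelve ordered triples, hence two lines), so you need a separate step excluding two lines that meet. That step is easier than you feared. If the lines share a point, their union is five points of $B$ spanning a $3$-space $W$. For the unique $g\neq0$ with $W\subseteq g^\perp$ you get $N(g)\le1$, contradicting $N(g)\in\{2,4\}$, with no appeal to spanning needed.

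So your route reuses a lemma already in the paper at the price of one short geometric check. The paper's uniqueness-of-representation argument yields disjointness for free.

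Two minor points. Your moment computation recovering $|S|=9$ is redundant, since $\sum_g d(g)=9$ is a hypothesis. Also, $N(g)$ counts characters \emph{not} orthogonal to $g$; this is harmless here, because $|B|=6$ makes the condition ``$2$ or $4$'' symmetric.
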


\begin{proof}
Let $T:=\{\chi\in G^*\setminus\{0\}: l(\chi)=3\}$, so $|T|=6$, and set
$f:=\mathbf 1_T:G^*\to\{0,1\}$.  Define $m:G^*\to\mathbb Z$ by
\[
m(0)=-2,\qquad m(\chi)=l(\chi)-2\ \ (\chi\ne 0),
\]
so $m(\chi)=1$ for $\chi\in T$ and $m(\chi)=0$ for $\chi\notin T\cup\{0\}$.
For $\chi\ne 0$ we have $\widehat d(\chi)=9-4l(\chi)$, hence $\widehat d(\chi)=1$
if $l(\chi)=2$ and $\widehat d(\chi)=-3$ if $l(\chi)=3$, while $\widehat d(0)=9$.
Thus
\[
\widehat d(\chi)=1-4m(\chi)\qquad\forall \chi\in G^*.
\]
Applying the inverse Fourier transform at $g\ne 0$ and using
$\sum_{\chi\in G^*}(-1)^{\chi\cdot g}=0$ for $g\ne 0$, we obtain
\[
d(g)=\frac1{16}\sum_{\chi\in G^*}\widehat d(\chi)(-1)^{\chi\cdot g}
=-\frac14\sum_{\chi\in G^*}m(\chi)(-1)^{\chi\cdot g}
=\frac{2-\widehat f(g)}{4},
\]
where $\widehat f(g):=\sum_{\chi\in T}(-1)^{\chi\cdot g}$.  Since $d(g)\in\{0,1\}$
for all $g\ne 0$, it follows that $\widehat f(g)\in\{2,-2\}$ for all $g\ne 0$, and
$\widehat f(0)=|T|=6$.

Consider the convolution $(f*f)(u)=\sum_{a+b=u} f(a)f(b)$ on the additive group $G^*$.
By the convolution--product rule, $\widehat{f*f}=\widehat f^{\,2}$.  Hence
$\widehat{f*f}(0)=\widehat f(0)^2=36$ and $\widehat{f*f}(g)=\widehat f(g)^2=4$
for all $g\ne 0$.  By inverse Fourier transform, for $u\ne 0$ we get
\[
(f*f)(u)=\frac1{16}\!\left(36+4\!\sum_{g\ne 0}(-1)^{g\cdot u}\right)
=\frac1{16}(36-4)=2,
\]
since $\sum_{g\ne 0}(-1)^{g\cdot u}=-1$ for $u\ne 0$.  Thus every nonzero
$u\in G^*$ can be written as $u=a+b$ with $a,b\in T$ in exactly one unordered way.

Fix $t\in T$.  Since $(f*f)(t)=2$, there exist unique distinct $a,b\in T$ with $a+b=t$.
Then $A_t:=\{0,a,b,t\}$ is a $2$--dimensional subspace of $G^*$ and $A_t\setminus\{0\}\subseteq T$.
If two such $2$--subspaces $A_t$ and $A_{t'}$ shared a nonzero element, then that element would admit
two different representations as a sum of two elements of $T$, contradicting uniqueness.  Hence the sets
$A_t\setminus\{0\}$ are disjoint as $t$ varies in $T$.  Since $|T|=6$ and each $A_t\setminus\{0\}$ has
$3$ elements, we conclude that
\[
T=(A\setminus\{0\})\ \sqcup\ (B\setminus\{0\})
\]
for two distinct $2$--dimensional subspaces $A,B\subset G^*$ with $A\cap B=\{0\}$.

Let $U:=A^\perp$ and $V:=B^\perp$ in $G$.  Then $\dim U=\dim V=2$ and
$U\cap V=(A+B)^\perp=\{0\}$, so $G=U\oplus V$.  For $g\ne 0$ we compute
\[
\widehat f(g)=\sum_{\chi\in A\setminus\{0\}}(-1)^{\chi\cdot g}
+\sum_{\chi\in B\setminus\{0\}}(-1)^{\chi\cdot g}
=
\begin{cases}
2,& g\in U\cup V,\\
-2,& g\notin U\cup V,
\end{cases}
\]
using that for a $2$--plane $A$ the sum over $A\setminus\{0\}$ equals $3$ if $g\in A^\perp$
and equals $-1$ otherwise.  Therefore,
\[
d(g)=\frac{2-\widehat f(g)}{4}=
\begin{cases}
0,& g\in (U\cup V)\setminus\{0\},\\
1,& g\notin U\cup V,
\end{cases}
\]
so $S=G\setminus(U\cup V)$.

Finally, $GL(4,2)$ acts transitively on decompositions $G=U\oplus V$ with $\dim U=\dim V=2$.  Hence there is a single $GL(4,2)$--orbit.
Taking $U=\{(a,b,0,0)\}$ and $V=\{(0,0,c,d)\}$ gives the displayed representative
\(
S=\{(a,b,c,d): (a,b)\ne 0,\ (c,d)\ne 0\}.
\)
\end{proof}

\begin{example}[A representative in the $(9,6,0)$ case]
Let $G=(\mathbb F_2)^4$, with elements written as $g=(g_1,g_2,g_3,g_4)$.
Define $d:G\to\mathbb Z_{\ge0}$ by
\[
d_g \;=\;
\begin{cases}
1, & g\in\{(0,1,0,1),(0,1,1,0),(0,1,1,1),\\
   & \hphantom{g\in\{} (1,0,0,1),(1,0,1,0),(1,0,1,1),\\
   & \hphantom{g\in\{} (1,1,0,1),(1,1,1,0),(1,1,1,1)\},\\[6pt]
0, & \text{otherwise.}
\end{cases}
\]
\end{example}

\begin{proof}
We compute the $\ell$–values. Recall that
\[
2\ell(\chi)=\#\{g\in S:\chi\cdot g=1\}.
\]

\begin{enumerate}

\item We have $\ell(\chi)=3$ for the following six characters
 $$\chi\in\{(1,0,0,0),(0,1,0,0),(1,1,0,0),(0,0,1,0),(0,0,0,1),(0,0,1,1)\}$$

 \item For every remaining nonzero character  one has $\#\{g\in S:\chi\cdot g=1\}=4$, hence $\ell(\chi)=2$.
Therefore the remaining $15-6=9$ nonzero characters have $\ell(\chi)=2$.

\end{enumerate}

Hence in particular, \[
(n_2,n_3,n_4)=(9,6,0).
\]
\end{proof}

\begin{proposition}\label{m = 1; k = 2; D = 12 impossibility}
Suppose $m = 1, k = 2$ and $D = 12$. Let $S:=\{g\in G : d(g)>0\}$ and $m_i = | g \in S, d_g = i|$.  Then there do not exist admissible solutions for 
\begin{enumerate}
    \item $s = 7, (m_1) = (12)$
    \item $s = 6, (m_1) = (12)$
    \item $s = 6, (m_1, m_2) = (10,1)$
    \item $s = 5, (m_1,m_3) = (9,1)$
    \item $s = 5, (m_1,m_2) = (8,2)$
    \item $s = 5, (m_1,m_2) = (10,1)$
    \item $s = 5, (m_1) = (12)$
    \item $s = 4, (m_1,m_2,m_3)=(7,1,1)$
    \item $s = 4, (m_1,m_3)=(9,1)$
    \item $s = 4, (m_2)=(6)$
    \item $s = 4, (m_1,m_2)=(2,5)$
    \item $s = 4, (m_1,m_2)=(4,4)$
    \item $s = 4, (m_1,m_2)=(6,3)$
    \item $s = 4, (m_1,m_2)=(8,2)$
    \item $s = 4, (m_1,m_4)=(8,1)$
    \item $s = 4, (m_1,m_2)=(10,1)$.
    
\end{enumerate}
\end{proposition}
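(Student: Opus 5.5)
We will use the same Fourier-analytic machinery as in \Cref{m = 2; k = 1; D = 9 impossibility}, now with $D=12$ and $k=2$, $m=1$. Here $\ell(\chi)\ge k+1=3$ for every $\chi\neq 0$. Since $2\ell(\chi)\le D=12$, we also have $\ell(\chi)\in\{3,4,5,6\}$. By \Cref{lemma: Fourier},
\[
\widehat d(\chi)=12-4\ell(\chi)\in\{0,-4,-8,-12\}\qquad(\chi\neq 0),
\]
and $\widehat d(0)=12$. Write $n_i=\#\{\chi\neq0:\ell(\chi)=i\}$. For each listed case we impose three constraints:
\begin{enumerate}
\item the count $\sum_i n_i=2^s-1$;
\item the first moment identity $\sum_{\chi\neq0}\ell(\chi)=2^{s-2}D=3\cdot 2^{s}$, equivalent to \Cref{lem:first-moment};
\item Plancherel (\Cref{Plancherel for the unnormalized Fourier transform}):
\[
16n_4+64n_5+144n_6=2^s\sum_g d_g^2-144,
\]
where $\sum_g d_g^2=\sum_i i^2 m_i$ is read off from the given $d$-distribution.
\end{enumerate}
Setting $n_3=2^s-1-n_4-n_5-n_6$, the first moment becomes $n_4+2n_5+3n_6=3$. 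This is already very restrictive: $(n_4,n_5,n_6)$ is one of $(3,0,0)$, $(1,1,0)$, or $(0,0,1)$. The corresponding Plancherel values are $48$, $80$, $144$ respectively.

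In each case we compare $2^s\sum d_g^2-144$ with these three numbers. For $s=7,6,5$ the right-hand side $2^s\sum d_g^2-144$ is far larger than $144$. For example, with $s=5$ and $(m_1)=(12)$ it equals $32\cdot12-144=240$. So Plancherel fails and those cases die immediately. For $s=4$ the right-hand side is $16\sum d_g^2-144$, which lies in $\{48,80,144\}$ only when $\sum d_g^2\in\{12,14,18\}$. Going through the list:
\begin{itemize}
\item $(7,1,1)$ gives $20$, $(9,1)$ gives $18$, $(6)$ gives $24$, $(2,5)$ gives $22$;
\item $(4,4)$ gives $20$, $(6,3)$ gives $18$, $(8,2)$ gives $16$, $(8,1)$ gives $24$, $(10,1)$ gives $14$.
\end{itemize}
So most cases die by Plancherel alone. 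The survivors are $(m_1,m_3)=(9,1)$ and $(m_1,m_2)=(6,3)$, both with spectrum $(n_4,n_5,n_6)=(0,0,1)$, and $(m_1,m_2)=(10,1)$, with spectrum $(1,1,0)$.

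For the survivors we first try the cubic moment, \Cref{lem:fourier-convolution-cubic}: $(d*d*d)(0)=\tfrac1{16}\sum_\chi\widehat d(\chi)^3$ must be a nonnegative integer. For spectrum $(0,0,1)$ this gives $(1728-1728)/16=0$; for $(1,1,0)$ it gives $(1728-64-512)/16=72$. Neither is contradictory, so for these we argue as in the $(11,2,2)$ case, via inverse Fourier transform.

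For spectrum $(0,0,1)$, the only nonzero values of $\widehat d$ are $\widehat d(0)=12$ and $\widehat d(\chi_0)=-12$. Inversion then gives $d(g)=\tfrac{12}{16}(1-(-1)^{\chi_0\cdot g})\in\{0,3/2\}$, which is not an integer, so there is a contradiction.

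For spectrum $(1,1,0)$, the nonzero values are $\widehat d(\chi_1)=-4$ and $\widehat d(\chi_2)=-8$. Then
\[
16\,d(g)=12-4(-1)^{\chi_1\cdot g}-8(-1)^{\chi_2\cdot g}.
\]
Choose $g$ with $\chi_1\cdot g=1$ and $\chi_2\cdot g=0$. Such $g$ exists because $\chi_1\neq\chi_2$ are independent in $(\mathbb F_2)^4$ as nonzero distinct characters. This gives $16\,d(g)=8$, a non-integer value, again a contradiction.

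The main obstacle is bookkeeping in cases where moment identities alone do not rule things out, but the inversion step handles these uniformly. We should also double-check that the first-moment reduction $n_4+2n_5+3n_6=3$ is correct for every $s$, since it drives all cases.
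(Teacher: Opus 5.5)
Your proposal is correct and follows the paper's proof. The first moment forces $(n_4,n_5,n_6)\in\{(3,0,0),(1,1,0),(0,0,1)\}$, Plancherel eliminates every case except $(9,1)$, $(6,3)$ and $(10,1)$ at $s=4$, and Fourier inversion gives half-integer values for $(10,1)$. The only difference is that for $(9,1)$ and $(6,3)$ the paper uses the cubic moment to get $(d*d*d)(0)=0$ and then a sum-free-set argument (a support of size $>8$ in $(\mathbb Z_2)^4$ must contain $a,b,a+b$), whereas your direct inversion giving $d(g)\in\{0,\tfrac32\}$ reaches the same contradiction more simply.
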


\begin{proof}
\noindent\textit{Proof of $(1)$}
\begin{enumerate}
\item \textbf{First moment and the $\ell$--distribution.}
We have
\begin{equation}\label{eq:first-moment}
\sum_{\chi\in G}\ell_\chi = 2^{s-2}D = 384.
\end{equation}

Since $\ell_0=0$, we have
\[
\sum_{\chi\neq 0}\ell_\chi=384.
\]
There are $2^7-1=127$ nonzero $\chi$, and by assumption $\ell_\chi\ge 3$ for all
$\chi\neq 0$. Write
\[
A_\chi := \ell_\chi-3 \in \mathbb Z_{\ge 0}\qquad (\chi\neq 0).
\]
Then
\[
\sum_{\chi\neq 0} A_\chi
=
\sum_{\chi\neq 0}\ell_\chi - 3\cdot 127
=
384-381
=
3.
\]
In particular, the only possible
$\ell$--distributions are:
\[
(n_4,n_5,n_6)=(3,0,0),\quad (1,1,0),\quad (0,0,1),
\]
where $n_j:=\#\{\chi\neq 0:\ell_\chi=j\}$ and $n_3 = 124, 125, 126$ respectively.

\item \textbf{Fourier relation and the quadratic (Parseval) moment.}
Let $\widehat d:G\to \mathbb Z$ denote the (unnormalized) Fourier transform
\[
\widehat d(\chi):=\sum_{g\in G} d(g)(-1)^{\chi\cdot g}.
\]
By the Fourier relation in \Cref{lemma: Fourier}, for every $\chi\neq 0$,
\begin{equation}\label{eq:fourier-relation}
\widehat d(\chi)=\widehat d(0)-4\ell_\chi.
\end{equation}
Here $\widehat d(0)=\sum_{g} d(g)=D=12$, hence
\[
\widehat d(\chi)=12-4\ell_\chi
\qquad (\chi\neq 0).
\]
Therefore, when $\ell_\chi\in\{3,4,5,6\}$ we have
\[
\widehat d(\chi)\in\{0,-4,-8,-12\}.
\]

Now apply Plancherel/Parseval for the unnormalized Fourier transform:
\[
\sum_{\chi\in G}\widehat d(\chi)^2
=
|G|\sum_{g\in G} d(g)^2.
\]
Since $d(g)\in\{0,1\}$, we have $d(g)^2=d(g)$ and hence $\sum_g d(g)^2=D=12$.
As $|G|=2^7=128$, we obtain
\[
\sum_{\chi\in G}\widehat d(\chi)^2 = 128\cdot 12=1536.
\]
Separating the trivial character term $\widehat d(0)^2=12^2=144$ yields
\begin{equation}\label{eq:nontrivial-L2}
\sum_{\chi\neq 0}\widehat d(\chi)^2 = 1536-144=1392.
\end{equation}

On the other hand, using the possible $\ell$--distributions from Step~(1), we have
\[
\sum_{\chi\neq 0}\widehat d(\chi)^2
=
0\cdot n_3 + 16 n_4 + 64 n_5 + 144 n_6,
\]
so it must equal one of
\[
16\cdot 3=48,\qquad 16+64=80,\qquad 144.
\]
contradicting \eqref{eq:nontrivial-L2}.
\end{enumerate}

\noindent\textit{Proof of $(2)-(3)$}
We argue uniformly by the first and quadratic moments.

\begin{enumerate}
\item \textbf{First moment and the possible $\ell$--distributions.}
We have
\[
\sum_{\chi\in G}\ell_\chi = 2^{s-2}D = 192.
\]
Since $\ell_0=0$, we obtain $\sum_{\chi\neq 0}\ell_\chi=192$.
For $\chi\neq 0$, we have $2\ell_\chi=\sum_{\chi\cdot g=1} d(g)\le D=12$, hence
$\ell_\chi\le 6$. Together with $\ell_\chi\ge 3$, this implies
\[
\ell_\chi\in\{3,4,5,6\}\qquad(\chi\neq 0).
\]
Let
\[
n_j:=\#\{\chi\neq 0:\ell_\chi=j\},\qquad j=3,4,5,6.
\]
Then
\[
n_3+n_4+n_5+n_6=63,
\qquad
3n_3+4n_4+5n_5+6n_6=192.
\]
Subtracting $3\cdot 63=189$ from the second equation gives
\[
n_4+2n_5+3n_6=3.
\]
Thus the only possibilities are
\[
(n_4,n_5,n_6)=(3,0,0),\quad (1,1,0),\quad (0,0,1),
\]
with $n_3=60,61,62$ respectively.

\item \textbf{Fourier relation and the quadratic constraint from Plancherel.}
Let $\widehat d:G\to\mathbb Z$ denote the unnormalized Fourier transform
\[
\widehat d(\chi):=\sum_{g\in G} d(g)(-1)^{\chi\cdot g}.
\]
By the Fourier relation in \Cref{lemma: Fourier}, for every $\chi\neq 0$,
\[
\widehat d(\chi)=\widehat d(0)-4\ell_\chi.
\]
Since $\widehat d(0)=\sum_g d(g)=D=12$, we have
\[
\widehat d(\chi)=12-4\ell_\chi
\qquad (\chi\neq 0).
\]
Hence, for $\chi\neq 0$,
\[
\ell_\chi=3,4,5,6 \quad\Longrightarrow\quad
\widehat d(\chi)=0,-4,-8,-12,
\]
and therefore
\[
\widehat d(\chi)^2\in\{0,16,64,144\}.
\]
Using the possible $\ell$--distributions from Step~(1), we obtain
\[
\sum_{\chi\neq 0}\widehat d(\chi)^2
=
0\cdot n_3 + 16n_4 + 64n_5 + 144n_6
\in\{48,80,144\}.
\]

On the other hand, Plancherel's identity for the unnormalized Fourier transform gives
\[
\sum_{\chi\in G}\widehat d(\chi)^2
=
|G|\sum_{g\in G} d(g)^2,
\]
where $|G|=2^6=64$. Since $\widehat d(0)=D=12$, we may rewrite this as
\begin{equation}\label{eq:nontriv-energy-s6}
\sum_{\chi\neq 0}\widehat d(\chi)^2
=
64\sum_{g\in G} d(g)^2 - 12^2.
\end{equation}

We now compute $\sum_g d(g)^2$ in each of the two cases:

\begin{enumerate}
\item If $(m_1)=(12)$, then $\sum_g d(g)^2 = 12\cdot 1^2 = 12$,
so \eqref{eq:nontriv-energy-s6} gives $\sum_{\chi\neq 0}\widehat d(\chi)^2=64\cdot12-144=624$.
\item If $(m_1,m_2)=(10,1)$, then $\sum_g d(g)^2 = 10\cdot 1^2 + 1\cdot 2^2=14$,
so \eqref{eq:nontriv-energy-s6} gives $\sum_{\chi\neq 0}\widehat d(\chi)^2=64\cdot14-144=752$.
\end{enumerate}
In either case the value forced by Plancherel is $624$ or $752$, neither of which lies
in $\{48,80,144\}$. This contradicts the constraint coming from the possible
$\ell$--distributions.

\end{enumerate}

\noindent\textit{Proof of $(4)-(7)$} We argue uniformly by the first and quadratic moments.

\begin{enumerate}
\item[(i)] \textbf{First moment and the possible $\ell$--distributions.}
We have
\[
\sum_{\chi\in G}\ell_\chi = 2^{s-2}D = 96.
\]
For $\chi\neq 0$, we have $2\ell_\chi=\sum_{\chi\cdot g=1} d(g)\le D=12$, hence
$\ell_\chi\le 6$. Together with $\ell_\chi\ge 3$, this implies
\[
\ell_\chi\in\{3,4,5,6\}\qquad(\chi\neq 0).
\]
Let
\[
n_j:=\#\{\chi\neq 0:\ell_\chi=j\},\qquad j=3,4,5,6.
\]
Then
\[
n_3+n_4+n_5+n_6=31,
\qquad
3n_3+4n_4+5n_5+6n_6=96.
\]
Subtracting $3\cdot 31=93$ from the second equation gives
\[
n_4+2n_5+3n_6=3.
\]
Thus the only possibilities are
\[
(n_4,n_5,n_6)=(3,0,0),\quad (1,1,0),\quad (0,0,1),
\]
with $n_3=28,29,30$ respectively.

\item[(ii)] \textbf{Fourier relation and the quadratic constraint from Plancherel.}
Let $\widehat d:G\to\mathbb Z$ denote the unnormalized Fourier transform
\[
\widehat d(\chi):=\sum_{g\in G} d(g)(-1)^{\chi\cdot g}.
\]
By the Fourier relation in \Cref{lemma: Fourier}, for every $\chi\neq 0$,
\[
\widehat d(\chi)=\widehat d(0)-4\ell_\chi.
\]
Since $\widehat d(0)=\sum_g d(g)=D=12$, we have
\[
\widehat d(\chi)=12-4\ell_\chi
\qquad (\chi\neq 0).
\]
Hence, for $\chi\neq 0$,
\[
\ell_\chi=3,4,5,6 \quad\Longrightarrow\quad
\widehat d(\chi)=0,-4,-8,-12,
\]
and therefore
\[
\widehat d(\chi)^2\in\{0,16,64,144\}.
\]
Using the possible $\ell$--distributions from Step~(i), we obtain
\[
\sum_{\chi\neq 0}\widehat d(\chi)^2
=
0\cdot n_3 + 16n_4 + 64n_5 + 144n_6
\in\{48,80,144\}.
\]

On the other hand, Plancherel's identity for the unnormalized Fourier transform gives
\[
\sum_{\chi\in G}\widehat d(\chi)^2
=
|G|\sum_{g\in G} d(g)^2,
\]
where $|G|=2^5=32$. Since $\widehat d(0)=D=12$, we may rewrite this as
\begin{equation}\label{eq:nontriv-energy}
\sum_{\chi\neq 0}\widehat d(\chi)^2
=
32\sum_{g\in G} d(g)^2 - 12^2.
\end{equation}

We now compute $\sum_g d(g)^2$ in each of the four cases:
\begin{enumerate}
\item If $(m_1,m_2)=(8,2)$, then $\sum_g d(g)^2 = 8\cdot 1^2 + 2\cdot 2^2=16$,
so \eqref{eq:nontriv-energy} gives $\sum_{\chi\neq 0}\widehat d(\chi)^2=32\cdot16-144=368$.
\item If $(m_1,m_2)=(10,1)$, then $\sum_g d(g)^2 = 10\cdot 1^2 + 1\cdot 2^2=14$,
so \eqref{eq:nontriv-energy} gives $\sum_{\chi\neq 0}\widehat d(\chi)^2=32\cdot14-144=304$.
\item If $(m_1,m_2)=(12,0)$, then $\sum_g d(g)^2 = 12\cdot 1^2 = 12$,
so \eqref{eq:nontriv-energy} gives $\sum_{\chi\neq 0}\widehat d(\chi)^2=32\cdot12-144=240$.
\item If $(m_1,m_3)=(9,1)$, then $\sum_g d(g)^2 = 9\cdot 1^2 + 1\cdot 3^2=18$,
so \eqref{eq:nontriv-energy} gives $\sum_{\chi\neq 0}\widehat d(\chi)^2=32\cdot18-144=432$.
\end{enumerate}
In each case the value forced by Plancherel is one of
\[
368,\ 304,\ 240,\ 432,
\]
none of which lies in $\{48,80,144\}$. This contradicts the constraint coming from
the possible $\ell$--distributions.
\end{enumerate}

\noindent\textit{Proof of $(8)-(15)$}
We argue uniformly using the first, quadratic, and cubic moments.

\begin{enumerate}
\item[(i)] \textbf{First moment and the possible $\ell$--distributions.}
Since $s=4$ and $D=12$, the first moment identity gives
\[
\sum_{\chi\neq 0}\ell_\chi = 2^{s-2}D = 48.
\]
For $\chi\neq 0$, we have $2\ell_\chi=\sum_{\chi\cdot g=1} d(g)\le D=12$, hence
$\ell_\chi\le 6$. Together with $\ell_\chi\ge 3$, this implies
\[
\ell_\chi\in\{3,4,5,6\}\qquad(\chi\neq 0).
\]
Let
\[
n_j:=\#\{\chi\neq 0:\ell_\chi=j\},\qquad j=3,4,5,6.
\]
Since there are $15$ nontrivial characters, we have
\[
n_3+n_4+n_5+n_6=15,
\qquad
3n_3+4n_4+5n_5+6n_6=48.
\]
Subtracting $3\cdot 15=45$ from the second equation yields
\[
n_4+2n_5+3n_6=3.
\]
Thus the only possibilities are
\[
(n_4,n_5,n_6)=(3,0,0),\quad (1,1,0),\quad (0,0,1),
\]
with $n_3=12,13,14$ respectively.

\item[(ii)] \textbf{Quadratic moment (Plancherel).}
Let $\widehat d:G\to\mathbb Z$ denote the unnormalized Fourier transform
\[
\widehat d(\chi):=\sum_{g\in G} d(g)(-1)^{\chi\cdot g}.
\]
For $\chi\neq 0$, by \Cref{lemma: Fourier}, we have 
\[
\widehat d(\chi)=\widehat d(0)-4\ell_\chi.
\]
Since $\widehat d(0)=D=12$, it follows that
\[
\widehat d(\chi)=12-4\ell_\chi\qquad(\chi\neq 0),
\]
so that
\[
\ell_\chi=3,4,5,6
\quad\Longrightarrow\quad
\widehat d(\chi)=0,-4,-8,-12.
\]
Hence
\[
\sum_{\chi\neq 0}\widehat d(\chi)^2
=
16n_4+64n_5+144n_6
\in\{48,80,144\}.
\]

On the other hand, Plancherel's identity gives
\[
\sum_{\chi\in G}\widehat d(\chi)^2
=
|G|\sum_{g\in G} d(g)^2,
\qquad |G|=16.
\]
Since $\widehat d(0)=12$, we obtain
\begin{equation}\label{eq:s4-nontriv-energy}
\sum_{\chi\neq 0}\widehat d(\chi)^2
=
16\sum_{g\in G} d(g)^2 - 12^2.
\end{equation}
A direct computation shows that for the cases
\[
(m_1,m_2,m_3) = (7,1,1),\ (m_2) = (6),\ (m_1,m_2) = (2,5),\ (4,4),\ (m_1, m_2) = (8,2),\ (m_1, m_4) = (8,1)
\]

the value forced by \eqref{eq:s4-nontriv-energy} does not lie in
$\{48,80,144\}$, yielding an immediate contradiction. Thus these cases are
ruled out by the quadratic moment.

\item[(iii)] \textbf{Cubic moment.}
It remains to consider the cases $(m_1,m_3)=(9,1)$ and $(m_1,m_2)=(6,3)$ and $(m_1, m_2)) = (10, 1)$. For the first two, \eqref{eq:s4-nontriv-energy} gives
\[
\sum_{\chi\neq 0}\widehat d(\chi)^2=144.
\]
Hence the $\ell$--distribution must be $(n_4,n_5,n_6)=(0,0,1)$.
For this distribution we have
\[
\sum_{\chi\neq 0}\widehat d(\chi)^3 = (-12)^3=-1728,
\]
and therefore
\[
\sum_{\chi\in G}\widehat d(\chi)^3
=
12^3-1728=0.
\]
Using the identity from \Cref{lem:fourier-convolution-cubic},
\[
\sum_{\chi\in G}\widehat d(\chi)^3
=
|G|\,(d*d*d)(0),
\]
we conclude that $(d*d*d)(0)=0$. However, in both cases $(9,1)$ and $(6,3)$ the support $S=\{g:d(g)\ge 1\}$ has cardinality $|S|>8$. In $(\mathbb Z_2)^4$, any subset of size $>8$ contains elements $a,b$ with $a+b\in S$, and hence contributes positively to
$(d*d*d)(0)$. This contradiction shows that these two cases are impossible as
well.

\item[(iv)] \textbf{Fractional values using inverse Fourier Transform}

\medskip
We finally deal with the case $(m_1,m_2)=(10,1)$.
In this case we have
\[
\sum_{g\in G} d(g)^2 = 10\cdot 1^2 + 1\cdot 2^2 = 14.
\]
By Plancherel's identity for the unnormalized Fourier transform on $G$ with
$|G|=16$, we obtain
\[
\sum_{\chi\in G}\widehat d(\chi)^2
=
16\sum_{g\in G} d(g)^2
=
16\cdot 14
=
224.
\]
Since $\widehat d(0)=\sum_g d(g)=D=12$, it follows that
\[
\sum_{\chi\neq 0}\widehat d(\chi)^2
=
224-12^2
=
80.
\]

From the classification of $\ell$--distributions established earlier for
$D=12$, $s=4$, and $\ell_\chi\ge 3$, the three possible types give
\[
\sum_{\chi\neq 0}\widehat d(\chi)^2\in\{48,80,144\}.
\]
Hence the value $80$ forces the $\ell$--distribution to be of \emph{Type~B}.
Equivalently, there exist distinct nontrivial characters $u\neq v$ such that
\[
\widehat d(u)=-4,\qquad
\widehat d(v)=-8,\qquad
\widehat d(\chi)=0\ \text{for all other }\chi\neq 0,
\]
and $\widehat d(0)=12$.

We now apply the inverse Fourier transform. For every $x\in G$,
\[
d(x)
=
\frac{1}{16}\sum_{\chi\in G}\widehat d(\chi)(-1)^{\chi\cdot x}
=
\frac{1}{16}\Bigl(12-4(-1)^{u\cdot x}-8(-1)^{v\cdot x}\Bigr).
\]

We evaluate this expression according to the values of
$(u\cdot x,v\cdot x)\in(\mathbb Z_2)^2$.

\begin{enumerate}
\item If $u\cdot x=0$ and $v\cdot x=0$, then
\[
d(x)=\frac{12-4-8}{16}=0.
\]

\item If $u\cdot x=1$ and $v\cdot x=0$, then
\[
d(x)=\frac{12+4-8}{16}=\frac12.
\]

\item If $u\cdot x=0$ and $v\cdot x=1$, then
\[
d(x)=\frac{12-4+8}{16}=1.
\]

\item If $u\cdot x=1$ and $v\cdot x=1$, then
\[
d(x)=\frac{12+4+8}{16}=\frac32.
\]
\end{enumerate}

In particular, $d(x)$ takes the nonintegral values $\frac12$ and $\frac32$,
contradicting the assumption that $d(x)\in\mathbb Z_{\ge 0}$ for all $x\in G$.
Therefore the case $(m_1,m_2)=(10,1)$ cannot occur.

\end{enumerate}


\end{proof}

\begin{proposition}\label{s=4; m = 1; k = 2; D = 12 possibility}
Suppose $s=4$ and $D=12$. Let $G=(\mathbb Z_2)^4$. 
Let $S:=\{g\in G : d(g)>0\}$ and 
\[
m_i:=|\{g\in S : d_g=i\}|.
\]
Fix an identification $G^*\cong G$ via $\chi_g(x)=(-1)^{g\cdot x}$.
For each $\chi\in G$, define $\ell_\chi\in\mathbb Z_{\ge 0}$ by $\ell_0=0$ and,
for $\chi\neq 0$,
\[
\sum_{\substack{g\in G\\ \chi\cdot g=1}} d(g)=2\ell_\chi.
\]
Assume moreover that $\ell_\chi\ge 3$ for all $\chi\neq 0$.
In the case
\[
(m_1)=(12),
\]
there exists such a function $d$, and it is unique up to the natural
$\operatorname{GL}_4(\mathbb F_2)$--action. One representative is given by
\[
d_g \;=\;
\begin{cases}
0, & g\in\{(0,0,0,0),\ (0,0,0,1),\ (0,0,1,0),\ (0,0,1,1)\},\\[4pt]
1, & \text{otherwise.}
\end{cases}
\]
Moreover, in this case $(n_3,n_4) = (12,3)$.
\end{proposition}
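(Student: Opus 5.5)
Since $(m_1)=(12)$, $d$ is the indicator $\mathbf 1_S$ of a $12$-element subset $S\subset G\setminus\{0\}$, and $\sum_g d(g)^2=12$. The plan is the same moment method used for \Cref{m = 1; k = 2; D = 12 impossibility}, followed by a rigidity argument modelled on \Cref{lem:rigidity-960}.

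\textbf{Pinning down the $\ell$-distribution.} From the proof of \Cref{m = 1; k = 2; D = 12 impossibility} (step (i) for $s=4$), the only possible distributions are $(n_4,n_5,n_6)\in\{(3,0,0),(1,1,0),(0,0,1)\}$. These give $\sum_{\chi\neq0}\widehat d(\chi)^2\in\{48,80,144\}$. Plancherel (\Cref{Plancherel for the unnormalized Fourier transform}) gives $\sum_{\chi\neq 0}\widehat d(\chi)^2=16\cdot 12-144=48$. This forces $(n_3,n_4)=(12,3)$, which is the last assertion of the statement. Hence $\widehat d(0)=12$, $\widehat d(\chi)=-4$ on a set $T$ of exactly three nonzero characters, and $\widehat d(\chi)=0$ for every other $\chi\neq 0$.

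\textbf{Rigidity and uniqueness.} Apply \Cref{lemma: Inverse Fourier Transform} to get
\[
d(x)=\tfrac1{16}\Bigl(12-4\textstyle\sum_{\chi\in T}(-1)^{\chi\cdot x}\Bigr).
\]
The main obstacle is to show that $T\cup\{0\}$ is a $2$-dimensional subspace. Integrality at $x=0$ is automatic, since the value is $0$. At $x\neq0$ we need $\sum_{\chi\in T}(-1)^{\chi\cdot x}\in\{3,-1\}$, i.e. $d(x)\in\{0,1\}$. The alternative values $1$ and $-3$ would give $d(x)=1/2$ or $3/2$. So every $x$ must be orthogonal to an odd number of the three elements of $T$, in fact to either all three or exactly one.
\begin{itemize}
\item If $T=\{\alpha,\beta,\gamma\}$ were linearly independent, we could choose $x$ with $\alpha\cdot x=\beta\cdot x=1$ and $\gamma\cdot x=0$. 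Then the sum is $-1-1+1=-1$, which is allowed, so this choice gives no contradiction.
\item Instead choose $x$ with $\alpha\cdot x=1$ and $\beta\cdot x=\gamma\cdot x=0$. Then the sum is $1$ and $d(x)=1/2$, a contradiction. Hence $T$ is linearly dependent.
\end{itemize}
Since the three elements are distinct and nonzero, this means $\gamma=\alpha+\beta$, so $A:=T\cup\{0\}$ is a $2$-plane. Alternatively, one can use the cubic identity of \Cref{lem:fourier-convolution-cubic} as a cross-check.

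With $A$ a $2$-plane, $\sum_{\chi\in A\setminus 0}(-1)^{\chi\cdot x}$ equals $3$ if $x\in A^\perp$ and $-1$ otherwise. Therefore $d=\mathbf 1_{G\setminus A^\perp}$, where $A^\perp$ is a $2$-dimensional subspace of $G$ with $4$ elements, leaving the $12$ points in the support as required.

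\textbf{Existence.} Take the subspace $A^\perp=\{(0,0,a,b)\}$; this gives the displayed representative. Directly, $\chi\cdot g=1$ picks out $8$ points of the support when $\chi\in A^\perp$ (so $\ell=4$) and $6$ points otherwise (so $\ell=3$). This confirms $\ell_\chi\ge 3$ and $(n_3,n_4)=(12,3)$. Uniqueness up to $\operatorname{GL}_4(\mathbb F_2)$ follows because $\operatorname{GL}_4(\mathbb F_2)$ acts transitively on $2$-dimensional subspaces.
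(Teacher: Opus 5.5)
Your proposal is correct and follows essentially the same route as the paper: Plancherel forces the $(n_3,n_4)=(12,3)$ distribution, Fourier inversion plus $d(x)\in\{0,1\}$ forces the three exceptional characters to be the nonzero elements of a $2$-plane, and transitivity of $\operatorname{GL}_4(\mathbb F_2)$ on $2$-planes gives uniqueness. The only cosmetic difference is that the paper reads off $u_1+u_2+u_3=0$ directly from the parity condition $(u_1+u_2+u_3)\cdot x=0$ for all $x$, whereas you rule out linear independence with a single test vector; your first bullet is a dead end and can simply be deleted.
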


\begin{proof}
\begin{enumerate}
\item \textbf{Existence and computation of $\ell$.}
Let $d$ be defined by the displayed formula, and set
\[
H:=\{(0,0,x_3,x_4): x_3,x_4\in\mathbb F_2\}\subset G.
\]
Then $d=\mathbf 1_{G\setminus H}$, so $d(0)=0$ and $D=\sum_g d(g)=16-|H|=12$,
hence $(m_1)=(12)$.

Fix $\chi\neq 0$ and write $A_\chi:=\{g\in G:\chi\cdot g=1\}$, so $|A_\chi|=8$.
Since $d=0$ precisely on $H$, we have
\[
\sum_{\chi\cdot g=1} d(g)=|A_\chi\setminus H|=8-|A_\chi\cap H|.
\]
If $\chi\in H^\perp=\langle e_1,e_2\rangle\subset G^*$, then $\chi\cdot h=0$ for
all $h\in H$, so $A_\chi\cap H=\varnothing$ and the sum equals $8$, hence
$\ell_\chi=4$.  If $\chi\notin H^\perp$, then $\chi|_H$ is a nonzero linear
functional on the $2$--dimensional space $H$, so it takes the value $1$ on
exactly half of $H$, i.e.\ on $2$ points. Thus $|A_\chi\cap H|=2$, the sum
equals $6$, and $\ell_\chi=3$. Therefore $\ell_\chi\ge 3$ for all $\chi\neq 0$,
and $n_4=3$, $n_3=12$.

\item \textbf{Uniqueness and parametrization by a pair $(u,v)$.}
Let $d$ be any function satisfying the hypotheses with $(m_1)=(12)$.
From the moment analysis for $(D,s)=(12,4)$ and $\ell_\chi\ge 3$ established
earlier, this case forces the $\ell$--distribution to be of Type~A, namely
$\ell_\chi=4$ for exactly three nontrivial characters and $\ell_\chi=3$ for the
other twelve. Equivalently, writing the unnormalized Fourier transform
\[
\widehat d(\chi)=\sum_{g\in G} d(g)(-1)^{\chi\cdot g},
\]
the Fourier relation $\widehat d(\chi)=12-4\ell_\chi$ gives
\[
\widehat d(0)=12,\qquad
\widehat d(\chi)=
\begin{cases}
-4 & \text{for exactly three nontrivial }\chi,\\
0  & \text{for all other }\chi\neq 0.
\end{cases}
\]
Let $u_1,u_2,u_3$ be the three nontrivial characters with $\widehat d(u_i)=-4$.
By the inverse Fourier formula,
\[
d(x)=\frac{1}{16}\Bigl(12
-4\bigl((-1)^{u_1\cdot x}+(-1)^{u_2\cdot x}+(-1)^{u_3\cdot x}\bigr)\Bigr).
\]
Set $S(x):=(-1)^{u_1\cdot x}+(-1)^{u_2\cdot x}+(-1)^{u_3\cdot x}$.
Since $d(x)\in\{0,1\}$, we must have $S(x)\in\{3,-1\}$ for all $x$, which is
equivalent to
\[
u_1\cdot x+u_2\cdot x+u_3\cdot x\equiv 0 \pmod 2\qquad\text{for all }x.
\]
Hence $(u_1+u_2+u_3)\cdot x=0$ for all $x$, so $u_1+u_2+u_3=0$.
After relabeling, $u_3=u_1+u_2$. Thus the three exceptional characters are
exactly the nonzero elements of the $2$--dimensional subspace
$U:=\langle u_1,u_2\rangle\subset G^*$.

Let $u=u_1$ and $v=u_2$. Substituting $u,v,u+v$ into the inverse formula shows
that
\[
d(x)=0 \iff u\cdot x=v\cdot x=0,
\]
and therefore $d=\mathbf 1_{G\setminus H_{u,v}}$ where
$H_{u,v}:=\{x: u\cdot x=v\cdot x=0\}$ is a codimension--$2$ subspace of $G$.
In particular, $d$ is determined by the choice of the $2$--plane
$U=\langle u,v\rangle$ (equivalently by any choice of linearly independent
$u,v$ spanning it).

Finally, $\operatorname{GL}_4(\mathbb F_2)$ acts transitively on ordered pairs of
linearly independent vectors in $G^*$, hence transitively on $2$--dimensional
subspaces. Thus all such functions $d$ lie in a single
$\operatorname{GL}_4(\mathbb F_2)$--orbit, and the displayed $d$ is a
representative.
\end{enumerate}
\end{proof}

\subsection{Unboundedness of non-flat}

In contrast to the case of flat pluricanonical covers, we show by means of the following examples that there exists canonical and bicanonical $\mathbb{Z}_2^s$ covers of weighted projective threefolds for arbitrarily large $s$.

\begin{example}\label{unbounded_non-flat_canonical}
Let $G = (\mathbb{Z}_2)^s$ with $s \ge 2$. Fix $\chi_0 = (1,0,\dots,0) \in G^*$ and let
\[
S = \{ g \in G : g_1 = 1 \}, \quad |S| = 2^{s-1}.
\]
Define $d_g$ and $L$ as follows:

\begin{enumerate}
    \item If $s$ is even and $s \geq 4$:
        \[
        L = \frac{2^s - 4}{6}, \quad d_g = 
        \begin{cases}
        2 & \text{if } g \in S, \\
        0 & \text{otherwise}.
        \end{cases}
        \]
    \item If $s$ is odd and $s \ge 5$:
        \[
        L = \frac{2^{s-1} - 4}{6}, \quad d_g = 
        \begin{cases}
        1 & \text{if } g \in S, \\
        0 & \text{otherwise}.
        \end{cases}
        \]
\end{enumerate}

Then:
\begin{enumerate}
    \item \( d_g \in \mathbb{Z}_{\ge 0} \) for all \( g \in G \), and \( d_0 = 0 \),
    \item $\displaystyle\sum_g d_g = 6L + 4$,
    \item \( l_\chi \in \mathbb{Z}_{\ge 0} \) for all \( \chi \in G^* \), and \( l_0 = 0 \)
    \item For all $\chi \ne 0$, $\displaystyle l_\chi = \frac12 \sum_{g:\, \chi \cdot g = 1} d_g \ge L+1$ and $\l_{\chi}$ is an integer.
\end{enumerate}

Consequently, by \Cref{prop:pluricanonical_maps_of_threefolds}, the $\mathbb{Z}_2^s$ cover $f: X \to Y = \mathbb{P}(1,1,L,L)$ constructed using branch divisors of degree $D_g \in |\mathcal{O}_Y(d_g)|$ is actually the canonical map
\[
\begin{tikzcd}
    X \arrow[d, "f"] \arrow[dr, "\varphi_{|K_X|}"] & \\
    Y \arrow[r, hook, "\mathcal{O}_{Y}(L)"] & \mathbb{P}^N
\end{tikzcd}
\]

\end{example}

\begin{proof}
We first verify each condition.

\noindent We first show that $L$ is a positive integer 
\begin{enumerate}
    \item For even $s$ and $s \geq 4$, $s = 2t$, $t \geq 1$. Hence $2^s - 4 = 4^t - 4$. Since $4^t \equiv 4 \pmod{6}$ for all $t \ge 2$, we have $6 \mid 4^t - 4$ and hence $L$ is a positive integer $L \geq 2$.
    \item For odd $s \ge 5$, $s-1 \ge 4$ is even, so $2^{s-1} - 4$ is divisible by 6 by the same reasoning.
\end{enumerate}

\noindent We now show that $\sum_g d_g = 6L+4$.
\begin{enumerate}
    \item For even $s$: $\sum d_g = 2 \cdot 2^{s-1} = 2^s = 6L+4$. 
    \item For odd $s$: $\sum d_g = 1 \cdot 2^{s-1} = 2^{s-1} = 6L+4$.
\end{enumerate}

\noindent We verify that $l_{\chi}$ is an integer and $l_{\chi} \geq L+1$.

Suppose that $\chi = \chi_0$.
\begin{enumerate}
    \item For even $s$, $2l_{\chi_0} = 2 \cdot 2^{s-1} = 2^s \Rightarrow l_{\chi_0} = 2^{s-1} \ge L+1 = \frac{2^s + 2}{6}$ for $s \geq 1$. 
    \item For odd $s$, $2l_{\chi_0} = 1 \cdot 2^{s-1} = 2^{s-1} \Rightarrow l_{\chi_0} = 2^{s-2} \ge L+1 = \frac{2^{s-1} + 2}{6}$ for $s \geq 2$. 
\end{enumerate}

Suppose that $\chi \neq \chi_0$ Then set $\{ g \in S : \chi \cdot g = 1, \chi_0 \cdot g = 1 \}$ has size $2^{s-2}$.
\begin{enumerate}
    \item For even $s$, $2l_\chi = 2 \cdot 2^{s-2} = 2^{s-1} \Rightarrow l_\chi = 2^{s-2} \ge \frac{2^s + 2}{6}$ for $s \geq 2$. 
    \item For odd $s$, $2l_\chi = 1 \cdot 2^{s-2} = 2^{s-2} \Rightarrow l_\chi = 2^{s-3} \ge \frac{2^{s-1} + 2}{6}$ for $s \geq 3$. 
\end{enumerate}

\end{proof}

\begin{example}\label{unbounded_non-flat_bicanonical}
Let \( G = (\mathbb{Z}_2)^s \) with \( s \ge 3 \).  
Fix \( \chi_0 = (1,0,\dots,0) \in G^* \).  
Let \( S = \{ g \in G : g_1 = 1 \} \), so \( |S| = 2^{s-1} \).

Define \( t \) as follows:
\begin{enumerate}
    \item If \( s = 3 \): \( t = 6 \)
    \item If \( s = 4 \): \( t = 3 \)
    \item If \( s \ge 5 \):
    \begin{enumerate}
        \item \( s \equiv 0 \pmod{4} \): \( t = 3 \)
        \item \( s \equiv 1 \pmod{4} \): \( t = 4 \)
        \item \( s \equiv 2 \pmod{4} \): \( t = 2 \)
        \item \( s \equiv 3 \pmod{4} \): \( t = 1 \)
    \end{enumerate}
\end{enumerate}

Then define:
\[
d_g = 
\begin{cases}
t & \text{if } g \in S, \\
0 & \text{otherwise},
\end{cases}
\quad
L = \frac{t \cdot 2^{s-1} - 4}{5}.
\]

This construction satisfies:
\begin{enumerate}
    \item \( d_g \in \mathbb{Z}_{\ge 0} \) for all \( g \in G \), and \( d_0 = 0 \)
    \item \( \sum_{g \in G} d_g = 5L + 4 \)
    \item For all \( \chi \ne 0 \), \( l_\chi = \frac12 \sum_{g:\, \chi \cdot g = 1} d_g > L \)
    \item \( l_\chi \in \mathbb{Z}_{\ge 0} \) for all \( \chi \in G^* \), and \( l_0 = 0 \)
    \item \( L \in \mathbb{Z} \) and \( L \ge 2 \)
\end{enumerate}
\end{example}

Consequently, the $\mathbb{Z}_2^s$ cover $f: X \to Y = \mathbb{P}(1,1,L,L)$ constructed using branch divisors of degree $D_g \in |\mathcal{O}_Y(d_g)|$ is actually the bi-canonical map
\[
\begin{tikzcd}
    X \arrow[d, "f"] \arrow[dr, "\varphi_{|2K_X|}"] & \\
    Y \arrow[r, "\mathcal{O}_{Y}(L)"] & \mathbb{P}^N
\end{tikzcd}
\]

\begin{proof}
We verify each condition in order.

\noindent \textbf{1. Nonnegativity and \( d_0 = 0 \):}  
By definition, \( d_g = t \) or \( 0 \), and \( t \ge 1 \) in all cases, so \( d_g \ge 0 \).  
The zero vector \( 0 \) has first coordinate $0$, so \( 0 \notin S  \Rightarrow  d_0 = 0 \).

\noindent \textbf{2. Sum condition:}  
\[
\sum_{g \in G} d_g = t \cdot |S| = t \cdot 2^{s-1}.
\]
From the definition of \( L \), \( t \cdot 2^{s-1} = 5L + 4 \), so the sum equals \( 5L + 4 \).

\noindent \textbf{3. Lower bound on \( l_\chi \):}  
We have \( l_\chi = \frac12 \sum_{g:\, \chi \cdot g = 1} d_g \).

\textbf{Case 1: \( \chi = \chi_0 \)}  
Here \( \{ g : \chi_0 \cdot g = 1 \} = S \), so  
\[
l_{\chi_0} = \frac12 \sum_{g \in S} t = \frac12 \cdot t \cdot 2^{s-1} = \frac{5L+4}{2}.
\]
We require \( \frac{5L+4}{2} > L \), i.e., \( 5L+4 > 2L \), i.e., \( 3L > -4 \), which is true for \( L \ge 2 \).

\textbf{Case 2: \( \chi \ne \chi_0, 0 \)}  
The hyperplane \( \{ g : \chi \cdot g = 1 \} \) intersects \( S \) in exactly \( 2^{s-2} \) elements, so  
\[
l_\chi = \frac12 \cdot t \cdot 2^{s-2} = \frac{t \cdot 2^{s-1}}{4} = \frac{5L+4}{4}.
\]
We require \( \frac{5L+4}{4} > L \), i.e., \( 5L+4 > 4L \), i.e., \( L > -4 \), which is true for \( L \ge 2 \).

\noindent \textbf{4. Integrality of \( l_\chi \) and \( l_0 = 0 \):}  
\begin{enumerate}
    \item \( l_{\chi_0} = t \cdot 2^{s-2} \) is integer since \( s \ge 3 \)
    \item \( l_{\chi} = t \cdot 2^{s-3} \) is integer since \( s \ge 3 \)
    \item By definition, \( l_0 = 0 \)
\end{enumerate}

\noindent \textbf{5. Integrality and bound on \( L \):}  
By construction, \( t \) is chosen so that \( t \cdot 2^{s-1} \equiv 4 \pmod{5} \),  
so \( t \cdot 2^{s-1} - 4 \) is divisible by \(5 \implies L \in \mathbb{Z} \).

We verify \( L \ge 2 \):
\begin{enumerate}
    \item \( s = 3 \): \( t = 6 \), \( 6 \cdot 4 = 24 \), \( L = (24-4)/5 = 4 \ge 2 \)
    \item \( s = 4 \): \( t = 3 \), \( 3 \cdot 8 = 24 \), \( L = 4 \ge 2 \)
    \item \( s \ge 5 \): \( t \ge 1 \), \( 2^{s-1} \ge 16 \), so \( t \cdot 2^{s-1} \ge 16 \),  
    \( L \ge (16-4)/5 = 2.4  \implies  L \ge 3 \ge 2 \)
\end{enumerate}

All conditions are satisfied for every \( s \ge 3 \).
\end{proof}

\bibliographystyle{alpha}
\bibliography{biblio.bib}

\end{document}